\def\E{\ifmmode{\mathbb E}\else{$\mathbb E$}\fi} 
\def\N{\ifmmode{\mathbb N}\else{$\mathbb N$}\fi} 
\def\R{\ifmmode{\mathbb R}\else{$\mathbb R$}\fi} 
\def\Q{\ifmmode{\mathbb Q}\else{$\mathbb Q$}\fi} 
\def\C{\ifmmode{\mathbb C}\else{$\mathbb C$}\fi} 
\def\H{\ifmmode{\mathbb H}\else{$\mathbb H$}\fi} 
\def\Z{\ifmmode{\mathbb Z}\else{$\mathbb Z$}\fi} 
\def\P{\ifmmode{\mathbb P}\else{$\mathbb P$}\fi} 
\def\T{\ifmmode{\mathbb T}\else{$\mathbb T$}\fi} 
\def\SS{\ifmmode{\mathbb S}\else{$\mathbb S$}\fi} 
\def\DD{\ifmmode{\mathbb D}\else{$\mathbb D$}\fi} 
\newcommand{\del}{\partial}
\newcommand{\ben}{\begin{enumerate}}
\newcommand{\een}{\end{enumerate}}
\newcommand{\be}{\begin{equation}}
\newcommand{\ee}{\end{equation}}
\newcommand{\bea}{\begin{eqnarray}}
\newcommand{\eea}{\end{eqnarray}}
\newcommand{\beastar}{\begin{eqnarray*}}
\newcommand{\eeastar}{\end{eqnarray*}}
\newcommand{\bc}{\begin{center}}
\newcommand{\ec}{\end{center}}
\theoremstyle{theorem}
\newtheorem{thm}{Theorem}[section]
\newtheorem{lem}[thm]{Lemma}
\newtheorem{prop}[thm]{Proposition}
\newtheorem{rmk}[thm]{Remark}
\theoremstyle{definition}
\newtheorem{defn}[thm]{Definition}
\newtheorem{rem}[thm]{Remark}
\newtheorem*{thm*}{Theorem}
\numberwithin{equation}{section}
\def\R{{\mathbb R}}
\def\E{{\mathbb E}}
\def\Z{{\mathbb Z}}
\def\C{{\mathbb C}}
\def\R{{\mathbb R}}
\def\P{{\mathbb P}}
\def\N{{\mathbb N}}
\def\11{{\mathbb I}}
\def\n{{\noindent}}
\def\Int{\operatorname{Int}}
\def\C{\mathbb{C}}
\def\Z{\mathbb{Z}}
\def\T{\mathbb{T}}
\def\D{\mathbb{D}}
\def\Q{\mathbb{Q}}
\def\E{\ifmmode{\mathbb E}\else{$\mathbb E$}\fi} 
\def\N{\ifmmode{\mathbb N}\else{$\mathbb N$}\fi} 
\def\R{\ifmmode{\mathbb R}\else{$\mathbb R$}\fi} 
\def\Q{\ifmmode{\mathbb Q}\else{$\mathbb Q$}\fi} 
\def\C{\ifmmode{\mathbb C}\else{$\mathbb C$}\fi} 
\def\H{\ifmmode{\mathbb H}\else{$\mathbb H$}\fi} 
\def\Z{\ifmmode{\mathbb Z}\else{$\mathbb Z$}\fi} 
\def\P{\ifmmode{\mathbb P}\else{$\mathbb P$}\fi} 
\def\SS{\ifmmode{\mathbb S}\else{$\mathbb S$}\fi} 
\def\DD{\ifmmode{\mathbb D}\else{$\mathbb D$}\fi} 
\def\R{{\mathbb R}}
\def\E{{\mathbb E}}
\def\Z{{\mathbb Z}}
\def\C{{\mathbb C}}
\def\R{{\mathbb R}}
\def\N{{\mathbb N}}
\def\MM{{\mathcal M}}
\def\n{\nu}
\def\CA{{\mathcal A}}
\def\CB{{\mathcal B}}
\def\CC{{\mathcal C}}
\def\CF{{\mathcal F}}
\def\CG{{\mathcal G}}
\def\CH{{\mathcal H}}
\def\CJ{{\mathcal J}}
\def\CL{{\mathcal L}}
\def\CM{{\mathcal M}}
\def\CN{{\mathcal N}}
\def\CS{{\mathcal S}}
\def\CW{{\mathcal W}}
\def\darr#1{\raise1.5ex\hbox{$\leftrightarrow$}
\mkern-16.5mu #1}
\def\roughly#1{\raise.3ex\hbox{$#1$\kern-.75em
\lower1ex\hbox{$\sim$}}}
\def\opname#1{\mathop{\kern0pt{\rm #1}}\nolimits}
\def\Im{\opname{Im}}
\def\dim{\opname{dim}}
\def\coker{\operatorname{Coker}}
\def\Span{\operatorname{Span}}
\begin{document}

\quad \vskip1.375truein

\def\mq{\mathfrak{q}}
\def\mp{\mathfrak{p}}
\def\mH{\mathfrak{H}}
\def\mh{\mathfrak{h}}
\def\ma{\mathfrak{a}}
\def\ms{\mathfrak{s}}
\def\mm{\mathfrak{m}}
\def\mn{\mathfrak{n}}
\def\mz{\mathfrak{z}}
\def\mw{\mathfrak{w}}
\def\Hoch{{\tt Hoch}}
\def\mt{\mathfrak{t}}
\def\ml{\mathfrak{l}}
\def\mT{\mathfrak{T}}
\def\mL{\mathfrak{L}}
\def\mg{\mathfrak{g}}
\def\md{\mathfrak{d}}
\def\mr{\mathfrak{r}}

\title[A wrapped Fukaya category of knot complement]
{A wrapped Fukaya category of knot complement}

\author{Youngjin Bae, Seonhwa Kim, Yong-Geun Oh}

\thanks{SK and YO are supported by the IBS project IBS-R003-D1.
YO is also partially supported by the National Science Foundation under Grant No. DMS-1440140 during his residence at the Mathematical Sciences Research Institute in Berkeley, California in the fall of 2018.
YB was partially supported by IBS-R003-D1 and JSPS International Research Fellowship Program.}

\address{Youngjin Bae\\
   Research Institute for Mathematical Sciences, Kyoto University\\
   Kyoto Prefecture, Kyoto, Sakyo Ward, Kitashirakawa Oiwakecho, Japan 606-8317}
\email{ybae@kurims.kyoto-u.ac.jp}
\address{Seonhwa Kim\\
Center for Geometry and Physics, Institute for Basic Sciences (IBS), Pohang, Korea} \email{ryeona17@ibs.re.kr}
\address{Yong-Geun Oh\\
Center for Geometry and Physics, Institute for Basic Sciences (IBS), Pohang, Korea \& Department of Mathematics,
POSTECH, Pohang, Korea} \email{yongoh1@postech.ac.kr}

\date{January 2, 2019; Revision on March 10, 2019}

\begin{abstract}
This is the first of a series of two articles where we construct a
version of wrapped Fukaya category $\CW\CF(M\setminus K;H_{g_0})$ of the cotangent bundle $T^*(M \setminus K)$
of the knot complement $M \setminus K$ of a compact 3-manifold $M$,
 and do some calculation for the case of
hyperbolic knots $K \subset M$. For the construction, we use the wrapping
induced by the kinetic energy Hamiltonian $H_{g_0}$ associated to the cylindrical adjustment $g_0$
on $M \setminus K$ of a smooth metric $g$ defined on $M$. We then consider
the torus $T = \del N(K)$ as an object in this category and its wrapped Floer
complex $CW^*(\nu^*T;H_{g_0})$ where $N(K)$ is a tubular neighborhood of $K \subset M$.
We prove that the quasi-equivalence class of the category
and the quasi-isomorphism class of the $A_\infty$ algebra $CW^*(\nu^*T;H_{g_0})$
are independent of the choice of cylindrical adjustments of such metrics depending only on
the isotopy class of the knot $K$ in $M$.

In a sequel \cite{BKO}, we give constructions of a wrapped Fukaya category
$\CW\CF(M\setminus K;H_h)$ for hyperbolic knot $K$ and of $A_\infty$ algebra
$CW^*(\nu^*T;H_h)$ directly using the hyperbolic metric $h$ on $M \setminus K$, and
prove a formality result for the asymptotic boundary of $(M \setminus K, h)$.
\end{abstract}

\keywords{Knot complement, wrapped Fukaya category, Knot Floer algebra, horizontal $C^0$-estimates}

\maketitle

\setcounter{tocdepth}{1}
\tableofcontents

\section{Introduction}
\label{sec:intro}

Idea of using the conormal lift of a knot (or a link) in $\R^3$ or $S^3$ as a Legendrian submanifold
in the unit cotangent bundle has been  exploited by Ekholm-Etynre-Ng-Sullivan \cite{EENS}
in their construction of knot contact homology and proved that this analytic invariants
recovers Ng's combinatorial invariants of the knot which is an isomorphism class of
certain differential graded algebras \cite{ng:knot}.
On the other hand, Floer homology of conormal bundles of submanifolds of a compact
smooth manifold in the full cotangent bundle were studied as a quantization
of singular homology of the submanifold (see \cite{oh:jdg}, \cite{kasturi-oh}).
Such a construction has been extended in the $A_\infty$ level by Nadler-Zaslow
\cite{nadler-zaslow}, \cite{nadler} and also studied by Abbondandolo-Portaluri-Schwarz \cite{APS}
in its relation to the singular homology of the space of cords of the submanifold.

The present article is the first of a series of two articles where we
construct a version of wrapped Fukaya category of $T^*(M \setminus K)$
of the knot-complement $M\setminus K$, which is \emph{noncompact},
as an invariant of knot $K$ (or more generally of links)
and do some computation of the invariant for the case of
hyperbolic knot $K \subset M$ by relating the
(perturbed) pseudoholomorphic triangles in $T^*(M \setminus K)$ to the hyperbolic
geodesic triangles of the base $M \setminus K$. (See \cite{BKO} for the latter.)

For our purpose of investigating the effect on the topology of $M\setminus K$
of the special metric behavior such as the existence of a
hyperbolic metric in the complement $M \setminus K$, it is important to directly deal with
the cotangent bundle of the full complement $M \setminus K$ equipped with the wrapping
induced by the kinetic energy Hamiltonian of a metric on $M\setminus K$.
To carry out necessary analysis of the relevant perturbed Cauchy-Riemann
equation, we need to impose certain tame behavior of the associated Hamiltonian and almost complex
structure at infinity of $M \setminus K$. Because of noncompactness of
$M \setminus K$, the resulting $A_\infty$ category a priori depends on the
Liptschitz-equivalence class of such metrics modulo conformal equivalence and requires a
choice of such equivalence class in the construction.

In the present paper, we will consider the restricted class of Hamiltonians that asymptotically
coincide with the \emph{kinetic energy Hamiltonian} denoted by $H_g$ associated Riemannian metric $g$ on the complement and the Sasakian almost complex structure $J_g$ associated to the metric $g$. We will need a
suitable `tameness' of the metric near the end of complement $M \setminus K$
so that a uniform \emph{horizontal} $C^0$ bound holds for the relevant Cauchy-Riemann equation
with any given tuple $(L^0, \ldots, L^k)$ of Lagrangian boundary conditions from the
given collection of \emph{admissible} Lagrangians.
As long as such a $C^0$ bound
is available, one can directly, construct a wrapped Fukaya category using such a metric.
It turns out that such a horizontal bound can be proved in general only for
the metric with suitable tame behavior such as  those with \emph{cylindrical ends} or with certain type of homogeneous behavior at the end of $M \setminus K$ like a complete hyperbolic metric.
We will consider the case of hyperbolic knot $K \subset M$ in a sequel \cite{BKO}
to the present paper. It turns out that $T^*(M\setminus K)$ is convex at infinity
in the sense that there is a $J_h$-pluri-subharmonic exhaustion function in a neighborhood at infinity of
$T^*(M\setminus K)$ when $K$ is a hyperbolic knot. We refer to \cite{BKO} for the explanation of
this latter property of the hyperbolic knot.

\subsection{Construction of wrapped Fukaya category  $\CW\CF(M \setminus K)$}

The main purpose of the present paper is to define a Fukaya-type category canonically associated to
the knot complement $M \setminus K$. To make our definition of wrapped Fukaya category of
knot complement flexible enough,
we consider a compact oriented Riemannian manifolds $(M,g)$ without boundary.
(We remark that the case of hyperbolic knot $K \subset M$ does not belong to this case
because the hyperbolic metric on $M \setminus K$ cannot be smoothly extended to the whole $M$.)

For this purpose, we take a tubular neighborhood $N(K) \subset M$ of $K$ and decompose
\be\label{eq:decompose}
M \setminus K = N^{\text{\rm cpt}} \cup (N(K) \setminus K)
\ee
and equip a cylindrical metric on $N(K) \setminus K \cong [0, \infty) \times T_K$
with $T_K = \del N(K)$. We call such a metric a cylindrical adjustment of the
given metric $g$ on $M$, and denote by $g_0$ the cylindrical adjustment of $g$
on $N(K) \setminus K$.  An essential analytical reason why we take such an
cylindrical adjustment of the metric and its associated kinetic energy Hamiltonian is because
it enables us to obtain the \emph{horizontal $C^0$-estimates} for the relevant
perturbed Cauchy-Riemann equation with various boundary conditions.
To highlight importance and nontriviality of such $C^0$-estimates,
we collect all the proofs of relevant $C^0$-estimates in Part 2. As a function
on $T^*(M \setminus K)$, the kinetic energy of such a metric on $M \setminus K$,
blows up outside the zero section as one approaches to the knot $K$.

Denote the associated kinetic energy Hamiltonian of $g_0$ on $T^*(M\setminus K)$ by
\be\label{eq:Hg0}
H_{g_0} = \frac12|p|^2_{g_0}.
\ee
The first main theorem is a construction the following $A_\infty$ functor between
two different choices of various data involved in the construction of
$\CW\CF (M \setminus K; H_{g_0})$.

\begin{thm}\label{thm:functoriality} Let $N(K)$ be a tubular neighborhood of $K$ be given.
For any two smooth metrics $g, \, g'$ on $M$, denote by $g_0, \, g_0'$ the associated
cylindrical adjustments thereof as above. Then there exist a natural $A_\infty$ quasi-equivalence
$$
\Phi_{gg'} : \CW\CF (M \setminus K; H_{g_0}) \to \CW\CF (M \setminus K; H_{g_0'})
$$
for any pair $g, \, g'$ such that $\Phi_{gg} = id$. Furthermore its quasi-equivalence class
depends only on the isotopy type of the knot $K$ independent of the choice of tubular neighborhoods
and other data.
\end{thm}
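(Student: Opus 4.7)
The plan is to construct $\Phi_{gg'}$ as a continuation $A_\infty$ functor, following the standard scheme of wrapped Floer theory, but carefully adapted to the noncompact complement. Fix first a tubular neighborhood $N(K)$ so that both cylindrical adjustments $g_0$ and $g_0'$ agree with cylindrical metrics on $[0,\infty) \times T_K$ outside the compact piece $N^{\text{\rm cpt}}$. Since the space of cylindrical adjustments is convex (or at least path-connected in a controlled way), I would choose a smooth path $\{g_0^s\}_{s \in [0,1]}$ of cylindrical adjustments from $g_0$ to $g_0'$, together with the associated kinetic energy Hamiltonians $H_{g_0^s}$ and Sasakian almost complex structures $J_{g_0^s}$. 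The functor $\Phi_{gg'}$ is the identity on objects (admissibility is arranged to be preserved by the interpolation), while its higher $A_\infty$ components are defined by signed counts of rigid solutions of the Cauchy--Riemann equation on discs with $k+1$ boundary punctures whose Hamiltonian perturbation $H_s\,dt$ is $s$-dependent over the disc, equal to $H_{g_0}$ near the output puncture and to $H_{g_0'}$ near the input punctures, together with weight functions implementing the rescaling needed to match wrapping times. The $A_\infty$ functor relations then follow from the standard codimension-one boundary analysis of the parametrized moduli spaces.

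To see that $\Phi_{gg'}$ is a quasi-equivalence, I would construct $\Phi_{g'g}$ by reversing roles and exhibit $\Phi_{g'g} \circ \Phi_{gg'}$ as homotopic to the identity $A_\infty$ functor through a second-order homotopy obtained from a two-parameter family of interpolating data that degenerates to the constant path at one end. The normalization $\Phi_{gg} = \text{id}$ is automatic from the constant interpolating path. Independence from the choice of tubular neighborhood $N(K)$ is a special case of the same machinery: any two tubular neighborhoods of a fixed $K$ admit a common refinement, and an ambient isotopy of $M$ supported near $K$ and fixing $K$ pointwise identifies the cylindrical ends up to the corresponding continuation functor.

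For the isotopy-invariance statement, given knots $K, K' \subset M$ related by an ambient isotopy $\{\phi_t\} \subset \text{Diff}(M)$ with $\phi_0 = \text{id}$ and $\phi_1(K) = K'$, the restriction $\phi_1|_{M \setminus K} \colon M \setminus K \to M \setminus K'$ lifts to a symplectomorphism of cotangent bundles by $(x,p) \mapsto (\phi_1(x), ((d\phi_1)_x^{-1})^* p)$. Pulling back an admissible cylindrical adjustment for $M \setminus K'$ produces admissible data on $M \setminus K$, and this yields a tautological strict $A_\infty$ isomorphism between the corresponding categories. Composing with the metric-comparison functor above gives the desired quasi-equivalence $\CW\CF(M \setminus K; H_{g_0}) \to \CW\CF(M \setminus K'; H_{g_0'})$ depending only on the isotopy class of $K$.

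The main obstacle, which I expect to occupy the bulk of the work, is uniformly controlling the parametrized Cauchy--Riemann equation along the whole family $\{g_0^s\}$ and along the second-order homotopies used in the quasi-equivalence argument. Because $H_{g_0^s}$ blows up as one approaches $K$, nothing a priori prevents Floer trajectories from escaping toward the puncture; the entire reason to insist on cylindrical adjustments is precisely that the horizontal $C^0$-estimates collected in Part 2 of the paper apply in that setting. What must be verified is that those estimates hold \emph{uniformly} in $s$ along a homotopy of cylindrical adjustments with a fixed collection of admissible Lagrangian boundary conditions, and uniformly in the two-parameter family used for homotopy inverses. Once this uniform compactness is in place, the continuation functor, its homotopy inverse, and the isotopy- and tube-independence statements all follow from standard cobordism and gluing arguments, and the theorem follows.
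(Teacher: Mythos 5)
Your overall strategy is the right one—construct the functor by continuation, show it is a quasi-equivalence by a second-order homotopy, and handle isotopy invariance by lifting the ambient isotopy—but there is a genuine gap in the step where you propose to build $\Phi_{g'g}$ ``by reversing roles.'' The continuation map cannot be built in both directions: the energy identity for the non-autonomous equation produces a term $-\int \chi'(\tau)\,\tfrac{\partial H_s}{\partial s}\big|_{s=\chi(\tau)}$, and the vertical $C^0$-estimate on the $\frak f$-components produces a Laplacian term $\chi'(\tau)\,\tfrac{\partial H^s}{\partial s}$, both of which have the right sign only when the homotopy of Hamiltonians is monotone increasing, i.e.\ when $g_0 \geq g_0'$ (equivalently $H_{g_0}\leq H_{g_0'}$). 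So once $\Phi_{gg'}$ is defined with $g_0 \geq g_0'$, the naive reverse continuation $\Phi_{g'g}$ has no a priori energy bound or vertical $C^0$-bound, and its trajectories may escape to infinity; you cannot simply swap the roles.

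The paper's workaround, which your proposal does not supply, has two ingredients. First, one proves that cylindrical adjustments of any two smooth metrics $g, g'$ on $M$ are \emph{Lipschitz equivalent}: there is $C\geq 1$ with $\frac1C g_0 \leq g_0' \leq C g_0$ on all of $M\setminus K$ (this uses that both $g$ and $g'$ extend smoothly across $K$, and is exactly why the construction is carried out for cylindrical adjustments of metrics on $M$, not arbitrary metrics on the complement). Second, one exploits the conformal rescaling isomorphism $CW(\psi_w)$: Lipschitz equivalence gives $H(g_0)\leq H(h_0/C)$ and $H(h_0)\leq H(g_0/C)$, so \emph{both} continuation functors can be run in the monotone direction at the cost of a conformal rescaling of the target, and the compositions $\Psi\circ\Phi$ and $\Phi\circ\Psi$ land on $H(g_0/C^2)$, $H(h_0/C^2)$ and are shown to be homotopic to the rescaling isomorphisms $\rho_{C^2},\eta_{C^2}$. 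This is the mechanism that produces a quasi-equivalence, and it is not interchangeable with the symmetric two-sided continuation you describe. Finally, your remark that the space of cylindrical adjustments is convex is not quite what is needed either: what one actually needs is a \emph{monotone} path, which a generic interpolation does not give, so the choice of path matters in a way your sketch elides.
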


We denote by
$$
\CW\CF (M \setminus K)
$$
any such $A_\infty$ category $\CW\CF (M \setminus K; H_{g_0})$.
Each $A_\infty$ category $\CW\CF (M\setminus K)$ will be constructed on $T^*(M\setminus K)$
using the \emph{kinetic energy Hamiltonian} $H_{g_0}$
and the Sasakian almost complex structure $J_{g_0}$ on $T^*(M \setminus K)$ of $g_0$ on $M\setminus K$: In general a Sasakian almost complex structure $J_h$ associated to the metric $h$ on a
Riemannian manifold $(N,h)$ is given by
\be\label{eq:SasakianJg}
J_h (X) = X^\flat, \quad J_h(\alpha) = - \alpha^\sharp
\ee
under the splitting $T(T^*N) \simeq TN \oplus T^*N$ via the Levi-Civita connection of $h$.

Then the proof of Theorem \ref{thm:functoriality} is relied on construction of various
$A_\infty$ operators, $A_\infty$ functors and $A_\infty$ homotopies in the
current context of Floer theory on $T^*(M\setminus K)$. The general strategy of
such construction is by now standard in Floer theory. (See, for example, \cite{seidel:book}.)
In fact, our construction applies to any arbitrary tame orientable 3-manifold with
 boundary and similar computational result applies when the 3-manifold admits
a complete hyperbolic metric of finite volume. Our construction is given in this generality.
 (See Theorem \ref{thm:invariant} for the precise statement.)

\begin{rem}
In the proofs of Theorems \ref{thm:functoriality} and \ref{thm:algebra},
we need to construct various $A_\infty$ functors and $A_\infty$ homotopies between them
which enter in the  invariance proofs. We adopt the definitions of them given in \cite{hasegawa} for
the constructions of the $A_\infty$ functor and the $A_\infty$ homotopy directly using
the continuations of either Hamiltonians or of Lagrangians or others. Construction of $A_\infty$
functors appear in the literature in various circumstances, but we could not locate
a literature containing geometric construction of an $A_\infty$ homotopy
in the sense of \cite{hasegawa,seidel:book} for the case of geometric continuations
such as Hamiltonian isotopies. (In \cite{fooo:book1,fooo:book2,fukaya:cyclic},
 the notion of $A_\infty$ homotopy is defined via a suspension model, the notion of pseudo-isotopy,
of the  chain complex.) Because of these reasons and for the convenience of readers,
we provide full details, in the categorical context, of the construction $A_\infty$ homotopy
associated to a continuation of Hamiltonians in Section \ref{sec:homotopy}. Our construction
of $A_\infty$ functor is the counterpart of the standard Floer continuation equation also applied to
the higher $\frak m^k$ maps with $k \geq 2$. It turns out
that actual constructions of the associated $A_\infty$ homotopy as well as of $A_\infty$ functors are
rather subtle and require some thought on the correct moduli spaces that enter
in definitions of $A_\infty$ functors and homotopies associated to geometric continuations.
(See Subsection \ref{subsec:functor} and Section \ref{sec:homotopy} for the definitions of relevant moduli spaces.) We also call readers' attention to Savelyev's relevant construction in the context of $\infty$-category in \cite{savelyev}.
\end{rem}

\subsection{Construction of Knot Floer algebra}

For a concrete computation we do in \cite{BKO}, we focus on a particular object
in this category $\CW\CF(M \setminus K)$ canonically associated to the knot.
For given tubular neighborhood $N(K)$ of $K$, we consider the conormal
$$
L = \nu^*T, \,\quad T = \del N(K)
$$
and then the wrapped Fukaya algebra of the Lagrangian $L$ in $T^*(M\setminus K)$.
We remark that the wrapped Fukaya algebra of $\nu^*T$ in $T^*M$ for a closed manifold $M$
can be described
by purely topological data arising from the base space, more specifically
that of the space of paths attached to $T$ in $M$. (See \cite{APS} for some relevant result.)
So it is important to consider $L$ as an object for $T^*(M\setminus K)$
to get more interesting knot invariant.

On the other hand, since we restrict the class of our Hamiltonians to that of
kinetic energy Hamiltonian $H_{g_0}$ associated to a Riemannian metric $g_0$
on $M\setminus K$, the pair $(\nu^*T, H_{g_0})$ is not a nondegenerate pair but
a \emph{clean pair} in that the set of Hamiltonian chords contains a continuum of
constant chords valued at points of $T \cong \T^2$.

We denote by $\mathfrak X(L;H_{g_0})= \mathfrak X(L,L;H_{g_0})$ the set of Hamiltonian chords of $H_{g_0}$
attached to a Lagrangian submanifold $L$ in general.
We have
$$
\mathfrak X(L;H_{g_0}) = \mathfrak X_0(L;H_{g_0}) \coprod \mathfrak X_{ < 0}(L;H_{g_0})
$$
where the subindex of $\mathfrak X$ in the right hand side denotes the action of
the Hamiltonian chord of $H_{g_0}$ which is the negative of the
length of the corresponding geodesic. (See \eqref{eq:CA-E} below for the explanation on the relevant
convention used in the present paper.)
It is shown in \cite[Proposition 2.1]{BKO} that the constant component $\mathfrak X_0(L;H_{g_0})$
of the critical set of the action functional $\CA_{g_0}$ is clean
in the sense of Bott and so diffeomorphic to $\T^2$ as a smooth manifold.
We take
$$
CW(\nu^*T,\nu^*T; T^*(M\setminus K); H_{g_0}) = C^*(T) \oplus \Z\{\mathfrak X_{< 0}(L;H_{g_0})\}
$$
where $C^*(T)$ is any model of cochain complex of $T$, e.g., $C^*(T) = \Omega^*(T)$ the de Rham complex
and associate an $A_\infty$ algebra following the construction from \cite{fooo:book1}.

\begin{thm}\label{thm:algebra} Let $N(K)$ be a tubular neighborhood of $K$ and
let $T = \del (N(K))$. The $A_\infty$ algebra
$$
(CW(\nu^*T, T^*(M\setminus K); H_{g_0}), {\frak m), \quad \frak m
= \{\frak m}^k\}_{1 \leq k < \infty}
$$
can be defined, and its isomorphism class does not depend on the various
choices involved such as tubular neighborhood $N(K)$ and
the metric $g$ on $M$.
\end{thm}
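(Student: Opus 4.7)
The plan is to first construct the $A_\infty$ algebra, and then obtain the independence statement by restricting the $A_\infty$ quasi-equivalences of Theorem \ref{thm:functoriality} to the endomorphism algebra of the single object $\nu^*T$. For the construction, the underlying graded group is already prescribed,
\[
CW(\nu^*T,\nu^*T; T^*(M\setminus K); H_{g_0}) = C^*(T) \oplus \Z\{\mathfrak X_{<0}(\nu^*T;H_{g_0})\},
\]
but the pair $(\nu^*T,H_{g_0})$ is only clean rather than nondegenerate: the critical locus $\mathfrak X_0$ is diffeomorphic to $T\cong \T^2$ by the Bott-cleanness cited from \cite[Proposition 2.1]{BKO}. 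The plan is therefore to define the $A_\infty$ operations $\frak m^k$ using the Bott--Morse (Kuranishi) package of \cite{fooo:book1}: take $C^*(T)=\Omega^*(T)$ (or any chosen cochain model), and define $\frak m^k$ by counting $(k+1)$-pointed perturbed $J_{g_0}$-holomorphic polygons with boundary on $\nu^*T$ whose asymptotic punctures are labeled by elements of $\mathfrak X_{<0}$, the constant-chord ends being integrated against input/output forms on the Bott manifold $T$ via evaluation maps at the boundary marked points. The $A_\infty$ relations are then extracted from the usual codimension-one boundary analysis of the resulting moduli spaces.

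The analytic inputs needed for this are: (i) transversality, achieved by the standard abstract perturbation of the $A_\infty$ operations in the clean-intersection setting of \cite{fooo:book1}; (ii) Gromov compactness of polygons with arbitrarily many strip-like ends, where the sole issue beyond the closed case is escape toward the end of $M\setminus K$; and (iii) the $A_\infty$ identities themselves, which are formal consequences of the codimension-one boundary description once (i) and (ii) are in place. For (ii) we invoke the horizontal $C^0$-estimates collected in Part~2 of the present paper: because $H_{g_0}$ is the kinetic energy of the cylindrical adjustment $g_0$ and $J_{g_0}$ is the Sasakian structure \eqref{eq:SasakianJg}, these estimates keep the projection of any finite-energy solution inside a fixed compact subset of $M\setminus K$, and the fiberwise $C^0$ control is standard. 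This makes the construction of $\frak m=\{\frak m^k\}$ parallel to \cite{seidel:book,fooo:book1} with no essentially new ingredients beyond those already used to define $\CW\CF(M\setminus K;H_{g_0})$ itself.

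For the independence statement, for two metrics $g,g'$ on $M$ with cylindrical adjustments $g_0,g_0'$, I would apply Theorem \ref{thm:functoriality} to obtain the $A_\infty$ quasi-equivalence $\Phi_{gg'}:\CW\CF(M\setminus K;H_{g_0})\to \CW\CF(M\setminus K;H_{g_0'})$, observe that the object $\nu^*T$ (with $T=\partial N(K)$) is literally the same Lagrangian in the two categories, and take the induced map on endomorphism $A_\infty$ algebras; this yields an $A_\infty$ quasi-isomorphism between the two versions of $CW^*(\nu^*T;H_{g_0})$. For two different tubular neighborhoods $N(K),N'(K)$, the boundary tori $T,T'$ are smoothly isotopic in $M\setminus K$, so $\nu^*T$ and $\nu^*T'$ are Hamiltonian isotopic admissible Lagrangians in $T^*(M\setminus K)$; the standard continuation/naturality arguments (packaged into the $A_\infty$ functor machinery constructed in this paper) then provide a quasi-isomorphism of the resulting $A_\infty$ algebras, composing with the previous step to cover simultaneous changes of $g$ and $N(K)$.

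The main obstacle I would expect is not the invariance step, which reduces cleanly to the functoriality theorem already proved in the paper, but the Bott--Morse construction of the $\frak m^k$ at the clean constant component $\mathfrak X_0\cong \T^2$: one must combine the abstract perturbation scheme of \cite{fooo:book1} with the continuation-type $A_\infty$ functors and homotopies formulated in the sense of \cite{hasegawa,seidel:book} (as emphasized in the remark above), and verify that the continuation data one picks to compare different $(g,N(K))$ keeps all relevant moduli spaces inside the admissible class so that the horizontal $C^0$-estimates remain valid along the homotopy. Once this compatibility of perturbations and continuations is in place, the $A_\infty$ relations and their invariance follow formally.
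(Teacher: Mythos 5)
Your construction of $\frak m$ and your metric-independence step track the paper's proof closely: the Morse--Bott treatment at the clean constant-chord component $\mathfrak X_0\cong\T^2$ (with full details deferred, just as in the paper, to \cite{BKO}, and a nondegenerate model obtained by a $C^2$-small perturbation of $\nu^*T$), and the restriction of the quasi-equivalence of Theorem~\ref{thm:functoriality} to the endomorphism $A_\infty$ algebra of the single object $\nu^*T$, are both exactly the paper's strategy.

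Where you diverge is the tubular-neighborhood independence. You propose to Hamiltonian-isotope $\nu^*T$ to $\nu^*T'$ and invoke a moving-Lagrangian-boundary continuation argument. The paper instead chooses a diffeomorphism $\phi:M\to M$ with $\phi(T)=T'$, fixing $K$ and isotopic to the identity, and lets the induced fiberwise-linear symplectomorphism $(d\phi^{-1})^*$ of $T^*(M\setminus K)$ carry $\nu^*T$ onto $\nu^*T'$; this identifies $CW_g(\nu^*T,\cdot)$ with $CW_{\phi_*g}(\nu^*T',\cdot)$ essentially tautologically, after which one quotes metric independence once more to compare $\phi_*g$ with $g$. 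That reduction converts the change of neighborhood into a change of metric and avoids moving-boundary continuation entirely. This matters, and it is where your version has a genuine gap: the $A_\infty$ functor machinery of the paper (Sections on $\frak f$ and $\frak h$) perturbs the Hamiltonian, not the Lagrangian, and the moving-boundary $C^0$ estimate of Section~\ref{sec:moving} is only available in one direction of motion --- the paper explicitly warns that ``construction of such a morphism is possible only in a certain direction that favors application of strong maximum principle,'' and even remarks that this estimate ``is not used in the present paper.'' So ``standard continuation/naturality'' does not straightforwardly yield a quasi-isomorphism with a quasi-inverse in your setting; you would need to build the reverse morphism by some separate device. The paper's pushforward-by-diffeomorphism trick sidesteps this obstacle, and is both simpler and more robust.
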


\begin{rem}
Due to the presence of Morse-Bott component of constant chords, there are two routes
toward construction of wrapped Floer complex, which we denote by
$
CW_g(\nu^*T, T^*(M \setminus K)):
$
One is to take the model
$$
CW(\nu^*T,\nu^*T; T^*(M\setminus K); H_{g_0}) = C^*(T) \oplus \Z\langle \mathfrak X_{< 0}(L;H_h)\rangle
$$
where $C^*(T)$ is a chain complex of $T$ such as the singular chain complex
as in \cite{fooo:book1} or the de Rham complex, and the other is to take
$$
CW(\nu_k^*T, \nu_k^*T; T^*(M\setminus K); H_{g_0}) \cong \Z\langle \text{\rm Crit }k\rangle \oplus \Z\langle\mathfrak X_{< 0}(L;H_h)\rangle
$$
where $\nu_k^*T$ the fiberwise translation of $\nu^*T$ by $dk$,
suitably interpolated with $\nu^*T$ away from the zero section,
for a sufficiently $C^2$-small compactly supported Morse function
$k: M \setminus K \to \R$ such that $\nu^*T \pitchfork \text{\rm Image } dk$.
We refer readers to \cite[Section 2]{BKO} for the detailed explanation
on the latter model.
\end{rem}

The isomorphism class of $CW_g(T, M \setminus K)$ independent of $g$ then provides a knot-invariant of $K$
in $M$ for an arbitrary knot $K$.
To emphasize the fact that we regard $L = \nu^*T$ as an object in the cotangent bundle of a knot complement
$M \setminus K$, not as one in the full cotangent bundle $T^*M$, we denote the cohomology group of
$CW_g(T, M \setminus K)$ as follows.

\begin{defn}[Knot Floer algebra]\label{defn:knotFloeralgebra}
We denote the cohomology of $CW_g(T, M \setminus K)$ by
$$
HW(\del_\infty(M \setminus K))
$$
which carries a natural product arising from $\frak m^2$ map. We call this
\emph{Knot Floer algebra} of $K \subset M$.
\end{defn}
By letting the torus converge to the ideal boundary of $M \setminus K$, we may
regard $HW(\del_\infty(M \setminus K))$ as the wrapped Floer cohomology of the `ideal boundary'
of the hyperbolic manifolds $M \setminus K$, which is the origin of the notation
$\del_\infty(M \setminus K)$ we are adopting.

In a sequel \cite{BKO}, we introduce a reduced version of
the $A_\infty$ algebra, denoted by $\widetilde{CW}^d(\del_\infty(M \setminus K))$,
by considering the complex generated by \emph{non-constant}
Hamiltonian chords, and  prove that for the case of hyperbolic knot $K \subset M$
this algebra can be also directly calculated by
considering a horo-torus $T$ and the wrapped Floer complex $CW(\nu^*T, T^*(M\setminus K);H_h)$
of the hyperbolic metric $h$ although $h$ cannot be smoothly extended to the whole manifold $M$.
We also prove a formality result of its $A_\infty$ structure
$\widetilde{CW}(\nu^*T,T^*(M\setminus K);H_h)$ for any hyperbolic knot $K$.
The following is the main result we prove in \cite{BKO}.

\begin{thm}[Theorem 1.6 \cite{BKO}]\label{thm:compare-with-h} Suppose $K$ is a hyperbolic knot on $M$. Then
we have an (algebra) isomorphism
$$
HW^d(\del_\infty(M \setminus K)) \cong HW^d(\nu^*T;H_h)
$$
for all integer $d \geq 0$. Furthermore the reduced cohomology $\widetilde{HW}^d(\del_\infty(M \setminus K)) = 0$ for all $d \geq 1$.
\end{thm}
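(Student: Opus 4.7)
The plan is to prove the isomorphism by extending the invariance theorems (Theorems \ref{thm:functoriality} and \ref{thm:algebra}) so that the hyperbolic metric $h$ is allowed as an admissible choice on $M\setminus K$, even though $h$ does not extend smoothly across $K$, and then to compute the reduced side directly from hyperbolic geometry. For the first part, I would interpolate between a cylindrical adjustment $g_0$ and the hyperbolic metric $h$ by a one-parameter family $\{g_s\}$ of metrics on $M\setminus K$, each tame at infinity in a sense strong enough to run the Floer theory. The technical core of this step is a horizontal $C^0$-estimate for the perturbed Cauchy--Riemann equation associated with $(H_h, J_h)$ and Lagrangian boundary in an admissible tuple containing $\nu^*T$. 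As flagged in the introduction, $T^*(M\setminus K)$ is convex at infinity for hyperbolic $K$, i.e. admits a $J_h$-plurisubharmonic exhaustion function near infinity, and the maximum principle for this function is what prevents Floer trajectories from escaping into the cusp.

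Given such a $C^0$-bound, the continuation machinery already set up in Section \ref{sec:homotopy} applies. A monotone $s$-dependent deformation of the Hamiltonian along $\{g_s\}$ produces an $A_\infty$ functor
$$
\Phi : \CW\CF(M\setminus K; H_{g_0}) \longrightarrow \CW\CF(M\setminus K; H_h),
$$
and the reverse deformation together with an $A_\infty$ homotopy of the type constructed in Section \ref{sec:homotopy} shows that $\Phi$ is a quasi-equivalence. Restricting to the distinguished object $\nu^*T$ yields the algebra isomorphism $HW^d(\del_\infty(M\setminus K)) \cong HW^d(\nu^*T; H_h)$ for all $d \ge 0$.

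For the vanishing $\widetilde{HW}^d(\del_\infty(M\setminus K)) = 0$ in degrees $d \ge 1$, the plan is to compute $\widetilde{CW}(\nu^*T; H_h)$ directly in the hyperbolic model provided by the previous step. By the definition \eqref{eq:SasakianJg} of the Sasakian almost complex structure applied to $h$, the non-constant Hamiltonian chords of $H_h$ attached to $\nu^*T$ are in bijection with geodesic arcs of $(M\setminus K, h)$ meeting the horotorus $T$ orthogonally at both endpoints, with action equal to minus their length. The $A_\infty$ operations $\frak m^k$ on this non-constant part should then be identified, by a Morse--Bott-type adiabatic degeneration of the perturbed pseudoholomorphic polygon equation onto the zero section, with signed counts of hyperbolic geodesic $k$-gons in $M\setminus K$ with vertices on $T$. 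For a sufficiently deep horotorus these geodesic configurations are rigid, determined by the holonomy and unable to be deformed while keeping all vertices on $T$, and a pairing of chords of opposite action cancels their contributions in positive degrees, yielding the asserted vanishing.

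The principal obstacle is the horizontal $C^0$-estimate with the hyperbolic Hamiltonian $H_h$: unlike the cylindrical case handled in Part 2, the hyperbolic end is not a product, so the maximum principle must be executed against the $J_h$-plurisubharmonic exhaustion function rather than a cylindrical coordinate, and its interaction with Lagrangian boundary conditions that themselves need to be adapted to the hyperbolic end is delicate. A secondary obstacle is the noncompact adiabatic degeneration identifying $\frak m^k$ with counts of hyperbolic geodesic polygons, where one must prevent the degenerating solutions from drifting into the cusp uniformly in the adiabatic parameter; this again reduces to the same convexity estimate applied to a rescaled energy functional.
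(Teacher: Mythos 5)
The paper quotes this theorem from the sequel \cite{BKO} without proof, so there is no in-paper proof to compare against; evaluating your route against the paper's explicit signposts about the sequel, your first step has a genuine gap. You propose a continuation functor $\Phi:\CW\CF(M\setminus K;H_{g_0})\to\CW\CF(M\setminus K;H_h)$, a reverse deformation, and an $A_\infty$ homotopy, concluding that $\Phi$ is a quasi-equivalence. The paper warns against exactly this. The back-and-forth scheme of Theorem \ref{thm:invariancemetric} requires the two kinetic Hamiltonians to be comparable in both directions, i.e.\ the metrics must be Lipschitz-equivalent (Remark \ref{rem:scaling}), and here they are not: in cusp coordinates $h\sim da^2+e^{-2a}(d\theta^2+d\varphi^2)$ while a cylindrical adjustment has $g_0\sim da^2+d\theta^2+d\varphi^2$, so $h\leq g_0$ but $H_h/H_{g_0}\to\infty$ as $a\to\infty$, and no constant $C$ gives $H_h\leq C^2 H_{g_0}$. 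Only the $\Phi$-direction monotone continuation exists. The remark closing Section \ref{sec:homlimit} states this plainly: since $h$ is not Lipschitz-equivalent to any cylindrical adjustment of a smooth metric on $M$, the hyperbolic category \emph{may not be quasi-equivalent} to $\CW\CF(M\setminus K)$. The claimed isomorphism of $HW^d(\nu^*T)$ therefore cannot be a formal consequence of the continuation machinery of Sections \ref{sec:construction of functor}--\ref{sec:homotopy}; the introduction indicates the sequel instead computes both sides directly, by identifying the perturbed pseudoholomorphic polygons with hyperbolic geodesic polygons, and compares the results. You single out the $C^0$-estimate as the principal obstacle but overlook this more elementary breakdown of the monotonicity hypothesis that your argument relies on.

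Your argument for $\widetilde{HW}^d=0$ for $d\geq 1$ is also not a proof. By \eqref{eq:CA-E} every non-constant chord of a kinetic-energy Hamiltonian has strictly negative action $-E_g(c)$, so there is no ``pairing of chords of opposite action'' to invoke. What the paper announces is a \emph{formality} result for the reduced $A_\infty$ algebra $\widetilde{CW}(\nu^*T;H_h)$, obtained from the identification of the operations $\frak m^k$ with signed counts of hyperbolic geodesic polygons with vertices on a horotorus; the degree-wise vanishing is then a computation in that explicit geometric model. Your adiabatic-degeneration picture is consistent with the paper's framing, but the decisive step from rigidity of geodesic configurations to vanishing of positive-degree cohomology is absent and would have to be supplied by the hyperbolic-geometric argument itself.
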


\subsection{$C^0$ estimates}

One crucial new ingredient in the proofs of the above theorems is to establish the \emph{horizontal}
$C^0$ estimates of solutions of the perturbed Cauchy-Riemann equation
\be\label{eq:duXH01=0}
(du -  X_{\bf H} \otimes \beta)_{\bf J}^{(0,1)} = 0
\ee
mainly for the energy Hamiltonian $H_{g_0}(q,p) = \frac{1}{2}|p|^2_{g_0}$. For this purpose, we have to require
the one-form $\beta$ to satisfy `co-closedness'
$$
d(\beta \circ j) = 0
$$
in addition to the usual requirement imposed in \cite{abou-seidel}.
Together with the well-known requirement of \emph{subclosedness} for the \emph{vertical} $C^0$ estimates
\cite{abou-seidel}, we need to require $\beta$ to satisfy
\be\label{eq:co-closed}
d\beta \leq 0, \quad d(\beta \circ j) = 0, \, i^*\beta = 0
\ee
for the inclusion map $i: \del \Sigma \to \Sigma$. This brings the question whether
such one-forms exist in the way that the choice is compatible with
the gluing process of the relevant moduli spaces.
We explicitly construct such a $\beta$ by pulling back a one form from a \emph{slit domain}.
This choice of one form is consistent under the gluing of moduli spaces.

Roughly speaking, this $C^0$-estimates guarantees that under the above preparation,
whenever the test objects $L^0, \ldots, L^k$ are all contained in $W_i = T^*N_i$,
the images of the solutions of the Cauchy-Riemann equation
is also contained in $W_i$, i.e., \emph{they do not approach the knot $K$.}
We refer to Part 2 for the proofs of various $C^0$ estimates
needed for the construction.
Furthermore for the proof of independence of the $A_\infty$ category $\CW\CF_g(M \setminus K)$
under the choice smooth metric $g$, one need to construct an
$A_\infty$ functor between them for two different choices of $g, \, g'$.
Because the maximum principle applies only in the increasing direction of the associated
Hamiltonian from $H_{g}$ to $H_{g'}$, we have to impose the monotonicity
condition
$$
g \geq g' \quad \text{ or equivalently } H_{g'} \leq H_g.
$$

\subsection{Further perspective}

Putting the main results of the present paper in perspective, we compare
 our category $\CW\CF(M \setminus K)$ is a version of partially wrapped
Fukaya category on $T^*M$ with the `stop' given by
$$
\Lambda^\infty_K : = \del^\infty(T^*M)|_{K} \subset \del^\infty(T^*M)
$$
for $\Lambda_K = T^*M|_{K}$. This is a 3-dimensional \emph{coisotropic submanifold}
of the asymptotic contact boundary
$$
\del^\infty(T^*M) = \bigcup_{q \in M} \del^\infty(T^*_q M)
$$
of $T^*M$ which is of 5 dimension.  In this regard, our construction can be compared
with the partially wrapped Fukaya categories as follows.

Our category avoids the entire colostropic submanifold $\Lambda_K \subset T^*M$
while partially wrapped Fukaya category with relevant coisotropic stop avoids
$\Lambda^\infty_K \subset \del^\infty(T^*M)$ only asymptotically. In this regard,
M. Abouzaid pointed out that  Sylvan's partially wrapped Fukaya category with stop $\Lambda^\infty_K$ can be
regarded as a bulk deformation of our category via \emph{coisotropic} submanifold of
codimension 2. (See \cite{seidel:quartic} for the usage of such
bulk deformation via a \emph{complex divisor}. We also refer to \cite{fooo:bulk} for
a general theory of bulk deformations by \emph{toric divisors}.)
The way how we avoid this stop is
by attaching the cylindrical end on $M \setminus N(K)$ along its boundary
$\del(M \setminus N(K)) = \del N(K)$ and considering the kinetic energy Hamiltonian
of a cylindrical adjustment $g_0$ of a smooth metric $g$ defined on $M$. As mentioned before,
this Hamiltonian blows up as we approach to $K$ off the zero section, which will prevent
the image of any finite energy solution of \emph{perturbed} Cauchy-Riemann equation \eqref{eq:duXH01=0}
from touching the fiber of $T^*M$ restricted to $K$.
Our approach directly working with $T^*(M\setminus K)$, which is well-adapted to the metric structures
on the base manifold, will be important for our later purpose of studying hyperbolic knots
in a sequel \cite{BKO}.

We would also like to mention that in the arXiv version of \cite[\S 6.5]{ENS}, Ekholm-Ng-Shende
considered a wrapped Fukaya category on the Weinstein manifold denoted by $W_K$ to $K$ that is obtained by
attaching a punctured handle to $DT^*\R^3$ along the unit conormal bundle of the knot
and altering the Liouville vector field of $T^*M$ along $\nu^*K$. It would be interesting
to compare our category with theirs.

A similar construction of $A_\infty$ algebra for the conormal $\nu^*T$ of $T$ as above
can be carried out for the `conormal' $\nu^*N^{\text{\rm cpt}}$ of $N^{\text{\rm cpt}} := N \setminus N(K)$ or the \emph{micro-support} of
the characteristic function $\chi_{N^{\text{\rm cpt}}}$, which is given by
$$
\nu^*N^{\text{\rm cpt}} = o_{N^{\text{\rm cpt}}} \coprod \nu^*_- (\del N^{\text{\rm cpt}})
$$
where $o_{N^{\text{\rm cpt}}}$ is the zero section restricted to $N^{\text{\rm cpt}}$,
$T = \del N^{\text{\rm cpt}}$ is equipped with boundary orientation of $N^{\text{\rm cpt}}$
and $\nu^*_- (\del N^{\text{\rm cpt}})$ is the negative conormal of $\del N^{\text{\rm cpt}}$.
Combined with the construction of the wrapped version of the natural restriction morphisms
constructed in \cite{oh:natural}, this construction
would give rise to $A_\infty$ morphisms $\widehat{\nu^*N_i} \to \widehat{\nu^*T}$ and
a natural $A_\infty$ functor
\be\label{eq:nuNiTi}
\xymatrix{
& \widehat{\nu^*N_1} \ar[r] \ar[d] & \widehat{\nu^*N_2} \ar[d]\\
& \widehat{\nu^*T_1} \ar[r] & \widehat{\nu^*T_2}  }
\ee
where $N_i = M \setminus N_i(K)$ and $T_i = \del N_i$ for two tubular
neighborhoods $N_1(K) \subset N_2(K)$ of $K$ for $i=1, \, 2$.
Here $\widehat{L}$ denotes the Yoneda image of the Lagrangian $L$ in general.
(Since we will not use this construction in this series, we will leave
further discussion elsewhere not to further lengthen the paper.)

On the other hand it is interesting to see that when we are given a exhaustion sequence
$N_1 \subset N_2 \subset \cdots N_i \subset \cdots $, the union $M \cup \nu^*K$ is a limit
of the above mentioned conormal $\nu^*N_i$ as $i \to \infty$ on
$T^*(M \setminus K)$ in the Gromov-Hausdorff sense.
Furthermore the canonical smoothing of
$\nu^*N_i$ given in \cite[Theorem 2.3]{kasturi-oh} can be made to converge to the $S^1$-family
Lagrangian surgery of $o_M \cup \nu^*K$ denoted by $M_K$ in \cite{AENV}, \cite{ENS} which consider
the case $M = S^3$. While construction of Lagrangian $M_K$
requires some geometric restriction $K$ such as being fibered (see \cite[Lemma 6.12]{AENV}),
construction of exact Lagrangian smoothing of $\nu^*K$ can be done for arbitrary knot $K$. It would be interesting to see what the ramification of this observation will be.

\begin{rem} It is reasonable to
expect that our category is equivalent to the limit of the wrapped Fukaya categories
$\CW(T^*N_i)$ defined in \cite{GPS} for the symplectic manifolds with boundary, or
\emph{Liouville sectors}
$$
X_i = T^*N_i,\quad \del X_i = T^*N_i|_{\del N_i}:
$$
While we put wrapping through by ambient Hamiltonian flows,
\cite{GPS} put wrapping on the Lagrangian itself but considered the unperturbed Cauchy-Riemann
equation with positively moving boundary condition and localization trick of category.
\end{rem}

\subsection{Conventions}
\label{subsec:sasakian}

In the literature on symplectic geometry, Hamiltonian dynamics, contact geometry and
the physics literature, there are various conventions used which are different from
one another one way or the other. Because many things considered in the present paper such as
the energy estimates, the $C^0$ estimates applying the maximum principle and
construction of the Floer continuation map depend on the choice of various conventions,
we highlight the essential components of our convention that affect their validity.

The major differences between different conventions in the literature
lie in the choice of the following three definitions:
\begin{itemize}
\item {\bf Definition of Hamiltonian vector field:} On a symplectic manifold $(P,\omega)$,
the Hamiltonian vector field associated to a function $H$ is given by the formula
$$
\omega(X_H,\cdot) = dH \, (\text{ resp. } \, \omega(X_H,\cdot ) = - dH),
$$
\item{\bf Compatible almost complex structure:} In both conventions, $J$ is compatible to $\omega$ if
the bilinear form $\omega(\cdot, J \cdot)$ is positive definite.
\item{\bf Canonical symplectic form:} On the cotangent bundle $T^*N$, the canonical symplectic form
is given by
$$
\omega_0 = dq \wedge dp, \, (\text{resp. }\,   dp \wedge dq).
$$
\end{itemize}

In addition, we would like to take
\be\label{eq:CRJH}
\frac{\del u}{\del \tau} + J\left(\frac{\del u}{\del t} - X_H(u)\right) = 0
\ee
as our basic perturbed Cauchy-Riemann equation on the strip.
Since we work with the cohomological version of Floer complex, we would like to regard
this equation as the \emph{positive} gradient flow of an action functional $\CA_H$ as in \cite{fooo:book2}.
This, under our convention laid out above, leads us to our choice of the action functional associated to Hamiltonian $H$ on $T^*N$ given by
$$
\CA_H(\gamma) = - \int \gamma^*\theta + \int_0^1 H(t, \gamma(t))\, dt,
$$
which is the negative of the classical action functional. With this definition, Floer's continuation
map is defined for the homotopy of Hamiltonian $s \mapsto H^s$ for which the
inequality $\frac{\del H^s}{\del s} \geq 0$ is satisfied, i.e., in the direction for which
the Hamiltonian is \emph{increasing}. (See Section \ref{sec:construction of functor} for the relevant discussion.)

For the kinetic energy Hamiltonian $H = H_g(x)$, we have
\be\label{eq:CA-E}
\CA_H(\gamma_c) = - E_g(c)
\ee
where $\gamma_c$ is the Hamiltonian chord associated to the geodesic $c$ and $E_g(c)$ is the energy of
$c$ with respect to the metric $g$.

\bigskip

\noindent{\bf Acknowledgement:} Y. Bae thanks Research Institute for Mathematical Sciences, Kyoto
University for its warm hospitality. Y.-G. Oh thanks H. Tanaka for his interest in the present work
and explanation of some relevance of Savelyev's work \cite{savelyev} to our construction of $A_\infty$
homotopy associated to the homotopy of Floer continuation maps.
The authors also thank M. Abouzaid for making an interesting comment on
 the relationship between our wrapped Fukaya category and
Sylvan's partially wrapped Fukaya category with stop given by $\Lambda_K^\infty$ in
Fukaya's 60-th birthday conference in Kyoto in February 2019.

\section{Geometric preliminaries}
\label{sec:prelim}

In this section, we consider the cotangent bundle $W = T^*N$ with
the canonical symplectic form
$$
\omega_0 = \sum_{i=1}dq^i \wedge dp_i
$$
which is nothing but $\omega_0 = -d \theta$, where $\theta$ is the Liouville one-form
$\theta = \sum p_idq^i$. (Our convention of the canonical symplectic form
on the cotangent bundle is different from that of \cite{seidel:biased}.)
Then the radial vector field
\be\label{eq:radial}
Z = \sum_{i=1}  p_i \frac{\del}{\del p_i}
\ee
satisfies
\be\label{eq:Z}
Z \rfloor \omega_0 = - \theta,\qquad \CL_Z \omega_0  = \omega_0
\ee
In particular its flow $\phi^t$ satisfies
\be\label{eq:flow}
(\phi^t)^*\omega_0 = e^t \omega_0.
\ee
Therefore $T^*N$ is convex at infinity in the sense of \cite{eliash-gromov}.

Let us consider a Riemannian metric $g_0$ of $N$. We denote by
$$
\sharp: T^*N \to TN, \quad \flat: T^*N \to TN
$$
the `raising' and the `lowering' operations associated to the metric $g_0$.
Then we also equip $T^*N$ with the metric
$
\widehat g_0 = g_0 \oplus g_0^{\flat}
$
with respect to the splitting
$$
T(T^*N) = TN \oplus T^*N
$$
induced from the Levi-Civita connection of $g_0$ on $N$.

\subsection{Tame manifolds and cylindrical adjustments}\label{sec:Tame manifolds and the Busemann function}

Let us consider a noncompact \emph{tame} 3-manifold $N$, which means that there exists an \emph{exhaustion} $\{N_i\}$, a sequence of compact manifolds $N_i$ with $\bigcup_i N_i = N$ such that
\be\label{eq:exhaustion}
N_1 \subset \text{int}(N_2) \subset N_2 \subset \cdots \subset N_i \subset \text{int}(N_{i+1}) \subset \cdots
\ee
and  each $N_{i+1} \setminus \text{int}(N_i)$ is homeomorphic to $\del N_i \times [0,1]$.
It is convenient to consider a compact manifold with boundary $N'$ such that $N$ is homeomorphic to $\Int N'$.
Let us choose a neighborhood $U=U(\partial N')$ of $\partial N'$ inside $N'$, called an \emph{end} of $N$.
Then there is a homeomorphism $\phi:U\to \del N \times (0,+\infty]$, where $\phi(\partial N')=\del N\times\{+\infty\}$ is called the \emph{asymptotic boundary} of $N$.

A typical example of tame manifold is a knot complement $N=M\setminus K$ with an ambient closed 3-manifold $M$. In this case, we can take an exhaustion by simply choosing a sequence of nested tubular neighborhoods of a knot $K$.

Now we focus on the knot complement, i.e. $\del N^{\text{\rm cpt}}$ is a 2-dimensional torus $\\T^2$, specially denoted by $T_i$. Although most of statements in this article also work for arbitrary tame 3-manifolds in general, the results in the sequel \cite{BKO} requires the torus boundary condition of $N^{\text{\rm cpt}}$  in order to exploit hyperbolic geometry using complete hyperbolic metric of finite volume.

We can also consider a more general Riemannian metric $g_0$ of $N$ possibly incomplete. For example, if we consider a knot complement $N=M\setminus K$, a natural choice of such a metric is a restriction of a smooth metric $g=g_M$ of a closed ambient manifold $M$.

\begin{defn}[Cylindrical adjustment]\label{defn:cyl-adj}
We define the cylindrical adjustment $g_0$ of the metric $g$ on $M$
with respect to the exhaustion \eqref{eq:NiNi'} by
\be\label{eq:gi}
g_0 = \begin{cases} g \quad & \text{on } N_i' \\
da^2 \oplus g|_{\del N_i} & \text{on } N_i\setminus K
\end{cases}
\ee
for some $i$, which is suitably interpolated on $N_i \setminus N_i'$ which is fixed.
\end{defn}

Here $a$ is the coordinate for $[0,+\infty)$ for the following decomposition
\[
N=N^{\text{\rm cpt}} \cup_{T} \big(T\times [0,+\infty)\big).
\]
We call the metric $g_0$ {\em an asymptotically cylindrical adjustment of $g$ on $N$}, which are all complete Riemannian metrics of $N$.
We denote $N^{\text{\rm end}} = T\times [0,+\infty)$ in the above decomposition equipped with
the cylindrical metric $g|_{T_i} \oplus da^2$.

We remark that the following property of asymptotically cylindrical adjustments
$g_0$ of smooth metrics $g$ on $M$ restricted to $M \setminus K$
will be important in Section \ref{sec:homlimit} later.
\begin{prop}\label{prop:equivalence} Suppose there is given an exhaustion \eqref{eq:exhaustion}.
	For any choice of two smooth Riemannian metrics $g$ and $g'$ defined on $M$,
denote by $g_0$ and $g_0'$ the cylindrical adjustment made on $N_i$ for a same $i$.
for every pair of $g_0$ and $g'_0$ are Lipschitz equivalent, i.e., there is a constant $C = C(g,g') \geq 1$
such that
	 $$
	\frac{1}{C} g_0 \leq g'_0\leq C g_0
	$$	
on $M \setminus K$.
\end{prop}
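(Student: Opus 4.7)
The plan is to decompose $M \setminus K$ along the torus $T = \del N_i$ into the compact piece $N_i$ (where $g_0$ equals $g$ on $N_i'$ and is smoothly interpolated on $N_i \setminus N_i'$, and similarly for $g_0'$) and the cylindrical end $N^{\text{end}} \cong T \times [0, +\infty)$, establish Lipschitz equivalence on each piece separately, and then combine by taking the maximum of the two resulting constants. Since the interpolation region $N_i \setminus N_i'$ lies entirely inside the compact piece, the two pieces overlap cleanly along $T$ and no matching issues arise at the interface.

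On the compact piece $N_i$, both $g_0$ and $g_0'$ are smooth Riemannian metrics by construction. I would invoke the elementary fact that any two smooth Riemannian metrics on a compact manifold are Lipschitz equivalent: the continuous, strictly positive function $v \mapsto g_0'(v,v)/g_0(v,v)$ on the unit $g_0$-sphere bundle of $TN_i$ attains a positive minimum $c_1 > 0$ and a finite maximum $C_1 < \infty$, which yields
\[
c_1\, g_0 \leq g_0' \leq C_1\, g_0 \quad \text{on } N_i.
\]

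On the cylindrical end $N^{\text{end}}$, both metrics have the explicit product form $g_0 = g|_T \oplus da^2$ and $g_0' = g'|_T \oplus da^2$, sharing the common Euclidean factor in the $a$-direction. For any tangent vector $v = v_T + c\,\del_a$, I have
\[
g_0(v,v) = g|_T(v_T, v_T) + c^2, \qquad g_0'(v,v) = g'|_T(v_T, v_T) + c^2,
\]
so the two differ only through their first summand. Since $T$ is a compact $2$-torus, the same compactness argument as above applied to the metrics $g|_T$ and $g'|_T$ on $T$ gives some $C_2 \geq 1$ with $C_2^{-1}\, g|_T \leq g'|_T \leq C_2\, g|_T$. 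Adding the common nonnegative term $c^2$ (and using $C_2 \geq 1$) preserves the inequality, so $C_2^{-1}\, g_0 \leq g_0' \leq C_2\, g_0$ uniformly along the entire end, independent of $a$.

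Taking $C := \max(c_1^{-1}, C_1, C_2)$ then yields the required uniform Lipschitz constant on all of $M \setminus K$. The argument is essentially an assembly of compactness arguments; the only point requiring a moment of thought — rather than a genuine obstacle — is the uniformity along the noncompact $a$-direction of the end, which is handled by the decoupling of the $da^2$ factor from the $T$-factor in the product structure of both cylindrical adjustments.
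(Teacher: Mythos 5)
Your proof is correct but follows a genuinely different route from the paper's. You fix the product structure $N^{\text{end}} \cong T \times [0,\infty)$ once and for all (which is the literal reading of Definition~\ref{defn:cyl-adj}), decompose $M \setminus K$ into the compact piece $N_i$ and this cylindrical end, and handle each piece by compactness: directly on $N_i$, and on the end by decoupling the common $da^2$ summand and comparing $g|_T$ with $g'|_T$ on the compact torus $T$. The two constants are then combined by taking a maximum.

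The paper's proof instead constructs the cylindrical parameterization out of the normal exponential map of each metric separately, so that $g_0$ and $g'_0$ live a priori in different coordinate systems $(a,\theta,\varphi)$ and $(a',\theta',\varphi')$. It writes down the explicit coordinate expression of each adjusted metric in its own adapted system, observes that the two expressions have identical coefficients, and then bounds the $C^1$ norm of the transition map $\iota_{g'}^{-1}\circ \iota_g$ between the two coordinate systems to conclude. This is a heavier argument, but it is insensitive to whether the product structure on the end is regarded as a fixed auxiliary datum or as something built from the metric itself — the latter being the picture used in Remark~\ref{rem:liouville-cylindrical}(1), where the cylindrical coordinate $a$ is produced via $r=e^{-a}$ from the $g$-normal radial coordinate.

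Your proof is cleaner precisely because it commits to the reading in which a single diffeomorphism $N^{\text{end}} \cong T \times [0,\infty)$ is used for both adjustments, so the only metric dependence on the end enters through the restrictions $g|_T$ and $g'|_T$, which are two smooth metrics on a compact torus. Under that reading your argument is complete, and I would single out as the correct key observation the fact that the shared flat $da^2$ factor lets the comparison constant $C_2$ for $g|_T$ versus $g'|_T$ pass unchanged to the full end metrics, provided $C_2\ge 1$ — which you handle. The one point worth flagging is that if one adopts the normal-exponential coordinate picture instead, the $a$-coordinates for the two adjustments differ and your end comparison must be supplemented by the bounded-coordinate-change estimate; that is exactly the extra work the paper does.
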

\begin{proof} Let $N(K)$ be a tubular neighborhood of $K$. By choosing sufficiently large $i$,
we may assume $M \setminus N_{i_0} \subset N(K)$ and
both $g_0, \, g_0'$ are cylindrical outside $N_{i_0}$.

Using the normal exponential map of $K$, we can parameterize the tubular neighborhood $T(K)$ of
$K$ by $(r, \theta, \varphi)$ where $\varphi \in S^1$ parameterizes the knot $K$ and $(r,\theta)$ is
the polar coordinates of $N_x K$ at $x \in K$ for the metric $g$. Similarly we denote by $(r',\theta',\varphi')$
the coordinates associated to $g'$.

More explicitly, we take a normal frame $\{X,Y\}$ along $K$ along $K$.
We denote by $\exp^{g,\perp}$ the normal exponential map along $K$ of the metric $g$ and
by $\exp^{g',\perp}$ that of $g'$. Using this frame, we have
an embedding of $\iota_g: D^2(\epsilon_0) \times S^1 M$ defined by
$$
\iota_g(r,\theta,\varphi) = \exp_{x(\phi)}^{g,\perp}(r(\cos \theta X(x(\varphi)) + \sin \theta Y(x(\varphi))))
$$
and $\iota_{g_0'}$ in a similar way. Then the composition map
$$
\iota_{g_0'}^{-1} \circ \iota_g: D^2(\epsilon_1) \times S^1 \to D^2(\epsilon_0') \times S^1
$$
is well-defined and smooth if we choose a smaller $\epsilon_1 = \epsilon_1(g,g')$
whose size depends only on $g, \, g'$. In particular, we have
$$
\|D(\iota_{g_0'}^{-1} \circ \iota_g) - id\|_{C^1} < C = C(g,g')
$$
on $D^2(\epsilon_1) \times S^1$ for some constant $C > 0$. This in particular proves
$$
\sup \left\{\frac{g'(v,v)}{g(v,v)}\, \Big \vert\, v \in T(M)|_{N(K)}, \, \|v\|_g = 1\right\}  < C'= C'(g,g').
$$

We recall that the cylindrical adjustment of $g_0$ is defined
$$
g_0 \sim \begin{cases} e^{-2a} da^2 + e^{-2a} \theta^2 + d\varphi^2 \quad & \text{for } 0 \leq r < \epsilon_0 \\
\epsilon_1^2 (da^2 + d\theta^2) + d\varphi^2 \quad & \text{for } a \geq i
\end{cases}
$$
in coordinates with the coordinate change $r = e^{-a}$ and
$\epsilon_0 = e^{-i + 1/2}$ and $\epsilon_1 = e^{-i}$. Exactly the same formula
holds for the cylindrical adjustment of $g_0'$ in the coordinates replaced by
$(a', \theta', \varphi')$, those with primes. In particular all the metric
coefficients appearing in the formulae are exactly the same for both adjustments.
This proves
$$
\sup \left\{\frac{g_0'(v,v)}{g_0(v,v)}\, \Big\vert\, v \in T(M\setminus K)|_{\iota_g^{-1}(N(K)\setminus K)},
\, \|v\|_{g_0} = 1\right\}  < C'= C'(g,g').
$$
for all $i$'s if we choose the tubular neighborhood $N(K)$ of $K$ sufficient small.
This finishes the proof.
\end{proof}

\begin{rem}\label{rem:liouville-cylindrical}
\begin{enumerate}
\item
It may be worthwhile to examine the behavior of Hamiltonian vector field
$X_{H_{g_0}}(q,p)$ on $M \setminus K$ as $q$ approach to $K$. For this purpose, let
$(r, \theta, \varphi)$ be the coordinate system on $N(K)$. The metric $g$ can be written
as
$$
g = dr^2 + r^2 d\varphi^2 + d\theta^2 + o(r^2).
$$
Define a cylindrical adjustment with respect to the coordinate $(a,\theta, \varphi)$ with $r = e^{-a}$ for $a \in [0,\infty)$.
Ignoring $o(r^2)$, the associate Hamiltonian of the cylindrical adjustment (for $r =1$) is given by
$$
H_{g_0} = \frac12 \left(p_a^2 + p_\varphi^2 + p_\theta^2\right)
= \frac12\left(r^2 p_r^2 + p_\varphi^2 + p_\theta^2\right).
$$
Therefore
\beastar
X_{H_{g_0}} & = & p_a \frac{\del}{\del a} + p_\varphi\frac{\del}{\del \varphi} + p_\theta\frac{\del}{\del \theta}\\
& = & - r p_r^2 \frac{\del}{\del p_r} +  r^2 p_r \frac{\del}{\del r} +  p_\varphi\frac{\del}{\del \varphi} + p_\theta\frac{\del}{\del \theta}.
\eeastar
We highlight the last two summands which makes the associated Hamiltonian flow
rotates around the torus with higher and higher speed as $|p_\varphi|^2 + |p_\theta|^2 \to \infty$
around the knot $K$. This asymptotic behavior is different from that of the
Hamiltonian vector field used in defining the partially wrapped Fukaya category whose
stop is given by the Liouville sector $T^*(\del N(K)) \subset T^*(M \setminus N(K))$ whose
horizontal component is parallel to the radial vector field $\frac{\del}{\del r}$.

\item

 The above discussion and construction of wrapped Fukaya category can be extended to
a general tame manifold $N$ whose end may have more than one connected component, as long as
we fix a Lipschitz-equivalence class of metrics on $N$.
An example of such $N$ is the complement $M \setminus L$ where $L$ is a link.
\end{enumerate}
\end{rem}

\subsection{Kinetic energy Hamiltonian and almost complex structures}\label{sec:Ham-J}

We now fix a complete Riemannian manifold with cylindrical end. We denote the resulting
Riemannian manifold by $(N, g)$.
\begin{defn}\label{defn:gi-Hamiltonian}
We denote by $\mathcal H=\mathcal H(T^*N)$
the space of smooth functions $H : T^*N \to \R$ that satisfies
$$
H(q,p) = \frac{1}{2}|p|^2_{g^\flat}
$$
on
$$
\{(q,p):q\in N,\ |p|_{g^\flat}\geq R \} \cup T^*N^{\text{\rm end}}
$$
for a sufficiently large $R> 0$,
where $g^\flat$ is the dual metric of $g$.
We call such map on $T^*N$ {\em admissible Hamiltonian} with respect to the metric $g$.
The {\em Hamiltonian vector field} $X_H$ on $T^*N$ is defined to satisfy
$$
X_H \rfloor \omega =dH
$$
for a given $H\in \mathcal H_i$.
\end{defn}

We next describe the set of adapted almost complex structures.
From now on, we use $g$ instead of $g^\flat$, when there is no danger of confusion.
For each given $R > 0$, the level set
$$
Y_R : = \{|p|_{g} = R\} = H^{-1}({R^2/2})
$$
is a hypersurface of contact type in $T^*N$, and the domain
$$
W_R: =  \{|p|_{g}\leq R\}
$$
becomes a Liouville domain in the sense of \cite{seidel:biased}.

For each $R > 0$, the level set $i_R:Y_R \hookrightarrow T^*N$ admits a contact from
\[
\lambda_R :=- i_R^*\theta
\]
by the restriction. We denote the associated contact distribution by
\[
\xi_R = \ker \lambda_R \subset TY_R.
\]
Note that the corresponding Reeb flow is nothing but a reparametrization of
the $g$-geodesic flow on $Y_R$.

If we denote by $(r,y)$ the cylindrical coordinates given by $r = |p|_{g}$
$$
T^*N \setminus \{0\} \cong ST^*N \times \R_+.
$$
Including the zero section, we have decomposition
$$
T^*N \cong W_1 \cup_{Y_1} \big([0,\infty) \times Y_1\big).
$$
To highlight the metric dependence, we use $Y_{g}, \lambda_{g}, \xi_{g}$ instead of $Y_1, \lambda_1, \xi_R$, respectively.
On $T^*N \setminus 0_{N}$ with metric $g$,
we have the natural splitting
$$
T(T^*N\setminus 0_{N}) \cong \R\cdot \frac{\del}{\del r} \oplus TY_{g_0} \cong \Span\left\{\widetilde X_{g}, \frac{\del}{\del r}\right\} \oplus \xi_{g}
\cong \R^2 \oplus \xi_{g},
$$
where  $\widetilde X_{g}$ is a vector field which generates the $g$-geodesic flow on $Y_{g}$.
We recall that we have a canonical almost complex structure $J_g$ on $T^*N$ associated to
a metric on $N$ defined by
\be\label{eq:sasakian-J}
J_g(X) = X^\flat, \, J_g(\alpha) = -\alpha^\sharp
\ee
in terms of the splitting $T(T^*N) = TN \oplus T^*N$ with respect to the Levi-Civita
connection of $g$. We call this $J_g$ the \emph{Sasakian almost complex structure} on $T^*N$.

\begin{defn}[Admissible almost complex structure]\label{def:Admissible almost complex structure}
We say an almost complex structure $J$ on $T^*N$ is $g$-admissible if
$J = J_{g}$ on $T^*N^{\text{\rm end}}$ and on $\{(q,p) \mid |p|_g \geq R\}$
for the Sasakian almost complex structure $J_{g_0}$ associated to $g_0$
for a metric decomposition $N = N^{\text{cpt}} \cup N^{\text{end}}$ with
$N^{\text{end}} \cong [0, \infty) \times T$ with $T = \del N^{\text{cpt}}$.
Denote by $\CJ_g$ the set of $g$-admissible almost complex structures.
\end{defn}

All admissible almost complex structures satisfy the following important property which enters in the study of Floer theoretic construction on Liouville manifolds in general.

\begin{defn}\label{def:almost_complex_structure} Let $H_g$ be the energy Hamiltonian on $T^*N$ associated metric $g$ on
the base $N$ given above. An almost complex structure $J$ on $T^*N$ is called
\emph{contact type} if it satisfies $(-\theta) \circ J =  dH_g$.
\end{defn}

\begin{rem} Appearance of the negative sign here is because
our convention of the canonical symplectic form on the cotangent bundle is $\omega_0 = -d\theta$.
Compared with the convention of \cite{abou-seidel}, our choice of the one-form $\lambda$ therein
is $\lambda = -\theta$.
\end{rem}

\section{A choice of one-form $\beta$ on $\Sigma$}
\label{sec:minimal}

In Abouzaid-Seidel's construction of wrapped Floer cohomology
given in \cite[Section 3.7]{abou-seidel}, \cite{abou:fiber}, they start from
a \emph{compact} Liouville domain with contact type boundary and consider
the perturbed Cauchy-Riemann equation of the type
\be\label{eq:CRXH}
(du- X_H \otimes \beta)_J^{(0,1)}=0.
\ee
Because of the compactness assumption, their setting
does not directly apply to our current cotangent bundle $T^*N$
where the tame base manifold $N$ is noncompact:
To perform analytical study of the relevant
moduli spaces, the first step is to establish suitable $C^0$-estimate.

\subsection{Co-closed and sub-closed one-forms}\label{sec:sub-closed}

We first recall Abouzaid-Seidel's construction of what they call a \emph{sub-closed one-form}
in the context of Liouville domain with compact contact-type boundary.
For each given $k \geq 1$, let us consider a Riemann surface $(\Sigma,j)$ of genus zero with $(k+1)$-ends.
This is isomorphic to the closed unit disk $\D^2$ minus $k+1$ boundary points ${\bf z}=\{z^0, \dots, z^k\}$ in the counterclockwise way.
Each end admits a holomorphic embedding
\begin{align*}
\begin{cases}
\epsilon^0: Z_+ :=\{\tau \geq 0\} \times [0,1] \to \Sigma;\\
\epsilon^i: Z_- :=\{\tau \leq 0\} \times [0,1] \to \Sigma, \quad\text{ for }i=1,\dots, k
\end{cases}
\end{align*}
preserving the boundaries and satisfying $\lim_{\tau \to \pm \infty}\epsilon^\ell=z^\ell$, for $\ell=0,\dots,k$.
We call the distinguished point at infinity $z^0$ a {\em root}.

For a given weight ${\bf w} = \{w^0,\dots,w^k\}$ satisfying
\be\label{eq:balancing}
w^0 = w^1 + \cdots +w^k,
\ee
a \emph{total sub-closed one-form} $\beta \in \Omega^1(\Sigma)$ is considered in \cite{abou:wrapped} satisfying
\begin{align*}
\begin{cases}
d\beta \leq 0;\\
(\epsilon^\ell)^*\beta=w^\ell dt.
\end{cases}
\end{align*}
This requirement is put to establish both (geometric) energy bound and (vertical) $C^0$-bound.

It turns out that in our case of $T^*(M \setminus K)$ where the base is
\emph{noncompact}, sub-closedness of $\beta$ is not enough to control the behavior of
the Floer moduli space of solutions of \eqref{eq:CRXH}: we need the following restriction
\be\label{eq:ebetaj=0}
d(\beta \circ j) = 0
\ee
in addition to the sub-closedness. We refer readers to Lemma \ref{lem:Deltaa} in Subsection
\ref{sec:horizontalC0} for the reason why such a condition is needed.

\subsection{Construction of one-forms $\beta$}\label{sec:one-form}
For each given $k \geq 1$ equipped with weight datum ${\bf w}=\{w^0,\dots, w^k\}$ satisfying the balancing condition (\ref{eq:balancing}), we will choose a one-form $\beta$ on $\Sigma$ that satisfies
\begin{align}\label{eq:beta-cond}
\begin{cases}
d\beta \leq 0\\
i^*\beta  = 0 \quad & \text{for the inclusion map }\, i: \del \Sigma \to \Sigma\\
d\beta = 0 \quad & \text{near }\, \del \Sigma \\
d(\beta \circ j) = 0 \\
(\epsilon^j)^*\beta = w^j dt \quad & \text{on a subset of $Z_\pm$ where $\pm \tau \gg 0$.}
\end{cases}
\end{align}
For this purpose, we first consider the {\em slit domain} representation of the conformal structure
$(\Sigma,j)$ whose explanation is in order.

Consider domains
\begin{align*}
Z^1 &=\{ \tau+\sqrt{-1}\, t\in \C \mid \tau\in\R,\ t \in [w^0-w^1, w^0] \};\\
Z^2 &=\{ \tau+\sqrt{-1}\, t\in \C \mid \tau\in\R,\ t \in [w^0-w^1-w^2, w^0-w^1] \};\\
&\vdots\\
Z^k &=\{ \tau+\sqrt{-1}\, t\in \C \mid \tau\in\R,\ t \in [0, w^k] \},
\end{align*}
and its gluing along the inclusions of the following rays
\begin{align*}
R^{\ell}=\{\tau+\sqrt{-1}\, t_\ell\in \C \mid \tau\geq s^{\ell},\ t_\ell=w^0-(w^1+\cdots+ w^\ell) \};\\
j_-^\ell:R^\ell\hookrightarrow Z^\ell,\qquad j_+^\ell:R^\ell\hookrightarrow Z^{\ell+1},
\end{align*}
for some $s^\ell\in \R$, $\ell=1,\dots,k-1$.
In other words, for a collection ${\bf s}=\{s^1,\dots, s^{k-1}\}$, the glued domain becomes
\[
Z^{\bf w}({\bf s})=Z^1 \coprod Z^2 \coprod \cdots \coprod Z^k/ \sim.
\]
Here, for $(\zeta,\zeta')\in Z^{\ell}\times Z^{\ell+1}$, $\zeta\sim \zeta'$ means that there exists $r\in R^\ell$ such that $\zeta=j^\ell_-(r), \zeta'=j^\ell_+(r)$.
We may regards
\begin{align*}
Z^0\subset \{\tau+\sqrt{-1}\, t\in \C \mid \tau\in\R,\  t\in[0,w^0]\},
\end{align*}
with $(k-1)$-slits
\begin{align*}
S^{\ell}=\{\tau + \sqrt{-1}\, t_\ell \in \C \mid \tau \leq s^\ell,\ t_\ell=w^0-(w^1+\cdots+w^{\ell}) \}
\end{align*}
where $\ell=1,\dots,k-1$.

Then there is a conformal mapping
\[
\varphi:\Sigma=\D^2\setminus \{z^0,\dots,z^k\}\to Z^{\bf w}({\bf s})
\]
with respect to ${\bf s}$ satisfying that
\begin{enumerate}

\item Let $\partial \Sigma^\ell$ be a connected boundary component of $\D^2\setminus \{z^0,\dots,z^k\}$ between $z^\ell$ and $z^{\ell+1}$, for $\ell\in \Z_{k+1}$. Then
\begin{align*}
\begin{cases}
\varphi(\partial \Sigma^\ell)=w^0\sqrt{-1}+\R &\text{ for }\ell=0;\\
\varphi(\partial \Sigma^\ell)=S^\ell &\text{ for }\ell=1,\dots,k-1;\\
\varphi(\partial \Sigma^\ell)=\R &\text{ for }\ell=k.
\end{cases}
\end{align*}

\item A restriction $\varphi|_{\mathring\Sigma}:\mathring\Sigma \to \mathring Z^0$ is a conformal diffeomorphism.

\item The following asymptotic conditions
\begin{align*}
\begin{cases}
\lim_{\tau\to-\infty}\mathring\varphi^{-1}(\{\tau\}\times(w^0-\sum_{j=1}^{\ell}w^j,w^0-\sum_{j=1}^{\ell-1}w^j))=z^\ell \quad \text{ for } \ell=1,\dots,k;\\
\lim_{\tau\to+\infty}\mathring\varphi^{-1}(\{\tau\}\times(0,w^0))=z^0.
\end{cases}
\end{align*}
\end{enumerate}
Let us denote such a slit domain by $Z^{\bf w}({\bf s})$ or simply $Z$ if there is no confusion.

\begin{figure}[ht]
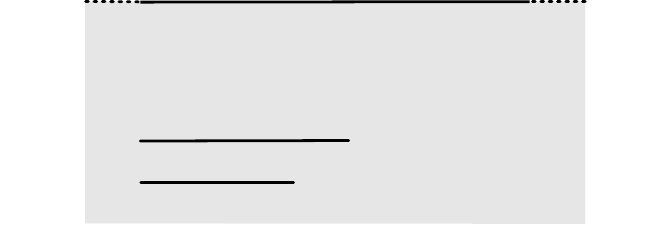
\caption{An example of slit domain}
\end{figure}

Now we consider $dt\in \Omega^1(Z)$ and we define the one-form $\beta$
\begin{lem}\label{lem:beta} Define
\begin{equation}\label{eq:beta}
\beta = \varphi^*dt\in \Omega^1(\Sigma).
\end{equation}
Then $\beta$ satisfies all the requirements in \eqref{eq:beta-cond}.
\end{lem}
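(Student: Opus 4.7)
The whole lemma reduces to a single structural observation: the slit domain $Z = Z^{\bf w}({\bf s})$, although not literally an open subset of $\C$, inherits globally well-defined smooth coordinate functions $\tau, t : Z \to \R$ (the real and imaginary parts of the ambient $\C$ for each strip $Z^\ell$). Indeed, the gluing identifications $j_\pm^\ell : R^\ell \hookrightarrow Z^\ell, Z^{\ell+1}$ match points with identical $(\tau,t)$-coordinates, so both $\tau$ and $t$ descend to $Z$. Consequently, the pullbacks $u := \varphi^* t$ and $v := \varphi^* \tau$ are well-defined smooth functions on $\Sigma$, and
\[
\beta = \varphi^* dt = du.
\]

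\textbf{Closedness and co-closedness.} Because $\beta = du$ is exact, one has $d\beta = 0$ on all of $\Sigma$, which immediately gives both $d\beta \leq 0$ and the local condition $d\beta = 0$ near $\partial \Sigma$. For the co-closed condition, use holomorphicity of $\varphi$: one has $d\varphi \circ j = J_\C \circ d\varphi$ where $J_\C$ is multiplication by $i$, and a direct computation in the standard basis $(\partial_\tau,\partial_t)$ gives $dt \circ J_\C = d\tau$. Therefore
\[
\beta \circ j = \varphi^*(dt \circ J_\C) = \varphi^* d\tau = dv,
\]
which is again exact, so $d(\beta \circ j) = 0$.

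\textbf{Boundary condition.} The map $\varphi$ sends $\partial \Sigma$ into $\partial Z$, and every component of $\partial Z$ (the bottom $\{t=0\}$, the top $\{t=w^0\}$, and each slit $S^\ell$ at $t = t_\ell$) lies on a horizontal line where $t$ is constant. Hence $u = \varphi^* t$ is locally constant on each boundary arc of $\Sigma$, so $i^*\beta = i^* du = 0$.

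\textbf{Asymptotic condition on the ends.} By property (3) of the uniformization $\varphi$, a neighborhood of each puncture $z^\ell$ is sent biholomorphically onto the far end of the strip $Z^\ell$ of height $w^\ell$ (with the convention that $z^0$ matches the positive end of the strip $\{0 \leq t \leq w^0\}$). One fixes $\epsilon^\ell$ so that, for $\pm\tau \gg 0$,
\[
\varphi \circ \epsilon^\ell(\tau,s) = \tau + \tau_0^\ell + \sqrt{-1}\bigl(t_\ell + w^\ell s\bigr).
\]
Pulling back then gives $(\epsilon^\ell)^*\beta = w^\ell\,ds$, which is $w^\ell\, dt$ in the strip coordinates $(\tau,t) = (\tau,s)$ on $Z_\pm$. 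The mildly delicate point is that $\varphi$ behaves like a squaring map near the preimage of each slit tip (where $\partial Z$ has an interior angle of $2\pi$), but such preimages are interior points of the smooth $1$-manifold $\partial \Sigma$ where $\varphi$ is holomorphic, so $\beta = \varphi^* dt$ extends smoothly there. This is the only nontrivial regularity issue; all remaining verifications are immediate from the exactness $\beta = du$ and $\beta \circ j = dv$.
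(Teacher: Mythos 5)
Your proof is correct and follows the same route as the paper's: both arguments rest on the exactness of $\beta = \varphi^*dt = d(\varphi^*t)$ and, via holomorphicity of $\varphi$, the exactness of $\beta \circ j = \pm\,d(\varphi^*\tau)$, with the boundary and asymptotic conditions read off from the horizontal boundary components and the explicit strip-like form of $\varphi \circ \epsilon^\ell$. Your remark about smoothness of $\varphi^*dt$ at the preimages of slit tips is a sound observation (Schwarz reflection across the straight boundary arcs shows $\varphi$ is holomorphic there, so the pullback is smooth even though $\varphi$ has vanishing derivative); the paper does not dwell on it, but it is a genuine, correctly handled point.
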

\begin{proof} Observe that the holomorphic embedding provides a natural strip-like representation at
each puncture. More precisely, we have
\begin{align*}
\begin{cases}
\varphi \circ \epsilon^0: Z_+\to Z:(\tau,t)\mapsto (\tau+K,w^0 t);\\
\varphi \circ \epsilon^\ell: Z_-\to Z:(\tau,t)\mapsto (\tau-K,w^\ell t+w^0-\sum_{j=1}^{\ell}w^j) \quad\text{ for }\ell=1,\dots, k
\end{cases}
\end{align*}
for some $K\in\R^+$ large enough. So it is direct to check that $(\epsilon^j)^*\beta = w^j dt$ and other conditions in (\ref{eq:beta}). In fact, this form satisfies the stronger condition begin closed rather
than being sub-closed. Furthermore since $\varphi$ is holomorphic, we compute
$$
\beta \circ j = (\varphi^*dt) \circ j = dt d\varphi \circ j = dt j \circ d\varphi = -d\tau \circ d\varphi= -d(\tau \circ \varphi)
$$
which is obviously closed. Therefore we have proved $d(\beta \circ j) = 0$. This finishes the proof.
\end{proof}

Let us wrap up this section by recalling the gluing result of the slit domains.

\begin{prop}
Let $k_1$, $k_2$ be positive integers, and
let ${\bf u}=(u^0,\dots,u^{k_1})$ and ${\bf v}=(v^0,\dots,v^{k_2})$ be weight datum satisfying the balancing condition and $u^0=v^i$ for some $i\geq 1$. Then there is a one-parameter gluing of $Z(\bf u)$ and $Z(\bf v)$ which become a slit domain of
\[
{\bf w}={\bf u}\#^i{\bf v}:=(v^0,\dots,v^{i-1},u^1,\dots, u^{k_1},v^{i+1},\dots,v^{k_2}).
\]
\end{prop}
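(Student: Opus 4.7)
The key observation is that the hypothesis $u^0 = v^i$ makes the root half-strip of $Z({\bf u})$ and the $i$-th input half-strip of $Z({\bf v})$ have the same width, so they can be identified biholomorphically by a single affine translation. The promised one-parameter family will then be indexed by a gluing length $R > 0$.

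Concretely, I would choose $T > 0$ so large that the strip-like chart $\epsilon^0 \colon [T, \infty) \times [0, u^0] \hookrightarrow Z({\bf u})$ at the root of $Z({\bf u})$ lies strictly to the right of every slit endpoint of $Z({\bf u})$, and likewise that the chart $\epsilon^i \colon (-\infty, -T] \times [0, v^i] \hookrightarrow Z({\bf v})$ at the $i$-th input of $Z({\bf v})$ lies strictly to the left of every slit endpoint of $Z({\bf v})$ (after the affine reparametrization of $t$ normalizing the target strip to $[0, v^i]$). For each $R > T$, I would truncate $Z({\bf u})$ at $\tau_{\bf u} = 2R - T$ and $Z({\bf v})$ at $\tau_{\bf v} = -(2R - T)$ in the respective end coordinates, and identify the remaining finite collars along the holomorphic map
\[
(\tau, t) \;\longmapsto\; (\tau - 2R,\; t + C), \qquad C \;=\; v^0 - \sum_{j=1}^{i} v^j,
\]
which sends the truncation line $\{\tau_{\bf u} = 2R - T\}$ onto $\{\tau_{\bf v} = -T\}$ and the height interval $[0, u^0]$ onto the height interval of the $i$-th input strip of $Z({\bf v})$. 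The resulting $Z^{\bf w}(R)$ is a Riemann surface of genus zero with $k_1 + k_2 - 1$ input ends and one root end, each of which carries strip-like coordinates inherited without modification from the factor it came from.

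To finish, I would exhibit a slit-domain structure on $Z^{\bf w}(R)$ realizing the weight datum ${\bf w}$. The $k_2 - 1$ slits of $Z({\bf v})$ at heights $v^0 - \sum_{j=1}^{\ell} v^j$ survive the gluing unchanged; in particular the $(i-1)$-th and $i$-th of these serve as the upper and lower boundaries of the inserted image of $Z({\bf u})$. The $k_1 - 1$ slits of $Z({\bf u})$, originally at heights $u^0 - \sum_{j=1}^{\ell} u^j$ with horizontal endpoints $s^\ell_{\bf u}$, are sent by the gluing translation to slits at heights
\[
v^0 - \sum_{j=1}^{i-1} v^j - \sum_{j=1}^{\ell} u^j
\]
with horizontal endpoints $s^\ell_{\bf u} - 2R$. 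Listing these $k_1 + k_2 - 2$ slits in order of decreasing height reproduces precisely the slit pattern dictated by ${\bf w} = (v^0, v^1, \dots, v^{i-1}, u^1, \dots, u^{k_1}, v^{i+1}, \dots, v^{k_2})$, and the balancing
\[
\sum_{\ell=1}^{i-1} v^\ell + \sum_{\ell=1}^{k_1} u^\ell + \sum_{\ell=i+1}^{k_2} v^\ell \;=\; v^0 - v^i + u^0 \;=\; v^0
\]
follows immediately from the balancing of ${\bf u}$ and ${\bf v}$ together with $u^0 = v^i$. The parameter $R$ gives the advertised one-parameter family: varying $R$ amounts, in the ${\bf s}$-parametrization of slit domains, to translating the inherited ${\bf u}$-slit positions by $-2R$. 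There is no analytic obstacle, precisely because the widths match exactly and the gluing is performed by a single affine biholomorphism; the only nontrivial content is the bookkeeping of slit heights carried out above.
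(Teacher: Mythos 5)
The paper states this gluing result without proof, so there is no argument to compare against; your proposal supplies one. Your construction is correct: because the gluing map is a single affine translation of $\C$, the truncated-and-identified surface embeds directly as a subset of the strip $\{0\le t\le v^0\}$, and the resulting set of slits --- the $k_2-1$ slits of $Z({\bf v})$ together with the $k_1-1$ slits of $Z({\bf u})$ shifted by $(-2R,\, v^0-\sum_{j\le i} v^j)$ --- occurs at exactly the heights $w^0-\sum_{j\le\ell}w^j$ dictated by ${\bf w}={\bf u}\#^i{\bf v}$, as your height bookkeeping verifies. Two small points worth smoothing if you write this up in full: the phrase ``the $(i-1)$-th and $i$-th of these serve as the upper and lower boundaries of the inserted image of $Z({\bf u})$'' should be interpreted as the half-lines $t=v^0-\sum_{j\le i-1}v^j$ and $t=v^0-\sum_{j\le i}v^j$ on $\{\tau\le -T\}$ joining up with those two ${\bf v}$-slits, and one should say a word about the degenerate cases $i=1$ (respectively $i=k_2$) where the slit $S^{i-1}_{\bf v}$ (resp.\ $S^i_{\bf v}$) is replaced by the top (resp.\ bottom) boundary line of the ambient strip; neither affects the argument. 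The identification of the gluing parameter $R>T$ with translating the inherited ${\bf u}$-slit positions $s^\ell_{\bf u}\mapsto s^\ell_{\bf u}-2R$ while leaving the ${\bf v}$-slit positions fixed is exactly the one-parameter family asserted.
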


\part{Construction of a wrapped Fukaya category for $M \setminus K$}

In this part, we carry our construction of our wrapped Fukaya category of
the knot complement $M \setminus K$.

The standard Liouville structure given by the vector field
$$
Z_\lambda = p \frac{\del}{\del p} = \sum_{i=1}^n p_i \frac{\del}{\del p_i}, \quad n = \dim N
$$
has noncompact contact type boundary $\del (DT_{\leq1}^*N)$ for $N = M \setminus K$.
Because of this, construction of wrapped Floer cohomology for the sequence of conormal bundles
of the type $\nu^*T$ with closed submanifold $T \subset N$ meets a new obstruction arising from
the \emph{noncompactness} of $N$: One must examine the \emph{horizontal} $C^0$
bound away from the `infinity' in $N$.
\begin{rem}
For the construction of a wrapped Fukaya category $\CW\CF(T^*N)$, we do not need to restrict
ourselves to the case of knot complement but
can consider the general context of tame 3-manifolds $M$
with several asymptotic boundaries. Since we will not use this general
construction, we do not pursue it in this paper.
\end{rem}

\section{Admissible Lagrangians and associated wrapped Floer complexes}

We first describe the set of admissible Lagrangian submanifolds in such
a manifold $N$  for the set of objects of $\CW\CF(M \setminus K)$.
Since the base manifold $N$ is noncompact, we need to restrict the class of Lagrangian manifolds as follows:
\begin{defn}\label{def:admissible Lagrangian}
A Lagrangian submanifold $L\subset(W,\omega_0)$ is called {\em admissible} if
\begin{enumerate}

\item $L$ is exact and embedded;

\item Its image under the projection $\pi:W \to  N$ is compact in $M \setminus K$.

\item The relative first Chern class $2c_1(W,L)$ vanishes on $H_2(W,L)$ and the second Stiefel-Whitney class $w_2(L)$ vanishes.

\item Under the decomposition $N \cong N^{\text{cpt}} \cup ([0,\infty) \times \T^2)$ and the
associated cylindrical adjustment $g_0$ of $g$, there exists $R>0$ such that
\begin{align*}
L \cap \{ |p|_{g_0} \geq R\} = \Lambda_R \cdot [R,+\infty)
\end{align*}
where $\Lambda_R = L \cap \{ |p|_{g_0} = R\}$ given as in \eqref{eq:gi}.
\end{enumerate}
\end{defn}

For the purpose of controlling the horizontal $C^0$ estimates of solutions of \eqref{eq:CRXH},
we fix a compact exhaustion sequence  $N_1 \subset N_2 \subset \cdots \subset N_i \subset \cdots  $ be
of $N = M \setminus K$. We also take another exhaustion $N_1' \subset N_2' \subset \cdots \subset N_i'
\subset \cdots$ such that
\be\label{eq:NiNi'}
N_1' \subset N_1 \subset N_2' \subset N_2 \subset \cdots \subset N_i' \subset N_i \subset \cdots.
\ee
We mostly denote any such $N_i$ by $N^{\text{\rm cpt}}$ when we do not need to specify the subindex.

We let
\[
W_1 \subset W_2 \subset \cdots \subset W_i \subset \cdots, \quad W_i = T^*N_i
\]
be the exhaustion $T^*(M \setminus K) = \bigcup_{i =1}^\infty W_i$ induced by \eqref{eq:NiNi'}.
We have
\be\label{eq:hatWi}
T^*N = T^*(N_i \cup [0,\infty) \times \del N_i)= W_i \cup T^*N_i^{\text{\rm end}}.
\ee
(See Section \ref{sec:Lagrangian brane} for the implication of the condition (4).)
For any two given admissible Lagrangian $L^0$ and $L^1$, they are
contained inside $W_i$ for some $i$.

We consider a Hamiltonian
$H$ from $\mathcal H$ and define the set
\be\label{eq:frakX}
{\frak X}(H;L^0,L^1) = \{ x:[0,1]\to W \mid
\dot x(t)=X_H(x(t)), \, x(0)\in L^0, x(1)\in L^1\}
\ee
of time-one Hamiltonian chords of $H$.
It can be decomposed into
$$
{\frak X}(H;L^0,L^1) = \coprod_i {\frak X}^i(H;L^0,L^1)
$$
where ${\frak X}^k(H;L^0,L^1)$ is the set of Reeb chords with degree $k$.
By the bumpy metric theorem \cite{abraham} (or rather its version with free boundary condition),
we may assume that all Hamiltonian chords of the Hamiltonian $H \in \CH_g$ are nondegenerate
by considering a generic metric $g$, as long as we consider a countable family of
Lagrangian submanifolds.

\begin{prop} Consider a decomposition $N = N^{\text{\rm cpt}} \cup N^{\text{\rm end}}$ as above
assume that $N^{\text{\rm end}}$ is equipped with the cylindrical metric
$da^2 \oplus g|_{\del N^{\text{\rm end}}}$.
Denote $W=T^*N^{\text{\rm cpt}}$ and let $T^*N = W \cup T^*N^{\text{\rm end}}$ be the associated decomposition.
Suppose $L^0,\, L^1 \subset W$. Then for all $x \in {\frak X}(H;L^0,L^1)$,
we have $\Im  x \subset W$.
\end{prop}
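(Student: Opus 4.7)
The plan is to exploit the translation invariance of the cylindrical metric in the $a$-direction to show that the cylindrical coordinate $a$ is affine along the chord, and then invoke a maximum-principle argument using the boundary condition $x(0), x(1) \in W$.

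First I would set up coordinates. On the cylindrical end $N^{\text{end}} \cong [0,\infty)_a \times \partial N^{\text{cpt}}$ with the product metric $da^2 \oplus g|_{\partial N^{\text{cpt}}}$, the dual metric $g_0^\flat$ also splits as an orthogonal sum. Introducing canonical cotangent coordinates $(a,y,p_a,p_y)$ on $T^*N^{\text{end}}$, the Hamiltonian becomes
\[
H(a,y,p_a,p_y) = \tfrac{1}{2}\bigl(p_a^2 + |p_y|^2_{g|_{\partial N^{\text{cpt}}}}\bigr),
\]
which crucially is independent of $a$. Hamilton's equations immediately give $\dot a = p_a$ and $\dot p_a = -\partial_a H = 0$, so $p_a$ is constant, and consequently the function $t \mapsto a(\pi(x(t)))$ is \emph{affine} on any subinterval of $[0,1]$ on which $x(t)$ stays in $T^*N^{\text{end}}$.

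Next I would argue by contradiction. Suppose there is $t^\ast \in [0,1]$ with $a(\pi(x(t^\ast))) > 0$, and let $I = (t_-,t_+) \subset [0,1]$ be the maximal open interval containing $t^\ast$ on which $a(\pi(x(\cdot))) > 0$; on $I$ we have $x(t) \in T^*N^{\text{end}}$. The hypothesis $L^0, L^1 \subset W = T^*N^{\text{cpt}}$ forces $\pi(x(0)), \pi(x(1)) \in N^{\text{cpt}}$, so $a(\pi(x(0)))\leq 0$ and $a(\pi(x(1)))\leq 0$; in particular $0, 1 \notin I$. By maximality of $I$ and continuity of $a \circ \pi \circ x$, one has $a(\pi(x(t_\pm))) = 0$. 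But the restriction of $a \circ \pi \circ x$ to $\overline{I}$ is affine, nonnegative, and vanishes at both endpoints; hence it must be identically zero on $\overline{I}$, contradicting $a(\pi(x(t^\ast))) > 0$. Therefore $a(\pi(x(t))) \le 0$ for all $t$, i.e.\ $\operatorname{Im} x \subset W$.

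The only real subtlety is making sure the argument uses only the cylindrical structure at the ends, not any global feature. The key structural input is precisely that the cylindrical adjustment was engineered so that $g_0|_{N^{\text{end}}}$ splits as a product; without this $a$-independence of $H_{g_0}$, $p_a$ would not be conserved, the $a$-coordinate would not be affine, and the elementary maximum-principle step would fail. I expect no further difficulty: the proof is essentially a one-line conservation law $\dot p_a = 0$ combined with the affine maximum principle, and this will be the prototype for the more elaborate horizontal $C^0$-estimates for Floer trajectories later in the paper.
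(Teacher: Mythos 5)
Your proposal is correct and uses essentially the same mechanism as the paper: the paper derives $\ddot a = 0$ directly from the geodesic equation $\nabla_t\dot x = 0$ and then applies the maximum principle, while you derive it via Hamilton's equations ($H$ is $a$-independent on the end, so $\dot p_a = 0$, hence $a$ is affine) — the two derivations are immediately equivalent since Hamiltonian chords of the kinetic energy Hamiltonian are geodesics. Your maximal-interval argument is a slightly more careful spelling-out of the same affine maximum principle the paper invokes.
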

\begin{proof} We recall that the geodesic $x$ satisfies
the (elliptic) second order ODE $\nabla_t \dot x = 0$. This implies that the $a$-component of $x$ in
$N^{\text{\rm end}}$ satisfies $\frac{d^2 a}{dt} = 0$.
Applying the easy maximum principle to $a$, $a$ cannot have maximum at a point $(a,p) \in N^{\text{\rm end}}$.
Since $x(0), \, x(1) \in W$, this finishes the proof.
\end{proof}

We take the {\em action functional}
\begin{align}\label{eq:offshell-action}
\mathcal A(\gamma)= - \int_{0}^{1}\gamma^*\theta  + \int_{0}^{1} H(\gamma(t))\, dt + f^1(\gamma(1))
 - f^0(\gamma(0))
\end{align}
on the path space
$$
\Omega((L^0,L^1) = \{\gamma:[0,1] \to T^*N \mid \gamma(0) \in L^0, \, \gamma(1) \in L^1\}.
$$
\emph{We alert the readers that this is the negative of the classical action functional.}

We assign a Maslov index $\mu(x)$ to each chord $x$.
See Section \ref{sec:Dimensions and orientations of moduli spaces}.
Consider the graded module
\begin{align*}
CW(L^1,L^0; H)&=\bigoplus_{i}CW^i(L^1,L^0;H);\\
CW^k(L^1,L^0;H)&=\bigoplus_{\mu(x)=k}\Z\langle x \rangle
\end{align*}
for $H \in \CH_g$ and $x\in {\frak X}^k(H;L^0,L^1)$. We would like to define a
boundary map $\frak m^1$ on this module.

\begin{rem}\label{rem:notational convention}
Here we adopt the notation $CW(L',L; H)$ for the complex associated to
the geometric path space and to the set of Hamiltonian chords from $L$ to $L'$
which we denote by $\Omega(L,L')$ and ${\frak X}(H;L,L')$ respectively,
following the notational convention of \cite{fooo:book1,fooo:book2}
in that $CW(L',L; H)$ is the \emph{cohomological complex}, not the homological one.
\end{rem}

We take a $t$-dependent family of almost complex structures $\{J_t\}_{t\in[0,1]}$ contained in admissible almost complex structures defined in Definition~\ref{def:Admissible almost complex structure}.
For each given pair $x^0,x^1$ of Hamiltonian chords in $CW(L^1,L^0; H)$ for some $\tau_0 \in \R_+$,
we consider the moduli space $\widetilde {\CM}(x^0;x^1)$ which consist of  maps
$u:\R\times [0,1]\to W$ satisfying the following  perturbed $J$-holomorphic equation
with the boundary and the asymptotic conditions given by
\be\label{eq:CRJXH}
\begin{cases}
\partial_\tau u+J_t(\partial_t u- X_H)=0\\
u(\R\times \{1\})\subset L^0,u(\R\times \{0\})\subset L^1\\
 u(-\infty,t)= x^1(t),\quad u(+\infty,t)= x^0(t).\\
\end{cases}
\ee

\begin{rem} \begin{enumerate}
\item In the point of view of \cite{oh:fredholm}, we may fix the Hamiltonian $H$ and
$J$ and perturb the boundary Lagrangians to achieve this kind of transversality result.
\emph{This is the strategy that we
adopt in the present paper and its sequel \cite{BKO}.}
\item Note that \eqref{eq:CRJXH} is action increasing as $\tau \to \infty$ for the action
functional \eqref{eq:offshell-action}. On the other hand
we put the output at $\tau = +\infty$ and the input at $\tau = -\infty$. Combination of these two
implies that our Floer complex is the \emph{cohomological} version.
\end{enumerate}
\end{rem}

For a generic choice of almost complex structures,
the moduli space $\widetilde {\CM}(x^0;x^1)$ is a manifold of dimension $\mu(x^0)-\mu(x^1)$.
It admits a free $\R$-action as long as $x^0 \neq x^1$. We write ${\CM}(x^0;x^1)$ for the quotient space.
Note that ${\CM}(x^0;x^1)$ is a set of oriented points, when $\mu(x^0)-\mu(x^1)-1=0$.
Here the sign of the rigid solution is determined by the sign of the induced isomorphism
\[
o(x^1) \to o(x^0),
\]
where $o(x^i)$ is an orientation space associated to each Hamiltonian chord which is described in Section \ref{sec:Dimensions and orientations of moduli spaces}.
Then the differential  ${\frak m}^1:CW^{i}(L^1,L^0)\to CW^{i+1}(L^1,L^0)$ is defined by counting rigid solutions of ${\CM}(x^0;x^1)$,
\[
{\frak m}^1(x^1)=\sum_{\mu(x^0)=\mu(x^1)+1}(-1)^{\mu(x^1)}\#\CM(x^0;x^1)x^0,
\]
where $\#$ denotes the signed sum.

For readers' convenience, let us recall from \cite{abou:wrapped} a canonical isomorphism between
the two wrapped Floer complexes before and after conformal fiberwise rescaling.
We denote by $\psi_\lambda$ is time-$log(\lambda)$ Liouville flow on $M$, which can be expressed by $\psi_s(r,y)=(sr,y)$ with respect to the cylindrical coordinate discussed in Section\ref{sec:Ham-J}.
For the simplicity of notation, when a pair $(L^0, L^1)$ and the Hamiltonian $H$ are given, we
just write $CW^*(\psi(L^1),\psi(L^0))$ for any of
$CW^*(\psi(L^1),\psi(L^0);\omega,\frac{1}{w^2}H \circ \psi,\psi^* J_t)$
for $\psi = \psi_w$.

\begin{lem}\label{lem:vertical rescaling}
 Let $g_0$ be the metric on $N$  as above.
Denote by $H$ the kinetic energy Hamiltonian on $W$ of the metric $g_0$ on $N$.
Let $\psi: W \to W$ be a conformally symplectic diffeomorphism satisfying $\psi^*\omega= w^2 \,\omega$ for some non-zero constant $w$. Then there is a canonical isomorphism
\[
CW(\psi): CW^*(L^1,L^0)\cong CW^*(\psi(L^1),\psi(L^0)).
\]
\end{lem}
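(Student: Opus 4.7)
The isomorphism $CW(\psi)$ will be constructed as the pushforward by $\psi$, both on generators and on Floer moduli spaces. The underlying principle is that a conformally symplectic diffeomorphism transports every piece of Floer data (Hamiltonian vector field, $\omega$-compatible almost complex structure, Lagrangian boundary conditions) compatibly, so the two complexes will be literally identified under postcomposition with $\psi$.

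Chord correspondence. For $\gamma\in \mathfrak{X}(H;L^0,L^1)$, I set $CW(\psi)(\gamma):=\psi\circ\gamma$. Its endpoints lie on $\psi(L^0)$ and $\psi(L^1)$. A direct computation using $\psi^*\omega=w^2\omega$ shows that $\psi_{*}X_H = X_{\widetilde H}$ where $\widetilde H := w^{2}H\circ\psi^{-1}$ (which coincides with the formula in the shorthand whenever $\psi=\psi_w$ and $H$ is the kinetic energy $H_{g_0}$, in which case $\widetilde H = H$). Consequently $\psi\circ\gamma$ is a $\widetilde H$-chord on the pair $(\psi(L^0),\psi(L^1))$, and the assignment $\gamma\mapsto\psi\circ\gamma$ is a bijection of generating sets. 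In particular it induces a $\mathbb Z$-module isomorphism on the underlying graded modules, since Maslov indices are invariant under conjugation by a symplectic diffeomorphism (the conformal factor does not affect the relevant loop in the Lagrangian Grassmannian).

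Moduli space correspondence. Given $u:\mathbb R\times[0,1]\to W$ solving \eqref{eq:CRJXH} for data $(H,J_t)$ with boundary on $(L^0,L^1)$, set $v:=\psi\circ u$. By the chain rule $\partial_\tau v = D\psi\cdot\partial_\tau u$ and $\partial_t v - X_{\widetilde H}(v) = D\psi\cdot(\partial_t u - X_H(u))$. Taking the transported almost complex structure $\widetilde J_t:=\psi_{*}J_t$, which is $\omega$-compatible because $\psi$ is conformally symplectic, one checks
\[
\partial_\tau v + \widetilde J_t\bigl(\partial_t v - X_{\widetilde H}(v)\bigr) = D\psi\bigl(\partial_\tau u + J_t(\partial_t u - X_H(u))\bigr)=0.
\]
The map $u\mapsto v$ is $\mathbb R$-equivariant, preserves asymptotics and boundary conditions, and is inverted by postcomposition with $\psi^{-1}$; it therefore gives a bijection $\widetilde{\mathcal M}(x^0;x^1)\cong \widetilde{\mathcal M}(\psi x^0;\psi x^1)$, which is index-preserving and hence preserves rigid trajectories. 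The geometric energy rescales by an overall factor of $w^{2}$, preserving all compactness bounds, and coherent orientations of the linearized Cauchy--Riemann operators transport canonically by the isomorphism $D\psi$. Consequently $CW(\psi)$ intertwines $\frak m^1$ and is a chain isomorphism.

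The only real obstacle is the bookkeeping of the conformal factor $w^2$ through the Hamiltonian, its vector field, the action functional, and the $\omega$-compatibility of the ACS. In the cotangent-bundle case of interest, with $\psi=\psi_w$ the Liouville rescaling and $H$ the kinetic energy, $\widetilde H$ reduces to $H$ itself, so the lemma simplifies to the canonical identification of the two wrapped Floer complexes stated.
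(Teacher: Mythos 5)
Your proof is correct, and it is precisely the standard pushforward argument (transporting chords, Floer data, moduli spaces, and orientations by $\psi$) that the paper delegates to \cite{abou:wrapped} without writing out. You also correctly identify that the transported Hamiltonian is $w^2 H\circ\psi^{-1}$ (equivalently the paper's $\frac{1}{w^2}H\circ\psi$ when $\psi=\psi_w$ and $H$ is fiberwise quadratic, so $\widetilde H=H$), and that the relevant transported almost complex structure is $\psi_*J_t$, which remains $\omega$-compatible because $\psi$ scales $\omega$ by a positive constant.
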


\section{Floer data for $A_\infty$ structure}

Now consider a $(k+1)$-tuple of admissible Lagrangian
\[
\mathbf L=(L^0,\dots,L^k)
\]
inside $W_i \subset W$. Pick a $(k+1)$-tuple of Hamiltonian chords $(x^0;\bf{x})$
with ${\bf x} = (x^1, \ldots, x^k)$ where
\be\label{eq:chord}
x^0 \in \frak X(H;L^0,L^k);\quad
x^i \in \frak X(H;L^{i-1}, L^{i}) \quad \text{for $i = 1, \ldots, k$}.
\ee

Recall the Riemann surface $(\Sigma,j)\in\mathcal M{}^{k+1}$ is a genus zero disk $(k+1)$ boundary points $\mathbf z=\{z^0,\dots,z^{k}\}$ removed in a counter-clockwise way.
Each end near $z^\ell$ is equipped with a holomorphic embedding
\be\label{eq:strip-end}
\begin{cases}
\epsilon^0:Z_+ \to  \Sigma \quad &\\
\epsilon^i:Z_- \to \Sigma \quad &\text{for } \, i = 1,\dots,k
\end{cases}
\ee
satisfying the boundary and the asymptotic conditions.
We take these choices so that they become
consistent over the universal family
\[
\CS{}^{k+1}:=\{(s,\Sigma,j)\,|\,s\in \Sigma,\  (\Sigma,j)\in \CM^{k+1}\} \to \CM{}^{k+1}
\]
and its compactification
$$
\overline \CS{}^{k+1} \to \overline \CM{}^{k+1}
$$
as in \cite{abou-seidel}. Usage of
universal family will enter in a more significant way when we consider construction of
$A_\infty$ homotopy later.

\begin{defn}[Floer data for $A_\infty$ map]\label{def:Floer data}
A {\em Floer datum} $\mathcal D_{\frak m}=\mathcal D_{\frak m}{(\Sigma,j)}$ on a stable disk $(\Sigma,j)\in \overline{\mathcal M}{}{}^{k+1}$ is the following:
\begin{enumerate}
\item {\bf Weights}: A $(k+1)$-tuple of positive real numbers $\mathbf w=(w^0,\dots,w^k)$ which is assigned to the end points $\mathbf z$ satisfying
$$
w^0=w^1+\cdots+w^k.
$$
\item {\bf One-form}: $\beta\in\Omega^1(\Sigma)$ constructed in Section \ref{sec:one-form} satisfying $\epsilon^{j*}\beta$ agrees with $w^jdt$.
\item {\bf Hamiltonian}: A map $\mathbf H:\Sigma\to\mathcal H_i$ whose pull-back under $\epsilon^j$ uniformly converges to $\frac{H}{(w^{j})^2}\circ \psi_{w^j}$ near each $z^j$ for some $H\in\mathcal H_i$.
\item {\bf Almost complex structure}: A map $\mathbf J:\Sigma\to\mathcal J(T^*N)$ whose pull-back under $\epsilon^j$ uniformly converges to $\psi_{w^j}^*J_t$ near each $z^j$ for some $J_t\in\mathcal J_i$.
\item {\bf Vertical moving boundary}: A map $\eta:\partial \Sigma\to [1,+\infty)$ which converges to $w^j$ near each $z^j$.
\end{enumerate}

\begin{rem}\label{rem:quadratic} The condition (3) is automatically holds for any (fiberwise)globally  quadratic Hamiltonian
such as the kinetic energy Hamiltonian that we are using in the present paper. This is because of the
equality
\be\label{eq:quadratic-H}
\frac{H}{w^2}\circ \psi_w = H
\ee
for such a Hamiltonian. For the consistency with that of \cite{abou:fiber}, we leave the condition as it is.
On the other hand, to exploit this special situation, we extend the function $\eta$ to whole $\Sigma$
so that
$$
\eta\circ \epsilon^i(\infty_i,t) \equiv w^i.
$$
We then choose ${\bf J}$ so that
\be\label{eq:choiceofJ}
(\psi_{\eta})_*{\bf J} = J_g.
\ee
Such a choice will be useful later in our study of $C^0$ estimates.
\end{rem}

Two Floer data $({\bf w}_1,\beta_1,{\bf H}_1, {\bf J}_1, \eta_1)$ and $({\bf w}_2,\beta_2,{\bf H}_2, {\bf J}_2, \eta_2)$ are {\em conformally equivalent} if there exist a constant $C>0$ such that
\begin{align*}
{\bf w}_1 = C {\bf w}_2,\quad \beta_1 = C \beta_2,\quad {\bf H}_1=\frac{{\bf H}_2 \circ \psi^C}{C^2},\quad {\bf J}_1=\psi^{C*}{\bf J_2},\quad \eta_1=C \eta_2.
\end{align*}
\end{defn}

\begin{defn}\label{def:universal Floer data}
A {\em universal} choice of Floer data $\frak{D}_{\frak m}$ for the $A_\infty$ structure consists of $\mathcal D_{\frak m}(\Sigma,j)$ for every $(\Sigma,j)$ in $\overline{\mathcal M}{}^{k}$ for all $k \geq 2$ satisfying the following:
\begin{enumerate}
\item The data is smooth with respect to the moduli space $\mathcal M^k$.
\item The data on $\partial \overline{\mathcal M}{}^{k}$ is conformally equivalent the one on the lower strata $\mathcal M^{k_1}\times \mathcal M^{k_2}$, $k_1+k_2=k+2$.
\item The data are compatible with the gluing process in an infinite order.
\end{enumerate}
\end{defn}

Let us consider a moduli space $\CM_{(\Sigma,j)}^{\bf w}(x^0;\mathbf x)=\CM(x^0;\mathbf x;\mathcal D_{\frak m}(\Sigma,j))$ which consists of a map $u:\Sigma\to T^*N$ satisfying
a perturbed $(j,{\bf J})$-holomorphic equation with a vertical moving boundary condition and a shifted asymptotic condition:
\be\label{eq:duXHJ}
\begin{cases}
(du- X_{\mathbf H}\otimes \beta)_{\mathbf J}^{(0,1)}=0,\\
\text{$u(z)\in\psi_{\eta(z)}(L^i)$, for $z\in\partial \Sigma$ between $z^i$ and $z^{i+1}$ where $i\in \Z_{k+1}$.}\\
u\circ \epsilon^j(-\infty,t)=\psi_{w^j}\circ x^j(t), \text{ for }j=1,\dots,k.\\
u\circ \epsilon^0(+\infty,t)=\psi_{w^0}\circ x^0(t).
\end{cases}
\ee
Here precise meaning of the first equation is
\begin{align}\label{eq:J-holeqn}
(du(z)-X_{\mathbf H(z)}\otimes \beta(z))+{\bf J}(z)\circ(du(z)-X_{{\bf H}(z)}\otimes \beta(z))\circ j=0.
\end{align}
An immediate but important observation is that the pull-back of \eqref{eq:J-holeqn} under $\epsilon^j$ becomes
the standard Floer equation
\[
\partial_\tau u+J_t\left(\partial_t u-X_{\frac{H}{w^j}\circ \psi_{w^j}}(u)\right)=0,
\]
on the strip-like ends
which can be obtained by applying $\psi_{w^j}$ to (\ref{eq:CRJXH}).
Now we consider a parameterized moduli space
\begin{align*}
\mathcal M^{\bf w}(x^0;{\bf x})=\bigcup_{(\Sigma,j)\in
\mathcal M{}^{k+1}} \CM_{(\Sigma,j)}^{\bf w}(x^0;\mathbf x).
\end{align*}
Then by the standard transversality argument we have

\begin{lem}
For a generic choice of universal Floer data $\frak D_{\frak m}$, the moduli space $\CM(x^0;\mathbf x)$ is a manifold of dimension
$\mu(x^0)-\sum_{i=1}^{k}\mu(x^i)-2+k.$
\end{lem}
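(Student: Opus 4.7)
The plan is to combine the boundary Riemann--Roch index formula for Cauchy--Riemann operators on punctured disks with totally real boundary conditions with the dimension of the conformal moduli space $\CM^{k+1}$, and then invoke Sard--Smale to obtain transversality for a generic universal Floer datum $\frak D_{\frak m}$.

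First, I would fix $u \in \CM_{(\Sigma,j)}^{\bf w}(x^0;{\bf x})$ and linearize \eqref{eq:J-holeqn} at $u$. After a weighted Sobolev completion with exponential weights adapted to the nondegenerate chords $x^0, \ldots, x^k$ at the strip-like ends \eqref{eq:strip-end}, one obtains a Fredholm operator
$$
D_u : W^{1,p}_\delta(\Sigma; u^*T(T^*N); \Lambda) \longrightarrow L^p_\delta(\Sigma; \Omega^{0,1}\otimes u^*T(T^*N)),
$$
where the totally real subbundle $\Lambda$ along $\del \Sigma$ is given by the tangent spaces of the rescaled Lagrangians $\psi_{\eta(z)}(L^i)$ along the boundary arcs. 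Nondegeneracy of chords is arranged (for a generic metric $g$) by invoking the bumpy metric theorem as done earlier in the paper.

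Second, I would apply the boundary Riemann--Roch theorem together with the Maslov--Conley--Zehnder convention used to define $\mu(x^i)$ (Section~\ref{sec:Dimensions and orientations of moduli spaces}). Condition~(3) of Definition~\ref{def:admissible Lagrangian} guarantees that $\mu$ is well-defined, and the standard index formula yields
$$
\mathrm{ind}\,D_u \;=\; \mu(x^0) - \sum_{i=1}^k \mu(x^i).
$$
Adding the variation of conformal structure, the universal moduli $\CM^{\bf w}(x^0;{\bf x})$ fibers over $\CM^{k+1}$, whose real dimension is $k-2$, contributing additively to the expected dimension.

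Third, I would establish transversality by a universal moduli space argument: consider the parametric moduli whose parameters include the admissible Floer data $({\bf H}, {\bf J})$ of Definition~\ref{def:universal Floer data}. Surjectivity of the universal linearization follows from the standard injectivity-of-adjoint argument using unique continuation for the perturbed Cauchy--Riemann equation, combined with density of admissible perturbations of $({\bf H}, {\bf J})$ supported away from the strip-like ends. Sard--Smale then implies that for a generic choice of universal datum $\frak D_{\frak m}$, each $\CM^{\bf w}(x^0;{\bf x})$ is a smooth manifold of expected dimension $\mu(x^0)-\sum_{i=1}^{k}\mu(x^i)-2+k$.

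The main obstacle is that the base $N = M\setminus K$ is noncompact, so one must verify that sufficiently many perturbations can be made within the class of \emph{admissible} Floer data, i.e., without altering the asymptotic form prescribed by Definitions~\ref{defn:gi-Hamiltonian} and~\ref{def:Admissible almost complex structure}. This is where the horizontal $C^0$-estimates to be proven in Part~2 are essential: they confine the image of $u$ to some compact sublevel $W_i \subset T^*N$, which leaves ample room for compactly supported admissible perturbations of $({\bf H}, {\bf J})$ in $W_i$ to achieve surjectivity of the universal linearization.
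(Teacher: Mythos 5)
Your proposal correctly reconstructs the standard transversality argument that the paper invokes without spelling out: Fredholm setup on punctured disks with moving Lagrangian boundary conditions, boundary Riemann--Roch giving $\mathrm{ind}\,D_u = \mu(x^0) - \sum_i \mu(x^i)$, the $(k-2)$-dimensional contribution from $\CM^{k+1}$, and Sard--Smale via a universal moduli space. Your dimension count matches.

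One point of divergence worth noting: you achieve surjectivity of the universal linearization by perturbing the Floer data $({\bf H},{\bf J})$, which is the textbook route, but the paper explicitly states in an earlier remark (after \eqref{eq:CRJXH}) that it instead fixes $H$ and $J$ and perturbs the \emph{boundary Lagrangians}, following the strategy of \cite{oh:fredholm}. Perturbing Lagrangians sidesteps the constraint that the admissible Hamiltonian must equal $\tfrac12|p|^2_{g_0}$ outside a compact set and that $J$ must be Sasakian near infinity --- constraints you correctly flag as the main obstruction and propose to handle via the horizontal $C^0$-bound confining $\Im u$ to a compact $W_i$. Both routes work, and your observation that the Part~2 $C^0$-estimates are what render the restriction to admissible perturbations harmless is exactly the right justification; the Lagrangian-perturbation route has the minor advantage of not requiring that discussion at all, since a compactly supported Hamiltonian isotopy of the boundary conditions gives enough freedom without touching the admissible $({\bf H},{\bf J})$.
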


Due to our construction of the one-forms $\beta$ using the slit domain,
consistency of the Floer data over all strata is automatic which is needed for
the construction of $A_\infty$ structures later.

\section{Construction of $A_\infty$ structure map}\label{sec:energy estimate}

In this section, we would like to construct an $A_\infty$ structure map
\begin{align*}
{\frak m}^k:CW^*(L^{1},L^0;H) \otimes \cdots \otimes CW^*(L^k,L^{k-1};H)\to  CW^*(L^k,L^0;H)[2-k].
\end{align*}
We recall that the module $CW^*(L,L';H)$ is generated by \emph{time-one Hamiltonian chords} of $X_H$ from $L$ to $L'$.

The starting point of compactification of the moduli space relevant to the $A_\infty$ Floer
structure map is to establish a uniform energy bound
and $C^0$ estimates for the solutions of perturbed pseudo holomorphic
equations associated to the corresponding moduli spaces.

\subsection{Energy bound and $C^0$ estimates}
\label{subsec:energyC0}

The uniform energy bound for the solutions of perturbed pseudo holomorphic
equations is necessary for the compactification of the corresponding moduli spaces.

Let $L^i$ be an admissible Lagrangian and $f^i: L \to \R$ be its potential function, i.e.,
a function $\iota^*(-\theta) = df^i$.
We first recall the definition of {\em action} of $x\in{\frak X}(H;L^i,L^j)$
\begin{align}\label{eqn:action}
\mathcal A(x)= - \int_{0}^{1}x^*\theta  + \int_{0}^{1} H(x(t))\, dt + f^j(x(1)) - f^i(x(0))
\end{align}
from \eqref{eq:offshell-action}.

The energy $E(u)$ is defined by
\begin{align*}
\int_{\Sigma}\frac{1}{2}|du-X_H(u)\otimes \beta|_J^2
\end{align*}
for general smooth map $u$.
For any solution $u:\Sigma\to T^*N$ of \eqref{eq:duXHJ},
we have the following estimate
\begin{align}
E(u)&=\int_{\Sigma}u^*\omega-u^*d{\bf H} \wedge \beta{\nonumber}\\
&\leq\int_{\Sigma}u^*\omega-u^*d{\bf H} \wedge \beta-\int_{\Sigma}u^*{\bf H}\cdot d\beta\label{eq:energy_estimate_1}\\
&=\int_{\Sigma}u^*\omega-d(u^*{\bf H}\cdot \beta)\label{eq:energy_estimate_2},
\end{align}
where the inequality in (\ref{eq:energy_estimate_1}) comes from $\mathbf H \geq 0$ and sub-closedness of $\beta$.
By Stokes' theorem with the fixed Lagrangian boundary condition in (\ref{eq:duXHJ}) and $\beta|_{\partial\Sigma}=0$ implies that (\ref{eq:energy_estimate_2}) becomes

\begin{align*}
\mathcal A(x^0)- \sum_{j=1}^{k}\mathcal A(x^j).
\end{align*}

The moduli spaces $\mathcal M^{\bf w}(x^0;\mathbf x)$ admit compactification with respect to the Floer datum $\mathcal D_{\frak m} =\mathcal D_{\frak m}{(\Sigma,j)}$ on a stable disk
$(\Sigma,j)\in\ \mathcal M{}^{k+1}$ stated in Definition \ref{def:Floer data}.

Since our underlying manifold $N$ is noncompact, the $C^0$-estimate for the $J$-holomorphic maps has to be preceded before starting the process of compactification.
This is where the `co-closedness' of $\beta$ enters in an essential way
which is needed for an application of
maximum principle.

The following is the main proposition in this regard whose proof is
postponed until Part 2.

\begin{prop}[Horizontal $C^0$ bound]\label{prop:horizontal}
Let $(\Sigma,j)$ be equipped with strip-like ends at each puncture $z^i$ as before.
Assume the one-form $\beta$ is as in Definition~\ref{def:Floer data}.
Let ${\bf L}=(L^0,\cdots, L^k)$ be $(k+1)$-pair of admissible Lagrangians in $W_\ell\subset T^*N$
for some $\ell \in \N$, see Definition~\ref{def:admissible Lagrangian}.

Let $x^j \in {\frak X}(w^j H; L^{j-1},L^{j})$ for $j=1, \dots, k$ and
$x^0 \in {\frak X}(w^0 H; L^0,L^k)$. Then for any solution $u$ of \eqref{eq:duXHJ}, we have
\be\label{eq:horizontalC0}
\Im u \subset W_\ell.
\ee
\end{prop}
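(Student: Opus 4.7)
The plan is to apply a weak maximum principle to the function $\phi = a \circ \pi \circ u : \Sigma \to \R$, where $a$ denotes the cylindrical coordinate on the end $N^{\mathrm{end}} \cong [0,\infty) \times T$, extended to all of $N$ so that $N_\ell \subset \{a \leq 0\}$; then $\Im u \subset W_\ell = T^*N_\ell$ is equivalent to $\phi \leq 0$ on $\Sigma$. I would carry out the argument on the open set $\Omega = \{z \in \Sigma \mid \phi(z) > 0\}$, where by construction $u(\Omega)$ lies in the cylindrical region $T^*N^{\mathrm{end}}$. On this set, admissibility of the Floer data forces $\mathbf{J}(z) = J_g$ and $\mathbf{H}(z) = \tfrac12|p|_g^2$ pointwise, so \eqref{eq:duXHJ} reduces to the standard perturbed Cauchy--Riemann equation for the Sasakian almost complex structure and the kinetic energy Hamiltonian.

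The key computation introduces the companion vertical observable $\psi = p_a \circ u$. Using the Sasakian identities $J_g \partial_a = \partial_{p_a}$, $J_g \partial_{p_a} = -\partial_a$ and the fact that the cylindrical metric has $a$-independent coefficients, one verifies along $u$ that $da \circ J_g = -dp_a$, $dp_a \circ J_g = da$, $da(X_{H_g}) = p_a$ and $dp_a(X_{H_g}) = 0$. Applying $da$ and $dp_a$ to the equation $\partial_s u + J_g \partial_t u = X_{H_g}\beta_s + J_g X_{H_g}\beta_t$ in local isothermal coordinates $(s,t)$ on $\Omega$ yields the modified Cauchy--Riemann system
\begin{align*}
\phi_s - \psi_t &= \psi\,\beta_s,\\
\phi_t + \psi_s &= \psi\,\beta_t.
\end{align*}
Differentiating the first in $s$, the second in $t$, and summing cancels the mixed partials $\psi_{st}$ and produces
\[
\Delta\phi \;=\; \psi_s\beta_s + \psi_t\beta_t + \psi\,(\partial_s\beta_s + \partial_t\beta_t).
\]
The co-closedness condition $d(\beta\circ j)=0$ enters here in an essential way: in isothermal coordinates it reads exactly $\partial_s\beta_s + \partial_t\beta_t = 0$, which annihilates the zero-order $\psi$-term. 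Substituting $\psi_s = \psi\beta_t - \phi_t$ and $\psi_t = \phi_s - \psi\beta_s$ back in, the remaining $\psi$-contributions cancel algebraically, and one arrives at the purely drift-diffusion equation
\[
\Delta\phi \;-\; \beta_t\,\phi_s \;+\; \beta_s\,\phi_t \;=\; 0
\]
on $\Omega$, a linear uniformly elliptic operator with smooth bounded coefficients and no zero-order term, to which Hopf's weak maximum principle applies.

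For the boundary analysis I would use that the Liouville flow $\psi_s(q,p)=(q,sp)$ is fiberwise, hence $\pi\circ\psi_s = \pi$. On $\partial\Sigma$ one has $u(z)\in \psi_{\eta(z)}(L^i)$, so $\pi(u(z))\in \pi(L^i)\subset N_\ell$ by admissibility of $L^i$, giving $\phi|_{\partial\Sigma}\leq 0$. At each strip-like end, the asymptotic condition $u\circ\epsilon^j \to \psi_{w^j}\circ x^j$, together with the earlier proposition ensuring $\Im x^j\subset W_\ell$, forces $\phi$ to take values $\leq 0$ there as well. Consequently $\Omega$ is relatively compact modulo the ends, and the values of $\phi$ on $\partial\Omega$ are nonpositive (either zero on the interior level set $\{\phi=0\}$, or controlled by the boundary/asymptotic conditions above). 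The maximum principle then gives $\sup_\Omega \phi \leq \sup_{\partial\Omega}\phi \leq 0$, contradicting the definition of $\Omega$ unless $\Omega=\emptyset$; hence $\phi\leq 0$ throughout $\Sigma$ and $\Im u\subset W_\ell$.

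The main obstacle, and the reason for the novelty of the co-closedness requirement, is that $\phi$ by itself is not subharmonic: the horizontal estimate genuinely couples $\phi$ to the vertical observable $\psi$ through a modified Cauchy--Riemann system, and the reduction to a closed elliptic PDE for $\phi$ alone depends on two successive cancellations, the first provided by $d(\beta\circ j)=0$ and the second by the algebraic structure of the system. Without co-closedness one would retain an uncontrolled zero-order term $\psi\,(\partial_s\beta_s+\partial_t\beta_t)$ of indeterminate sign, and no maximum principle could be recovered. This is exactly the new analytic input over and above the sub-closedness $d\beta\leq 0$ used for the vertical $C^0$-estimate, and it is precisely what is delivered by the slit-domain construction of $\beta$ in Section~\ref{sec:one-form}, where $\beta=\varphi^*dt$ is manifestly closed and hence co-closed.
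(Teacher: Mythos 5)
Your derivation of the drift-diffusion equation for $\phi=a\circ\pi\circ u$ and the cancellations driven by co-closedness reproduce precisely the computation of Lemma~\ref{lem:Deltaa}, and the boundary and asymptotic analysis are correct. The one step that does not hold as written is the assertion that admissibility of the Floer datum forces $\mathbf{J}(z)=J_g$ on $\Omega$: by the normalization chosen in Remark~\ref{rem:quadratic} one has $(\psi_\eta)_*\mathbf{J}=J_g$, i.e.\ $\mathbf{J}(z)=\psi_{\eta(z)}^*J_g$, and since $\psi_w^*J_g\neq J_g$ for $w\neq 1$ (the Liouville flow scales the vertical subspace but not the horizontal one), the pointwise equality fails, so equation \eqref{eq:duXHJ} does not literally reduce to the autonomous equation along $u$. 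The paper resolves this at the start of Part~2 by replacing $u$ with $v(z)=\psi_{\eta(z)}^{-1}(u(z))$, which by that normalization solves the autonomous equation \eqref{eq:dvXHJ} with almost complex structure $J_g$, Hamiltonian $H$, and \emph{fixed} boundary condition $v(z)\in L^i$; your computation of the modified Cauchy--Riemann system, the elimination of $\psi=p_a$, and the maximum principle then apply verbatim to $v$. Since the Liouville flow is fiberwise --- an observation you already make in the boundary analysis --- one has $\pi\circ v=\pi\circ u$, so $\phi$, the open set $\Omega$, and the conclusion $\Im u\subset W_\ell$ are unaffected. With this single substitution (analyze $v$ rather than $u$) your argument coincides in substance with the paper's proof.
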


We also need to establish the vertical bound. Such a vertical bound
is established in \cite[section 7c]{abou-seidel} using an integrated form of
the strong maximum principle. Here, partially because similar arguments are needed
for the (horizontally) moving boundary condition, we will provide a more standard argument of
using the pointwise strong maximum principle in Part 2. Both conditions
$d\beta \leq 0$ and $i^*\beta=0$ will be used in a crucial way similarly as
in Abouzaid-Seidel's proof

\begin{prop}[Vertical $C^0$ bound]\label{prop:vertical}
Let $\mathbf x=(x^0,\dots,x^{k})$ be given as in (\ref{eq:chord}). Then
$$
\max_{z \in \Sigma}|p(u(z))| \leq {\frak{ht}}(\mathbf x;H,\{L^i\})
$$
for any solution $u$ of \eqref{eq:duXHJ}.
\end{prop}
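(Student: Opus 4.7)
The strategy is to reduce the moving-boundary problem to a fixed-boundary one via Liouville rescaling, and then apply Abouzaid--Seidel's vertical maximum-principle argument to the radial function $r\circ \tilde u$, where $r := |p|_g$.

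First, I would set $\tilde u(z) := \psi_{\eta(z)}^{-1}(u(z))$. Using the identity $H\circ\psi_w/w^2 = H$ for the fiberwise quadratic $H$ together with the arrangement $(\psi_\eta)_*{\bf J} = J_g$ recorded in Remark~\ref{rem:quadratic}, the map $\tilde u$ satisfies the perturbed $J_g$-holomorphic equation with Hamiltonian $H$, fixed Lagrangian boundary $\tilde u(z)\in L^i$, and a rescaled one-form $\tilde\beta$ which still satisfies $d\tilde\beta\le 0$ and $i^*\tilde\beta = 0$; on the strip-like ends $\eta$ is locally constant, so the asymptotic condition becomes $\tilde u \to x^j$. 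It therefore suffices to establish the vertical bound for $\tilde u$ in this fixed-boundary situation.

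On the cylindrical region $\{|p|_g \ge R\}$ the almost complex structure is the Sasakian $J_g$, and a direct calculation in the splitting $T(T^*N) = TN \oplus T^*N$ gives the identities $dr \circ J_g = \theta/r$ and $dr(J_g X_H) = r$ for $H = \tfrac12 r^2$. Feeding these into \eqref{eq:duXHJ} yields
\[
d(r\circ \tilde u)\circ j \;=\; \frac{\tilde u^*\theta}{r\circ \tilde u} \;-\; (r\circ \tilde u)\,\tilde\beta.
\]
Taking $d$ of both sides, substituting the pointwise energy identity for $-\tilde u^*\omega_0$, and collecting the sign-definite terms using $d\tilde\beta\le 0$ and $r \ge R > 0$ produces a subharmonic inequality $\Delta_j(r\circ \tilde u) \ge 0$ on $\{r\circ \tilde u > R\}$; this is the familiar Abouzaid--Seidel computation adapted to our setup.

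The strong maximum principle then forbids interior maxima of $r\circ \tilde u$ on $\{r\circ \tilde u > R\}$. At a boundary point where a maximum might occur, admissibility condition~(4) of Definition~\ref{def:admissible Lagrangian} forces each $L^i$ to be Liouville-invariant outside a compact set, so $L^i \cap \{r \ge R\}$ is cylindrical and tangent to the radial field $Z$; combined with $i^*\tilde\beta = 0$, Hopf's lemma rules out boundary maxima in the usual way. Therefore $r\circ \tilde u$ attains its supremum asymptotically at a puncture, where $\tilde u$ converges to $x^j$, giving a uniform bound by $\max_j \max_t |p(x^j(t))|$. Undoing the rescaling via $u = \psi_\eta\circ\tilde u$ with $\eta \le \max_j w^j$ and incorporating the bounded contribution from $\{r \le R\}$ then produces the desired constant $\mathfrak{ht}({\bf x}; H, \{L^i\})$. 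The principal technical subtlety is the sign bookkeeping in the subharmonic computation: one must carry the sub-closedness $d\tilde\beta \le 0$ and the non-negativity of the perturbed energy density along together so that the lower-order correction terms arising from $X_H$ and $\tilde\beta$ are all sign-compatible; a secondary point is checking that the slit-domain construction of $\beta$ from Section~\ref{sec:one-form} is compatible with the rescaling by $\eta$ so that $\tilde\beta$ inherits both $d\tilde\beta\le 0$ and $i^*\tilde\beta=0$.
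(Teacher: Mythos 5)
Your proposal is correct and follows essentially the same route as the paper: pass to the autonomous equation for $v=\psi_{\eta}^{-1}(u)$ via Liouville rescaling, derive a maximum-principle inequality for $r\circ v$ from sub-closedness $d\beta\le 0$ (the paper invokes the computation from Seidel's (3.20), which actually carries a non-sign-definite first-order drift term $-\rho H''(\rho)\,d\rho\wedge\beta/dA$, so it is an elliptic inequality with a gradient term rather than literal subharmonicity $\Delta(r\circ v)\geq 0$ as you state, but the strong maximum principle applies just the same), and then rule out boundary maxima using $i^*\beta=0$ together with the cylindrical structure of admissible Lagrangians at their ends. The boundary-Hopf step is precisely where the paper spells out what you leave to ``the usual way'': at a boundary maximum the tangential derivative of $r\circ v$ vanishes, and cylindricity of $L^i$ forces $T_{v(z_0)}L^i\cap\ker dr$ to be Legendrian in the contact hypersurface, which together with $\theta\circ J=-dH_g$ makes the outward-normal derivative vanish as well, contradicting Hopf's lemma unless $\Im v$ lies in a single level set of $r$.
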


\begin{rem} In this remark, we summarize the standard arguments on how the compactness arguments and
dimension counting enter in the construction of moduli operator such as $A_\infty$ maps.
For the practical purpose, we restrict ourselves to the cases of the moduli spaces
$\mathcal M^{\bf w}(x^0;\mathbf x)$ of expected dimension one and zero.
Consider a sequence of pseudo-holomorphic maps $\{u_\nu\}_{\nu\in\N}$ with a sequence of Floer data $\{\mathcal D_\nu\}_{\nu\in\N}$ defined on $\{(\Sigma_{\nu},j_{\nu})\}_{\nu\in\N}$ diverging to one end.
Let $\mathcal D_\infty$ be the corresponding limit of the Floer datum which is defined over a broken stable disk $(\Sigma_\infty,j_{\infty})=(\Sigma_{\mathrm I},j_{\mathrm I})*_{n+1}(\Sigma_{\rm II},j_{\rm II})$. Here $*_{n+1}$ means $0$-th puncture of $\Sigma_{\rm I}$ and $(i+1)$-th puncture of $\Sigma_{\rm II}$ correspond to the breaking point.
We already mentioned about the $C^0$-estimate of the moduli space $\mathcal M^{\bf w}(x^0;\mathbf x)$.
If the gradient of $\mathcal M^{\bf w}(x^0;\mathbf x)$ is not uniformly bounded, then we have sphere or disk bubbling phenomenon. But these cannot happen since the symplectic manifold $(T^*N,\omega)$ and Lagrangian submanifold $(L^i,\iota^*\theta)$ are exact.
Then by Arzel\`a-Ascoli theorem, there is a subsequence of $\{u_\nu\}$ which $C^\infty_{loc}$-converges to $u_\infty$ with Floer datum $\mathcal D_\infty$.
The uniform energy estimate guarantees that the broken strip (or points) should be mapped to a Hamiltonian chord with a matching weight condition.
So the boundary points correspond to broken pseudo-holomorphic maps and each component of the broken map is a part of zero-dimensional moduli space.
Note also that the moduli space $\mathcal M^{\bf w}(x^0;\mathbf x)$ of dimension zero has empty boundary, and hence itself is a finite set.
\end{rem}

\subsection{Definition of $A_\infty$ structure map}

Construction of the $A_\infty$ structure map $\frak m= \{\frak m^k\}$ proceeds in two steps as in \cite{abou:fiber}:

First we consider the moduli space $\mathcal M^{\bf w}(x^0;{\bf x})$ which consist of
solutions $u:\Sigma\to T^*N$ of (\ref{eq:duXHJ}) satisfying the asymptotic conditions
\begin{align*}
(\psi_{w^0}(x^0);\psi_{w^1}(x^1),\dots,\psi_{w^k}(x^k))
\end{align*}
at each ends ${\bf z}=(z^0; z^1, \dots , z^k)$ of $\Sigma$, where
\begin{align*}
\begin{cases}
\psi_{w^0}(x^0) \in CW^*(\psi_{w^0}(L^k),\psi_{w^0}(L^0)); \\
\psi_{w^j}(x^j) \in CW^*(\psi_{w^j}(L^{j}),\psi_{w^{j}}(L^{j-1})) \quad \text{ for } j=1,\dots, k
\end{cases}
\end{align*}
for some Floer datum $\mathcal D=\mathcal D_\frak m(\Sigma,j)$ given in Definition~\ref{def:Floer data}.
Here ${\bf w} = (w^0; w^1, \ldots w^k)$ is the given tuple of weights of asymptotic
conformal shifting of boundary Lagrangians.

The count of such elements directly defines a map
\beastar
\frak m_{\mathcal D}^k & : & CW^*(\psi_{w^1}(L^{1}),\psi_{w^1}(L^{0})) \otimes \cdots \otimes CW^*(\psi_{w^k}(L^{k}),\psi_{w^k}(L^{k-1}))\\
&{}& \qquad \qquad \longrightarrow CW^*(\psi_{w^0}(L^{k}),\psi_{w^0}(L^0))[2-k].
\eeastar
Then we compose the tensor of vertical scaling maps
\[
CW(\psi): CW^*(L,L')\cong CW^*(\psi(L),\psi(L')).
\]
in Lemma~\ref{lem:vertical rescaling} and pre-compose its inverse to $\frak m^k_{\mathcal D}$, i.e.,
the map $\frak m^k$ is defined by
\be\label{eq:mk}
\frak m^k = (CW(\psi))^{-1} \circ \frak m_{\mathcal D}^k \circ (CW(\psi))^{\otimes k}
\ee
In conclusion, we have
\begin{align}\label{eqn:structure map m}
{\frak m}^k(x^1 \otimes \cdots \otimes x^k)=\sum_{x^0} (-1)^\dagger\#\CM^{\bf w}(x^0;\mathbf x)\, x^0,
\end{align}
where $\dagger=\sum_{i=1}^k i \cdot \mu(x^i)$ and $\#$ denotes the signed count of zero dimensional
moduli space $\CM^{\bf w}(x^0;\mathbf x)$.

\begin{rmk}
Note that the $A_\infty$ structure map $\{\frak m^k \}_{k \in \N}$ is defined on the chain complex generated by time-1 Hamiltonian chords of the originally given $H$, while the intermediate maps
$\frak m^k_{\mathcal D}$ is defined on the complex generated by time-one Hamiltonian chords of
the weighted Hamiltonian $w^iH$.
\end{rmk}

Now suppose that the moduli space $\CM^{\bf w}(x^0;\mathbf x)$ is one dimensional. Then by the standard argument in Gromov-Floer compactification, the boundary strata $\partial \overline{\CM}(x^0;\mathbf x)$ consist of
\[
\coprod_{\bar x} \overline{\mathcal M}^{\bf w}(x^0;{\bf x^1}) \times
\overline{\mathcal M}^{\bf w}(\bar x;{\bf x^2}).
\]
Here, $0 \leq n \leq k-m$ and
\begin{align*}
\bar x \in \frak{X}(|{\bf w^2}|H; L^{n+m},L^{n}),\quad |{\bf w^2}|=w^{n+1}+\cdots+w^{n+m};\\
{\bf x^1}=(x^1,\dots, x^{n}, \bar x, x^{n+m+1},\dots, x^{k}),\quad {\bf w^1}={\bf w}|_{\bf x^1};\\
{\bf x^2}=(x^{n+1},\dots,x^{n+m}),\quad {\bf w^2}={\bf w}|_{\bf x^2}.
\end{align*}
We conclude the following relation. Here we follow the sign convention from \cite{seidel:book}, see also Section~\ref{sec:Dimensions and orientations of moduli spaces}.
\begin{prop}\label{eqn:A_infty relation}
The maps $\{\frak m^k \}_{k\in\N}$ define an $A_\infty$ structure i.e., satisfy
\[
\sum_{m,n}(-1)^\ddagger\frak m^{k-m+1} (x^1,\dots, x^{n},\frak m^{m} (x^{n+1},\dots,x^{n+m}),x^{n+m+1},\dots,x^{k})=0,
\]
where $\ddagger=\ddagger_n=\sum_{i=1}^{n}\mu(x^i)-n$.  We denote the resulting category  by
$$
\mathcal{WF}(T^*(M \setminus K);H_{g_0}).
$$

\end{prop}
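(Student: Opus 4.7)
The plan is to compactify the one-dimensional moduli space $\overline{\CM}^{\bf w}(x^0;\mathbf x)$, identify its codimension-one boundary with fiber products of rigid moduli spaces, and then read off the $A_\infty$ relation as the vanishing of the signed count of these boundary points. The broad strategy is standard (cf.\ \cite{seidel:book}), but each step has to be executed against the noncompact base $M\setminus K$, which is why the $C^0$-estimates of Part~2 are prerequisites.

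\textbf{Step 1: Compactness.} I would first establish that $\overline{\CM}^{\bf w}(x^0;\mathbf x)$ is sequentially compact. For a sequence $u_\nu$ of solutions of \eqref{eq:duXHJ} with underlying disks $(\Sigma_\nu,j_\nu)\in \CM^{k+1}$, the energy bound from Subsection~\ref{subsec:energyC0} controls $E(u_\nu)$ by $\CA(x^0)-\sum_{j\geq 1}\CA(x^j)$, using $d\beta\leq 0$, $\mathbf H\geq 0$ and $i^*\beta=0$. The horizontal bound (Proposition~\ref{prop:horizontal}) keeps the image inside a fixed $W_\ell$, the vertical bound (Proposition~\ref{prop:vertical}) confines $|p(u_\nu)|$ to a fixed level set, and exactness of both $T^*(M\setminus K)$ and each admissible $L^i$ excludes sphere and disk bubbling. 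Combining Gromov's compactness with the compactness of $\overline{\CM}^{k+1}$, every subsequential limit is a stable broken pseudo-holomorphic building whose underlying domain sits in $\partial\overline{\CM}^{k+1}$ or involves a strip breaking along one of the strip-like ends.

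\textbf{Step 2: Boundary identification and gluing.} For each nodal configuration $(\Sigma_{\mathrm I},j_{\mathrm I})\ast_{n+1}(\Sigma_{\mathrm{II}},j_{\mathrm{II}})$, the universal Floer datum $\frak D_{\frak m}$ restricts to conformally equivalent data on the two components (Definition~\ref{def:universal Floer data}), and the one-form $\beta$ constructed via the slit domain in Lemma~\ref{lem:beta} splits consistently because the slit datum at the node matches on the nose. The intermediate asymptotic chord $\bar x$ must lie in $\frak X(|{\bf w}^2|H;L^{n+m},L^n)$ by the matching of weights, and its action is pinned by the energy identity. Standard pregluing, Newton iteration and surjectivity of the gluing construction identify a neighborhood of each such broken configuration with a half-open interval, so
\[
\partial\overline{\CM}^{\bf w}(x^0;\mathbf x) \cong \coprod_{n,m,\bar x}\overline{\CM}^{{\bf w}^1}(x^0;{\bf x}^1) \times \overline{\CM}^{{\bf w}^2}(\bar x;{\bf x}^2),
\]
where both factors are rigid. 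Strip-breaking at an external end is the special case $m=1$ or the configuration producing $\frak m^1$-contributions, so no additional boundary phenomena arise.

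\textbf{Step 3: Signs and passage to $\frak m^k$.} Each rigid $u$ carries an induced isomorphism of orientation spaces $o(x^1)\otimes\cdots\otimes o(x^k)\to o(x^0)$; with the Koszul convention of \cite{seidel:book}, the gluing map changes orientation by exactly $(-1)^{\ddagger_n}$ with $\ddagger_n=\sum_{i=1}^n\mu(x^i)-n$. Summing the signed count of boundary points of the compact one-manifold $\overline{\CM}^{\bf w}(x^0;\mathbf x)$ to zero yields the stated $A_\infty$ relation for the maps $\frak m^k_{\mathcal D}$ on the rescaled complexes. Finally, conjugating by the canonical rescaling isomorphisms $CW(\psi_{w^j})$ of Lemma~\ref{lem:vertical rescaling} as in \eqref{eq:mk} transports the identity to the unscaled complex $CW^*(L^i,L^{i-1};H)$, giving the $A_\infty$ relation for $\{\frak m^k\}$.

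\textbf{Main obstacle.} The analytic inputs (compactness, gluing, transversality for generic $\frak D_{\frak m}$) are all standard once the $C^0$-bounds of Part~2 are in hand, and exactness removes bubbling. The delicate point is the \emph{sign computation}: one must verify that the orientation of $\partial\overline{\CM}^{\bf w}(x^0;\mathbf x)$ induced from the outward normal at a broken configuration agrees with the product orientation on $\overline{\CM}^{{\bf w}^1}\times\overline{\CM}^{{\bf w}^2}$ up to exactly $(-1)^{\ddagger_n}$. This amounts to combining Seidel's sign conventions for the abstract moduli $\overline{\CM}^{k+1}$ with the Floer-theoretic sign rule for strip-breaking, and checking that our vertical rescaling in \eqref{eq:mk} and the slit-domain gluing do not introduce additional signs.
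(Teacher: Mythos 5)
Your proposal is correct and follows essentially the same route as the paper: Gromov--Floer compactness (with the horizontal and vertical $C^0$-bounds from Part~2 standing in for the usual compactness of the target and exactness killing bubbles), identification of $\partial\overline{\CM}^{\bf w}(x^0;\mathbf x)$ with two-level broken configurations, and the sign bookkeeping of the appendix. The paper's actual sign verification (Appendix B) is a bit more layered than ``the gluing changes orientation by exactly $(-1)^{\ddagger_n}$'' --- it tracks separately the sign $\dagger$ built into \eqref{eqn:structure map m}, the Koszul sign $\blacksquare_{\frak m}$ from the orientation-space isomorphism, and the parameter-space sign $\blacktriangle$ from comparing $\CM^{k+1}$ with $\CM^{k_1}\times\CM^{k_2}$, and shows their combination reduces to $\ddagger_n$ --- but you have correctly flagged exactly this as the delicate point.
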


\section{Construction of $A_\infty$ functor}
\label{sec:construction of functor}

In this section, we consider a pair of metrics $g, \, g'$ on $M$ with $g \geq g'$.

We shall construct a (homotopy) directed system of $A_\infty$ functors
\[
\mathcal F_\lambda:\CW\CF (M\setminus K; H_{g_0}) \to \mathcal{WF} (M\setminus K; H_{g_0'})
\]
for any \emph{monotone} path $\lambda:[0,1] \to \mathcal C(N)$ with $\lambda(0) = g, \, \lambda(1) = g'$.
By Definition \ref{def:admissible Lagrangian} of the objects in the wrapped Fukaya category, there is a natural inclusion
$$
Ob(\CW\CF (M\setminus K; H_{g_0})) \hookrightarrow Ob(\CW\CF (M\setminus K;H_{g_0'})).
$$
We take it as data of the maps
\[
Ob(\mathcal F_\lambda):Ob(\CW\CF (M\setminus K;H_{g_0})) \to Ob(\CW\CF (M\setminus K;H_{g_0'})).
\]

Note that even though $L^0,L^1$ are objects of $\CW\CF (M\setminus K;H_{g_0})$ and hence of
$\CW\CF (M\setminus K;H_{g_0'})$, the corresponding morphism spaces could be different. In fact, the Hamiltonian used for
$\CW\CF (M\setminus K)$ is $H_{g_0}$ and that for $\CW\CF (M\setminus K)$ is $H_{g_0'}$ and so
the corresponding morphism spaces are $CW^*(L^0,L^1;H_{g_0})$ and $CW^*(L^0,L^1;H_{g_0'})$ respectively.
Then we will show that the quasi-equivalence class of the $A_\infty$ category
$
\CW \CF(T^*(M\setminus K);H_{g_0})
$
does not depend on $g$. We denote the $A_\infty$ category by
$$
\CW \CF_g(M \setminus K):= \CW \CF(T^*(M\setminus K);H_{g_0})
$$
suppressing $g$ from its notation.

More precisely, we will construct a  $A_\infty$ functor $\frak f_{g'g} = \{\frak f_\lambda^k\}$ with
$$
\frak f_\lambda^k: CW^*(L^{1},L^0;H_{g_0}) \otimes\ldots \otimes CW^*(L^{k},L^{k-1};H_{g_0}) \to CW^*(L^{k},L^{0}; H_{g_0'})
$$
associated to a homotopy of metrics from $g$ to $g'$. Such an $A_\infty$ functor will be
constructed by an $A_\infty$ version of Floer's continuation map under the
homotopy of associated Hamiltonians $H_{g_0}$ to $H_{g_0'}$.

\begin{rem}
It turns out that this construction of $A_\infty$ morphism or $A_\infty$ functor
under the change of Hamiltonians, at least in the form given in the present paper,
 has not been given in the existing literature as far as we are aware of.
(See \cite{savelyev}, however, for some construction which should be relevant to such a study.)
It took us some effort and time to arrive at the right definition we are presenting here.
At the end of the day, our definition is motivated by the construction given in \cite[Section 4.6]{fooo:book1}
which defines an $A_\infty$ morphism under its Hamiltonian isotopy in the context
of \emph{singular chain complexes}
associated to a given Lagrangian submanifold $L$ in the Morse-Bott context.
\end{rem}

\subsection{Moduli space of time-allocation stable curves}

In this subsection, we recall the moduli space, denoted by ${\overline \CN}{}^{k+1}$ in \cite{fooo:book2},
of a decorated stable curves of genus 0. This moduli space was used in the construction of
$A_\infty$ homomorphisms therein which we generalize for the construction of $A_\infty$ functors.
(See also \cite{abou-seidel} for the similar moduli space named as
the `popsicle moduli space' for the more elaborate version thereof.)

Let $\MM{}^{k+1}$ be the moduli space of
$(\Sigma;{\bf z})$ where $\Sigma$ is a genus zero bordered
Riemann surface and ${\bf z}=(z^0,\dots,z^k)$ are the boundary punctured points,
ordered anti-clockwise, such that $(\Sigma;{\bf z})$ is
stable. Let $\Sigma = \bigcup \Sigma_i$ be the decomposition into the irreducible components.
The stability implies that there is no sphere component, since we do not put any interior
marked points.

We define a partial order on the set of irreducible components.
Let $\{\Sigma_\alpha\,|\,\alpha\in\frak A\}$ be the set of components of $\Sigma$ and,
by the definition of $\mathcal M{}^{k+1}$, it admits a rooted tree structure, see Figure 7.1.21 in \cite{fooo:book2}.
We assign a partial order $\prec$ on $\frak A$ with respect to the rooted tree structure as follows:

\begin{defn}\label{def:partial order}
If every path joining $\Sigma_{\alpha_1}$ to the rooted component $\Sigma_{\alpha_0}$, corresponding to $z_0$, intersect with $\Sigma_{\alpha_2}$, then we write
$\alpha_1\prec \alpha_2$.
\end{defn}

We recall the following definition of time-allocation.

\begin{defn}[Definition 7.1.53 \cite{fooo:book2}]\label{def:rho}
Let $(\Sigma,\prec)$ be such a pair. We define the {\it time allocation}
$\frak \rho:\frak A \to [0,1]$ to $(\Sigma,\prec)$ so that if $\alpha_i \prec \alpha_j$ then $\rho(\alpha_i) \le \rho(\alpha_j)$.
\end{defn}
See Figure 7.1.19 \cite{fooo:book2}.
\begin{defn}[Definition 7.1.54 \cite{fooo:book2}]\label{def:parameterN}
For $k \geq 2$, we define ${\overline \CN}{}^{k+1}$
to be the set of pairs $((\Sigma ; {\bf z}), \rho)$ of $(\Sigma;{\bf z}) \in {\overline \CM}{}^{k+1}$ and
its time allocation $\rho$.
\end{defn}

We equip ${\overline \CN}{}^{k+1}$ with the topology induced from
that of ${\overline \CM}{}^{k+1}$ in an obvious way.
We now describe the stratification of ${\overline \CN}{}^{k+1}$.
Following \cite{fooo:book2}, we denote
$$
\text{\bf N} = (\Sigma,{\bf z},\rho)
\in \mathcal N{}^{k+1}.
$$
We consider the union of all irreducible components $\Sigma_i$ with $\rho_i=\rho(i) \in (0,1)$.
Let us decompose and label the index set $\frak A$ of ${\bf N}\in \mathcal N^{k+1}$ as follows:
\begin{align}\label{eqn:interior vertex data}
\frak A &= \rho^{-1}(0) \sqcup \rho^{-1}((0,1)) \sqcup \rho^{-1}(1);\\
\rho^{-1}(0) &= \{\frak m_1, \dots, \frak m_\lambda \};\nonumber\\
\rho^{-1}((0,1)) &= \{\frak f_1,\dots, \frak f_j \};\nonumber\\
\rho^{-1}(1) &= \{\frak m'_1 ,\dots, \frak m'_i\}.\nonumber
\end{align}
Also denote
Note that $\ell$ could be $0$, and if $\rho^{-1}(1)$ is non-empty then $\bigcup_{\alpha\in \rho^{-1}(1)}\Sigma_\alpha$ is connected and contains $z_0$.

\begin{defn}\label{def:ribbon graph} The combinatorial type $\Gamma=\Gamma({\bf N})$ of $\text{\bf N}$ is a ribbon graph with decorations as follows:
\begin{itemize}
\item Interior vertices $V^{\rm int}$ of $\Gamma$ correspond to the components set $\alpha\in\frak A$.
\item Exterior vertices $V^{\rm ext}$ of $\Gamma$  correspond to the punctured points $\bf z$.
\item The matching condition of components determines interior edges of $\Gamma$.
\item An exterior edge is assigned when a component contains $\bf z$.
\item The ribbon structure is determined by the cyclic order of the marked or
singular points on the boundary of each component.
\item There is a decomposition
\begin{align}\label{eqn:int vertex decomposition}
V^{\rm int}(\Gamma) = V^{\frak f}(\Gamma) \sqcup V^{\frak m}(\Gamma) \sqcup V^{\frak m'}(\Gamma)
\end{align}
with respect to (\ref{eqn:interior vertex data}).
\end{itemize}
We denote by $G^{k+1}$ the set of such combinatorial types of $\overline{\mathcal N}{}^{k+1}$.
\end{defn}

Let $\mathcal N_\Gamma=\{{\bf N}\in \overline{\mathcal N}{}^{k+1}:\Gamma({\bf N})=\Gamma\}$, then
we have the decomposition
$$
{\overline \CN}{}^{k+1} = \bigsqcup_{\Gamma \in G{}^{k+1}} \CN_\Gamma.
$$
Let $\Gamma_0$ be the graph that has
only one interior vertex.
For each $k$, there are three different types of them as in (\ref{eqn:int vertex decomposition}).

\begin{lem}[Lemma 7.1.55 \cite{fooo:book2}]\label{lem:dim of Nspace} For any $\Gamma \in G{}^{k+1}$
with $|V(\Gamma)|>1$ then $\mathcal N_\Gamma$
is diffeomorphic to $D^{|\Gamma|}$ where
\be\label{eq:|Gamma|}
|\Gamma| := k-1 - |V(\Gamma) \setminus V^{\frak f}(\Gamma)|.
\ee
\end{lem}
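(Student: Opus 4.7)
The plan is to factorize $\mathcal N_\Gamma$ as a product of the underlying (time-free) stratum $\mathcal M_\Gamma \subset \overline{\mathcal M}{}^{k+1}$ and the space $\mathcal T_\Gamma$ of admissible time allocations on $\frak A = V^{\rm int}(\Gamma)$, and then identify each factor with an open ball of the correct dimension. The forgetful map
$$
\pi \colon \mathcal N_\Gamma \to \mathcal M_\Gamma, \qquad (\Sigma,{\bf z},\rho) \longmapsto (\Sigma,{\bf z}),
$$
has fiber $\mathcal T_\Gamma$ consisting of $\rho\colon\frak A \to [0,1]$ with $\rho|_{V^{\frak m}(\Gamma)} \equiv 0$, $\rho|_{V^{\frak m'}(\Gamma)} \equiv 1$, $\rho|_{V^{\frak f}(\Gamma)} \subset (0,1)$, and $\rho(\alpha_i)\le \rho(\alpha_j)$ whenever $\alpha_i\prec\alpha_j$. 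Since $\mathcal T_\Gamma$ depends only on the combinatorial data of $\Gamma$, $\pi$ is a trivial fibration and we obtain a diffeomorphism $\mathcal N_\Gamma \cong \mathcal M_\Gamma \times \mathcal T_\Gamma$.

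For the base factor, writing $n_\alpha$ for the number of marked-or-nodal boundary points on the component $\Sigma_\alpha$ at $\alpha\in V(\Gamma)$, I would use the product decomposition
$$
\mathcal M_\Gamma \;\cong\; \prod_{\alpha\in V(\Gamma)} \mathcal M^{n_\alpha}.
$$
By the identification of $\overline{\mathcal M}{}^{n_\alpha}$ with Stasheff's associahedron $K_{n_\alpha-1}$ via cross-ratio coordinates, the open stratum $\mathcal M^{n_\alpha}$ is diffeomorphic to an open ball of dimension $n_\alpha-3$. Since the underlying ribbon tree has $|V(\Gamma)|-1$ internal edges, each contributing $2$ to $\sum_\alpha n_\alpha$, and there are $k+1$ external marked points contributing $1$ each, we have $\sum_\alpha n_\alpha = (k+1) + 2(|V(\Gamma)|-1)$ and hence
$$
\dim \mathcal M_\Gamma \;=\; \sum_{\alpha} (n_\alpha - 3) \;=\; k-1-|V(\Gamma)|.
$$

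For the fiber factor, the conditions on $\rho$ cut out an open convex polytope in $(0,1)^{|V^{\frak f}(\Gamma)|}$, defined by the finitely many linear inequalities induced by $\prec$ after substituting the prescribed boundary values. Because $\Gamma\in G^{k+1}$ is by construction realized by a point of $\overline{\mathcal N}{}^{k+1}$, this polytope is nonempty, and any nonempty open convex subset of Euclidean space is diffeomorphic to an open ball. Hence $\mathcal T_\Gamma$ is diffeomorphic to an open ball of dimension $|V^{\frak f}(\Gamma)|$. Summing the two dimensions,
$$
\dim\mathcal N_\Gamma \;=\; \bigl(k-1-|V(\Gamma)|\bigr) + |V^{\frak f}(\Gamma)| \;=\; k-1-|V(\Gamma)\setminus V^{\frak f}(\Gamma)| \;=\; |\Gamma|,
$$
and $\mathcal N_\Gamma$ is the product of two open balls, hence diffeomorphic to $D^{|\Gamma|}$.

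\textbf{Anticipated main obstacle.} The one genuinely nontrivial ingredient is giving $\mathcal M_\Gamma$ a smooth structure for which the associahedron-interior identification $\mathcal M^{n_\alpha}\cong D^{n_\alpha-3}$ is a diffeomorphism (not merely a homeomorphism) and compatible with the product decomposition; everything else is linear-convex combinatorics on $\frak A$. This is handled by the standard cross-ratio/Stasheff atlas on $\overline{\mathcal M}{}^{n_\alpha}$, and I expect only bookkeeping---not new analysis---to complete the argument.
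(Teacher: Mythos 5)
Your strategy --- fibering $\mathcal N_\Gamma$ over $\mathcal M_\Gamma$ with fiber the time-allocation polytope $\mathcal T_\Gamma$, decomposing $\mathcal M_\Gamma$ component-by-component, and adding dimensions --- is exactly the computation that underlies \cite[Lemma 7.1.55]{fooo:book2}, which the paper quotes without proof. The arithmetic checks out: with $\sum_\alpha n_\alpha = (k+1)+2(|V(\Gamma)|-1)$ one gets $\dim\mathcal M_\Gamma = k-1-|V(\Gamma)|$, and $\dim\mathcal T_\Gamma = |V^{\frak f}(\Gamma)|$ adds to $|\Gamma|$. (You should explicitly note that $\mathcal T_\Gamma$ is full-dimensional; this holds because $\prec$ comes from a rooted tree and so forces no equalities among the $\rho(\alpha)$, $\alpha \in V^{\frak f}(\Gamma)$.)

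The step that does not quite close as written is the assertion that $\mathcal T_\Gamma$ is an \emph{open} convex polytope. Definition \ref{def:rho} imposes the non-strict constraint $\rho(\alpha_i)\leq\rho(\alpha_j)$, and Definition \ref{def:ribbon graph} makes the decorated combinatorial type $\Gamma$ record only the partition $V^{\frak m}\sqcup V^{\frak f}\sqcup V^{\frak m'}$, not the (strict vs.\ non-strict) order of the $\rho$-values on comparable $\frak f$-vertices. So configurations with $\rho(\alpha_i)=\rho(\alpha_j)$ for $\alpha_i\prec\alpha_j$ both in $V^{\frak f}(\Gamma)$ belong to the \emph{same} stratum $\mathcal N_\Gamma$, and $\mathcal T_\Gamma$ is a half-open polytope: it contains the diagonal faces $\rho(\alpha_i)=\rho(\alpha_j)$ while omitting the faces $\rho\to 0,1$. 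Hence ``nonempty open convex $\Rightarrow$ open ball'' does not apply as stated; what one actually obtains, consistently with Proposition \ref{prop:struc of Nspace}, is a cell carrying a manifold-with-corners structure. To make the argument complete you must either verify that corner structure directly (the full-dimensional polytope with some facets retained and some deleted is what is meant by $D^{|\Gamma|}$ here), or pass to closures and identify $\overline{\mathcal N_\Gamma}$ with the closed disc. This is not cosmetic: exactly which facets of $\mathcal T_\Gamma$ are retained determines which pieces contribute to $\partial\overline{\mathcal N}{}^{k+1}$ in Theorem \ref{thm:f-boundary}, and that distinction is what the $A_\infty$-functor argument downstream relies on.
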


\begin{prop}[Proposition 7.1.61 \cite{fooo:book2}]\label{prop:struc of Nspace} $\mathcal N{}^{k+1}$ has a structure of
smooth manifold (with boundary or corners) that is compatible
with the decomposition (according to the combinatorial types).
\end{prop}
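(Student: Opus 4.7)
The plan is to equip $\CN^{k+1}$ with an atlas by combining the standard smooth-with-corners structure on $\overline{\CM}{}^{k+1}$ with the polytope of admissible time allocations on the dual tree of each stable curve, and then to verify that the resulting coordinate transitions are smooth and respect the decomposition by combinatorial type.

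First I would recall that $\overline{\CM}{}^{k+1}$ carries its classical smooth-with-corners structure (the Stasheff associahedron $K_{k+1}$): near a point of combinatorial type $\Gamma$, local coordinates consist of interior moduli on each irreducible component $\Sigma_\alpha$ together with one non-negative gluing parameter $t_e\in[0,\varepsilon)$ for each interior edge $e$ of $\Gamma$, where $t_e=0$ recovers the nodal configuration. For each $\Gamma\in G^{k+1}$ the space of time allocations forms a convex polytope
\[
P_\Gamma = \bigl\{\rho:V^{\mathrm{int}}(\Gamma)\to[0,1] \,\big|\, \alpha\prec\beta \Rightarrow \rho(\alpha)\le\rho(\beta)\bigr\},
\]
and $\CN_\Gamma$ sits as the locus in $\CM_\Gamma\times P_\Gamma$ on which the partition $V^{\mathrm{int}}(\Gamma)=V^{\frak m}\sqcup V^{\frak f}\sqcup V^{\frak m'}$ is exactly as prescribed by $\Gamma$. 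A direct count gives
\[
\dim\CN_\Gamma = \dim\CM_\Gamma + |V^{\frak f}(\Gamma)| = (k-1-|V(\Gamma)|) + |V^{\frak f}(\Gamma)| = |\Gamma|,
\]
in agreement with Lemma~\ref{lem:dim of Nspace}.

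Next I would analyze the codimension-one walls separating adjacent strata. Two types can occur: (a) a time value $\rho(\alpha)\in(0,1)$ of an $\frak f$-vertex reaches $0$ or $1$, reclassifying $\alpha$ into $V^{\frak m}$ or $V^{\frak m'}$ without altering the underlying curve, which is transparent at the level of coordinates; and (b) a gluing parameter $t_e$ smooths a node and merges two adjacent vertices $\alpha'\prec\alpha''$ into a single vertex $\alpha$, forcing $\rho(\alpha')=\rho(\alpha'')=\rho(\alpha)$ in the limit. For walls of type (b), both adjacent strata are of the same dimension and meet transversally; the natural chart uses the common time $\rho_\alpha$ together with a single real parameter $s$ defined by $s=+t_e$ on the smoothed side $\CN_{\Gamma'}$ and $s=-(\rho(\alpha'')-\rho(\alpha'))$ on the nodal side $\CN_\Gamma$. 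The coordinate tuple $(s,\rho_\alpha,\ldots)$ then provides by construction a smooth chart on a neighborhood of the wall, matching the stratum-wise descriptions on both sides.

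The main obstacle is global consistency at higher-codimension corners where several edges glue and several time collisions occur simultaneously, and ensuring that the gluing of the polytopes $P_\Gamma$ is compatible with the iterated gluing structure of $\overline{\CM}{}^{k+1}$. I would handle this by induction on $k$ and on the depth of the rooted tree, using that both the Stasheff compactification and the polytopes $P_\Gamma$ are built by iterated gluing of lower-dimensional pieces whose smooth structures are already controlled, and that the face structure of $P_\Gamma$ encodes exactly the degenerations of type (a) and (b) above. This will show that the charts assemble into an atlas making $\CN^{k+1}$ a smooth manifold with corners whose locally closed strata are precisely the $\CN_\Gamma$, as asserted.
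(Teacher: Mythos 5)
The paper does not prove this statement; it quotes it verbatim from Fukaya--Oh--Ohta--Ono, Proposition~7.1.61 of \cite{fooo:book2}, where the actual construction is carried out. So there is no in-paper proof to compare against, and I evaluate your plan on its own.

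The outline and bookkeeping are sound: the polytope $P_\Gamma$ of admissible time allocations, the dimension count
$\dim\CN_\Gamma=\dim\CM_\Gamma+|V^{\frak f}(\Gamma)|=|\Gamma|$
agreeing with Lemma~\ref{lem:dim of Nspace}, and the split of walls into type (a) (a time value reaching $0$ or $1$) and type (b) (a node smoothing with time collision) is the right decomposition. The gap is that the entire content of the proposition -- the compatibility of the smooth charts across type (b) walls and at higher-codimension corners -- is asserted rather than established. When a node separating $\alpha'\prec\alpha''$ is smoothed, you are matching the $\overline{\CM}{}^{k+1}$-gluing parameter $t_e$ on one side against the collision defect $\rho(\alpha'')-\rho(\alpha')$ on the other. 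Writing $s=+t_e$ on the smooth side and $s=-(\rho(\alpha'')-\rho(\alpha'))$ on the nodal side is a \emph{definition} of a chart, not a derived fact; whether the resulting atlas is consistent depends on the choice of gluing profile and must be checked to behave correctly at corners where several $t_e$'s go to zero and several $\rho$-collisions happen simultaneously. That verification is exactly where your inductive paragraph is waving, so the proof is not closed. (A minor but real slip: two strata of the same dimension in an ambient space of that dimension cannot ``meet transversally''; what is meant is that they are glued along a common codimension-one face.) Also note that $\overline{\CN}{}^{k+1}$ is not literally a subspace of $\overline{\CM}{}^{k+1}\times(\text{time polytope})$ with an induced smooth structure -- under node-smoothing two time parameters collapse to one, so $\overline{\CN}{}^{k+1}$ arises by a gluing, not a restriction. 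The cleaner route, and essentially what \cite{fooo:book2} does (and what Theorem~\ref{thm:f-boundary} encodes), is to identify $\overline{\CN}{}^{k+1}$ as a convex polytope (a $(k-1)$-disc with corners, in effect the multiplihedron), from which the smooth-with-corners structure and the stratum-compatibility come for free; the chart-by-chart argument you sketch can work, but the consistency at corners then has to be done by hand and is the bulk of the argument.
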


It was shown in the proof of this proposition in \cite{fooo:book2}, there exists a
map
$$
\frak I: {\overline \CM}{}^{k+1} \times [0,1] \to {\overline \CN}{}^{k+1}
$$
that gives a homeomorphism onto the set of all $\text{\bf N}$ whose associated graph
$\Gamma$ has only one interior vertex and its time allocation $\rho$ is constant.

We then quote the following basic structure theorem of ${\overline \CN}{}^{k+1}$
from \cite{fooo:book2}

\begin{thm}[Theorem 7.1.51 \cite{fooo:book2}]\label{thm:f-boundary} For each $k \in \N$,
${\overline \CN}{}^{k+1}$ carries the structure of a cell complex which is
diffeomorphic to $k-1$ dimensional disc $D^{k-1}$ such that its boundary is decomposed to the union of cells
described as follows:
\begin{enumerate}
\item $\displaystyle{\MM^{(i+1,\dots,i+\ell)} \times
{\overline \CN}{}^{(1,\dots,i,*,i+\ell+1,\dots,k)}}$.
\item $\displaystyle{\prod_{i=1}^{m}{\overline \CN}{}^{(\ell_{i-1}+1,\dots,\ell_{i})}
\times \MM^{m+1}}$, where $1 = \ell_0$, $\ell_m = k$,
$\ell_i < \ell_{i+1} -1$.
\end{enumerate}
In the above, $\mathcal M^{(i_1,\dots,i_k)}$ is $\mathcal M^k$ with a new label $(i_1,\dots,i_k)$ on $k$ punctures. The same for $\mathcal N$.
\end{thm}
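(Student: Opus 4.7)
The plan is to identify $\overline{\mathcal N}^{k+1}$ with a realization of the Stasheff multiplihedron $J_k$ and to read off the cell structure and boundary from the combinatorial stratification by $G^{k+1}$, using Lemma \ref{lem:dim of Nspace} and Proposition \ref{prop:struc of Nspace} as the technical backbone. As a first step I would isolate the open top-dimensional cell: the generic type $\Gamma_0$ consists of a single $\mathfrak f$-vertex, corresponding to an irreducible $(\Sigma,{\bf z}) \in \mathcal M{}^{k+1}$ together with a value $\rho \in (0,1)$, whence this stratum is $\mathcal M{}^{k+1} \times (0,1)$ of dimension $(k-2)+1 = k-1$, matching $|\Gamma_0| = k-1$. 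Lemma \ref{lem:dim of Nspace} supplies $\mathcal N_\Gamma \cong D^{|\Gamma|}$ for every $\Gamma$ with $|V(\Gamma)| > 1$, and Proposition \ref{prop:struc of Nspace} makes the resulting cellular decomposition into a manifold with corners.

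Next I would identify the codimension-one strata by counting: the codimension of $\mathcal N_\Gamma$ equals $|V(\Gamma) \setminus V^{\mathfrak f}(\Gamma)|$, since each non-$\mathfrak f$ interior vertex imposes exactly one constraint ($\rho = 0$ or $\rho = 1$) and, otherwise, the moduli of $\mathcal M^{|V|}$-type degenerations balances against the gained $[0,1]$-parameters on $\mathfrak f$-components. Hence codimension one means either exactly one $\mathfrak m$-vertex or exactly one $\mathfrak m'$-vertex among the interior vertices. The monotonicity built into Definition \ref{def:rho} forces $\rho^{-1}(1)$ to be a connected subtree containing the root component, and $\rho^{-1}(0)$ to sit below every non-$\mathfrak m$ component on its path to the root; consequently the unique non-$\mathfrak f$ vertex is either (i) a single $\mathfrak m$-subtree hanging off an input of an otherwise $\mathfrak f$-type configuration, which produces $\mathcal M^{(i+1,\dots,i+\ell)} \times \overline{\mathcal N}^{(1,\dots,i,*,i+\ell+1,\dots,k)}$ (Type (1)), or (ii) the root itself carrying $\rho = 1$ with $m$ inputs, each seeding a full $\overline{\mathcal N}$-subtree, yielding $\prod_{i=1}^m \overline{\mathcal N}^{(\ell_{i-1}+1,\dots,\ell_i)} \times \mathcal M^{m+1}$ (Type (2)).

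To conclude that $\overline{\mathcal N}^{k+1}$ is a disk $D^{k-1}$, I would argue by induction on $k$. The base case $k=2$ gives $\overline{\mathcal N}{}^{3} \cong \{\text{pt}\} \times [0,1] = [0,1]$. For the inductive step, assuming every $\overline{\mathcal N}^{j+1}$ with $j < k$ and every $\mathcal M^{j+1}$ is a closed cell of the stated dimension, the Type (1) and Type (2) cells assembled above are products of cells, hence closed cells, and I would check that their union along common sub-strata glues to a PL $(k-2)$-sphere $\partial \overline{\mathcal N}^{k+1}$, which bounds the open top stratum (diffeomorphic to an open $(k-1)$-ball). This mirrors the classical verification for the associahedron and is exactly the pattern encoded in Proposition \ref{prop:struc of Nspace}.

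The main obstacle I expect is the last step: rigorously verifying that the cell-wise smooth structures provided by Lemma \ref{lem:dim of Nspace} glue to a genuine manifold-with-corners diffeomorphic to $D^{k-1}$, rather than some more singular cell complex. This requires a careful compatibility check of the smooth charts across all combinatorial types of $G^{k+1}$ near codimension-two strata (where two gluing parameters simultaneously go to $0$, or a gluing parameter and a $\rho$-value both reach a boundary value). Combined with the inductive sphericity of $\partial \overline{\mathcal N}^{k+1}$ obtained from Types (1) and (2), this yields the desired disk structure and completes the proof.
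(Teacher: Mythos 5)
The paper does not prove this statement at all: it quotes Theorem 7.1.51 verbatim from \cite{fooo:book2}, together with the supporting Lemma 7.1.55 ($\mathcal N_\Gamma \cong D^{|\Gamma|}$), Proposition 7.1.61 (smooth structure compatible with the stratification), and the map $\frak I: \overline{\mathcal M}{}^{k+1}\times[0,1]\to\overline{\mathcal N}{}^{k+1}$ mentioned just afterward. So there is no in-paper proof to compare against; the relevant comparison is to FOOO's argument. Your combinatorial bookkeeping is sound: the top stratum is $\mathcal M{}^{k+1}\times(0,1)$ of dimension $k-1$; the codimension formula $|V(\Gamma)\setminus V^{\mathfrak f}(\Gamma)|$ follows from $\sum_\alpha k_\alpha = k + |V(\Gamma)|-1$ for a rooted tree; and the split into an $\mathfrak m$-leaf (Type (1)) versus an $\mathfrak m'$-root (Type (2)) is forced by the monotonicity of $\rho$ in Definition \ref{def:rho}. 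The identification with the multiplihedron is also correct.

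The gap is the one you name, and it cannot be deferred: your induction asserts that the union of the Type (1) and (2) faces is a PL $(k-2)$-sphere bounding the open top cell, but nothing in the sketch controls how the cell-wise charts from Lemma \ref{lem:dim of Nspace} fit together near codimension-two corners (two nodal parameters degenerating simultaneously, or a node together with a $\rho$-value reaching $\{0,1\}$). That compatibility is precisely the content of Proposition 7.1.61, and FOOO do not obtain it by an inductive sphericity argument; they construct the disk structure directly by exhibiting explicit collar/corner charts, of which the homeomorphism $\frak I$ onto the constant-$\rho$, one-vertex locus is the piece the paper records. So your route is genuinely different from FOOO's: theirs is constructive chart-by-chart, yours is an inductive boundary-sphere argument whose key step is currently imported as a black box. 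Two minor points: the base of the induction should extend to $k=1$, since the remark after the theorem invokes $\overline{\mathcal N}{}^{2}$ (a point) in both degenerate sub-cases; and for the generic point of a Type (1) face the non-$\mathfrak f$ part is a single $\mathfrak m$-\emph{vertex}, not a subtree — the subtree only appears in the closure of that face.
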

We remark that Case (1) contains the case $i=1$, $1+\ell =k$.
This is the case of
$\MM{}^{k+1} \times \mathcal N^{1+1} \cong \MM{}^{k+1}$.
It corresponds to the
case when time allocation $\rho$ is 1 everywhere.
Case (2) contains the case $m = k$.
This is the case of $\mathcal N^{1+1} \times (\MM{}^{k+1})'$. It corresponds to the case when the time allocation $\rho$ is $0$
everywhere.

Similarly as in the construction of $A_\infty$ homomorphism given in Chapter 7 \cite{fooo:book2},
we will use ${\overline \CN}{}^{k+1}$ for the construction of $A_\infty$ functors.

\subsection{Definition of $A_\infty$ functor and energy estimates}
\label{subsec:functor}
Firstly, consider cylindrical adjustments $g_0, \, g_0'$  on $N = M \setminus K$
of Riemannian metrics defined on $M$ as introduced in Section
\ref{sec:Tame manifolds and the Busemann function}. We assume $g_0 \geq g_0'$ and
consider a homotopy $\lambda:[0,1]\to \mathcal C(N)$ of Riemannian metrics
from $g_0'$ to $g_0$ given by
\[
\lambda: r \mapsto g(r) = g_0 + r(g_0'-g_0)
\]
(or by any other homotopy $r \mapsto g(r)$ connecting $\lambda(0) = g_0$ and $g_0'$). We denote by
$H_\lambda$ the time-dependent Hamiltonian associated to $\lambda$ defined by
$H_\lambda(s,x) = H_{\lambda(s)}(x)$.
We will also impose additional monotonicity restriction
\be\label{eq:gigj}
g_0 \geq g_0' \quad \text{or equivalently } \, H_{g_0} \leq H_{g_0'}
\ee
and $\lambda$ is monotone,
which is needed, for example, for the energy estimates for the
perturbed Cauchy-Riemann equation relevant to the definition of
Floer's continuation map for the wrapped Fukaya category of the cotangent bundle
in general. We refer to Remark \ref{rem:reason} to see how this monotonicity
condition enters in a crucial way.

\begin{rem}\label{rem:scaling}
For our purpose, we will obtain such an inequality by multiplying a sufficiently large constant
$\lambda> 0$ to one of the metrics say $g_1$. Because the base manifold $M \setminus K$ is not compact,
it is possible to achieve $g_0' \leq \lambda g_0$ everywhere on $M \setminus K$,
only when $g_0$ and $g_0'$ are Lipschitz-equivalent, i.e., when there is a constant $C = C(g,g) '> 0$
such that $\frac1C g_0 \leq g_0' \leq C g_0$. This is precisely the reason why we
look at those metrics $g, \, g'$ on $M \setminus K$ that are smoothly extendable to whole $M$
in the present paper.
We refer readers to Proposition \ref{prop:equivalence} for the latter statement.
\end{rem}
For given $\lambda =\{g(r)\}_{r \in [0,1]}$, we consider the fiber bundles
$$
\mathcal H_\lambda \to [0,1], \quad \mathcal H_{g(r)}=\mathcal H(T^*N,\widehat g(r))
$$
and
$$
\mathcal J_\lambda \to [0,1], \quad \mathcal J_{g(r)}=\mathcal J(T^*N,\widehat g(r)).
$$
We will also use the elongation function $\chi : \R \times [0,1]$ satisfying
\beastar
\chi(\tau) =
\begin{cases}
0 \quad \tau \leq 0; \\
1 \quad \tau \geq 1,
\end{cases}\quad 0 \leq \chi' \leq 2.
\eeastar

The morphism of $A_\infty$ functor $\mathcal F_\lambda$ consists of the following data of $A_\infty$ homomorphism
\begin{align*}
{\frak f}^k_\lambda:CW^*(L^{1},L^{0};H_{g_0}) \otimes \cdots \otimes CW^*(L^{k},L^{k-1};H_{g_0})\to  CW^*(L^k,L^0;H_{g_0'})[1-k].
\end{align*}
In order to construct ${\frak f}^k$ we need to use another moduli space of perturbed $J$-holomorphic curves of genus zero.
Here we consider the case where components are perturbed $J$-holomorphic with respect to the compatible almost complex structures and Hamiltonian functions which could {\em vary} on the components. We basically follow the construction described in \cite[Section 4.6]{fooo:book1} in the current wrapped context.

Note that each component $\Sigma_\alpha$ is an element of $\mathcal M^{\ell+1}$ for some $\ell\geq 1$, and two components have a matching asymptotic condition at most {\em one} point. Since our underlying manifold is exact and the Lagrangian submanifolds are exact, there is no possibility for sphere bubbles and disk bubbles.

Now we are ready to give the definition of $A_\infty$ functor. We start with
listing the data necessary for the construction. We would like to highlight
that in the item ($3(\rho)$) below, we put the time-dependent Hamiltonian $H_\lambda$
while we put the time-independent Hamiltonian in Definition \ref{def:Floer data}.

\begin{defn}[Floer data for $A_\infty$ functor]\label{def:Floer data for functor}
 A {\em Floer datum} $\mathcal D_{\frak f}=\mathcal D_{\frak f}(\Sigma,j;\rho)$ for $(\Sigma,j;\rho) \in \overline{\mathcal N}{}^{k+1}$ is defined by  replacing the properties (3), (4) in Definition \ref{def:Floer data} as follows:
\begin{enumerate}
\item {\bf Weights}: A $(k+1)$-tuple of non-negative integers $\mathbf w=(w^0,\dots,w^k)$ which is assigned to the marked points $\mathbf z$ satisfying
$$
w^0=w^1+\cdots+w^k.
$$
\item {\bf One forms}: A collection of one-forms $\beta_\alpha \in\Omega^1(\Sigma_\alpha)$ constructed in Section \ref{sec:one-form} satisfying $\epsilon^{j*}\beta$ agrees with $w^jdt$.
\item[($3_{\rho}$)] {\bf Hamiltonians}: a collection $\mathbf H^{\rho}$ of maps $\mathbf H_\alpha:\Sigma_\alpha\to\mathcal H_{\rho(\alpha)}$ assigned to each irreducible component $\alpha\in\frak A$
whose pull-back under $\epsilon^{j_\alpha}$ uniformly converges to $\frac{H^{j_\alpha}_\lambda}{(w^{j_\alpha})^2}\circ \psi_{w^{j_\alpha}}$ near each $z^{j_\alpha}$
for the path
$$
H^{j_\alpha}_\lambda:[0,1] \to \mathcal H_{\rho(\alpha)}
$$
dictated as described above.
\item[($4_{\rho}$)] {\bf Almost complex structures}: $\mathbf J^{\rho}$ which consist of maps $\mathbf J_{\alpha}:\Sigma_\alpha \to\mathcal J_{\rho(\alpha)}$ for each $\alpha\in\frak A$, whose pull-back under $\epsilon^{j_\alpha}$ uniformly converges to $\phi_{w^{j_\alpha}}^*J^{j_\alpha}$ near each $z^{j_\alpha}$ for the path
$$
J^{j_\alpha}_\lambda: [0,1] \to \mathcal J_{\rho(\alpha)}
$$
associated to $\lambda$.
\item[(5)] {\bf Vertical moving boundary}: A map $\eta:\partial \Sigma\to [1,+\infty)$ which converges to $w^j$ near each $z^j$.
\end{enumerate}

A universal choice of Floer data $\frak D_{\frak f}$ for the $A_\infty$ functor consists of the collection of $\mathcal D_{\frak f}$ for every $(\Sigma,j;\rho) \in \coprod_{k\geq1}\overline{\mathcal N}{}^{k+1}$ satisfying the corresponding conditions as in Definition \ref{def:universal Floer data} for $\overline{\mathcal N}{}^{k+1}$ and its boundary strata.
\end{defn}

Now we consider the system of $\mathbf H^{\rho}$ perturbed $\mathbf J^{\rho}$ holomorphic maps $\{u_\alpha:\Sigma_\alpha \to T^*N\}_{\alpha\in\frak A}$ for the data $\mathcal D_{\frak f}(\Sigma_\alpha,j_\alpha;\rho)$ and  a $(k+1)$-pair of Hamiltonian chords
\begin{align*}
\mathbf x=(x^1,\dots, x^k)&\in {\frak X}(H_{g_0};L^{0},L^{1}) \times \cdots \times {\frak X}(H_{g_0};L^{k-1},L^{k});\\
y&\in {\frak X}(H_{g_0'};L^0,L^k)
\end{align*}
satisfying stability, shifted asymptotic conditions, and the perturbed $J$-holomorphic equation
with respect to $\mathcal H_s$ and $\mathcal J_s$ for each parameter $s$ as follows.

With this preparation, we describe the structure of
the collection of maps $\{u_\alpha\}_{\alpha \in \frak A}$.

As a warm-up, we first consider the case $k=1$, i.e., with one input and one output puncture
whose domain is unstable. When the domain is smooth, it is isomorphic to
$\R \times [0,1]$. For a given pair of Hamiltonians $H^-, \, H^+$ and homotopy
$\{H_s\}_{s \in [0,1]}$ with $H_0 = H^-, \, H_1 =  H^+$,
we consider the \emph{non-autonomous} Cauchy-Riemann equation
\be\label{eq:CR-chi}
\begin{cases}
\frac{\del u}{\del \tau} + J_{\chi(\tau)}\left(\frac{\del u}{\del t} - X_{H_{\chi(\tau)}}(u)\right) = 0\\
u(\tau,0) \in L, \, u(\tau,1) \in L'.
\end{cases}
\ee
\begin{rem}\label{rem:reason} This appearance of non-autonomous Cauchy-Riemann equation in our
construction of $A_\infty$ functor is the reason why we imposed the
monotonicity hypothesis \eqref{eq:gigj} for the energy estimates.
We refer to \cite{oh:chain} for the energy formula
for the non-autonomous equation:
\bea\label{eq:lemma4.1}
\int \left|\frac{\del u}{\del \tau}\right|_{J_{\chi(\tau)}}^2 \, dt\, d\tau
& = &
 \mathcal A_{H^+}(z^+) - \mathcal A_{H^-}(z^-) \nonumber \\
&{}& -
\int_{-\infty}^\infty \chi'(\tau) \left(\int_0^1
\frac{\del H_s}{\del s}\Big|_{s = \chi(\tau)}(u(\tau,t))dt\right)  d\tau.
\eea
For readers' convenience, we give its derivation in Appendix.
It follows that we have uniform energy bound for general Hamiltonians
on non-compact manifolds \emph{provided
the homotopy $s \mapsto H_s$ is a monotonically increasing homotopy}.
\end{rem}
Denote by $\CN(x';x)$ the moduli space of finite energy solution with $u(-\infty) = x, \, u(\infty) = x'$.
We denote by $\overline{\CN}(x';x)$ its compactification. An element of $\overline{\CN}(x';x)$
is a linear chain of maps elements
$$
u^-_1, \, \ldots, \, u^-_{k^-}, u_0, u^+_1, \, \ldots, \, u^+_{k^+}
$$
such that $u^-_i \in \CM(x^-_{i-1},x^-_i)$ with $x^-_i \in \frak X(H^-;L,L')$ for $0 \leq i \leq k^-$,
$u^+_j \in \CM(x^+_{j-1},x^+_j)$ $x^+_j \in \frak X(H^+;L,L')$ for $0 \leq j \leq k^+$ and $u_0 \in \CN(x^-_{k^-},x^+_0)$. We denote the concatenation of such linear chain by
\be\label{eq:u-chain}
u = (u^-_1, \, \ldots, \, u^-_{k^-}, u_0, u^+_1, \, \ldots, \, u^+_{k^+}).
\ee
Then for given ordered sequence $0 \leq \rho_0 \leq \rho_1 \leq  \ldots \leq \rho_\lambda \leq 1$,
we perform the above construction for each consecutive pair $(H_{\rho_i}, H_{\rho_{i+1}})$
iteratively to form a `stair-case' chain which is a concatenation of
the linear chains $u^i$ over $i = 0, \ldots, \ell$. We denote by $\overline\CN(x';x)$
the set of such stair-case chains.

Now we turn to the case $k \geq 2$.
Let $\Sigma_\alpha$ be an irreducible component of $\Sigma$.
Denote by $\{z_0^\alpha, \ldots, z_{k_\alpha}^\alpha\}$ $k_\alpha \geq 1$ the special points of
$\alpha$ which is the union of nodal points and marked points of $\Sigma$
on $\Sigma_\alpha$.  We denote by $L^\alpha_i$ the Lagrangian submanifold that
we originally put around the vertex $\alpha$ in the chambers given by the dual graph of
$\Sigma$ in the beginning. Denote by $\Sigma_{\alpha^+}$ the component attached to
$z_0^\alpha$. Then we have $\alpha \prec \alpha^+$ and so $\rho(\alpha) \leq \rho(\alpha^+)$.

We now describe the equation for $u^\alpha$ on the strip-like region at $z_0^\alpha$.
For given $\alpha, \, \beta$, we consider the function homotopy $s \mapsto H_s^{(\alpha,\beta)}$ defined by
$$
H^{(\alpha,\beta)}_s = (1-s)H_{\rho(\alpha)} + s H_{\rho(\beta)}
$$
or any homotopy $\{H_s\}_{s \in [0,1]}$ connecting $H_{\rho(\alpha)}$ and $H_{\rho(\beta)}$.
In the strip-like region near $z_0^\alpha$ on $\Sigma_\alpha$,
we consider the non-autonomous Cauchy-Riemann equation on $(-\infty,0] \times [0,1]$
$$
\frac{\del u}{\del \tau} + J_{\chi(\tau_0^\alpha - \tau)}\left(\frac{\del u}{\del t} -
X_{H^{(\alpha,\alpha^+)}_{\chi(\tau_0^\alpha - \tau)}}(u)\right) = 0
$$
for some $\tau^\alpha_0 \leq -1$.

On the other hand, at the input punctures $z_i^\alpha$,
we denote by $\Sigma_{\alpha^-}$ the component attached to $z^i_\alpha$ at the
nodal point $z_0^{\alpha^-}$. Then we put the non-autonomous equation
on $[0, \infty) \subset [0,1]$
$$
\frac{\del u}{\del \tau} + J_{\chi(\tau - \tau_i^\alpha)}\left(\frac{\del u}{\del t} -
X_{H^{(\alpha^-,\alpha)}_{\chi(\tau - \tau_i^\alpha)}}(u)\right) = 0
$$
for some $\tau_i^\alpha \geq 1$.

At each nodal point of $\Sigma$ associated with $\alpha \prec \beta$, we insert a linear chain
associated to $H^- = H_{\rho(\alpha)}, \, H^+ = H_{\rho(\beta)}$ of the type
$$
u^-_1, \, \ldots, \, u^-_{k^-}, u_0
$$
on the strip-like regions of $z_i^\alpha$ with $u^-_i$ as above but $u_0$ is a map
on $[0, \infty) \times [0,1]$ or of the type
$$
u_0, u^+_1, \, \ldots, \, u^+_{k^+}
$$
on the strip-like regions of $z_0^{\beta}$ with $u^+_j$ as above but $u_0$ is a map
on $(-\infty, 0] \times [0,1]$. \emph{We put the chain
at most one of the two regions.}

In summary, we require the map $u_\alpha$ on $\Sigma_\alpha$ to satisfy
\begin{enumerate}
\item
\be\label{eq:CRJHalpha}
(du_\alpha- X_{{\bf H}_{\alpha}}\otimes \beta_\alpha)_{{\bf J}_{\alpha}}^{(0,1)}=0
\ee
for each $\alpha\in\frak A$. Here we emphasize the fact that the Hamiltonian ${\bf H}_\alpha$ is
autonomous away from the strip-like regions of each component. \emph{We do not exclude the
case where the component is of the $\frak m$-type, i.e., one that
satisfies the autonomous equation of $H_{\rho(\alpha)}$.}
\item $u_\alpha\circ \epsilon^{j_\alpha}(-\infty,t)=\psi_{w^{j_\alpha}}\circ x^{j_\alpha}(t)$,
for the input punctures $j_\alpha$ of $\Sigma_\alpha$.
\item $u\circ \epsilon^{0_\alpha}(+\infty,t)=\psi_{w^{0_\alpha}}\circ x^{0_\alpha}(t)$,
for the output puncture $0_\alpha$ of $\Sigma_\alpha$.
\item $\big((\Sigma_\alpha,\mathbf z),(u_\alpha)\big)_{\alpha\in\frak A}$ is stable.
\end{enumerate}

\begin{figure}[ht]
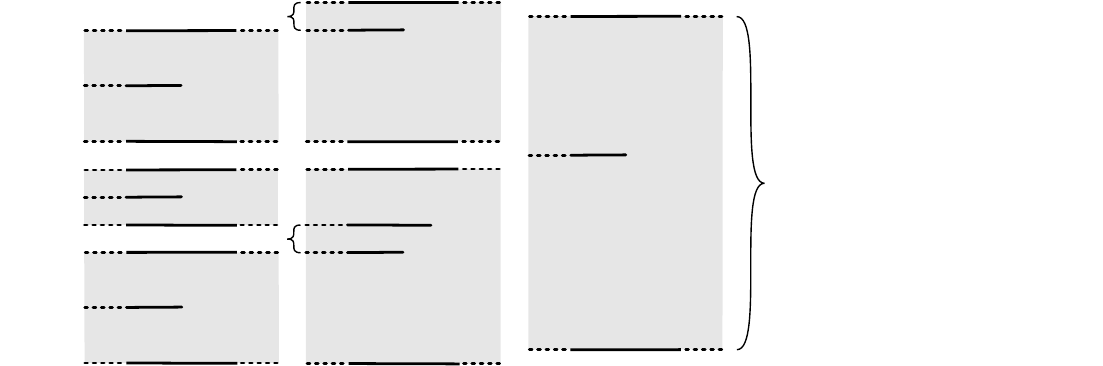
\caption{An example of slit domains for the $A_\infty$ functor}
\end{figure}

To be able to construct the relevant compactified moduli space of solutions \eqref{eq:CRJHalpha},
we need the uniform energy bound and $C^0$ estimates both of which require the monotonicity condition
\eqref{eq:gigj}. The proof of the following energy bound is given by combining the
energy bound obtained in Section \ref{subsec:energyC0} and that of \eqref{eq:lemma4.1} in
Remark \ref{rem:reason}.
\begin{lem} For any finite energy solution of \eqref{eq:CRJHalpha}, we have the energy identity
\be\label{eq:functor-energy}
E(u) \leq  \CA_{H^+}(x^0) - \sum_{j=1}^k \CA_{H^-}(x^j)
- \sum_{\alpha \in \frak A} \int_{-\infty}^\infty \chi'(\tau) \left(\int_0^1
\frac{\del H_{\lambda^\chi}^{\alpha}}{\del s}\Big|_{s = \chi(\tau)}(u(\tau,t))dt\right)  d\tau
\ee

where the map $\lambda^\chi: \R_\pm \to [0,1]$ is the elongated path defined by
$
\lambda^\chi(\tau) = \lambda(\chi(\tau)).
$
In particular, if $\lambda$ is a monotonically decreasing path, i.e, if
$\frac{\del H_{\lambda}}{\del s} \geq 0$, then we have
$$
E(u) \leq  \CA_{H^+}(x^0) - \sum_{j=1}^k \CA_{H^-}(x^j).
$$
\end{lem}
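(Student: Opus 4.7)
The plan is to extend the component-by-component energy computation of Subsection \ref{subsec:energyC0} to the $A_\infty$-functor setting by tracking, in addition to the autonomous piece, the explicit domain-dependence of $\mathbf H_\alpha$ on those strip-like necks where the elongation function $\chi$ activates the metric homotopy $\lambda$. After summing over $\alpha \in \frak A$, applying Stokes' theorem, and exploiting matching conditions at interior nodes, the asymptotic contributions at external punctures will produce the two action terms, while the $\tau$-dependence of $\mathbf H_\alpha$ will manifest as the claimed $\chi'\,\partial_s H$ correction, exactly in the spirit of \eqref{eq:lemma4.1} recalled from \cite{oh:chain}.

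Concretely, on each irreducible component $\Sigma_\alpha$ the equation \eqref{eq:CRJHalpha} yields the pointwise identity
$$
\tfrac12 \bigl|du_\alpha - X_{\mathbf H_\alpha}\otimes \beta_\alpha\bigr|^2_{\mathbf J_\alpha}\, d\mathrm{vol}_{\Sigma_\alpha} = u_\alpha^*\omega - d_x \mathbf H_\alpha(du_\alpha)\wedge \beta_\alpha,
$$
with $d_x$ the fiber differential. Writing $d_x\mathbf H_\alpha(du_\alpha) = d(\mathbf H_\alpha\circ u_\alpha) - u_\alpha^*(d_z \mathbf H_\alpha)$, using $d((\mathbf H_\alpha\circ u_\alpha)\beta_\alpha) = d(\mathbf H_\alpha\circ u_\alpha)\wedge \beta_\alpha + (\mathbf H_\alpha\circ u_\alpha)\, d\beta_\alpha$, and discarding $(\mathbf H_\alpha\circ u_\alpha)\,d\beta_\alpha \leq 0$ by subclosedness of $\beta_\alpha$ together with $\mathbf H_\alpha \geq 0$, I obtain
$$
E(u_\alpha) \leq \int_{\Sigma_\alpha} \Bigl(u_\alpha^*\omega - d\bigl((\mathbf H_\alpha\circ u_\alpha)\beta_\alpha\bigr) + u_\alpha^*(d_z \mathbf H_\alpha)\wedge \beta_\alpha\Bigr).
$$
Summing over $\alpha \in \frak A$ and invoking Stokes on the first two summands (with $\omega=-d\theta$): the Lagrangian portion of each $\partial\Sigma_\alpha$ vanishes because $i^*\beta_\alpha = 0$ and each $L^i$ is exact with primitive $f^i$; at each interior node the two incident ends share the same asymptotic chord and weight but with opposite orientation, so their contributions cancel pairwise; and the external punctures, after undoing the conformal rescaling of Lemma \ref{lem:vertical rescaling} and applying \eqref{eqn:action}, yield precisely $\CA_{H^+}(x^0) - \sum_{j=1}^k \CA_{H^-}(x^j)$.

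The residual $\sum_\alpha \int_{\Sigma_\alpha} u_\alpha^*(d_z\mathbf H_\alpha)\wedge \beta_\alpha$ is supported on the finitely many strip-like necks on which $\mathbf H_\alpha$ depends on $\tau$ through $\chi$. On each such neck $\beta_\alpha = w\,dt$ and $d_z\mathbf H_\alpha = \pm\chi'(\tau)\,\partial_s H^\alpha_\lambda|_{s=\chi(\tau)}\, d\tau$, the sign being determined by whether the neck sits at an output puncture $z_0^\alpha$ (with elongation $\chi(\tau_0^\alpha - \tau)$) or an input puncture $z_i^\alpha$ (with elongation $\chi(\tau - \tau_i^\alpha)$); once the signs are tallied consistently, this residual assembles into the asserted $-\sum_\alpha \int \chi'(\tau)(\cdots)\,d\tau$. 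The monotone case then follows at once, since $\del H_\lambda/\del s \geq 0$ forces the subtracted sum to be non-positive, so dropping it only strengthens the bound. The main obstacle I foresee is precisely this sign bookkeeping for the two flavors of elongation neck, combined with verifying that the $w^j$-scaling of $\beta$, the $(w^j)^{-2}$-rescaling of $H$ in the Floer datum, and the $\psi_{w^j}$-rescaling of asymptotic chords conspire, via Lemma \ref{lem:vertical rescaling}, to produce the actions of the unrescaled $H_{g_0}$ and $H_{g_0'}$ at the external punctures rather than those of the intermediate rescaled Hamiltonians.
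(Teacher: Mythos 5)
Your proposal is correct and follows exactly the route the paper itself indicates: the paper proves this lemma by the single sentence ``combine the energy bound obtained in Section \ref{subsec:energyC0} and that of \eqref{eq:lemma4.1} in Remark \ref{rem:reason},'' and your component-by-component computation, splitting $d_x\mathbf H_\alpha(du_\alpha) = d(\mathbf H_\alpha\circ u_\alpha) - u_\alpha^*(d_z\mathbf H_\alpha)$ and isolating the $\chi'\,\partial_s H$ residual on the elongation necks, is precisely that combination carried out explicitly. The sign bookkeeping and $w^j$-rescaling issues you flag are real but benign, since (as Remark \ref{rem:quadratic} notes) the quadratic Hamiltonian satisfies $\frac{H}{w^2}\circ\psi_w = H$, so the conformal rescaling drops out and the sign works out to the asserted $-\sum_\alpha\int\chi'(\cdots)$.
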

Then we denote the space of such a system of maps satisfying the above conditions by
\begin{align}\label{eqn:moduli space N}
\overline{\mathcal N}{}^{k+1}_{(\Sigma,j;\rho)}(y;\mathbf x)=\overline{\mathcal N}{}^{k+1}(y;\mathbf x;\mathcal D_{\frak f}(\Sigma,j;\rho)).
\end{align}
Consider a parameterized moduli space
\[
\overline{\mathcal N}{}^{k+1}(y;\mathbf x) =\bigsqcup_{(\Sigma,j;\rho)\in \overline{\mathcal N}{}^{k+1}} \overline{\mathcal N}{}^{k+1}_{(\Sigma,j;\rho)}(y;\mathbf x).
\]
We have a forgetful map
$$
{\frak{forget}}: \overline{\mathcal N}{}^{k+1}(y;\mathbf x) \to \overline{\mathcal N}{}^{k+1}
$$
for $k \geq 2$ which induces a decomposition
$$
\overline{\mathcal N}{}^{k+1}(y;\mathbf x) = \bigsqcup_{\Gamma \in G{}^{k+1}} {\CN}_{\Gamma}(y;\mathbf x)
$$
where ${\CN}_\Gamma(y;\mathbf x) = {\frak{forget}}^{-1}(\Gamma)$.

\begin{lem}
For a generic choice of universal Floer data $\frak D_{\frak f}$, the moduli space
${\CN}_\Gamma(y;\mathbf x)$ is a manifold of dimension
$$
\mu(y)-\sum_{i=1}^{k}\mu(x^i) - |\Gamma|
$$
where $|\Gamma|$ is given as in \eqref{eq:|Gamma|}.
\end{lem}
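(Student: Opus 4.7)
The dimension formula decomposes as (Fredholm index of the linearized operator at a solution with fixed conformal structure) plus the dimension of the relevant stratum of the domain moduli space, after establishing transversality. First I would fix $\Gamma \in G^{k+1}$ and a representative $((\Sigma_\alpha, j_\alpha; \rho))_{\alpha \in \frak A}$ of $\CN_\Gamma$, and consider the linearization of the perturbed Cauchy--Riemann equation \eqref{eq:CRJHalpha} on each component $\Sigma_\alpha$ as a Fredholm operator between weighted Sobolev spaces, with Lagrangian boundary conditions moved by $\psi_{\eta}$ and with asymptotic Hamiltonian chords (original inputs/outputs at $\infty$ and nodal chords at the special points matching to neighboring components). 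Because all chords are non-degenerate and the $\eta$- and $\beta$-data on the strip-like ends are standard, the index at $\alpha$ is the usual Maslov--Viterbo formula $\mu(y_\alpha) - \sum_{j} \mu(x^j_\alpha)$ (the non-autonomous $H^{(\alpha,\alpha^+)}_{\chi(\cdot)}$ data in the region where $\rho$ interpolates does not affect the index since the operator is a compact perturbation of the autonomous one).

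Second I would sum these indices over $\alpha \in \frak A$. At each interior edge (node) of the dual tree of $\Gamma$, the same chord appears once as an input Maslov index on one side and once as an output Maslov index on the other, so all nodal contributions cancel in a telescoping sum, leaving exactly $\mu(y) - \sum_{i=1}^{k} \mu(x^i)$. Combining this Fredholm index with the dimension $|\Gamma|$ of the stratum $\CN_\Gamma \cong D^{|\Gamma|}$ from Lemma \ref{lem:dim of Nspace} (recalling the extra time-allocation parameter in $(0,1)$ attached to each $\frak f$-type vertex) yields the stated expected dimension of the moduli space.

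Third, to upgrade this expected dimension to an actual smooth manifold dimension, I would run the standard Sard--Smale transversality argument in the universal setting over the space of admissible Floer data $\frak D_{\frak f}$ of Definition \ref{def:Floer data for functor}. The essential point is that on each irreducible component $\Sigma_\alpha$ one has plenty of room to perturb $(\mathbf H_\alpha, \mathbf J_\alpha)$ at any interior injective point of $u_\alpha$ without disturbing the prescribed asymptotic profile at the punctures or nodes; a classical cokernel-killing argument then gives surjectivity of the universal linearization, and restricting to a generic slice recovers a smooth moduli space of the expected dimension. This has to be done inductively over the strata of $\overline{\CN}^{k+1}$, in accordance with the consistency axioms of Definition \ref{def:universal Floer data}, so that the perturbations already chosen on boundary strata (of lower dimension) extend to the given stratum.

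The main obstacle is the last step: organizing the inductive transversality so as to respect both the conformal-equivalence consistency along codimension-one boundary faces (the product strata in Theorem \ref{thm:f-boundary}) and the degeneration of $\frak f$-components into $\frak m$- or $\frak m'$-components as $\rho(\alpha) \to 0$ or $1$. One must verify that all relevant solutions are somewhere injective on each component, which follows from the exactness of the Lagrangians and the absence of sphere/disk bubbling (granted by the $C^0$ estimates of Propositions \ref{prop:horizontal} and \ref{prop:vertical} together with the energy identity \eqref{eq:functor-energy}); the remaining bookkeeping is parallel to the transversality proof for $\frak m^k$ and can be handled as in \cite{abou-seidel, seidel:book, fooo:book2}.
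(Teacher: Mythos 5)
Your overall strategy --- (i) compute the Fredholm index on each irreducible component $\Sigma_\alpha$, (ii) telescope the indices across the dual tree of $\Gamma$ so that the Maslov indices of the nodal chords cancel, (iii) add the dimension of the domain stratum $\CN_\Gamma \cong D^{|\Gamma|}$, and (iv) run a Sard--Smale argument inductively over the strata --- is the standard one and is what the paper's (unwritten) proof must amount to; the paper states the lemma without argument. The somewhere-injectivity and no-bubbling input you invoke (exactness, together with the horizontal and vertical $C^0$ bounds and the energy identity \eqref{eq:functor-energy}) is also the right justification.

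However, you should not have concluded that this "yields the stated expected dimension." Your argument produces $\mu(y) - \sum_{i=1}^k \mu(x^i) + |\Gamma|$ (Fredholm index plus domain dimension), whereas the lemma as printed reads $\mu(y) - \sum_{i=1}^k \mu(x^i) - |\Gamma|$. These differ by $2|\Gamma|$. In fact the printed formula is internally inconsistent with the sentence immediately following the lemma: when $V(\Gamma)=V^{\frak f}(\Gamma)$ one has $|\Gamma|=k-1$, and the paper then asserts the dimension is $\mu(y)-\sum\mu(x^i)-1+k = \mu(y)-\sum\mu(x^i)+(k-1)$, which is $+|\Gamma|$, not $-|\Gamma|$. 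The same sign is forced by the degree shift $[1-k]$ of $\frak f^k$ and by comparison with the $\frak m$-formula $\mu(x^0)-\sum\mu(x^i)-2+k$, where the domain contribution $k-2$ also enters with a plus. So the lemma contains a sign typo, and the correct output of your step (iii) is $+|\Gamma|$; you should flag this rather than claim agreement with the printed minus sign. With that correction made explicit, your argument is a sound proof of the (corrected) statement.
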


In particular, if $V(\Gamma) = V^{\frak f}(\Gamma)$, i.e.,
there is no ${\frak m}$ or ${\frak m'}$ component, then
the dimension becomes
$$
\mu(y) - \sum_{i=1}^{k}\mu(x^i) -1 +k.
$$
The matrix coefficient of the $A_\infty$ homomorphism
\begin{align*}
{\frak f}^k_\lambda:CW^*(L^{1},L^{0};H_{g_0}) \otimes \cdots \otimes CW^*(L^{k},L^{k-1};H_{g_0})\to  CW^*(L^{k},L^{0};H_{g_0'})[1-k].
\end{align*}
is defined by
\begin{align}\label{eqn:A_infty functor F}
\frak f^k_\lambda(x^1 \otimes \cdots \otimes x^k)=\sum_{y}(-1)^\spadesuit\#\overline{\mathcal N}{}^{k+1}(y;{\bf x})\, y,
\end{align}
where $\spadesuit=\sum_{j=1}^{k}j \cdot \mu(x^j)+k$ and $\#$ is the signed sum as before.
Note that $\#(\overline{\mathcal N}{}^{k+1}(y;\mathbf x))$ becomes zero unless
$$
\mu(y)=\sum_{i=1}^{k}\mu(x^i)+1-k.
$$

Since the equation \eqref{eq:CRJHalpha}
for each given $\rho(\alpha)$ does not involve moving boundary condition but only fixed
boundary condition, the $C^0$-bound for \eqref{eq:duXHJ} still applies to prove the following
$C^0$-bounds.

Let $(\Sigma,j)$ be equipped with strip-like ends
$\epsilon^k: Z_\pm \to \Sigma$ at each puncture $z_k$ as before.
\begin{prop}[Vertical $C^0$ estimates for $\frak f$-components]
\label{prop:vertical-f}
Let $\mathbf x=(x^0,\dots,x^{k})$ be a $(k+1)$-tuple of
Hamiltonian chords $x^j \in {\frak X}( H_{g_0}; L^{j-1},L^{j})$. Then
$$
\max_{z \in \Sigma}|p(u(z))| \leq \max_{r \in [0,1]} \max
\{ {\frak{ht}}(\mathbf x;H_{g_{(r)}},\{L^i\})\mid i = 0, \ldots, k \}
$$
for any solution $u$ of \eqref{eq:CRJHalpha}.
\end{prop}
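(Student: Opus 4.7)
The plan is to reduce the proposition to a componentwise application of Proposition \ref{prop:vertical} together with a parallel maximum-principle argument on the non-autonomous transition strips that arise inside the linear chains. Fix a solution $u = \{u_\alpha\}_{\alpha \in \mathfrak{A}}$ of \eqref{eq:CRJHalpha}, together with the inserted linear chains $u_0^{(\alpha,\alpha^+)}$ at each node. I would decompose $\Sigma$ into three types of pieces: \textbf{(i)} the interior part of each $\Sigma_\alpha$, on which the equation is autonomous with Hamiltonian $H_{g(\rho(\alpha))}$, contact-type almost complex structure $J_{g(\rho(\alpha))}$, sub-closed one-form $\beta_\alpha$ with $i^*\beta_\alpha = 0$, and fixed admissible Lagrangian boundary; \textbf{(ii)} the asymptotic strip-like ends at the exterior punctures, where the chord is one of the $x^j$ or $y$; and \textbf{(iii)} the transition strips inserted at nodes, on which the Hamiltonian interpolates between $H_{g(\rho(\alpha))}$ and $H_{g(\rho(\alpha^+))}$ via the elongated path $r \mapsto g(r)$ under $\chi$.

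On pieces of type (i), Proposition \ref{prop:vertical} is directly applicable and bounds $|p\circ u_\alpha|$ in terms of the heights of its own $(k_\alpha+1)$-tuple of asymptotic chords, each of which is a chord of $H_{g(\rho(\alpha))}$. For pieces of type (iii), I would essentially rerun the strong maximum principle argument underlying Proposition \ref{prop:vertical}. The analytic input is the contact-type identity $(-\theta) \circ J_{g(r)} = dH_{g(r)}$, which holds pointwise for every $r \in [0,1]$ by Definition \ref{def:Admissible almost complex structure}; combined with $d\beta \leq 0$ and $i^*\beta = 0$, this yields the weak subharmonicity of $|p|^2 \circ u$ exactly as in the autonomous case. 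The only new term compared to the autonomous computation is the $\partial_s H_s$ contribution, and because the homotopy $r \mapsto H_{g(r)}$ is monotone and $\chi' \geq 0$, this term has the right sign to preserve subharmonicity. Hence the maximum of $|p|$ on such a transition strip is attained either on its boundary (where it is bounded by $\sup_{L^i}|p|$) or at its two asymptotic chords, which are chords of $H_{g(r)}$ for some $r \in [0,1]$.

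Finally, the local bounds on pieces (i) and (iii) are linked by the matching conditions at nodal and asymptotic ends: each interior matching chord is a chord of $H_{g(r)}$ for some $r \in [0,1]$ shared by the two adjacent pieces, the exterior input chords $x^j$ are chords of $H_{g_0} = H_{g(0)}$, and the exterior output $y$ is a chord of $H_{g_0'} = H_{g(1)}$. Propagating the componentwise bound along the partial order $\prec$ from the root, every chord that appears is bounded in height by the continuous function $r \mapsto \mathfrak{ht}(\mathbf{x}; H_{g(r)}, \{L^i\})$ on the compact interval $[0,1]$, so taking the maximum over all components and over $r \in [0,1]$ delivers the stated bound. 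The main obstacle in carrying this out is the bookkeeping at the internal nodes, namely verifying that no inserted linear chain produces an intermediate chord whose height exceeds $\sup_{r \in [0,1]} \mathfrak{ht}(\mathbf{x}; H_{g(r)}, \{L^i\})$; this should follow from the non-autonomous energy identity \eqref{eq:functor-energy} combined with the monotonicity assumption \eqref{eq:gigj}, which bounds the action of each interior chord by the actions at the exterior ends.
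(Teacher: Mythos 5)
Your core analytic step matches the paper exactly: on the non-autonomous strip-like regions one carries out the Laplacian computation for $\rho=e^{s}\circ v$ as in Proposition~\ref{prop:vertical}, and the new term produced by the $\tau$-dependence of the Hamiltonian is $\chi'(\tau)\,\partial_s H^s|_{s=\chi(\tau)}$, which is $\geq 0$ precisely because of the monotonicity hypothesis \eqref{eq:gigj} and $\chi'\geq 0$. The paper's own proof of Proposition~\ref{prop:vertical-f} (the short paragraph at the end of Section~\ref{sec:moving}, around \eqref{eq:Deltarho-1}) says exactly this and nothing more: since the Lagrangian boundaries are fixed for $\frak f$-components, one reduces to the computation of Proposition~\ref{prop:vertical}, notes that the only change is in the strip-like region, and observes the extra term has the favorable sign so both the interior and boundary (strong) maximum principle go through. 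So your pieces (i)--(iii) decomposition and the sign check correctly reproduce the paper's argument.

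What is \emph{not} in the paper is your third step, the propagation across internal nodes and the worry that an intermediate matching chord might have larger height than the exterior chords. The paper's sketch silently treats the estimate as a per-component one and does not address this at all. Your instinct that something needs to be said is reasonable, but the fix you propose (invoking the energy identity \eqref{eq:functor-energy} plus monotonicity to bound interior chord actions) does not straightforwardly deliver the stated bound: for a multi-input component the identity gives $\CA(x^0)\geq \sum_j\CA(x^j)$, and since for the kinetic Hamiltonian $\CA(\gamma)=-\tfrac12\|p\circ\gamma\|_{C^0}^2$ this yields $\|p(x^0)\|^2\leq\sum_j\|p(x^j)\|^2$, i.e.\ a bound on the intermediate output by the \emph{sum} of input heights squared rather than by the \emph{max}. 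That is weaker than what the proposition asserts, so this part of your proposal is a sketch that does not close. The clean route, and the one the paper implicitly takes, is to regard the estimate as an a priori bound component by component, in terms of the heights of that component's own asymptotic chords; the control over where those chords can live (chords of $H_{g(r)}$ for $r\in[0,1]$) is what the $\max_{r\in[0,1]}$ in the statement is bookkeeping. If you want a genuinely global bound over the broken domain in terms of the exterior chords alone you would need a separate argument, and the energy-identity route as stated does not quite give it.
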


\begin{prop}[Horizontal $C^0$ estimates for $\frak f$-components]
\label{prop:horizontal-f}
Assume the one-form $\beta$ satisfies \eqref{eq:beta}. Suppose $L^j \subset
W_\lambda$ for all $j =0,\ldots, k$, and let
$x^j \in {\frak X}( H_{g_0}; L^{j-1},L^{j})$ for $j=1, \dots, k$ and
$x^0 \in {\frak X}(H_{g_0'}; L^0,L^k)$ be Hamiltonian chords. Then
\be\label{eq:horizontalC0}
\Im u \subset W_\lambda
\ee
for any solution $u$ of \eqref{eq:CRJHalpha}.
\end{prop}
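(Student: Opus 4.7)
The plan is to reuse the template of the horizontal $C^0$-estimate from Proposition~\ref{prop:horizontal}, applying it componentwise to the system of maps $\{u_\alpha\}$ and invoking the strong maximum principle on the composite function $a \circ \pi \circ u_\alpha$, where $a$ denotes the cylindrical coordinate on the end $N^{\text{\rm end}} \cong T \times [0,\infty)$ and $\pi : T^*N \to N$ is the bundle projection. Fix a threshold $a_\lambda$ large enough that $W_\lambda \supset T^*(N \setminus \{a > a_\lambda\})$; it then suffices to establish $a \circ \pi \circ u_\alpha \leq a_\lambda$ on every irreducible piece $\Sigma_\alpha$.

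The key structural point is that on the cylindrical end, every Hamiltonian $H_{g(r)}$ in the path $\lambda$ and every $H_{g(s)}$ appearing in the non-autonomous equation along a strip-like end is \emph{independent of} $a$: this follows from the explicit form of the cylindrical adjustments described in Definition~\ref{defn:cyl-adj} and from the coordinate computation of Proposition~\ref{prop:equivalence}, which shows that all convex combinations $g(r)=g_0+r(g_0'-g_0)$ keep a product form along $a$. In particular $\partial_a H_{g(s)} = 0$ for every $s \in [0,1]$, and the Sasakian almost complex structures $J_{g(s)}$ preserve the horizontal-vertical splitting. Together with the sub-closedness $d\beta_\alpha \leq 0$ and the co-closedness $d(\beta_\alpha \circ j) = 0$ supplied by Lemma~\ref{lem:beta}, this feeds into the same Laplacian computation as in the proof of Proposition~\ref{prop:horizontal} (cf. the referenced Lemma~\ref{lem:Deltaa}) and yields $\Delta_{\Sigma_\alpha}(a \circ \pi \circ u_\alpha) \geq 0$ at every interior point where $a \circ \pi \circ u_\alpha > a_\lambda$, on both the autonomous $\frak m$- and $\frak m'$-components and on the $\frak f$-components where $(\mathbf H_\alpha, \mathbf J_\alpha)$ depends on $\chi(\tau)$.

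For the boundary, the vertical-only moving boundary condition $u_\alpha(z) \in \psi_{\eta(z)}(L^i)$ preserves the projection to $N$, and since $L^i \subset W_\lambda$ we get $a \circ \pi \circ u_\alpha \leq a_\lambda$ on $\partial \Sigma_\alpha$. At each puncture, the asymptotic chord is $\psi_{w^j}\circ x^j$ or $\psi_{w^0}\circ y$, and a straightforward application of the maximum principle to the $a$-coordinate of the corresponding geodesic (which solves $\nabla_t \dot x = 0$ and thus satisfies $\ddot a = 0$ on the cylindrical end, as in the proposition preceding \eqref{eq:offshell-action}) keeps those chords inside $W_\lambda$. The strong maximum principle, applied componentwise and propagated across nodes via the matching asymptotic conditions, then forces $a \circ \pi \circ u \leq a_\lambda$ globally, i.e. $\Im u \subset W_\lambda$.

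The \textbf{main obstacle} is the careful bookkeeping in the subharmonicity step to verify that the new $\chi'(\tau)$-contributions arising from the non-autonomy of $\mathbf H^\rho$ and $\mathbf J^\rho$ on the strip-like ends interpolating between components of different $\rho$-values do not introduce a term of uncontrolled sign into $\Delta_{\Sigma_\alpha}(a \circ \pi \circ u_\alpha)$. This reduces to the explicit fact that on the cylindrical end the pair $(H_{g(s)}, J_{g(s)})$ depends on $s$ only through the fiberwise dual metric, so both $\partial_s H_{g(s)}$ and $\partial_s J_{g(s)}$ act purely in vertical/fiber directions and are annihilated by the horizontal projection onto $da$; one must also check that this vanishing is preserved under the gluing of strip-like ends into the slit-domain picture, which is built into the consistency of the universal Floer data $\frak D_{\frak f}$ from Definition~\ref{def:Floer data for functor}.
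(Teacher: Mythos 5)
Your argument is correct and matches the paper's intended template: reduce to the strip-like ends, derive the Laplacian identity of Lemma~\ref{lem:Deltaa} for $a\circ v$, and apply the maximum principle. The paper's sketch at the end of Section~\ref{sec:moving} only works out the vertical estimate \eqref{eq:Deltarho-1} explicitly, where a $\chi'(\tau)\,\frac{\del H^s}{\del s}$ term appears and is controlled by monotonicity, and simply gestures at the horizontal case as an analogue; you correctly observe that in fact no analogous term appears in the horizontal Laplacian. The reason, which the paper leaves implicit, is that both cylindrical adjustments retain the identical $da^2$ factor on $N^{\text{\rm end}}$, so every $g(r)=g_0+r(g_0'-g_0)$ has the split form $da^2\oplus h_r$ there; hence the $(a,p_a)$-component of $X_{H_{g(r)}}$ and of $J_{g(r)}$ is literally $r$-independent, the system \eqref{eq:a-component} holds verbatim along the whole homotopy, and Lemma~\ref{lem:Deltaa} applies unchanged. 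This makes the horizontal estimate strictly easier than the vertical one and, as a bonus, independent of the monotonicity hypothesis on $\lambda$. One small imprecision in your wording: the scalar $\del_s H_{g(s)}$ does not itself ``act'' in any direction; the statement you actually need is $\del_a H_{g(s)}=0$ together with the vanishing of $\del_s J_{g(s)}$ on $\operatorname{span}\{\del_a,\del_{p_a}\}$, both of which follow from the product structure.
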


We now describe the structure of boundary (or codimension one) strata
of ${\overline \CN}{}^{k+1}(y;\mathbf x)$. We first form the union
\[
\overline{\mathcal N}{}^{k+1}(y;\mathbf x) =\bigsqcup_{(\Sigma,j;\rho)\in \overline{\mathcal N}{}^{k+1}} \overline{\CN}{}^{k+1}_{(\Sigma,j;\rho)}(y;\mathbf x).
\]
Then consider the asymptotic evaluation maps
\begin{align*}
&ev^\ell_-: \bigsqcup_{(y;{\bf x})}\overline{\mathcal N}{}^{k+1}(y;\mathbf x)\to{\frak X}(H_{\rho(z^\ell)};L^{\ell-1},L^{\ell})\times [0,1] \quad \text{for }\, \ell = 1, \ldots, k\\
&ev^0_+: \bigsqcup_{(y;{\bf x})}\overline{\mathcal N}{}^{k+1}(y;\mathbf x)
\to{\frak X}(H_{\rho(z^0)};L^{0},L^{k}) \times [0,1]
\end{align*}
given by
\begin{align*}
&ev^\ell_-(\{u_\alpha\}_{\alpha\in\frak A}) = (u\circ \epsilon^\ell(-\infty, \cdot \ ), \rho(z^\ell)) \quad \text{for }\, \ell = 1, \ldots, k\\
&ev^0_+(\{u_\alpha\}_{\alpha\in\frak A}) = (u\circ \epsilon^0(+\infty, \cdot \ ), \rho(z^0))
\end{align*}
where $\rho(z^i) \in [0,1]$ is a $\rho$ value of the component having the $i$-th end.
We also consider similar evaluation maps from
\begin{align*}
&ev^\ell_-: \bigsqcup_{(y;{\bf x})}\overline{\mathcal M}{}^{k+1}(y;\mathbf x)\to{\frak X}(H_{g_0};L^{\ell-1},L^{\ell}) \quad \text{for }\, \ell = 1, \ldots, k\\
&ev^0_+: \bigsqcup_{(y;{\bf x})}\overline{\mathcal M}{}^{k+1}(y;\mathbf x)\to{\frak X}(H_{g_0};L^{0},L^{k}).
\end{align*}

\begin{thm}
Let $g, \, g'$ be a pair of smooth metrics on $M$ satisfying $g \geq g'$, and $\lambda$ be a
monotone homotopy between them with $\lambda(0) = g, \, \lambda(1) = g'$.
Then the definition \eqref{eqn:A_infty functor F}
is well-defined and the maps $\{\frak f^k_\lambda\}$ satisfy the $A_\infty$ functor relation.
\end{thm}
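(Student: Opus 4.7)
The plan is to establish the theorem by a standard cobordism analysis of the codimension one boundary of one-dimensional moduli spaces $\overline{\CN}{}^{k+1}(y;\mathbf x)$. Well-definedness of $\frak f^k_\lambda$ reduces to showing that zero-dimensional components are compact oriented points whose signed count is finite. Four ingredients combine to give this. First, the monotonicity hypothesis $g\geq g'$ ensures $\frac{\del H_\lambda}{\del s}\geq 0$, so the energy identity \eqref{eq:functor-energy} yields a uniform a priori bound $E(u) \leq \CA_{H^+}(x^0) - \sum_{j=1}^k \CA_{H^-}(x^j)$. Second, Propositions \ref{prop:horizontal-f} and \ref{prop:vertical-f} confine every solution to a fixed compact subset $W_\ell \cap \{|p|\leq R\}$ of $T^*(M\setminus K)$. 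Third, the exactness of $(T^*N,\omega_0)$ together with the exactness of the $L^i$ rules out sphere and disk bubbling. Fourth, a generic choice of universal Floer datum $\frak D_{\frak f}$ makes the strata $\CN_\Gamma(y;\mathbf x)$ transversely cut-out manifolds of the expected dimension, while the orientation scheme of Section \ref{sec:Dimensions and orientations of moduli spaces} supplies the signs.

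To verify the $A_\infty$ functor relation I would then inspect the one-dimensional moduli space $\overline{\CN}{}^{k+1}(y;\mathbf x)$ with $\mu(y)=\sum_{j=1}^{k}\mu(x^j)+2-k$, and identify its codimension one boundary. Using the forgetful map to $\overline{\CN}{}^{k+1}$ together with Theorem \ref{thm:f-boundary}, the boundary is fibered over two families of strata. Type (1) strata $\MM^{(i+1,\dots,i+\ell)}\times\overline{\CN}{}^{(1,\dots,i,*,i+\ell+1,\dots,k)}$ arise when $\ell$ consecutive inputs are swallowed by a component with time allocation $\rho\equiv 1$, so that it is an $\frak m$-disc for $H_{g_0}$ attached to an $\frak f$-tree on the remaining punctures. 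Signed counts of these configurations contribute the terms $\frak f^{k-\ell+1}_\lambda(x^1,\dots,x^i,\frak m^\ell(x^{i+1},\dots,x^{i+\ell}),x^{i+\ell+1},\dots,x^k)$. Type (2) strata $\prod_{j=1}^m \overline{\CN}{}^{(\ell_{j-1}+1,\dots,\ell_j)}\times \MM^{m+1}$ arise when an $\frak m'$-component with $\rho\equiv 0$ sits at the root receiving Hamiltonian $H_{g_0'}$, with $m$ separate $\frak f$-discs feeding into its inputs; counts here contribute $\frak m^m(\frak f^{k_1}_\lambda(\mathbf x^1),\dots,\frak f^{k_m}_\lambda(\mathbf x^m))$. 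Since the oriented boundary of a compact $1$-manifold has signed count zero, summing over all configurations yields
\begin{align*}
\sum_{i,\ell}(-1)^{\clubsuit}\,\frak f^{k-\ell+1}_\lambda\bigl(x^1,\dots,\frak m^\ell(x^{i+1},\dots,x^{i+\ell}),\dots,x^k\bigr) = \sum_{m,\{k_j\}}(-1)^{\heartsuit}\,\frak m^m\bigl(\frak f^{k_1}_\lambda(\mathbf x^1),\dots,\frak f^{k_m}_\lambda(\mathbf x^m)\bigr),
\end{align*}
which is the desired $A_\infty$ functor relation.

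The main obstacle is twofold. On the analytic side, each codimension one stratum of $\overline{\CN}{}^{k+1}$ from Theorem \ref{thm:f-boundary} must be matched by a gluing/breaking theorem realizing the corresponding boundary stratum of $\overline{\CN}{}^{k+1}(y;\mathbf x)$ as a fiber product over the asymptotic evaluation maps $ev^\ell_\pm$. Compatibility of the universal Floer datum along these identifications is guaranteed by Definition \ref{def:universal Floer data} together with the slit-domain construction of $\beta$ in Section \ref{sec:one-form}, which is precisely why that construction was designed to be compatible with gluing. On the combinatorial side, one has to track signs $\clubsuit, \heartsuit$ through the orientation formulas of Section \ref{sec:Dimensions and orientations of moduli spaces}; this is the counterpart in our setting of the calculation in \cite[Chapter 7]{fooo:book2} and is what pins down the exponent $\spadesuit=\sum_j j\mu(x^j)+k$ appearing in \eqref{eqn:A_infty functor F}. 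Once these are in place, the theorem follows from the boundary analysis above.
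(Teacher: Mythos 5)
Your proposal follows the same overall cobordism strategy as the paper (energy and $C^0$ bounds for compactness, then a boundary analysis of the one-dimensional $\overline{\CN}{}^{k+1}(y;\mathbf x)$), and you correctly identify the two surviving algebraic contributions: $\frak f(\dots,\frak m,\dots)$ from an $\frak m$-disc for $H_{g_0}$ breaking off at a leaf, and $\frak m(\frak f,\dots,\frak f)$ from an $\frak m'$-disc for $H_{g_0'}$ at the root. The gap is in your claim that the codimension-one boundary of the map moduli space is fibered solely over the two families of boundary strata of $\overline{\CN}{}^{k+1}$ listed in Theorem \ref{thm:f-boundary}. In FOOO's time-allocation formalism there are additional codimension-one degenerations occurring over the \emph{interior} of $\overline{\CN}{}^{k+1}$: (i) a single component $\Sigma_\alpha$ with $\rho_\alpha\in(0,1)$ Floer-breaks at a chord for the intermediate Hamiltonian $H_{\lambda(\rho_\alpha)}$, the two pieces inheriting the same time allocation, and (ii) two adjacent nodal components with distinct time allocations $\rho_{\alpha_1}<\rho_{\alpha_2}$ have those values merge. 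These are exactly the paper's Types (1) and (2); both produce boundary points fibered over the same fiber product $\overline{\CN}{}^{k_1+1}(y;\mathbf x^1)\,{}_{ev^i_+}\!\!\times_{ev^0_-}\overline{\CN}{}^{k_2+1}(*;\mathbf x^2)$, but with opposite orientations, and the paper's proof hinges on showing that their signed contributions cancel. That cancellation is the crux of FOOO's construction of $A_\infty$ homomorphisms and is precisely what reduces the boundary count to the two types you enumerate; without it, the $\partial(\text{1-manifold})=0$ argument is not established.

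A secondary point: the time allocations you write down are swapped relative to what the monotonicity of $\rho$ along $\prec$ forces. Since $\rho$ increases toward the root, the $\MM$-factor swallowing consecutive inputs (a leaf) degenerates as $\rho\to 0$, not $\rho\equiv 1$, and the $\MM$-factor at the root degenerates as $\rho\to 1$, not $\rho\equiv 0$; this also matches the Hamiltonians $H_{g_0}$ and $H_{g_0'}$ that you correctly attach to those components.
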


\begin{proof}

Let $\big((\Sigma^{(i)},\mathbf z^{(i)}),(u_\alpha^{(i)}),(\rho_\alpha^{(i)})\big)_{\alpha\in\frak A}^{i\in\N}$ be a sequence of elements of ${\overline \CN}{}^{k+1}(y;\mathbf x)$, then
\begin{enumerate}
\item One of the component $\Sigma_\alpha^{(i)}$ splits into two components as $i\to +\infty$.
\item Two component $\Sigma_{\alpha_1}$ and $\Sigma_{\alpha_2}$ sharing one asymptotic matching condition satisfy
$\lim\rho_{\alpha_1}^{(i)}= \lim\rho_{\alpha_2}^{(i)}$ when $i\to +\infty$.
\item $\lim_{i\to +\infty}\rho_{\alpha}^{(i)}=0$.
\item $\lim_{i\to +\infty}\rho_{\alpha}^{(i)}=1$.
\end{enumerate}
(See Figure 7.1.20 \cite{fooo:book2}.)

Both Type (1) and Type (2) above consists of the following fiber product
$$
{\overline \CN}{}^{k_1+1}(y;{\bf x_1})_{ev^i_+}\times_{ev^0
_-}  {\overline \CN}{}^{k_2 +1}(*;{\bf x}_2)
$$
with $k_1+ k_2 = k$ and with opposite sign and so they cancel each other when $\mu(y)=\sum_{i=1}^{k}\mu(x^i)+1-k$.
Here for given ${\bf x} = (x^1,x^2,\ldots, x^k)$,
\beastar
{\bf x_1} & = & (x^1, \ldots, x^{i-1}, *, x^{i + k_2}, \ldots, x^{k_1 + k_2}),\\
{\bf x_2} & = & (x^i, \ldots, x^{i + k_2-1}).
\eeastar

Note that the case (3), (4) occur at {\em leaf components} and the {\em root component} of  $\{\Sigma_\alpha\}_{\alpha\in\frak A}$, which correspond to the two cases of domain
degenerations described in Theorem \ref{thm:f-boundary} respectively.

The case (3) contributes to the following terms
\begin{align}\label{eqn:functor_relation1}
\sum_{\substack{k=k_1+k_2-1\\n,k_2}}(-1)^{\ddagger+1}{\frak f}^{k_1}_\lambda(x^1,\dots,x^n,{\frak m}^{k_2} (x^{n+1},\dots, x^{n+k_2}),x^{n+k_2+1},\dots, x^k),
\end{align}
where $\ddagger=\ddagger_n=\sum_{i=1}^{n}\mu(x^i)-n$.
While the case (4) corresponds to
\begin{align}\label{eqn:functor_relation2}
\sum_{\substack{k=k_1+\cdots+k_\ell \\ \ell,k_1,\dots,k_\ell}}{\frak m}^\ell({\frak f}^{k_1}_\lambda(x^{1},\dots,x^{k_1}),\dots,{\frak f}^{k_\ell}_\lambda(x^{k-k_{\ell}+1},\dots,x^{k})).
\end{align}
This verifies that $\{{\frak f}^k\}_{k\in\N}$ satisfies the $A_\infty$ homomorphism relation, i.e. (\ref{eqn:functor_relation1}) equals to (\ref{eqn:functor_relation2}).
We refer to Chapter 7 Section 1 of \cite{fooo:book2} for the detail of this proof for
the case of $A_\infty$ homomorphism but the same argument applies to the case of $A_\infty$ functors.
\end{proof}

\section{Homotopy of $A_\infty$ functors}
\label{sec:homotopy}

Now let $\lambda_1,\, \lambda_2\in C^\infty([0,1],\mathcal C(N))$ be two paths of admissible metrics connecting
$$
g_0 = \lambda_1(0)=\lambda_2(0), \quad g_0'=\lambda_1(1)=\lambda_2(1)
$$
with $g_0 \geq g_0'$.
Denote by $\CF=\{\frak f^k_\lambda\}_{k\in\N}$ and $\CF'=\{\frak f'^k_\lambda\}_{k\in\N}$ be the corresponding $A_\infty$ functors
from $\CW\CF (M \setminus K)$ to $\CW\CF (M \setminus K)$ constructed
in the previous subsection.

Now suppose we are given a path $\Gamma \in C^\infty([0,1] \times [0,1],\mathcal C(N))$ of paths
connecting $\lambda_1$ and $\lambda_2$. The main objective of this subsection is to construct an
$A_\infty$ homotopy, denoted by $\frak h = {\frak h}_\Gamma$, between $\CF$ and $\CF'$.

\subsection{Definitions of $A_\infty$ homotopy and composition}

To motivate our construction of the relevant decorated moduli space below,
we first recall the definition of an $A_\infty$ homotopy from \cite{hasegawa}, \cite{seidel:book}.

\begin{defn}[$A_\infty$ homotopy]\label{defn:homotopy} Let $(\CA,\frak m_\CA), (\CB,\frak m_\CB)$ be two $A_\infty$ categories,
and  $\CF=\{\frak f^k\}_{k\in\N},\, \CG=\{\frak g^k\}_{k\in\N}$ be two $A_\infty$ functors from $\CA$ to $\CB$. A \emph{homotopy} $\CH$ from $\CF$ to $\CG$
is a family of morphisms
$$
\frak h^k: \CA^{\otimes k} \to \CB, \quad k \geq 1,
$$
of degree $-k$ satisfying the equation that
\bea\label{eq:homotopy}
&{}& ( \frak f^k - \frak g^k ) (a^{1},\dots, a^{k} ) \nonumber\\
& = & \sum_{m,n}(-1)^\dagger \frak h^{k-m+1}(a^1, \dots, a^{n}, \frak m^m_\CA(a^{n+1}, \dots,
a^{n+m}), a^{n+m+1}, \dots, a^{d} )\nonumber\\
&{}& \quad +\sum_{r,i}\sum_{s_1,\dots,s_r} (-1)^\clubsuit \frak m^r_\CB\big(\frak f^{s_1}(a^{1},\dots,a^{s_1}),\dots,\frak f^{s_i-1}(\dots,a^{s_1+\cdots+s_{i-1}}),\nonumber\\
&{}& \qquad \qquad \qquad{\frak h}^{s_i}(a^{s_1+\cdots+s_{i-1}+1},\dots, a^{s_1+\cdots+s_i}),\nonumber\\
&{}& \qquad \qquad \frak g^{s_i+1}(a^{s_1+\cdots+s_i+1},\dots),\dots,\frak g^{s_r}(a^{d-s_r+1},\dots,a^d)\big).
\eea
where $s_1+\cdots+s_r=d$ and
\begin{align*}
\dagger &= \sum_{i=1}^{n}\mu(a^i)-n,\qquad \clubsuit = \sum_{\ell=1}^{s_1+\dots+s_{i-1}} \mu (a^\ell)-\sum_{\ell=1}^{i-1}s_\lambda.
\end{align*}
\end{defn}

We will apply this definition of homotopy in the proof of consistency condition for
the system
$$
\left(\CW\CF (M \setminus K;H_{g_0}), \{\frak m^k \}\right) \stackrel{\Phi_\lambda}\longrightarrow
\left(\CW\CF (M \setminus K;H_{g_0'}), \{\frak m^k \}\right)
$$
to define a homotopy:
For a triple $g \geq g' \geq g''$ of metrics on $M$
we have functors $\mathcal F_\lambda,\mathcal F_{\lambda'}$, and $\mathcal F_{\lambda''}$.
We consider the composition of $A_\infty$~functors and construct a homotopy
between $\mathcal F_{\lambda'}\circ \mathcal F_\lambda$ and $\mathcal F_{\lambda''}$.

The composed $A_\infty$~functor $\mathcal F_{\lambda'}\circ \mathcal F_\lambda$ which we recall
consists of the following data of $A_\infty$~homomorphism:
\[
(\CF_{\lambda'}\circ \CF_\lambda)^d(x^1,\dots, x^d)
=\sum_{r}\sum_{s_1,\dots,s_r}\frak f_{\lambda'}^r(\frak f_\lambda^{s_1}(x^{1},\dots,x^{s_1}),\dots,\frak f_\lambda^{s_r}(x^{d-s_r+1},\dots,x^{d})
 )
\]
We consider two paths of cylindrical Riemannian metric between the
concatenated path $\lambda\ast \lambda':[0,1]\to \mathcal C(N)$ and the direct path $\lambda''$
connecting $g$ and $g''$. It easily follows from contractibility
of the space of cylindrical Riemannian metrics we can construct a path (of paths)
$$
\Gamma:[0,1]\times [0,1]\to \CC(N)
$$
between $\lambda\ast \lambda'$ and $\lambda''$.
Here we use the coordinate $(r,s)$ for $[0,1]\times[0,1]$, where $s$ is the newly adopted one.
Note that $\Gamma(-,s)=: \lambda_s$ gives a homotopy between $g_0$ and $g_0''$ for each $s\in[0,1]$.

Now we go back to the construction of $A_\infty$ homotopy associated to
a geometric homotopy.
For each fixed $s\in [0,1]$, we have the corresponding spaces of admissible almost
complex structures and of admissible Hamiltonians
\begin{align}\label{eqn:parametrized acs and Hamiltonian}
\mathcal J^r_{s}=\mathcal J(T^*N,\Gamma(r,s)),\quad
\mathcal H^r_{s}=\mathcal H(T^*N,\Gamma(r,s)).
\end{align}
Let us recall from Definition \ref{def:partial order} that
 $(\Sigma;\mathbf z;\rho) \in {\overline \CN}{}^{k+1}$ admits a partial order $\prec$
on the set $\frak A$ of irreducible components $\alpha$ of $\Sigma$.

\begin{rem}
\begin{enumerate}
\item With the above preparation, it looks natural to consider the
parameterized ${\frak f}$-moduli space, which was called a \emph{timewise moduli space}
in \cite{fooo:book1,fooo:book2}
\be\label{eq:paraCN}
{\overline \CN}{}_{\text{\rm para}}^{k+1}(y;{\bf x}) = \bigcup_{s \in [0,1]} \{s\} \times
{\overline \CN}{}_{\rho(s)}^{k+1}(y;{\bf x}),
\ee
where $\rho(s)$ is a homotopy between $\rho(0),\rho(1):\frak A \to [0,1]$ which induces
the $A_\infty$ functors $\mathcal F_{\lambda'}\circ \mathcal F_\lambda$
and $\mathcal F_{\lambda''}$, respectively.
It is fibered over $[0,1]$
$$
ev_{[0,1]}: {\overline \CN}{}_{\text{\rm para}}^{k+1}(y;{\bf x}) \to [0,1].
$$
By definition, each fiber of this map is compact. It is tempting to
define the homotopy by setting its matrix coefficient to be
$$
\#\left({\overline \CN}{}_{\text{\rm para}}^{k+1}(y;{\bf x}) \right)
$$
when the moduli space associated to $(y;{\bf x})$ has
its virtual dimension 0. However this definition will not
lead to the relation required for the $A_\infty$ homotopy described in
Definition \ref{defn:homotopy} because the one-dimensional components of
the above defined moduli space has its boundary that is not consistent
with the homotopy relation.

So we need to modify this moduli space so that the deformed
homotopy map whose matrix coefficients are defined via counting the
zero dimensional components of the modified moduli space. One requirement
we impose in this modification is that the strata the domain of each element of which is irreducible
are unchanged from above. We will modify those strata whose elements have
nodal domains, i.e., not irreducible.
\item
We would like to mention that this kind of modification process already
appeared in the definition of $A_\infty$ map ${\frak f}$ where the
time-order decoration $\rho$ is added to the moduli space of bordered stable maps
to incorporate the datum of geometric homotopy $\lambda: r \mapsto g_\lambda(r)$.
We also recall the starting point of Fukaya-Oh-Ohta-Ono's deformation
theory of Floer homology \cite{fooo:book1} in which modifying the definition of
Floer boundary map to cure the anomaly $\del^2 \neq 0$.
\item A general categorial construction of $A_\infty$ homotopy associated to
Lagrangian correspondence is given as a natural transformation between two
$A_\infty$ functors using the quilted setting in \cite{MWW}. It appears to us that
to apply this general construction, we should lift a family of Hamiltonian
to a Lagrangian cobordism arising as the suspension of the associated Lagrangian isotopy.
We avoid using this general set-up and Lagrangian suspension
but instead quickly provide a direct and down-to-earth construction
as a variation of the construction given in \cite[Section 4.6]{fooo:book2}.
\end{enumerate}
\end{rem}

\subsection{Timewise decorated time-allocation stable curves}

We recall the evaluation map
$ev_{[0,1]}: {\overline \CN}{}_{\text{\rm para}}^{k+1}(y;{\bf x}) \to [0,1]$
which naturally induces a map
$$
{\frak A} \to [0,1]; \quad \alpha \mapsto s(\alpha)
$$
where $s(\alpha)$ is the time in $[0,1]$ at which the irreducible component
$((\Sigma_\alpha,z_\alpha), u_\alpha)$ of $((\Sigma,z), u)$ is attached.
For our modification of the above mentioned moduli space
${\overline \CN}{}_{{\bf w},\text{\rm para}}^{k+1}(y;{\bf x})$, we assign a
{\em total order} $\lhd$ on the component set $\frak A$ of $\Sigma$
which is determined by the rooted {\em ribbon} structure on $(\Sigma,{\bf z}) \in {\overline \CM}{}^{k+1}$.
This order will play an important role in establishing the homotopy
relation \eqref{eq:homotopy} between $\CF_{\lambda''}$ and $\CF_{\lambda'} \circ \CF_\lambda$.
\[
\begin{tikzcd}[column sep=scriptsize]
\CW\CF (M\setminus K;H_{g_0}) \arrow[dr, "\mathcal{F}_\lambda"] \arrow[rr, "\mathcal{F}_{\lambda''}",
""{name=U,below}]{}
    & & \mathcal{WF}(M\setminus K;H_{g_0''})  \\
& \CW\CF (M\setminus K; H_{g_0'}) \arrow[Leftarrow, from=U,
"\mathcal {H}"] \arrow[ur,"\mathcal{F}_{\lambda'}"]
\end{tikzcd}
\]

Motivated by this diagram and to achieve the $A_\infty$ homotopy relation given in
Definition \ref{defn:homotopy}, we would like to attach the ${\frak m}$-component only at
$\del [0,1] = \{0,1\}$ in the way that the one with time-allocation $\rho = 0$ at $s=0$ and
$\rho =1$ at $s=1$.  We note that the components with $\rho(\alpha) = 0, \, 1$ are
type $\frak m$-components.

Recall from Definition~\ref{def:ribbon graph} that $\Gamma=\Gamma({\bf N})$ is the rooted ribbon tree associated to ${\bf N}\in \overline{\mathcal N}{}^{k+1}$ or $\overline{\mathcal M}{}^{k+1}$. From now on, we canonically identify $V^{\rm ext}(\Gamma)$ with the boundary punctures ${\bf z}=(z^0,\dots, z^k)$ and $V^{\rm int}(\Gamma)$ with the component $\frak A$. Each edge of $\Gamma$ carries the natural orientation
which flows into the root $z^0$.

Each $z^j$, $j\geq 1$ determines a unique (minimal) edge path $\ell_j = \ell_{z^0 z^j}$ from $z^0$ to $z^j$ respecting the orientation of $\Gamma$ (in the opposite direction). For an interior vertex $v$ of
$\Gamma$, we say $v \in \ell_j$ if $v \in V^{\text{int}}(\ell_j)$ where
$$
V^{\text{int}}(\ell_j) = \{ v \in V^{\text{int}}(\Gamma) \mid v \in \ell_j\}.
$$
We have an obvious  order on each $V^{\text{\rm int}}(\ell_j)$ given by the
edge distance from the root.
Denote $v_\alpha \in V^{\rm int}(\Gamma)$ the interior vertex corresponding to $\alpha \in \frak A$.
Now define
\begin{align*}
j_{\text{\rm tm}}: \frak A &\to \{1,\dots, k\};\\
\alpha &\mapsto \min \{j \mid v_\alpha \in \ell_j \}.
\end{align*}

\begin{defn}\label{defn:<s} Let $\Sigma$ be a bordered stable curve of genus zero with
$k+1$ marked points with its combinatorial type $T$.
Let $\alpha, \, \beta \in {\frak A}$. We say
$\alpha \lhd \beta$ if one of the following holds:
\begin{enumerate}
\item $j_{\text{\rm tm}}(\alpha) < j_{\text{\rm tm}}(\beta)$.
\item $j_{\text{\rm tm}}(\alpha) = j_{\text{\rm tm}}(\beta)$ and $\rho(\alpha) \leq \rho(\beta)$.
\end{enumerate}
\end{defn}
It is easy to check that $\lhd$ defines a total order on $\frak A$
and depends only on the ribbon structure of $\Gamma$.
(We refer to \cite[p.7]{savelyev} for more pictorial description of
this order.) We denote the set of descendants of $\delta$ by
\begin{align*}
{\rm Desc}(\delta):=\{\alpha \in \frak A \mid \alpha \lhd \delta, \ \alpha \neq \delta \}.
\end{align*}

We now recall the universal family $\overline \CS{}^{k+1} \to \overline \CM{}^{k+1}$.
Each element \[{\bf v}=(\Sigma,j;\sigma) \in \overline \CS{}^{k+1}\] picks out a distinguished component
$\delta({\bf v}) \in \frak A$ that contains the point $\sigma \in \Sigma$.

\begin{defn}[Universal parametrization]\label{defn:fraks} Let ${\bf v} \in \overline \CS{}^{k+1}$ and
$\delta({\bf v})$ be the associated component.  We define an $s$-parameterized map
${\frak s}_{{\bf v}}: [0,1] \times \frak A \to [0,1]$ by
\begin{align}\label{eq:s(alpha)}
{\frak s}_{{\bf v}}(s)(\alpha)=
\begin{cases}
1 &\text{ if } \alpha \in \text{\rm Desc}(\delta(\bf v));\\
s &\text{ if } \alpha=\delta(\bf v);\\
0 &\text{ otherwise}.
\end{cases}
\end{align}
\end{defn}
Note that the map $\frak{s}_{\bf v}$ is determined by the ribbon graph $\Gamma(\bf v)$ and the distinguished component $\delta(\bf v)$.
Indeed, $\frak{s}_{\bf v}$ is defined over $\overline \CS{}^{k+1}/\hspace{-1.5mm}\sim$ where
\[
{\bf v} \sim {\bf v'} \quad \Longleftrightarrow \quad \Gamma({\bf v})=\Gamma({\bf v'}),\ \
\delta({\bf v})=\delta({\bf v'}).
\]

We are ready to describe a parameter space $\overline{\mathcal L}{}^{k+1}$ defined over $\overline{\mathcal S}{}^{k+1}/\hspace{-1.5mm}\sim$
which will be used in the construction of $A_\infty$ homotopy or 2-morphism.
\begin{defn}\label{def:Lspace}
For each ${\bf v}=[\Sigma,j;\sigma]\in\overline \CS{}^{k+1}/\hspace{-1.5mm}\sim$ with $k\geq 2$,
we define $\overline{\mathcal L}{}^{k+1}$ to be the set of quadruples
\[
(\Sigma,j;\rho_{\bf v};\frak{s}_{\bf v}),
\]
where
\begin{itemize}
\item $\rho_{\bf v}$ is a time allocation in Definition~\ref{def:rho} satisfying $\rho(\delta({\bf v}))\neq 0$ or $1$,
\item $\frak{s}_{\bf v}$ is a $s$-parameterized map defined in Definition~\ref{defn:fraks}.
\end{itemize}
\end{defn}

By extending Definition~\ref{def:ribbon graph}, let us denote a ribbon graph induced from
 ${\bf L}\in \overline{\mathcal{L}}{}^{k+1}$ by $\Gamma({\bf L})$. The ribbon graph $\Gamma({\bf L})$ has an additional decoration of $\delta(\bf v)$.
 Note that $\delta(\bf v) \in V^{\frak f}(\Gamma)$ by the condition of $\rho_{\bf v}$ in Definition~\ref{def:Lspace}.
Let
\[
\mathcal{L}_\Gamma=\{{\bf L}\in \overline{\mathcal{L}}{}^{k+1} \mid \Gamma({\bf L})=\Gamma\},
\]
then we have the decomposition
\begin{align}\label{eqn:decomposition of L}
\overline{\mathcal{L}}{}^{k+1}=\bigcup_{\Gamma\in G^{k+1}}\mathcal{L}_{\Gamma},
\end{align}
where $G^{k+1}$ be the set of combinatorial types of $\overline{\mathcal{L}}{}^{k+1}$.

As analogues of Lemma~\ref{lem:dim of Nspace} and Proposition~\ref{prop:struc of Nspace} we have
\begin{lem}
For any $\Gamma \in G^{k+1}$, $\mathcal{L}_\Gamma$ is diffeomorphic to $D^{|\Gamma|+1}$.
\end{lem}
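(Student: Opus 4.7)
The plan is to identify $\mathcal{L}_\Gamma$ canonically with $\mathcal{N}_{\Gamma'} \times [0,1]$, where $\Gamma'$ is the ribbon graph obtained from $\Gamma$ by forgetting the distinguished-vertex decoration $\delta$, and then invoke Lemma~\ref{lem:dim of Nspace} together with the elementary fact $D^n \times [0,1] \cong D^{n+1}$.

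First, I would unpack Definition~\ref{def:Lspace} carefully. Once a combinatorial type $\Gamma \in G^{k+1}$ is fixed, the distinguished component $\delta = \delta({\bf v})$ is already specified as part of the decoration of $\Gamma$, and the constraint $\rho(\delta) \in (0,1)$ is automatically encoded in the condition $\delta \in V^{\frak f}(\Gamma)$. By Definition~\ref{defn:fraks}, once the partial order on $\frak A$ and $\delta$ are fixed, the parameterized map $\frak{s}_{\bf v}$ is completely determined by the single scalar $s \in [0,1]$ through formula \eqref{eq:s(alpha)}, since the values on $\mathrm{Desc}(\delta)$ and on the complement are pinned at $1$ and $0$ respectively. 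Therefore the data of a point in $\mathcal{L}_\Gamma$ reduce to the triple $(\Sigma, j; \rho)$ of combinatorial type $\Gamma'$ together with $s \in [0,1]$, and these two pieces of data vary independently of one another.

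Next, I would apply Lemma~\ref{lem:dim of Nspace} to identify $\mathcal{N}_{\Gamma'} \cong D^{|\Gamma'|}$. Since the invariant $|\cdot|$ defined in \eqref{eq:|Gamma|} depends only on $V(\Gamma) \setminus V^{\frak f}(\Gamma)$, the decoration $\delta \in V^{\frak f}$ does not alter this count, so $|\Gamma'| = |\Gamma|$. Combining with the product description above yields
\[
\mathcal{L}_\Gamma \;\cong\; \mathcal{N}_{\Gamma'} \times [0,1] \;\cong\; D^{|\Gamma|} \times [0,1] \;\cong\; D^{|\Gamma|+1},
\]
the last step being the standard diffeomorphism between a disk and a disk-times-interval. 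The product structure is a genuine diffeomorphism rather than a mere continuous bijection because the projection to $\mathcal{N}_{\Gamma'}$ (forgetting $s$) and the section $s \mapsto \frak{s}_{\bf v}(s)$ are both smooth given the explicit formula \eqref{eq:s(alpha)}.

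The main obstacle is the case $|V(\Gamma)| = 1$, since Lemma~\ref{lem:dim of Nspace} is stated only for $|V(\Gamma)| > 1$. I would handle this case separately: the unique vertex must be $\delta$ itself, which is of $\frak f$-type, so the underlying space is $\mathcal{M}^{k+1} \times \{\rho(\delta) \in (0,1)\} \cong D^{k-2} \times (0,1)$, an open disk of dimension $k-1 = |\Gamma|$. Multiplying by the $[0,1]$ factor from $s$ and using $D^{k-1} \times [0,1] \cong D^k$ again gives the desired conclusion. A secondary bookkeeping point is that the equivalence relation $\sim$ on $\overline{\mathcal S}{}^{k+1}$ identifies all choices of $\sigma$ lying in the same component $\delta$, so the position of the marked point $\sigma$ within $\delta$ contributes no extra parameter and the only residual degree of freedom coming from ${\bf v}$ is indeed the single scalar $s$. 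With these verifications in place, the diffeomorphism type of $\mathcal{L}_\Gamma$ is as claimed.
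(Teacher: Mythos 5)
Your argument is correct and follows the same route as the paper's own one-sentence proof: identify $\mathcal{L}_\Gamma$ with the $\mathcal{N}$-stratum of the same underlying (undecorated) combinatorial type times an extra $[0,1]$-factor supplied by the parameter $s$, then invoke Lemma~\ref{lem:dim of Nspace} and $D^{n}\times[0,1]\cong D^{n+1}$. The only differences are presentational and in your favor: you make explicit that the $\delta$-decoration is absorbed into $V^{\frak f}$ so $|\Gamma'|=|\Gamma|$, that the $\sim$-equivalence on $\overline{\CS}{}^{k+1}$ collapses the choice of $\sigma$ within $\delta$ so the only residual freedom is the scalar $s$, and you separately dispatch the $|V(\Gamma)|=1$ case that the quoted Lemma~\ref{lem:dim of Nspace} formally excludes — all of which the paper leaves implicit.
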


\begin{proof}
The space $\mathcal{L}_\Gamma$ additionally has a datum of $s$-parameterized map $\frak{s}_{\bf v}$ compared to $\mathcal N_\Gamma$, so the lemma follows.
\end{proof}

\begin{prop}
The parameter space $\overline{\mathcal L}{}^{k+1}$ has a structure of smooth manifold with boundaries and corners, which are compatible with the decomposition (\ref{eqn:decomposition of L}). Moreover, there is a projection map
\begin{align*}
\overline{\mathcal{L}}{}^{k+1} & \to [0,1]; \\
(\Sigma,j;\rho_{\bf v};\frak{s}_{\bf v}) &\mapsto \frak s_{\bf v}(s)(\delta({\bf v}))=s
\end{align*}
which is a smooth submersion.
\end{prop}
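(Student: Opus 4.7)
The plan is to follow the proof of \cite[Proposition 7.1.61]{fooo:book2}, which establishes the analogous statement for $\overline{\CN}{}^{k+1}$, and adapt it to accommodate the additional data of the distinguished component $\delta({\bf v})$ and the parameter $s$. The essential observation is that $\overline{\CL}{}^{k+1}$ can be described set-theoretically as a fiber product of $\overline{\CS}{}^{k+1}/\!\sim$ with the subspace of $\overline{\CN}{}^{k+1}$ of those time allocations satisfying $\rho(\delta({\bf v})) \in (0,1)$, multiplied further by an interval $[0,1]_s$ carrying the parameter $s$. Once this description is in place, the smooth-with-corners structure will be inherited from those of the three factors via standard compatibility of fiber products.

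Concretely, I would proceed as follows. At a point ${\bf L} \in \CL_\Gamma$ of combinatorial type $\Gamma \in G^{k+1}$, construct a local chart modeled on $D^{|\Gamma|+1}$ by combining three pieces of data: (i) the local coordinates on $\overline{\CM}{}^{k+1}$ near $(\Sigma,j)$, i.e.\ gluing parameters $\tau_e$ for each interior edge of $\Gamma$; (ii) the affine simplex of time allocations $\rho$ compatible with the partial order $\prec$ coming from $\Gamma$; and (iii) the $s$-coordinate. Then verify that these charts glue smoothly across the two types of boundary strata of $\mathcal L_{\Gamma'}$ for a degeneration $\Gamma'$ of $\Gamma$: (a) nodal degenerations $\tau_e \to 0$, which reduce to the standard smoothing theory used in \cite[Proposition 7.1.61]{fooo:book2} for $\overline{\CN}{}^{k+1}$; and (b) transitions $\rho(\alpha) \to 0$ or $1$, corresponding to an $\frak f$-component of $\Sigma$ collapsing to an $\frak m$- or $\frak m'$-component. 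Once this smoothness is in place, the projection $(\Sigma,j;\rho_{\bf v};\frak{s}_{\bf v}) \mapsto s$ simply reads off one of the local coordinates in every chart, so it is manifestly a smooth submersion onto $[0,1]$.

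The main obstacle will be in case (b) of the gluing step when the collapsing component is $\delta({\bf v})$ itself, which corresponds precisely to $s \to 0$ or $s \to 1$. In that situation the distinguished component is forced to jump to a neighboring irreducible component, so one must argue that the identification $\sim$ on $\overline{\CS}{}^{k+1}$ matches local charts on either side of the boundary. The key point in resolving this is that $\sim$ depends only on discrete data (the ribbon graph and the location of $\delta$), while the continuous parameters $\tau_e$, $\rho$, and $s$ extend continuously across this stratum, so the bookkeeping eventually reduces to a face-identification in a product of simplices. Once this is verified the rest is a routine adaptation of the argument in \cite{fooo:book2}, and the submersion statement then follows immediately since $s$ is a coordinate in every chart.
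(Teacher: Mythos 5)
Your overall plan — mimic the $\overline{\CN}{}^{k+1}$ construction of \cite[Proposition~7.1.61]{fooo:book2}, build local charts from gluing parameters, the simplex of time allocations, and the $s$-coordinate, and then observe that $s$ is one of those coordinates — is correct and takes essentially the same route as the paper. The paper's own proof is terser still: it factors the projection through the forgetful map $\overline{\CL}{}^{k+1} \to \overline{\CM}{}^{k+1} \times [0,1]$, $(\Sigma,j;\rho_{\bf v};\frak s_{\bf v}) \mapsto (\Sigma,j;s)$, and then reads off the second factor, which is exactly your ``$s$ is a coordinate in every chart'' observation; the smooth-with-corners structure is delegated, as you also do, to the analogy with $\overline{\CN}{}^{k+1}$.

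One concrete misidentification is worth flagging, though it does not break the argument. You claim the main subtlety is the case ``when the collapsing component is $\delta({\bf v})$ itself, which corresponds precisely to $s\to 0$ or $s\to 1$.'' These are in fact two independent boundary phenomena. The limit $s \to 0$ or $s \to 1$ produces the stratum $\{0,1\}\times\overline{\CN}{}^{k+1}$ of Lemma~\ref{lem:delNColl}(3); there $\frak s_{\bf v}$ degenerates to the constant $0$ or $1$ and the choice of $\delta$ becomes irrelevant, but nothing collapses and $\rho(\delta)$ remains in $(0,1)$. The situation where the distinguished $\frak f$-component transitions to an $\frak m$- or $\frak m'$-type component is instead $\rho(\delta)\to 0$ or $1$, which is excluded by the open condition $\rho(\delta)\neq 0,1$ in Definition~\ref{def:Lspace}; that is where $\delta$ must jump to a neighboring $\frak f$-component along the total order $\lhd$ and where your observation about $\sim$ being determined by discrete ribbon-graph data is actually needed. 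Separating these two cases cleans up the chart-gluing step and also clarifies why the projection to $s$ remains a submersion on both kinds of boundary strata: near $\rho(\delta)\to 0,1$ the $s$-coordinate is untouched, and near $s\in\{0,1\}$ it is just the normal coordinate to the boundary face.
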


\begin{proof} The map factors through the  map
$
\overline{\mathcal{L}}{}^{k+1} \to \overline \CM{}^{k+1} \times [0,1]
$
defined by
$$
(\Sigma,j;\rho_{\bf v};\frak{s}_{\bf v}) \mapsto (\Sigma,j;\frak s_{\bf v}(s)(\delta({\bf v})))=(\Sigma,j;s),
$$
so the submersion property follows.
\end{proof}

\begin{lem}\label{lem:delNColl}
The boundary $\del \overline{\mathcal L}{}^{k+1}$
is decomposed into the union of the following three types of fiber products:
\begin{enumerate}
\item $
{\CM}{}^{\ell+1}\#\left(\overline{\CN}{}^{k_1+1} \times \cdots \times \overline\CL{}^{k_i+1} \times \cdots \times
\overline{\CN}{}^{k_\lambda+1}\right)$, where $\sum_{i=1}^\ell k_i = k$.
\item $\overline\CL{}^{\ell+1}\#{\CM}^{k-\ell+1}$
where $\ell = 1,\dots , k-1$.
\item $\displaystyle{\{0,1\} \times \overline{\CN}{}^{k+1}}$.
\end{enumerate}
\end{lem}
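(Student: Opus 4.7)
The approach is to split $\partial \overline{\mathcal L}^{k+1}$ according to whether the boundary arises from the parameter $s \in [0,1]$ reaching $\{0,1\}$ or from a nodal degeneration of the bordered stable curve $\Sigma$. For $s \to 0$ or $s \to 1$, inspection of Definition \ref{defn:fraks} shows that $\mathfrak s_{\bf v}(s)(\delta({\bf v})) \in \{0,1\}$ while every other component already has $\mathfrak s_{\bf v}(s)(\alpha) \in \{0,1\}$, so the quadruple $(\Sigma,j;\rho_{\bf v};\mathfrak s_{\bf v})$ records merely a time allocation in the sense of Definition \ref{def:rho}, with $\delta({\bf v})$ no longer distinguished as a $\mathfrak f$-vertex; consequently the fibre of the submersion $\overline{\mathcal L}^{k+1} \to [0,1]$ at each endpoint is canonically $\overline{\mathcal N}^{k+1}$, which is case (3).

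For a nodal degeneration of $\Sigma$, the plan is to appeal to Theorem \ref{thm:f-boundary} to enumerate the possible combinatorial types. In every such type, the distinguished component $\delta({\bf v})$ lies in exactly one piece of the broken curve, and since $\rho(\delta({\bf v})) \in (0,1)$ that piece must carry a $\mathfrak f$-vertex and therefore produces an $\overline{\mathcal L}$-factor, whereas every other piece has $\rho \in \{0,1\}$ throughout and therefore becomes a pure $\mathcal M$-factor. Reading off which piece contains $\delta({\bf v})$: type (1) of Theorem \ref{thm:f-boundary}, in which an $\mathcal M^{\ell+1}$ is attached at the root with $\rho \equiv 1$ above $\ell$ pieces grafted below, yields case (1) of the lemma, with the single piece containing $\delta({\bf v})$ supplying the $\overline{\mathcal L}^{k_i+1}$-factor and the remaining $\ell-1$ pieces supplying the $\overline{\mathcal N}^{k_j+1}$-factors; type (2), in which an $\mathcal M$-piece breaks off with $\rho \equiv 0$ on the leaf side, yields case (2), where the unique $\overline{\mathcal L}^{\ell+1}$-piece contains $\delta({\bf v})$ and the leaf-side $\mathfrak m$-component forms the $\mathcal M^{k-\ell+1}$-factor.

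The hardest part will be verifying that these three families exhaust $\partial \overline{\mathcal L}^{k+1}$ and are compatible with the smooth manifold-with-corners structure established in the proposition immediately preceding the lemma, once the additional $s$-parameter and the total order $\lhd$ of Definition \ref{defn:<s} are taken into account. This amounts to a chart-by-chart check that under the gluing or degeneration of a node adjacent to $\delta({\bf v})$ the assignment (\ref{eq:s(alpha)}) remains self-consistent; the argument parallels that of \cite[Theorem 7.1.51]{fooo:book2}. The most subtle point is that in case (2) several lower-level $\overline{\mathcal N}$-pieces from Theorem \ref{thm:f-boundary}(2) may simultaneously collapse into a single $\mathcal M^{k-\ell+1}$-factor, which both accounts for the restricted range $\ell = 1, \ldots, k-1$ and explains why case (2) cannot be further refined into a product of several $\mathcal M$-factors.
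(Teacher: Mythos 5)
The paper states Lemma \ref{lem:delNColl} without an explicit proof, treating it as a routine adaptation of Theorem~7.1.51 of \cite{fooo:book2} once the additional $s$-parameter and the distinguished component $\delta(\mathbf{v})$ are incorporated. Your overall strategy -- splitting $\del\overline{\mathcal L}^{k+1}$ into the two endpoint faces $s\in\{0,1\}$ and the nodal degenerations classified by Theorem \ref{thm:f-boundary}, then tracking which broken piece carries $\delta(\mathbf{v})$ -- is the right one, and your treatment of case~(3) is correct.

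There is, however, a genuine confusion in the middle paragraph. You write that ``every other piece has $\rho\in\{0,1\}$ throughout and therefore becomes a pure $\mathcal M$-factor,'' but this is false and you immediately contradict it yourself when you assign the remaining $\ell-1$ pieces $\overline{\mathcal N}^{k_j+1}$-factors in case~(1). What actually happens in a codimension-one nodal boundary face is that exactly \emph{one} broken component is forced to have $\rho\in\{0,1\}$ (the $\mathfrak m$-type leaf with $\rho=0$ in one degeneration, or the $\mathfrak m'$-type root with $\rho=1$ in the other), and it is only that single component that collapses to an $\mathcal M$-factor. All the remaining pieces retain a free time allocation (they are of type $\mathfrak f$, i.e.\ $\rho\in(0,1)$) and therefore give $\overline{\mathcal N}$-factors, with the one containing $\delta(\mathbf{v})$ promoted to $\overline{\mathcal L}$. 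Without this distinction the dimension count does not close and the fiber-product structure in case~(1) cannot be recovered.

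Two smaller points. First, you have the two cases of Theorem \ref{thm:f-boundary} swapped: the configuration with $\mathcal M^{\ell+1}$ at the root with $\rho\equiv 1$ is case~(2) of that theorem (the $\prod\overline{\mathcal N}\times\mathcal M^{m+1}$ degeneration), while the one where an $\mathcal M$-piece breaks off on the leaf side with $\rho\equiv 0$ is case~(1) (the $\mathcal M\times\overline{\mathcal N}$ degeneration); you appear to have been misled by the remark following Theorem \ref{thm:f-boundary}, whose description of the $\rho$-values is inconsistent with the later identification of $\rho\to 0$ / $\rho\to 1$ with leaf/root in Section~\ref{sec:construction of functor}, but in any case the geometric descriptions and the eventual match with lemma cases~(1) and~(2) come out right despite the mislabeling. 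Second, the final sentence is not correct: in case~(2) the $\mathcal M^{k-\ell+1}$-factor is a single $\mathfrak m$-type component bubbling off at one leaf of the $\overline{\mathcal L}^{\ell+1}$ root, not the simultaneous collapse of several $\overline{\mathcal N}$-pieces, and this is what constrains the range of $\ell$; invoking a simultaneous collapse would put the stratum in higher codimension, not explain the absence of further $\mathcal M$-factors.
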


\subsection{Construction of  $A_\infty$ homotopy}

We first describe the situation we are in for the purpose of constructing an
$A_\infty$ functor.

\begin{defn}[Floer data for $A_\infty$ homotopy]\label{def:Floer data homotopy}
A Floer datum $\mathcal D_\frak h({\bf L})$
for ${\bf L}=(\Sigma,j;\rho;\frak s)\in \overline\CL{}^{k+1}$ is a quintuple
\[
({\bf w}, \{\beta_\alpha\}_{\alpha \in \frak A}, {\bf H}_{\bf L}=\{H_\alpha\}_{\alpha \in \frak A}, {\bf J}_{\bf L}=\{J_\alpha\}_{\alpha \in \frak A}, \eta)
\]
defined by modifying a Floer datum for $A_\infty$ functor in Definition~\ref{def:Floer data for functor} as follows:
\begin{enumerate}
\item A $(k+1)$-tuple of non-negative integers $\mathbf w=(w^0,\dots,w^k)$ which is
assigned to the marked points $\mathbf z$ satisfying
$$
w^0=w^1+\cdots+w^k.
$$
\item A consistent choice of one-forms $\beta_\alpha\in\Omega^1(\Sigma_\alpha)$, for each $\alpha\in \frak A$, constructed in Section \ref{sec:one-form} satisfying $\epsilon^{j*}\beta_\alpha$ agrees with $w^jdt$.
\item[($3_{\rho,{\frak s}}$)] Hamiltonian perturbations $\mathbf H_{\bf L}$
which consist of maps
$$
H_{\alpha}: \Sigma_\alpha \to \mathcal H^{\rho(\alpha)}_{{\frak s}_{\bf v}(s,\alpha)}
$$
for each $\alpha\in{\frak A}$, see (\ref{eqn:parametrized acs and Hamiltonian}). Its pull-back under the map $\epsilon^{j_\alpha}$ uniformly converges to $\frac{H}{(w^{j_\alpha})^2}\circ \psi_{w^{j_\alpha}}$ near each $z^{j_\alpha}$ for some $H\in\mathcal H^{\rho(\alpha)}_{{\frak s}_{\bf v}(s,\alpha)}$.
\item[($4_{\rho,{\frak s}}$)] Almost complex structures $\mathbf J_{\bf L}$
which consist of maps
\[
J_{\alpha}:\Sigma_\alpha \to\mathcal J^{\rho(\alpha)}_{{\frak s}_{\bf v}(s,\alpha)}
\]
for each $\alpha\in{\frak A}$, whose pull-back under $\epsilon^{j_\alpha}$ uniformly converges to $\phi_{w^{j_\alpha}}^*J$
near each $z^{j_\alpha}$ for some
$J\in\mathcal J^{\rho(\alpha)}_{{\frak s}_{\bf v}(s,\alpha)}$, see also (\ref{eqn:parametrized acs and Hamiltonian}).
\item[(5)] A map $\eta:\partial \Sigma\to [1,+\infty)$
which converges to $w^j$ near each $z^j$.
\end{enumerate}

A universal choice of Floer data $\frak D_{\frak h}$ for the $A_\infty$ homotopy consists of the collection of $\mathcal D_{\frak h}$ for every $(\Sigma,j;\rho;\frak s) \in \coprod_{k\geq1}\overline{\mathcal L}{}^{k+1}$ satisfying the corresponding conditions as in Definition \ref{def:universal Floer data} for $\overline{\mathcal L}{}^{k+1}$ and its boundary strata.
\end{defn}

\begin{figure}[ht]
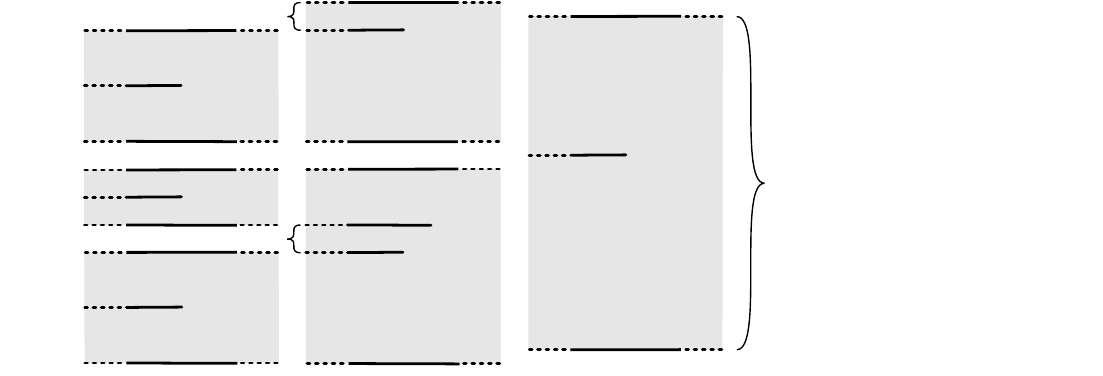
\caption{An example of slit domains for the $A_\infty$ homotopy}
\end{figure}

Here we would like to recall the readers that the current geometric circumstance is
that of $2$-morphisms. The collection ${\bf H} = \{H^r_s\}_{(r,s) \in [0,1]^2}$ satisfies
\be\label{eq:H2morphism}
H^0_s \equiv H_{g_0}, \quad H^1_s \equiv H_{g_\lambda},
\ee
see (\ref{eqn:parametrized acs and Hamiltonian}).

For $(k+1)$-pair of Hamiltonian chords
\begin{align*}
\mathbf x=(x^1,x^2,\dots, x^k)&\in {\frak X}(H_{g_0};L^0,L^1) \times \cdots \times {\frak X}(H_{g_0};L^{k-1},L^{k});\\
y&\in {\frak X}(H_\lambda;L^0,L^k),
\end{align*}
we consider a system of maps $\{u_\alpha:\Sigma_\alpha \to T^*N\}_{\alpha\in\frak A}$ with respect to
$\mathcal D_{\frak h}$ over
\[
{\bf L}=(\Sigma,j;\rho;\frak s)\in \overline \CL{}^{k+1}
\]
satisfying the following:
\begin{enumerate}
\item[(1-a)] For $\alpha\in {\frak A}_\Sigma$ except the distinguished component, $u_\alpha$ satisfies
$$
(du_\alpha - X_{H_{\alpha}}\otimes \beta_\alpha)_{J_{\alpha}}^{(0,1)}=0
$$
for $H_\alpha \in \CH^{\rho(\alpha)}_{*}$ and $J_\alpha \in \CJ^{\rho(\alpha)}_{*}$, where $*=0,1$ depending on $\frak s(\alpha)$.
\item[(1-b)] For the distinguished component $\delta\in\frak A$ in Definition~\ref{defn:fraks} and for each $s\in[0,1]$, let us consider a map $u_{\delta(s)}$ satisfying
$$
(du_{\delta(s)} - X_{H_{\delta(s)}}\otimes \beta_\delta)_{J_{\delta(s)}}^{(0,1)}=0,
$$
where $H_{\delta(s)} \in \CH^{\rho(\alpha)}_{\frak s(s,\delta)}$ and $J_{\delta(s)} \in \CJ^{\rho(\alpha)}_{\frak s(s,\delta)}$.
\end{enumerate}
and the conditions (2)--(6) for $\overline{\mathcal N}{}^{k+1}_{(\Sigma,j;\rho)}(y;\mathbf x)$ in (\ref{eqn:moduli space N}).
For each $s_0\in[0,1]$, let us denote the space of maps
\[\{u_\alpha\}_{\alpha \in \frak A\setminus\{\delta\}}\cup\{u_{\delta(s_0)}\}\]
satisfying the above by
$$
\overline{\CL}{}^{k+1}_{{\bf L}}(s_0)(y;\mathbf x)=\overline{\CL}{}^{k+1}(y;\mathbf x;\mathcal D_{\frak h}(\Sigma,j;\rho;\frak s;s_0)).
$$
Now we consider parameterized moduli spaces given by
\begin{align*}
\overline{\CL}{}^{k+1}_{{\bf L}}(y;\mathbf x)=\bigcup_{s\in[0,1]} \overline{\CL}{}^{k+1}_{{\bf L}}(s)(y;\mathbf x);\\
\overline{\CL}{}^{k+1}(s)(y;\mathbf x)=\bigcup_{{\bf L}\in\overline{\CL}{}^{k+1}} \overline{\CL}{}^{k+1}_{{\bf L}}(s)(y;\mathbf x);\\
\overline{\CL}{}^{k+1}(y;\mathbf x)=\bigcup_{{\bf L}\in\overline{\CL}{}^{k+1}} \overline{\CL}{}^{k+1}_{{\bf L}}(y;\mathbf x)=\bigcup_{s\in[0,1]} \overline{\CL}{}^{k+1}(s)(y;\mathbf x).
\end{align*}
Similarly as for ${\overline \CN}{}^{k+1}(y;\mathbf x) $,
we have a forgetful map
$$
{\frak{forget}}: \overline{\CL}{}^{k+1}(y;\mathbf x)
\to \overline{\CL}{}^{k+1}
$$
for $k \geq 2$ and the evaluation map $ev_{[0,1]}:\overline{\CL}{}^{k+1}(y;\mathbf x)\to[0,1]$ defined by
\begin{align}\label{eqn:ev_s}
\{u_\alpha\}_{\alpha \in \frak A\setminus\{\delta\}}\cup\{u_{\delta(s_0)}\}\mapsto s_0.
\end{align}
Standard parameterized transversality  then proves the following

\begin{lem} Suppose that ${\overline \CL}{}^{k+1}(s_0)(y;\mathbf x)$
with $s_0=0, \, 1$ are transversal. Then for a generic choice Floer data $\frak D_{\frak h}$ for the $A_\infty$ homotopy
 with fixed ends for $s=0, \, 1$, the moduli space
 $\overline{\CL}{}^{k+1}(y;\mathbf x)$ is a manifold of dimension
\[\mu(y)-\sum_{i=1}^{k}\mu(x^i)+k.\]
\end{lem}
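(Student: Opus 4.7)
The plan is to prove this by a standard parameterized transversality argument, in direct analogy with the dimension computation for the functor moduli space $\overline{\CN}{}^{k+1}(y;\mathbf x)$. The only genuinely new feature is the extra parameter $s\in[0,1]$ carried by the distinguished component $\delta(\mathbf v)$ from Definition~\ref{def:Lspace}, and the payoff of the entire argument is that this parameter contributes exactly one additional dimension.

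First I would stratify $\overline{\CL}{}^{k+1}(y;\mathbf x)$ by the combinatorial type $\Gamma$ of its domain via the forgetful map ${\frak{forget}}\colon \overline{\CL}{}^{k+1}(y;\mathbf x)\to \overline{\CL}{}^{k+1}$, using the decomposition~\eqref{eqn:decomposition of L}. On each stratum $\CL_\Gamma(y;\mathbf x)$, set up the standard Banach manifold of pairs consisting of a point in $\CL_\Gamma$ together with a $W^{1,p}$ system of maps $\{u_\alpha\}_{\alpha\in \frak A}$ satisfying the shifted asymptotic conditions and the (vertically moving) Lagrangian boundary conditions, and realize the stratum as the zero locus of the Cauchy-Riemann section defined by the Floer datum $\frak D_\frak h$. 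The $C^0$ and energy bounds from Section~\ref{sec:energy estimate} together with the monotonicity assumption \eqref{eq:gigj} ensure that this Banach setup is compact modulo the stratification, so that a local manifold statement on each stratum assembles into a global one.

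For the index computation, the linearization of~\eqref{eq:CRJHalpha} on each irreducible component $\Sigma_\alpha$ is Fredholm with index determined by the Maslov indices of the asymptotic chords at its punctures, as for $\frak m^k$ and $\frak f^k$ in Sections~\ref{sec:energy estimate} and~\ref{sec:construction of functor}. Summing over $\alpha\in\frak A$, subtracting the matching conditions at interior nodes, and adding the dimension of the domain parameter space $\CL_\Gamma$, which by the dimension lemma for $\CL_\Gamma$ equals $|\Gamma|+1$ with the extra $+1$ coming precisely from the $s$-factor built into Definition~\ref{def:Lspace}, the expected dimension on the top stratum $|\Gamma|=k-1$ comes out to $\mu(y)-\sum_{i=1}^k \mu(x^i)+k$, and strictly smaller on lower strata.

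The transversality step is then a parameterized Sard--Smale argument: one lets $\mathbf H_{\bf L}$ and $\mathbf J_{\bf L}$ vary over the universal Floer data $\frak D_\frak h$, keeping the endpoint data at $s=0,1$ fixed, shows that the universal moduli space is cut out transversally by perturbing $\mathbf H_{\bf L}$ on the distinguished component $\delta$ (where $s$ genuinely acts), and concludes that a generic choice of $\frak D_\frak h$ gives a regular Cauchy-Riemann section on every stratum. The main obstacle, in my view, is the relative nature of this transversality problem: since the hypothesis only supplies transversality at $s=0,1$, one must argue that the parameterized space glues smoothly to the prescribed endpoint fibers $\overline{\CL}{}^{k+1}(s_0)(y;\mathbf x)$ to form a manifold with boundary. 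This is handled by a relative Sard--Smale theorem for Floer data fixed at the two ends, combined with the observation that the evaluation $ev_{[0,1]}$ of~\eqref{eqn:ev_s} is automatically submersive on interior strata because the Hamiltonian and almost complex structure truly depend on $s$ only through the distinguished component $\delta$, whose perturbations are unconstrained by the endpoint conditions.
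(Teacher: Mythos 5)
The paper gives no proof of this lemma beyond the single sentence ``Standard parameterized transversality then proves the following,'' so any fleshed-out argument necessarily goes beyond the source. Your proposal is the natural expansion of that one-liner, and the structure is correct: stratify by combinatorial type via the forgetful map, realize each stratum as the regular zero locus of the parameterized Cauchy--Riemann section, and compute the index. The dimension bookkeeping is right and is the real content: the CR index on a fixed domain is $\mu(y)-\sum_i\mu(x^i)$ exactly as in the $\frak m$- and $\frak f$-cases, and adding $\dim\CL_\Gamma = |\Gamma|+1$ (versus $\dim\CN_\Gamma = |\Gamma|$ for the functor) contributes precisely the extra $+1$ over the $\frak f$-dimension, giving $\mu(y)-\sum_i\mu(x^i)+k$ on the top stratum. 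Treating the problem as a relative Sard--Smale with Floer data pinned at $s=0,1$, using the hypothesized regularity of the two boundary fibers to obtain a manifold-with-boundary structure near them, is also the right framing.

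One claim in the last paragraph is incorrect as stated, though not essential to the lemma: the evaluation $ev_{[0,1]}$ of \eqref{eqn:ev_s} on the moduli space of maps is \emph{not} automatically submersive. What the paper's preceding proposition asserts to be a submersion is the projection from the abstract parameter space $\overline{\CL}{}^{k+1}\to[0,1]$, not the projection from the space of solutions. Indeed the paper's subsequent homotopy argument relies on $ev_{[0,1]}$ having critical values (the sets $\text{\rm Sing}^{\rm I},\text{\rm Sing}^{\rm II},\text{\rm Sing}^{\rm III}$), which would be empty if that map were a submersion. The manifold-with-boundary structure near $s=0,1$ follows from regularity of the endpoint fibers together with a gluing/implicit-function-theorem argument, and does not require (or obtain) submersivity of $ev_{[0,1]}$ on the moduli space; you should drop that assertion or replace it by the correct statement that the projection from the \emph{universal} moduli space (over all allowable Floer data) to $[0,1]$ is a submersion because the $s$-dependent perturbations of $\mathbf H_{\bf L}$ and $\mathbf J_{\bf L}$ on the distinguished component are unconstrained away from the ends.
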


Again, when $\mu(y)=\sum_{i=1}^{k}\mu(x^i)-k$, a map is defined by counting rigid solutions of $\overline\CL{}^{k+1}(y;\mathbf x)$
with respect to the Floer data $\frak D_{\frak h}$ in Definition~\ref{def:Floer data homotopy}
\[
{\frak h}_{\frak D}^k( \psi_{w^1}(x^1) \otimes \psi_{w^2}(x^2) \otimes \cdots  \otimes \psi_{w^k}(x^k))
=\sum_{y}(-1)^\dagger\#\left(\overline\CL{}^{k+1}(y;\mathbf x)\right)\psi_{w^0}(y),
\]
where $\dagger=\sum_{i=1}^k i \cdot \mu(x^i)$.
By the same identification as used in the definition ${\frak m}_k$ and ${\frak f}_k$, we will obtain
a $A_\infty$ homotopy
$$
{\frak h}^k :CW^*(L^{1},L^{0};H_{g_0}) \otimes \cdots \otimes CW^*(L^{k},L^{k-1};H_{g_0})
\to  CW^*(L^k,L^0;H_{g_0'})[-k].
$$

For the later purpose, we describe the structure of the zero dimensional component more carefully.
Recall that the fiberwise dimension of $\overline{\CL}{}^{k+1}(y;\mathbf x)$ for the current circumstance is
$-1$ and that we assume the cases of $s = 0, \, s=1$ are generic. Therefore there is no contribution
therefrom. Therefore there are a finite number of $0< s_1 < s_2 < \cdots < s_j < 1$ such that
$$
\#\left(\overline\CL{}^{k+1}(y;\mathbf x)\right) = \sum_{i=1}^j
\#\left(\overline\CL{}^{k+1}(s)(y;\mathbf x)|_{s = s_i}\right).
$$
Furthermore, generically the associated moduli space
$
\overline\CL{}^{k+1}(s)(y;\mathbf x)|_{s = s_i}
$
is \emph{minimally degenerate}, i.e., the cokernel of the associated linearized operator
has dimension 1. (See \cite{yijen} for the relevant parameterized gluing result.)
Denote by
$$
\text{\rm Sing}^{\rm I} \subset [0,1]
$$
the set of these points.

In order to investigate the algebraic relation for the $A_\infty$ homotopy for
the map $\{{\frak h} ^k\}_{k\in\N}$ we need to look up the codimension one strata
of the moduli space $\overline{\CL}{}^{k+1}(y;\mathbf x)$.
Especially we need to examine the structure of the boundary of one-dimensional components i.e., of those
satisfying
\be\label{eq:1dim}
\mu(y)-\sum_{i=1}^{k}\mu(x^j)+k = 1.
\ee

\begin{thm}\label{thm:homotopy}
 The maps $\{\frak h ^k\}_{k\in \N}$ satisfies the homotopy relation \eqref{eq:homotopy}.
\end{thm}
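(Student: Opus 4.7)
The plan is the standard cobordism argument: I will identify the codimension-one boundary of the one-dimensional components of $\overline{\CL}{}^{k+1}(y;{\bf x})$ with the algebraic terms appearing in the $A_\infty$ homotopy relation \eqref{eq:homotopy}, and then check signs. Throughout I fix $(y;{\bf x})$ with $\mu(y)-\sum_{i=1}^k \mu(x^i)+k=1$ and consider the compactified moduli space $\overline{\CL}{}^{k+1}(y;{\bf x})$, whose interior is a smooth 1-manifold by the generic transversality lemma. The horizontal and vertical $C^0$-estimates (Propositions \ref{prop:horizontal-f}, \ref{prop:vertical-f}, applied componentwise in the universal family over $\overline{\CL}{}^{k+1}$) together with exactness of $T^*(M\setminus K)$ and of the admissible Lagrangians preclude sphere/disk bubbling, so the only boundary contributions come from strip-breaking at the $k+1$ punctures and from the parameter-space boundary described in Lemma \ref{lem:delNColl}.

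First, I would unpack each of the three boundary types from Lemma \ref{lem:delNColl} and match them to the three groups of terms in \eqref{eq:homotopy}. Type (3), the stratum $\{0,1\}\times\overline{\CN}{}^{k+1}$, is by construction of the universal Floer datum the stratum on which the parameter $s$ degenerates to an endpoint; by the boundary condition \eqref{eq:H2morphism} these two fibers realize precisely $\CF_{\lambda''}$ at $s=0$ and $\CF_{\lambda'}\!\circ\!\CF_\lambda$ at $s=1$, and they contribute the left-hand side $(\frak f^k-\frak g^k)(a^1,\dots,a^k)$ of \eqref{eq:homotopy}. Type (2), $\overline{\CL}{}^{\ell+1}\#\CM^{k-\ell+1}$, corresponds geometrically to one stable disk with time-allocation $\rho\equiv 0$ (an $\frak m$-component in $\CA=\CW\CF(M\setminus K;H_{g_0})$) bubbling off at one of the input ends of an $\frak h$-configuration; this gives the $\frak h^{k-m+1}(\dots,\frak m^m_\CA(\dots),\dots)$ sum. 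Type (1), $\CM^{\ell+1}\#(\overline{\CN}{}^{k_1+1}\times\cdots\times\overline{\CL}{}^{k_i+1}\times\cdots\times\overline{\CN}{}^{k_\ell+1})$, corresponds to bubbling off an $\frak m$-component with $\rho\equiv 1$ at the root, so that its $\ell$ incoming ends carry $\frak f$-configurations to the left of the distinguished component (because the total order $\lhd$ forces $\frak s=1$ on the descendants of $\delta$) and $\frak g$-configurations to the right (where $\frak s=0$), with exactly one $\frak h$-configuration in the middle slot $i$. This matches the second sum on the right-hand side of \eqref{eq:homotopy}, with the ordering of $\frak f$'s, $\frak h$, and $\frak g$'s dictated by the definition of $\lhd$ in Definition \ref{defn:<s}; this compatibility between the total order and the factorization of $A_\infty$-homotopies is the main structural input of the argument.

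Second, I would account for strip-breaking at the punctures. Breaking at an input puncture $z^j$ splits off a $\{\frak m\}_\CA$-rigid Floer cylinder, which by standard signs contributes the differential pre-composition and is absorbed into the Type (2) count once we rewrite $\frak m^m_\CA$ with $m=1$ and pass to the quotient by $\R$-translation. Breaking at the output puncture $z^0$ splits off a $\{\frak m\}_\CB$-rigid cylinder with respect to $H_{g_0''}$, which is absorbed into Type (1) with $\ell=2,\ s_1=k,\ s_2=0$ after reintroducing the length-one factors. Combining, the signed count of $\partial\overline{\CL}{}^{k+1}(y;{\bf x})$ equals the sum of all terms on the two sides of \eqref{eq:homotopy} evaluated against $y$, and the identity follows from $\#\partial(\text{oriented } 1\text{-manifold})=0$.

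The main obstacle I anticipate is the sign verification. I would follow Seidel's orientation conventions (\cite{seidel:book}, and the orientation lines $o(x)$ recalled in Section \ref{sec:Dimensions and orientations of moduli spaces}) and compute the sign contribution of each boundary stratum using: (i) the Koszul sign $(-1)^\dagger$ from reordering tensor factors past $\frak m^m_\CA$ in Type (2); (ii) the sign $(-1)^\clubsuit$ from reordering past $\frak h$ inserted in the $i$-th slot of $\frak m^\ell_\CB(\frak f,\dots,\frak h,\dots,\frak g)$ in Type (1), where the $\lhd$-ordering is exactly what makes this sign agree with the formula in Definition \ref{defn:homotopy}; and (iii) the overall orientation of $\overline{\CL}{}^{k+1}(y;{\bf x})$ induced by the $[0,1]$-factor via the evaluation map $ev_{[0,1]}$ in \eqref{eqn:ev_s}, which produces the $-$ sign that converts $\frak f^k-\frak g^k$ into the left-hand side of \eqref{eq:homotopy} after moving Type (3) to the other side. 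Once these signs are pinned down on the irreducible-domain stratum and propagated through the gluing maps (using compatibility of universal Floer data, Definition \ref{def:Floer data homotopy}), the identity \eqref{eq:homotopy} is obtained as the signed count $\#\partial\overline{\CL}{}^{k+1}(y;{\bf x})=0$ for each $(y;{\bf x})$ in the relevant index range.
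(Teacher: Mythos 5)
Your overall strategy is the right one — the cobordism argument over the one-dimensional moduli space $\overline{\CL}{}^{k+1}(y;{\bf x})$ — and your identification of the Type (1) and Type (2) boundary strata of Lemma \ref{lem:delNColl} with the $\frak m^r_\CB(\frak f,\dots,\frak h,\dots,\frak g)$ and $\frak h(\dots,\frak m^m_\CA,\dots)$ groups of terms in \eqref{eq:homotopy} is essentially what the paper does (these correspond, in the paper's proof, to $\rho(\alpha)\to 1$ at the root and $\rho(\alpha)\to 0$ at an input, respectively). Your observation that the total order $\lhd$ and the $\frak s$-decoration arrange the $\frak f$- and $\frak g$-factors on the correct sides of the single $\frak h$-factor in Type (1) is also the right structural input.

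The gap is in the Type (3) identification. You assert that the $\{0,1\}\times\overline{\CN}{}^{k+1}$ stratum contributes $\frak f^k - \frak g^k$ outright, appealing to \eqref{eq:H2morphism}. But a point of $\overline{\CL}{}^{k+1}$ remembers the distinguished component $\delta$, and the $\frak s$-decoration it induces (Definition \ref{defn:fraks}) assigns $\frak s=1$ to all $\lhd$-descendants of $\delta$, $\frak s=s$ at $\delta$ itself, and $\frak s=0$ to the rest. When $s\to 0$ with a \emph{non}-extremal $\delta$, the limit configuration carries a genuinely mixed decoration: some $\frak f$-blocks and some $\frak g$-blocks but \emph{no} $\frak h$-block, and it is neither the pure $\frak f^k$ nor the pure $\frak g^k$ moduli problem. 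These extra endpoint limits must cancel, and the cancellation is exactly what the paper's proof establishes: the $s\to 0$ limit with a given $\delta$ matches, with opposite orientation, the $s\to 1$ limit with the $\lhd$-predecessor of $\delta$ (both produce the same partition of components into $\frak s=0$ and $\frak s=1$). Only the two extremal terms — $\delta$ the $\lhd$-minimum at $s=0$ (all $\frak s=0$, giving $-\frak f^d$) and $\delta$ the $\lhd$-maximum at $s=1$ (all $\frak s=1$, giving $\frak g^d$) — survive. Without this pairwise cancellation the boundary count does not close up to the homotopy relation, so this is the step your proof needs to supply. Everything else (the matching of strip-breaking into the $\frak m$-terms, the sign bookkeeping following the Koszul conventions of Section \ref{sec:Dimensions and orientations of moduli spaces}) is consistent with the paper.
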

\begin{proof}
We first recall the current geometric circumstance. We are given two
Hamiltonian $H^0 \in \CH(\widehat{g}_i)$ and $H^1 \in \CH({\widehat g_\lambda})$, and two paths
$r \mapsto H_*(r) \in \CH(\widehat{g}(r))$ with $*=0, \, 1$ connecting the two Hamiltonians,
and then a homotopy $s \mapsto H_s$ interpolating the two paths with $s \in [0,1]$.

\[
\begin{tikzcd}
\CW\CF (M\setminus K;H_{g_0})\phantom{aaaaa} \arrow[r, bend left=40,
"\mathcal F_{\lambda''}(H_0(r))"{name=U, below}]
\arrow[r, bend right=40, "\mathcal F_{\lambda''}(H_1(r))"{name=D}]
&\phantom{aaaaaaa}\CW\CF(M\setminus K;H_{g''_0})
\arrow[Rightarrow, from=U, to=D,"\mathcal H (H_s(r))"] \end{tikzcd}
\]

Let us consider a sequence
\begin{align}\label{eqn:seq for homotopy relation}
\left(s_\delta^{(i)}, \Gamma(\vec{u}^{(i)}),(\Sigma_\alpha^{(i)},\rho_\alpha^{(i)})_{\alpha\in\frak A}\right)^{i\in \N}
\end{align}
constructed from $\vec{u}\in\overline\CL{}^{k+1}(y;\mathbf x)$ satisfying \eqref{eq:1dim},
i.e., when the fiberwise dimension becomes zero.
Here $s_\delta$ is the $\frak s$-image of the distinguished component $\delta$, see (\ref{eqn:ev_s}), and
$\Gamma(\vec{u})$ is a ribbon graph with decorations defined as in Definition~\ref{def:ribbon graph}.

Generically, the change of $\Gamma(\vec{u})$ occurs at finite number points $s=s_0$ in $[0,1]$
provided the corresponding moduli spaces are regular. We denote by
$$
\text{\rm Sing}^{\rm II} \subset [0,1]
$$
the set of these points. There are also a finite number of points $s$ at which
the fiber $ev_{[0,1]}^{-1}(s)$ contains an element that is minimally degenerate, i.e.,
the cokernel of the linearized operator has dimension 1. We denote by
$$
\text{\rm Sing}^{\rm III} \subset [0,1].
$$
Generically, the three subsets $\text{\rm Sing}^{\rm I},\, \text{\rm Sing}^{\rm II},\, \text{\rm Sing}^{\rm III}$
are pairwise disjoint. We denote the union by $\text{\rm Sing}([0,1]_s)$.

The sequence (\ref{eqn:seq for homotopy relation}) with fixed
type of domain configuration $\Gamma(\Sigma)$, see Definition~\ref{def:ribbon graph}.
By choosing a subsequence,
we may assume that the sequence $s_\delta^{(i)}$ is increasing.

Then  we have the following possible scenarios:
\begin{enumerate}
\item $s_\delta^{(i)}$ converges to $0 < s_0 < 1$, one of the points in $\text{\rm Sing}([0,1]_s)$.
\item $\lim_{i\to +\infty}s_{\delta}^{(i)}=0$.
\item $\lim_{i\to +\infty}s_{\delta}^{(i)}=1$.
\end{enumerate}

We examine the three cases more closely. The first case $s_\delta^{(i)} \to s_0$ is further divided into the
following four different scenarios by Gromov-Floer compactness:
\begin{enumerate}
\item[(1-i)] One of the component $\Sigma_\alpha^{(i)}$ with
$0 < \rho^{(i)}_\alpha < 1$ splits into two components at $s = s_0$ as $i\to +\infty$.
\item[(1-ii)] Two component $\Sigma_{\alpha_1}$ and $\Sigma_{\alpha_2}$ at $s = s_0$
 sharing one asymptotic matching condition satisfy
$0 < \lim\rho_{\alpha_1}^{(i)}= \lim\rho_{\alpha_2}^{(i)} < 1$ at $s=s_0$ when $i\to +\infty$.
\item[(1-iii)] $\lim_{i\to +\infty}\rho_{\alpha}^{(i)}=0$ .
\item[(1-iv)] $\lim_{i\to +\infty}\rho_{\alpha}^{(i)}=1$.
\end{enumerate}
Each of these cases is the analog to the corresponding scenario of
the study of $\frak f$-moduli space $\CN{}^{k+1}(y;{\bf x})$
in the previous section. One difference is that the parameter $s = s_0$ is
not transversal fiberwise but transversal as a parameterized problem. So among the two
components $\alpha_1, \ \alpha_2$ one is an $\frak m$-component and the other is
an $\frak h$-component, i.e., a fiberwise $\frak f$-component with minimal degeneracy. Then
by the same argument as in ${\overline \CN}{}^{k+1}(y;\mathbf x)$ applied to the parametric
case, the case (1-i) and (1-ii) cancel out.

For the convenience sake, we denote the associated
structure maps by $\frak m^\ast, \frak f^\ast$, and ${\frak h}^\ast$ respectively.
A codimension one phenomenon of (1-iii) occurs
when $\alpha$ is an index for a $\frak m$-component. We note that this component $\alpha$ is generically regular
and the underlying zero-dimensional $\frak h$-component containing $\alpha$ is also regular
as a parameterized moduli space. Since at $\rho = 0, \, 1$, the Floer datum is constant over $s \in [0,1]$.
The same configuration must occur for all $s$ near $s_0$. This contradicts to the fact that
$s = s_0$ is contained in the discrete set $\text{\rm Sing}([0,1]_s)$ unless $\frak m$-bubble
is attached at $s =0, \, 1$.
Then it contributes the following terms

\begin{align}\label{eqn:homotopy term (3)}
\sum_{m,n}(-1)^\dagger \frak h^{k-m+1}(x^1, \dots, x^{n}, \frak m^m_{g_0}(x^{n+1}, \dots,
x^{n+m}), x^{n+m+1}, \dots, x^{d} ),
\end{align}
where $\dagger = \sum_{i=1}^{n}\mu(x^i)-n.$

For the case (1-iv), it follows from \eqref{eq:s(alpha)} that a codimension one strata can be obtained when $\alpha$ is an index for the root component and is different from the distinguished component $\delta$.
Then it corresponds to
\begin{align}\label{eqn:homotopy term (4)}
\sum_{r,i}\sum_{s_1,\dots,s_r} (-1)^\clubsuit \frak m^r_{g_0''} \big(\frak f^{s_1}(x^{1},\dots,x^{s_1}),\dots,\frak f^{s_i-1}(\dots,x^{s_1+\cdots+s_{i-1}}),\nonumber\\
 \qquad \qquad \qquad{\frak h}^{s_i}(x^{s_1+\cdots+s_{i-1}+1},\dots, x^{s_1+\cdots+s_i}),\nonumber\\
\qquad \qquad \frak g^{s_i+1}(x^{s_1+\cdots+s_i+1},\dots),\dots,\frak g^{s_r}(x^{d-s_r+1},\dots,x^d)\big),
\end{align}
where $\clubsuit = \sum_{\ell=1}^{s_1+\dots+s_{i-1}} \mu (x^\ell)-\sum_{\ell=1}^{i-1}s_\ell$.
Among many terms from the cases (2) and (3), all of them are cancelled out except the following two terms
\begin{enumerate}
\item[($2'$)] $\lim_{i\to +\infty}s_{\delta}^{(i)}=0$, where $\delta$ is the minimum index with respect to $\lhd$.
\item[($3'$)] $\lim_{i\to +\infty}s_{\delta}^{(i)}=1$, where $\delta$ is the maximum index with respect to $\lhd$.
\end{enumerate}
because of the total order $\lhd$ and the definition of time-wise product $\frak s$.
These two cases give the following terms
\begin{align}\label{eqn:homotopy term (5)(6)}
-\frak f^d(x^1, \dots, x^d),\quad \frak g^d(x^1, \dots , x^d),
\end{align}
respectively.

The algebraic relation coming from the combination of (\ref{eqn:homotopy term (3)}),
(\ref{eqn:homotopy term (4)}), and (\ref{eqn:homotopy term (5)(6)})
is nothing but the $A_\infty$ homotopy relation \eqref{eq:homotopy}.
\end{proof}

This concludes the proof of the following theorem.

\begin{thm}\label{thm:homotopy-direct-g} Let $M$ be a closed manifold and $g, \, g'$
be two metrics on $M$ such that their cylindrical adjustments satisfy $g_0  \geq g_0'$.
Suppose $\lambda_1$ and $\lambda_2$ be homotopies connecting $g_0$ and $g_0'$ and
let $\Gamma: [0,1]^2 \to \mathcal C(N)$ be a homotopy between $\lambda_1$ and $\lambda_2$.
Then there exists an $A_\infty$ homotopy between $\mathcal F_{\lambda_1}$ and
$\mathcal F_{\lambda_2}$.
\end{thm}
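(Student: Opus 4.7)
The plan is that this theorem is essentially the packaging of the construction carried out in Section \ref{sec:homotopy}, so my approach would be to organize the work already set up there around the given homotopy $\Gamma$ and invoke Theorem \ref{thm:homotopy}. Concretely, I would start by using $\Gamma$ to produce the two-parameter family of admissible Hamiltonians and almost complex structures $\CH^r_s$, $\CJ^r_s$ as in \eqref{eqn:parametrized acs and Hamiltonian}, arranged so that the slices at $s=0$ and $s=1$ recover the Floer data previously used to define $\CF_{\lambda_1}$ and $\CF_{\lambda_2}$. The total order $\lhd$ of Definition \ref{defn:<s} together with the universal parametrization $\frak s_{\bf v}$ of Definition \ref{defn:fraks} then give, for each stable curve in $\overline\CL{}^{k+1}$, a consistent way to assign the $s$-coordinate only to the distinguished component while forcing $\frak m$-type bubbles to sit at $s=0$ or $s=1$ (as required to match the two endpoint functors).

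Next I would make a universal choice of Floer data $\frak D_\frak h$ following Definition \ref{def:Floer data homotopy}, constructing the one-forms $\beta_\alpha$ by pulling back from slit domains as in Section \ref{sec:one-form} so that gluing compatibility is automatic and the co-closedness condition $d(\beta\circ j)=0$ holds. With this I would define
\[
\frak h^k(x^1\otimes\cdots\otimes x^k) = \sum_y (-1)^{\dagger}\,\#\,\overline\CL{}^{k+1}(y;{\bf x})\cdot y
\]
on the zero-dimensional strata (after the conformal rescaling identification used for $\frak m^k$ and $\frak f^k$), then verify the $A_\infty$ homotopy relation \eqref{eq:homotopy} by examining the boundary of the one-dimensional components of $\overline\CL{}^{k+1}(y;{\bf x})$. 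The stratification in Lemma \ref{lem:delNColl} together with the scenario analysis (1-i)--(1-iv) and ($2'$),($3'$) appearing in the proof of Theorem \ref{thm:homotopy} gives the three required pieces: $\frak m$-bubbling on an input leaf produces the first sum in \eqref{eq:homotopy}, $\frak m$-bubbling at the root produces the second, and the endpoint contributions at $s=0,1$ give $\frak f^k_{\lambda_1}-\frak f^k_{\lambda_2}$ thanks to the total order $\lhd$ which forces the distinguished component to be leftmost or rightmost in the relevant limits.

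Two analytic ingredients must be checked uniformly over the whole two-parameter family $(r,s)\in[0,1]^2$: the horizontal $C^0$ bound (Proposition \ref{prop:horizontal-f}) and the vertical $C^0$ bound (Proposition \ref{prop:vertical-f}) for each component of $\vec u\in\overline\CL{}^{k+1}(y;{\bf x})$, together with the non-autonomous energy identity \eqref{eq:functor-energy}. Since each such component satisfies the $(j,{\bf J})$-holomorphic equation \eqref{eq:CRJHalpha} for a Hamiltonian lying in $\CH_{\Gamma(r,s)}$ for some fixed $r$, the $C^0$ estimates apply componentwise verbatim, and the energy identity gives an $s$-independent bound because each path $r\mapsto \Gamma(r,s)$ can be chosen monotone in the sense of \eqref{eq:gigj}.

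The hard part is precisely arranging this monotonicity through the whole square $[0,1]^2$ while preserving $\Gamma(0,s)=g_0$ and $\Gamma(1,s)=g_0'$: one must either restrict to homotopies $\Gamma$ that are monotone in $r$ for every fixed $s$, or reduce to this case by a further homotopy using Proposition \ref{prop:equivalence} (Lipschitz equivalence) together with the scaling trick of Remark \ref{rem:scaling}. Given $g_0\geq g_0'$, such a monotone two-parameter family exists by convex interpolation in $\CC(N)$, and the resulting $A_\infty$ homotopy class is independent of the chosen monotone representative by a second application of the same construction to a three-parameter family, which presents no new analytic difficulty and is absorbed into the consistency clause of Definition \ref{def:universal Floer data}. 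This completes the construction of $\frak h_\Gamma$ and the verification of \eqref{eq:homotopy}.
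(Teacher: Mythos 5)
Your proposal follows the paper's route exactly: the theorem is stated as the conclusion of Section~\ref{sec:homotopy}, and the paper's ``proof'' is precisely the construction of $\overline\CL{}^{k+1}$, the Floer data $\frak D_\frak h$, the parameterized moduli spaces $\overline\CL{}^{k+1}(y;\mathbf x)$, the definition of $\frak h^k$ by rigid counts, and Theorem~\ref{thm:homotopy}, all of which you have correctly reassembled and whose boundary bookkeeping (cases (1-iii), (1-iv), ($2'$), ($3'$)) you have mapped to the three terms of \eqref{eq:homotopy} in the right way. The one genuinely valuable thing you add is making explicit the requirement that each slice $r\mapsto\Gamma(r,s)$ be monotone so that the non-autonomous energy identity \eqref{eq:functor-energy} applies fiberwise, and your observation that convex interpolation of two monotone paths joining $g_0$ to $g_0'$ stays monotone shows that this can always be arranged without loss of generality; the paper leaves this point implicit, and spelling it out as you do tightens the argument.
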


We denote the resulting $A_\infty$ category by
\be\label{eq:WFgMK}
\CW\CF_g(M \setminus K) := \CW\CF(T^*N,H_{g_0}).
\ee
In the next section, we will prove the quasi-equivalence class of the
$A_\infty$ category $\CW\CF(T^*N;H_{g_0})$
is independent of the choice of metrics $g$.

\section{Independence of choice of metrics}\label{sec:homlimit}
In this section, we consider the wrapped Fukaya categories
$\mathcal{WF}(T^*N,g_0)$ and $\mathcal{WF}(T^*N,h_0)$ constructed
on the knot complement $M \setminus K$ for two metrics $g, \, h$ of $M$.

 Moreover, it follows from Proposition \ref{prop:equivalence} that
the two Riemannian metric $g_0$ and $h_0$ are {\em Lipschitz equivalent}, i.e., there exists a constant $C > 1$ such that
 \be\label{eq:LipschitzCondition}
 \frac{1}{C} h_0 \leq g_0 \leq C h_0,
 \ee
on $M \setminus K$.

For the convenience of notation, we denote the kinetic energy Hamiltonian $H_{g_0}$ associated
to $g$ also by $H(g_0)$ in this section.

Next we prove the following equivalence theorem.

\begin{thm}\label{thm:invariancemetric}
Let $(g,h)$ be a pair of Lipschitz equivalent metrics on an orientable tame manifold $N$.
Then two induced wrapped Fukaya categories $\mathcal{WF}_g(T^*N)$ and $\mathcal{WF}_h(T^*N)$ are quasi-equivalent.
\end{thm}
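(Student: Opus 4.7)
The plan is to reduce the statement to the monotone case handled by Theorem~\ref{thm:homotopy-direct-g}. The Lipschitz equivalence \eqref{eq:LipschitzCondition} lets me choose $\lambda \geq C$ so that the single metric $\lambda h_0$ dominates both $g_0$ and $h_0$ pointwise on $M\setminus K$, giving $H(\lambda h_0) \leq H(g_0)$ and $H(\lambda h_0) \leq H(h_0)$ by \eqref{eq:gigj}. Fixing monotone paths $\mu^g$ from $\lambda h_0$ down to $g_0$ and $\mu^h$ from $\lambda h_0$ down to $h_0$, Theorem~\ref{thm:homotopy-direct-g} produces $A_\infty$ functors
\[
\mathcal F^g : \CW\CF(T^*N; H(\lambda h_0)) \longrightarrow \CW\CF_g, \qquad \mathcal F^h : \CW\CF(T^*N; H(\lambda h_0)) \longrightarrow \CW\CF_h,
\]
each well-defined up to $A_\infty$ homotopy.

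I would next argue that both $\mathcal F^g$ and $\mathcal F^h$ are quasi-equivalences. Since $\lambda h_0$ and $h_0$ differ only by the constant conformal factor $\lambda$, the fiberwise Liouville flow $\psi_{\sqrt{\lambda}}$ is a conformal symplectomorphism of $(T^*N,\omega)$ and, by Lemma~\ref{lem:vertical rescaling}, provides a chain-level identification of $CW^*(L^1,L^0;H_{h_0})$ with $CW^*(\psi_{\sqrt\lambda}(L^1),\psi_{\sqrt\lambda}(L^0);H_{h_0})$. Combined with the quadratic homogeneity relation \eqref{eq:quadratic-H} exploited in Remark~\ref{rem:quadratic}, this rescaling produces a candidate quasi-inverse to $\mathcal F^h$. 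To verify that its composition with $\mathcal F^h$ is $A_\infty$-homotopic to the identity, I would apply Theorem~\ref{thm:homotopy-direct-g} to the path-of-paths obtained by concatenating the monotone homotopy $\mu^h$ with its Liouville-rescaled reverse and then contracting this loop inside the space of monotone homotopies of cylindrical metrics (which is convex, hence contractible, thanks to the Lipschitz bound). The same argument with $g$ in place of $h$ handles $\mathcal F^g$. The zigzag $\CW\CF_g \overset{\mathcal F^g}{\longleftarrow} \CW\CF(T^*N;H(\lambda h_0)) \overset{\mathcal F^h}{\longrightarrow} \CW\CF_h$ then yields the desired quasi-equivalence, and independence of the choice of $\lambda$ follows by a further application of the homotopy machinery to paths $\lambda \mapsto \lambda'$.

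The main obstacle is the verification in the middle paragraph: that the composition of the monotone functor with the Liouville rescaling isomorphism is $A_\infty$-homotopic to the identity. Geometrically the two deformations are essentially orthogonal -- one rescales the fibers of $T^*N$, the other deforms the Hamiltonian profile along the base -- but coupling them within the $A_\infty$ framework requires extending the homotopy-of-homotopies construction of Section~\ref{sec:homotopy} so that the parameterized Floer data include Liouville-rescaled Lagrangian boundary conditions varying over the moduli space of decorated disks. Once this extension is set up, the contractibility of the space of coupled monotone homotopies between Lipschitz-equivalent cylindrical metrics yields the identity statement up to $A_\infty$ homotopy, and completes the proof.
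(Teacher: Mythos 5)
Your wedge-through-a-dominating-metric approach is structurally different from the paper's argument, and the difference matters. The paper exploits the two-sided Lipschitz bound in the form $H(h_0)\leq H(g_0/C)$ and $H(g_0)\leq H(h_0/C)$ to produce an \emph{interleaved} chain of monotone continuation functors
\[
\Phi: \CW\CF(T^*N;H(h_0))\to\CW\CF(T^*N;H(g_0/C)),\qquad
\Psi: \CW\CF(T^*N;H(g_0))\to\CW\CF(T^*N;H(h_0/C)),
\]
and then shows that the \emph{double} compositions $\Psi\circ\Phi$ and $\Phi\circ\Psi$ are $A_\infty$-homotopic to the conformal rescaling isomorphisms $\rho_{C^2}$ and $\eta_{C^2}$. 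Because every path and every homotopy of paths in this argument stays inside the space of monotone homotopies of cylindrical metrics --- the concatenation $h_0\to g_0/C\to h_0/C^2$ is monotone, and its straight-line homotopy to the pure rescaling path $h_0\to h_0/C^2$ stays monotone by convexity --- Theorem~\ref{thm:homotopy-direct-g} applies directly, and the standard categorical argument (both double compositions are isomorphisms, hence $\Phi$ and $\Psi$ are quasi-equivalences) finishes the proof. No single leg ever needs to be shown to be a quasi-equivalence on its own.

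Your version needs the stronger claim that \emph{each} leg $\mathcal F^h$ of the wedge is individually a quasi-equivalence, which is exactly where your proposal stalls. The specific mechanism you suggest --- concatenating the monotone homotopy $\mu^h$ with its ``Liouville-rescaled reverse'' and contracting the resulting loop --- is not well-posed in the framework of this paper. The reverse of $\mu^h$ runs from $h_0$ (bigger Hamiltonian) to $\lambda h_0$ (smaller Hamiltonian), which is the wrong direction for a continuation functor; pulling back by a Liouville rescaling $\psi_{\sqrt\lambda}$ rescales the whole family by the same constant and does not flip the direction of the inequality, so the rescaled reverse is still not a monotone homotopy in the sense required by Remark~\ref{rem:reason} and the $C^0$-estimates of Part~2. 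Consequently the ``loop'' never lives in the convex space your contractibility argument needs, and Theorem~\ref{thm:homotopy-direct-g} cannot be invoked for it. If you wish to persist with the wedge structure, the cleaner route would be to mirror the paper: show that $\rho^{-1}\circ\mathcal F^h$ composed with a second monotone continuation back up to $H(\lambda h_0/\lambda')$ is homotopic to a rescaling isomorphism, so that you only ever compose forward-going monotone continuations; this collapses back to the paper's interleaved scheme. As stated, your middle step leaves a genuine gap that your outlined fix does not close.
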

\begin{proof} By the Lipschitz equivalence \eqref{eq:LipschitzCondition}, we have
\begin{align}\label{eq:confinal}
\begin{cases}
H(g_0) \leq H(h_0/C);\\
H(h_0) \leq H(g_0/C)
\end{cases}
\end{align}
recalling that the Hamiltonian is given by the {\em dual} metric.

We have two $A_\infty$ functors between induced wrapped Fukaya categories
\begin{align}\label{eqn:two vertical sequences}
\Phi  &: \mathcal{WF}(T^*N;H(h_0)) \to \mathcal{WF}(T^*N;H(g_0/C));\nonumber \\
\Psi  &: \mathcal{WF}(T^*N;H(g_0)) \to \mathcal{WF}(T^*N,H(h_0/C))
\end{align}
which are defined by the standard $C^0$-estimates for the monotone homotopies.
Note that the morphism is naturally defined from the {\em smaller metric} to the {\em bigger metric}.

Now consider the composition of the functors
\begin{align*}
\Psi \circ \Phi  &: \mathcal{WF}(T^*N;H(h_0)) \to \mathcal{WF}(T^*N;H(g_0/C^2));\\
\Phi \circ \Psi  &: \mathcal{WF}(T^*N;H(g_0)) \to \mathcal{WF}(T^*N;H(g_0/C^2)).
\end{align*}
These are homotopic to natural isomorphisms induced by the rescaling of metrics
\begin{align*}
\rho_{C^2}&:  \mathcal{WF}(T^*N;H(h_0)) \to \mathcal{WF}(T^*N,H(h_0/C^2));\nonumber\\
\eta_{C^2}&:  \mathcal{WF}(T^*N;H(g_0)) \to \mathcal{WF}(T^*N;H(g_0/C^2)),
\end{align*}
respectively. This proves that $\Phi$ and $\Psi$ are quasi-equivalences.
\end{proof}

\begin{rem}
Another class of metrics we will study in \cite{BKO} is a complete hyperbolic metric $h$
on $N = M \setminus K$ for hyperbolic knots $K$. In such a case,
we can exploit hyperbolic geometry to directly construct another $A_\infty$ category $\CW\CF(\nu^*T;H_h)$
without taking a cylindrical adjustment. Since $h$ is not Lipschitz-equivalent to
a cylindrical adjustment $g_0$ on $M \setminus K$ of any smooth metric $g$ on $M$,
that category may not be quasi-equivalent to $\CW\CF(M \setminus K)$ constructed in the
present paper.
\end{rem}

\subsection{Wrap-up of the construction of wrapped Fukaya category $\CW\CF(M\setminus K)$}

In this section, we restrict ourselves to the case when $N$ is a knot complement $M \setminus K$ in a closed oriented $3$-manifold $M$, and wrap up the proofs of the main theorems stated in the introduction.

We first note that one class of Lipschitz equivalent metrics on $N$
considered in Theorem \ref{thm:invariancemetric}
is obtained by restricting any Riemannian metric $g_M$ to $M$.
Note that these metrics on $N$ is {\em incomplete}.

\begin{thm}\label{thm:invariant}
Let $M$ be a closed oriented $3$-manifold equipped with a metric and let $M \setminus K$ be a knot.
Then there exists an $A_\infty$ category $\CW\CF(M\setminus K)$ whose isomorphism class depends only
on the isotopy type of $K$ in $M$.
\end{thm}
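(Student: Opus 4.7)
The plan is to establish the theorem in two stages: first, confirm that the construction yields a well-defined $A_\infty$ category up to quasi-equivalence (independently of the auxiliary choices), and second, transport the category across an ambient isotopy of knots.

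For the first stage, I would proceed as follows. Given any smooth Riemannian metric $g$ on $M$, Definition \ref{defn:cyl-adj} together with a choice of exhaustion \eqref{eq:NiNi'} yields a cylindrical adjustment $g_0$ on $M \setminus K$, and the constructions of Sections \ref{sec:energy estimate}--\ref{sec:homotopy}, underwritten by the horizontal and vertical $C^0$ estimates of Propositions \ref{prop:horizontal} and \ref{prop:vertical}, produce an $A_\infty$ category $\CW\CF_g(M \setminus K) = \CW\CF(T^*N; H_{g_0})$. For two choices $g, g'$ of smooth metrics on $M$, Proposition \ref{prop:equivalence} guarantees that the cylindrical adjustments $g_0$ and $g_0'$ are Lipschitz equivalent, so Theorem \ref{thm:invariancemetric} produces a quasi-equivalence $\CW\CF_g(M \setminus K) \simeq \CW\CF_{g'}(M \setminus K)$. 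Independence from the choice of exhaustion and tubular neighborhood is of the same nature, while independence from the universal Floer data of Definition \ref{def:universal Floer data} follows from Theorem \ref{thm:homotopy-direct-g} applied to a monotone path interpolating any two choices after a common rescaling.

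For the second stage, let $K_0$ and $K_1$ be isotopic knots in $M$, and fix an ambient isotopy $\{\phi_t\}_{t \in [0,1]}$ of $M$ with $\phi_0 = \mathrm{id}$ and $\phi_1(K_0) = K_1$. The restriction $\phi_1 \colon M \setminus K_0 \to M \setminus K_1$ is a diffeomorphism, whose cotangent lift $T^*\phi_1 \colon T^*(M \setminus K_1) \to T^*(M \setminus K_0)$ is an exact symplectomorphism. Taking the pullback metric $\phi_1^* g$ on $M$ for any smooth metric $g$, and choosing on $M \setminus K_0$ the exhaustion obtained by pulling back one on $M \setminus K_1$, I get cylindrical adjustments that correspond under $\phi_1$; consequently $T^*\phi_1$ transports admissible Lagrangians, kinetic energy Hamiltonians, Sasakian almost complex structures, and the one-forms $\beta$ on Floer domains from Section \ref{sec:one-form} between the two sides. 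This gives a strict isomorphism $\CW\CF_{\phi_1^* g}(M \setminus K_0) \cong \CW\CF_g(M \setminus K_1)$ of $A_\infty$ categories. Composing with the quasi-equivalence $\CW\CF_{\phi_1^* g}(M \setminus K_0) \simeq \CW\CF_g(M \setminus K_0)$ supplied by the first stage finishes the argument.

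The main obstacle will be respecting the monotonicity condition \eqref{eq:gigj} required by the $A_\infty$ functor construction in Section \ref{sec:construction of functor}, which is not symmetric in the two metrics. As in the proof of Theorem \ref{thm:invariancemetric}, this will force the introduction of a two-sided family of rescaled metrics and the verification that the two compositions of the resulting $A_\infty$ functors are homotopic (via Theorem \ref{thm:homotopy-direct-g}) to the natural rescaling isomorphisms. Threading this two-sided argument through the isotopy step requires no new analytic input, but the bookkeeping of universal Floer data and the gluing-consistency of the one-forms $\beta$ under $\phi_1$ must be checked carefully to ensure that the strict isomorphism produced by $T^*\phi_1$ actually intertwines the $A_\infty$ structures.
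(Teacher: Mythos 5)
Your proof is correct, and its overall strategy — reduce isotopy invariance to metric independence via the endpoint diffeomorphism — is the same as the paper's, but your Stage 2 is cleaner than the paper's presentation. The paper also fixes a metric $g$, forms the family $g_t = \phi^t_*g$ and the corresponding family of cylindrical adjustments on $(M \setminus K_t, g_t)$, and then invokes the continuation $A_\infty$ functor machinery of Section \ref{sec:construction of functor} to build $\Phi$ and $\Psi$, finishing with the $A_\infty$ homotopy argument to show $\Psi \circ \Phi \simeq \mathrm{id}$. That is, the paper tracks the intermediate stages $t \in (0,1)$ and implicitly identifies the moving targets $T^*(M \setminus K_t)$ via the lifts $T^*\phi^t$, but never makes that identification explicit; as written, the displayed functor $\Phi$ and its target have a domain mismatch (the target should involve $M \setminus K_1$, not $M \setminus K_0$). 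Your version separates the two ingredients: first the \emph{strict} $A_\infty$ isomorphism $\CW\CF_{\phi_1^*g}(M \setminus K_0) \cong \CW\CF_g(M \setminus K_1)$ arising from the exact symplectomorphism $T^*\phi_1$ (with exhaustion, Hamiltonian, almost complex structure, and admissible Lagrangians transported coherently, while the one-forms $\beta$ live on the domain $\Sigma$ and do not change), and second the quasi-equivalence $\CW\CF_{\phi_1^*g}(M \setminus K_0) \simeq \CW\CF_g(M \setminus K_0)$ from Theorem \ref{thm:invariancemetric} via Proposition \ref{prop:equivalence}. This avoids having to re-run the continuation/homotopy machinery through the whole one-parameter family, and is arguably more transparent; the price is that the well-definedness of the strict isomorphism (compatibility with universal Floer data) must be checked, which you rightly flag as the remaining bookkeeping. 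Both proofs buy the same theorem; yours factors the argument more cleanly into a ``conjugation by a symplectomorphism'' step and a ``metric continuation'' step, whereas the paper entangles the two.
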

\begin{proof} Let $g$ be a Riemannian metric on $M$ and restrict the metric to $M \setminus K$.
By the remark above, the quasi-isomorphism type of $A_\infty$ category $\CW\CF(T^*N,H_{g_0})$
does not depend on the choice of the metric.

It remains to show the isotopy invariance of $\CW\CF(T^*N,H_{g_0})$. More precisely, let $K_0, \, K_1$ be
isotopic to each other. Suppose $\phi^t: M \to M$ be an isotopy such that $K_1 = \phi^1(K_0)$.
Denote $K_t = \phi^t(K_0)$.
We fix a metric $g$ on $M$ and consider the family of metric $g_t: = \phi^t_*g$.
We also fix a pair of precompact domains $W_0 \subset W_0' \subset M \setminus K$
with smooth boundary and fix a cylindrical adjustment
$g'$  of $g$ outside $W_0'$ so that $g_0' = g|_{\del W_0} \oplus dr^2$ on $M \setminus W_0'$
which is interpolated in $W_0' \setminus W_0$ to $g$.
Then we consider the isotopy of pairs $(M \setminus K_t, g_t)$.
We denote $W_t = \phi^t(W_0)$ and $W_t' = \phi^t(W_0')$. Then choose a smooth family of cylindrical
adjustments $g_t'$ for $g_t$ outside $W_t' \subset M \setminus K_t$ with $g_t = \phi^t_*(g)$ on $W_t$
which are interpolated in between.

Then we construct an  $A_\infty$ functor
$$
\Phi: \CW\CF_g(T^*(M\setminus K_0)) \to \CW\CF_{\phi^1_*g}(T^*(M\setminus K_0))
$$
which are given by $L \to \phi^1(L)$ objectwise and whose morphism
is defined by the same construction performed in the previous construction of $A_\infty$ functor.
By considering the inverse isotopy, we also have
$$
\Psi: \CW\CF( T^*(M\setminus K_1); H_{g_0}) \to \CW\CF( T^*(M\setminus K_0);H_{g_0'}).
$$
Then we consider the isotopy which is a concatenation of $\phi^t$ and its inverse isotopy
which is homotopic to the constant isotopy.
Then the same construction of the homotopy as the one in Section \ref{sec:construction of functor}
can be applied to produce a $A_\infty$ homotopy between $\Psi \circ \Phi$ and the identity functor.

This proves $\CW\CF(T^*(M\setminus K_0),H_{g_0})$ is quasi-isomorphic to $\CW\CF(T^*(M\setminus K_0),H_{\phi^1_*g})$, which finishes the proof.
\end{proof}

\section{Construction of Knot Floer algebra $HW(\del_\infty(M\setminus K))$}
\label{sec:knot-algebra}

In this section, we give construction of Knot Floer algebra mentioned in Definition
\ref{defn:knotFloeralgebra}.

Denote by $\mathfrak G_{g_0}(T)$ the energy of the shortest geodesic cord of $T$
relative to the metric $g_0$. Then the following lemma is a standard fact
in Riemannian geometry since $T$ is a compact smooth submanifold and $g_0$ is
of bounded geometry.

\begin{lem}\label{lem:perturbed-cord} Denote by $\mathfrak G_{g_0}(T)$
the infimum of the energy of non-constant geodesics. Then $\mathfrak G_{g_0}(T) > 0$ and
all non-constant Hamiltonian chords $\gamma$ of $(\nu^*T;H_{g_0})$ have
$$
\CA_{H_{g_0}}(\gamma) = -E(c_\gamma) \leq  - \mathfrak G_{g_0}(T).
$$
\end{lem}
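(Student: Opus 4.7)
The strategy is first to translate the statement about Hamiltonian chords into a Riemannian question about geodesic chords of $T$, and then to apply a tubular neighborhood argument to produce a positive lower bound on length.

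For the translation, I would use that $H_{g_0} = \frac12 |p|^2_{g_0}$ is the kinetic-energy Hamiltonian, so its Hamiltonian flow on $T^*(M \setminus K)$ projects under $\pi$ to the geodesic flow of $g_0$ on $M \setminus K$. Under the Legendre transform, the boundary condition $\gamma(0), \gamma(1) \in \nu^*T$ is equivalent to $\dot c_\gamma(0), \dot c_\gamma(1) \in \nu T$; hence non-constant Hamiltonian chords of $(\nu^*T; H_{g_0})$ correspond bijectively to non-constant \emph{geodesic chords} of $T$, meaning non-constant geodesics of $g_0$ in $M \setminus K$ whose endpoints lie on $T$ and whose velocities are perpendicular to $T$ at both endpoints. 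The identity $\CA_{H_{g_0}}(\gamma) = -E(c_\gamma)$ is then nothing but the convention \eqref{eq:CA-E}, using that $\theta|_{\nu^*T} = 0$ allows the boundary primitives $f^i$ in \eqref{eqn:action} to be taken as zero on $\nu^*T$.

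For the length lower bound, the essential input is compactness of $T$. Since $T \cong \T^2$ is a smoothly embedded compact submanifold of the smooth Riemannian manifold $(M \setminus K, g_0)$, the normal exponential map $\exp^\perp : \nu T \to M \setminus K$ is a local diffeomorphism along the zero section, and by compactness of $T$ there exists $\epsilon > 0$ such that $\exp^\perp$ restricts to a diffeomorphism from the $\epsilon$-disk bundle $\nu_\epsilon T$ onto an open tubular neighborhood $U_\epsilon \supset T$. I claim every non-constant geodesic chord of $T$ has $g_0$-length at least $\epsilon$. Parameterizing such a chord by arclength as $c : [0,L] \to M \setminus K$ and arguing by contradiction with $L < \epsilon$, the whole image $c([0,L])$ lies in $B_{g_0}(c(0), \epsilon) \subset U_\epsilon$, and the initial perpendicularity $\dot c(0) \in \nu_{c(0)} T$ together with the diffeomorphism property of $\exp^\perp$ on $\nu_\epsilon T$ forces $c(t) = \exp^\perp_{c(0)}(t \dot c(0))$ throughout. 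The endpoint condition $c(L) \in T$ then yields $L\, \dot c(0) = 0 \in \nu_{c(0)} T$, which contradicts non-constancy. Consequently $\mathfrak G_{g_0}(T) \geq \epsilon^2/2 > 0$, and combined with the previous paragraph this proves $\CA_{H_{g_0}}(\gamma) \leq -\mathfrak G_{g_0}(T)$.

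The only potential concern is non-compactness of $(M \setminus K, g_0)$ and its cylindrical end, but neither affects the argument: $T$ itself is compact, so the tubular radius $\epsilon$ is uniform; and geodesics starting perpendicular to $T = T \times \{0\}$ and moving into the product end $T \times [0,\infty)$ are of the form $(x_0, t)$ and escape to infinity without ever returning to $T$, so they contribute no chord. The whole argument is localized near the compact submanifold $T$, and I do not anticipate a serious obstacle.
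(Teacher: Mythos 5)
Your proof is correct. Note, however, that the paper gives no proof of this lemma at all: it merely asserts that it is ``a standard fact in Riemannian geometry since $T$ is a compact smooth submanifold and $g_0$ is of bounded geometry.'' What you have done is supply the standard argument the authors defer to, and you have done so accurately: the Legendre-transform identification of Hamiltonian chords of $(\nu^*T,H_{g_0})$ with geodesic chords meeting $T$ orthogonally, the vanishing $\theta|_{\nu^*T}=0$ that kills the boundary primitives $f^i$ in \eqref{eqn:action} so that \eqref{eq:CA-E} gives $\CA_{H_{g_0}}(\gamma)=-E(c_\gamma)$, and the uniform tubular radius of $\exp^\perp$ (from compactness of $T$) as the source of the positive lower bound are all exactly right. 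One small streamlining worth noting: the inclusion $B_{g_0}(c(0),\epsilon)\subset U_\epsilon$ --- which tacitly uses completeness of $g_0$, true here since $g_0$ is a cylindrical adjustment, but an extra hypothesis --- is not actually needed. Since $\exp^\perp$ is injective on $\nu_\epsilon T$, and both $(c(L),0)$ and $(c(0),L\dot c(0))$ lie in $\nu_\epsilon T$ (the latter because $L<\epsilon$ and $|\dot c(0)|=1$) and both map under $\exp^\perp$ to $c(L)\in T\subset U_\epsilon$, injectivity alone forces $L\dot c(0)=0$, hence $L=0$, without tracking the intermediate image of $c$. Your closing observation about the cylindrical end is apt but, as you say, inessential, since the whole argument localizes in a fixed tubular neighborhood of the compact torus $T$.
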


Now we perform the algebra version in the Morse-Bott setting
of the construction given in the previous sections associates an $A_\infty$ algebra
$$
CW_g(L, T^*(M\setminus K)): = C^*(T)\oplus \Z\{ \mathfrak X_{< 0}(H_{g_0};\nu^*T,\nu^*T)\}
$$
where $C^*(T)$ is chosen to be a cochain complex of $T$, e.g., the de Rham complex similarly as in \cite{fooo:book1,fooo:book2}.
We note that for $0 < \epsilon_0 < \mathfrak G_{g_0}(T)$ is sufficiently small, we have
$$
\mathfrak X_{\geq -\epsilon_0}(H_{g_0};\nu^*T, \nu^*T) \cong \T^2
$$
and
$$
\mathfrak X_{< -\epsilon_0}(H_{g_0};\nu^*T, \nu^*T)
$$
is in one-one correspondence with the set of non-constant geodesic cords of $(T, g_0)$ and so
$$
CW_{< -\epsilon_0}(L, T^*(M\setminus K); H_{g_0}) = \Z\langle \mathfrak X_{< 0}(H_{g_0};\nu^*T,\nu^*T)\rangle
$$
It follows from the lemma that the associated complex
$(CW_g(L, T^*(M\setminus K)), \frak m^1)$ has a subcomplex
$$
(CW_{\geq -\epsilon_0}(L, T^*(M\setminus K); H_{g_0}), \frak m^1) \cong (C^*(T), d)
$$
where $d$ is the differential on $C^*(T)$.  We denote the associated homology by
$$
HW_g(L, T^*(M\setminus K)):= HW (L, T^*(M\setminus K); H_{g_0}).
$$

Not to further lengthen the paper and since
the detailed construction is not explicitly used in the present paper, we
omit the details of this Morse-Bott construction till \cite{BKO} where we
provide them and describe its cohomology in terms of the Morse cohomology model of $C^*(T)$.
\emph{With this mentioned, we will pretend in the discussion below that $(\nu^*T, H_{g_0})$ is a
nondegenerate pair with $\nu^*T$ implicitly replaced by a $C^2$-small perturbation $L$ thereof.}

Let $ N(K), \, N'(K)$ be a  tubular neighborhood of $K$
such that
$$
N(K) \subset \Int N'(K).
$$
We denote $M \setminus N(K) = N^{\text{\rm cpt}}$ and similarly for $N'(K)$.
We denote $T = N(K)$ and $L = \nu^*T$.

We define the cylindrical adjustments $ g_0$ of the metric $g$ on $M$
with respect to the exhaustion \eqref{eq:NiNi'} by
$$
g_0 = \begin{cases} g_0 \quad & \text{on } N^{\prime,\text{\rm cpt}}\\
da^2 \oplus g_0|_{\del N^{\text{\rm cpt}}} & \text{on } N^{\text{\rm cpt}} \setminus K
\end{cases}
$$
which is suitably interpolated on $N^{\text{\rm cpt}} \setminus N^{\prime,\text{\rm cpt}}$ and fixed.

\begin{thm}\label{thm:limitLi} The quasi-isomorphism class of $A_\infty$ algebra
$$
(CW(\nu^*T, T^*(M \setminus K;H_{g_0})), {\frak m}), \quad \frak m
= \{\frak m ^k\}_{0 \leq k < \infty}
$$
does not depend on the various choices involved such as tubular neighborhood $N(K)$ and the metric $g$ on $M$.
\end{thm}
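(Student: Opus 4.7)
The plan is to realize $CW(\nu^*T, T^*(M\setminus K); H_{g_0})$ as the endomorphism $A_\infty$ algebra of the object $\nu^*T$ inside the wrapped Fukaya category $\CW\CF_g(M\setminus K) = \CW\CF(T^*N; H_{g_0})$ constructed in the preceding sections, and then reduce the statement to the invariance results already in hand for the whole category. Once this identification is made, the theorem follows formally from the fact that any $A_\infty$ quasi-equivalence of $A_\infty$ categories restricts to a quasi-isomorphism on the endomorphism algebra of any shared object.

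First I would address the dependence on the metric $g$. Given two smooth metrics $g, g'$ on $M$, Proposition \ref{prop:equivalence} guarantees that any cylindrical adjustments $g_0, g_0'$ are Lipschitz equivalent on $M\setminus K$, so Theorem \ref{thm:invariancemetric} (combined with the homotopy comparison of Theorem \ref{thm:homotopy-direct-g}) produces an $A_\infty$ quasi-equivalence
$$
\Phi_{gg'}: \CW\CF(T^*N; H_{g_0}) \longrightarrow \CW\CF(T^*N; H_{g_0'}).
$$
The Lagrangian $\nu^*T$ is admissible in both categories, because the conormal of $T = \del N(K)$ is defined purely topologically and its cylindrical behavior at infinity is automatic on the cylindrical ends. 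Restricting $\Phi_{gg'}$ to $\nu^*T$ therefore gives the desired quasi-isomorphism between $CW(\nu^*T; H_{g_0})$ and $CW(\nu^*T; H_{g_0'})$. The same argument, applied to two different cylindrical adjustments of a single metric $g$, handles that ambiguity.

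Next I would address the choice of tubular neighborhood $N(K)$. By transitivity and common refinement it suffices to treat $N_1(K) \subset N_2(K)$, in which case the boundary tori $T_1, T_2$ are smoothly isotopic in $M\setminus K$ via a radial flow supported in $N_2(K) \setminus K$. Such an ambient isotopy lifts canonically to a compactly supported Hamiltonian isotopy $\phi^s: T^*(M\setminus K) \to T^*(M\setminus K)$ with $\phi^1(\nu^*T_1) = \nu^*T_2$; compactness of support ensures that each $\phi^s(\nu^*T_1)$ remains admissible in the sense of Definition \ref{def:admissible Lagrangian}. The $A_\infty$ functor construction of Section \ref{sec:construction of functor}, in the variant where the boundary Lagrangian (rather than the Hamiltonian) is varied along a monotone path, yields an $A_\infty$ quasi-isomorphism between $CW(\nu^*T_1; H_{g_0})$ and $CW(\nu^*T_2; H_{g_0})$. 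Its quasi-isomorphism class is independent of the particular isotopy chosen by the $A_\infty$ homotopy construction of Theorem \ref{thm:homotopy} applied to a two-parameter family of isotopies. Combining this with the previous paragraph proves independence of both data simultaneously.

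The principal technical obstacle will be the Morse-Bott nature of the pair $(\nu^*T, H_{g_0})$: the constant chord locus $\mathfrak X_0(L; H_{g_0})$ is a copy of $\T^2$, so the complex must be built from the clean model $C^*(T) \oplus \Z\langle \mathfrak X_{<0}(L; H_{g_0})\rangle$ rather than a fully nondegenerate complex, and each continuation functor and each $A_\infty$ homotopy above must be implemented compatibly with this clean component. As explicitly flagged in the body of the paper, the detailed Morse-Bott construction is postponed to \cite{BKO}; adopting the authors' convention of replacing $\nu^*T$ by a $C^2$-small nondegenerate perturbation $L$, the argument sketched above goes through verbatim, and an independent perturbation-independence argument in the sequel then shows that the quasi-isomorphism class obtained agrees with the Morse-Bott model and does not depend on the auxiliary perturbation.
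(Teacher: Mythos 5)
Your treatment of the metric independence is essentially the paper's: both invoke Proposition \ref{prop:equivalence} to get Lipschitz equivalence of cylindrical adjustments, and then reduce to the categorical invariance from Section \ref{sec:homlimit}. The genuine divergence is in the tubular-neighborhood step, and there your route has real gaps.

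For independence of $N(K)$ the paper does not use a moving-boundary continuation at all. It chooses a diffeomorphism $\phi$ of $M$, isotopic to the identity fixing $K$, with $\phi(T_1)=T_2$, and observes that the canonical (fiberwise linear) symplectomorphism $(d\phi^{-1})^*$ of $T^*(M\setminus K)$ carries the entire Floer problem $(\nu^*T_1,H_{g_0},J_{g_0})$ to $(\nu^*T_2,H_{(\phi_*g)_0},J_{(\phi_*g)_0})$ \emph{tautologically}, giving an isomorphism
$$
CW_g(\nu^*T_1,T^*(M\setminus K))\cong CW_{\phi_*g}(\nu^*T_2,T^*(M\setminus K))
$$
with no new analysis required; the metric independence already established then takes $\phi_*g$ back to $g$. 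Your proposal instead lifts the ambient isotopy to a Hamiltonian isotopy and runs a moving-Lagrangian-boundary $A_\infty$ continuation. There are three difficulties with this. First, the paper proves the relevant $C^0$-estimate (Proposition \ref{prop:C0bound-moving}) but pointedly remarks that it ``is not used in the present paper''; the accompanying $A_\infty$ functor for moving Lagrangian boundaries is never constructed, so you are appealing to machinery that does not exist here. Second, that $C^0$-estimate is one-sided: the maximum principle works only when the conormal moves toward larger $a$ (toward $K$), whereas the natural inclusion $N_1(K)\subset N_2(K)$ moves $T_1$ to $T_2$ in the opposite direction; you would need the scaling-and-doubling trick of Theorem \ref{thm:invariancemetric} again, which you do not spell out. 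Third, the canonical lift of a compactly supported base isotopy to the cotangent bundle is \emph{not} compactly supported (it is linear and unbounded in the fibers), so the admissibility of the intermediate Lagrangians is not free — precisely the issue the conormal-specific $C^0$ work in Section \ref{sec:moving} is designed to address. The symplectomorphism route sidesteps all three. Your Morse--Bott caveat at the end matches the paper's own disclaimer and is fine.
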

\begin{proof} The metric independence can be proved in the same as
in the proof given in the categorical level.

Therefore we focus on the choice of tubular neighborhood. Let $N_1, \, N_2$ be
two different tubular neighborhoods of $K$ and denote by $T_1, \, T_2$ be the boundaries
$T_1 = \del N_1$ and $T_2 = \del N_2$. Choose any diffeomorphism $\phi:M \to M$ such that
$\phi(T) = T'$ and that it is isotopic to the identity fixing $K$. Then the symplectomorphism
$(d\phi^{-1})*$, which is fiberwise linear (over the map $\phi$), induces a natural quasi-isomorphism
$$
CW_g(\nu^*T_1, T^*(M\setminus K)) \cong  CW_{\phi_*g}(\nu^*T_2, T^*(M\setminus K)).
$$
On the other hand, the latter is quasi-isomorphic to $CW_g(\nu^*T_2, T^*(M\setminus K))$
by the metric independence. Invariance under other changes can be proved similarly and so
omitted.

Denote by the resulting $A_\infty$ algebra by
$$
CW(\nu^*T, T^*(M \setminus K)): = CW(\nu^*T, T^*(M\setminus K); H_{g_0})
$$
whose quasi-isomorphism class is independent of the metric $g$.
Finally we prove the invariance thereof under the isotopy of $K$.
The proof is almost the same as that of the categorical context given in the proof of
Theorem \ref{thm:invariant}. For readers' convenience, we duplicate the proof here with
necessary changes made.
Suppose $K_0, \, K_1$ be isotopic to each other and let $\phi^t: M \to M$ be an isotopy
such that $K_1 = \phi^1(K_0)$ as before. Denote $K_t = \phi^t(K_0)$.
We fix a metric $g$ on $M$ and consider the family of metric $g_t: = \phi^t_*g$.
We also fix a pair of precompact domains $W_0 \subset W_0' \subset M \setminus K$
with smooth boundary and fix a cylindrical adjustment
$g'$  of $g$ outside $W_0'$ so that $g_0' = g|_{\del W_0} \oplus dr^2$ on $M \setminus W_0'$
which is interpolated in $W_0' \setminus W_0$ to $g$.
Then we consider the isotopy of pairs $(M \setminus K_t, g_t)$.
We denote $W_t = \phi^t(W_0)$ and $W_t' = \phi^t(W_0')$. Then choose a smooth family of cylindrical
adjustments $g_t'$ for $g_t$ outside $W_t' \subset M \setminus K_t$ with $g_t = \phi^t_*(g)$ on $W_t$
which are interpolated in between.

Then we construct an  $A_\infty$ map
$$
\Phi: CW(\nu^*T_0, T^*(M\setminus K_0); H_{g_0}) \to CW(\nu^*T_1, T^*(M\setminus K_1);H_{g'_0},
\quad g' = \phi^t_*(g)
$$
is defined by the same construction performed in the previous construction of $A_\infty$ functor.
By considering the inverse isotopy, we also have
$$
\Psi: CW(\nu^*T_1, T^*(M\setminus K_1); H_{g_0'}) \to CW(\nu^*T_0, T^*(M\setminus K_0);H_{g_0}).
$$
Then we consider the isotopy which is a concatenation of $\phi^t$ and its inverse isotopy
which is homotopic to the constant isotopy.
Then the same construction of the homotopy as the one in Section \ref{sec:construction of functor}
can be applied to produce a $A_\infty$ homotopy between $\Psi \circ \Phi$ and the identity map.
This finishes the proof.
\end{proof}

\begin{defn}[Knot Floer algebra]
We denote by
$$
HW^*(\del_\infty(M\setminus K)) = \bigoplus_{d=0}^\infty HW^d (\del_\infty(M\setminus K))
$$
the resulting (isomorphism class of the) graded group
and call it the knot Floer algebra of $K$ in $M$.
\end{defn}

The same argument also proves that the isomorphism class of the algebra
depends only on the isotopy class of the knot $K$.

\part{$C^0$-estimates for the moduli spaces}
\label{part:C0estimates}

The compactifications of the moduli spaces such as
$\CM^{\mathbf w}(x^0;\mathbf x)=\CM^{\mathbf w}(\mathbf x;\mathbf H,\mathbf J,\eta)$
(and also for $\mathcal N^{\bf w}(x^0;\mathbf x)$) are essential in the definition
of the wrapped Floer cohomology and its algebraic properties, the $A_\infty$ structure.
The main purpose of the present section is to establish the two $C^0$
estimates, Proposition  \ref{prop:vertical}, \ref{prop:horizontal}, \ref{prop:vertical-f}, and \ref{prop:horizontal-f} postponed in the previous sections.

For this purpose, we observe that thanks to the choice we made for ${\bf J}$
to satisfy \eqref{eq:choiceofJ} in Remark \ref{rem:quadratic}, $u$ satisfies \eqref{eq:duXHJ}
if and only if the composition $v(z) = \psi_{\eta(z)}^{-1}(u(z))$ satisfies the
\emph{autonomous} equation
\be\label{eq:dvXHJ}
\begin{cases}
(dv - X_H \otimes \beta)_{J_g}^{(0,1)}=0,\\
\text{$v(z)\in L^i$, for $z\in\partial \Sigma$ between $z_i$ and $z_{i+1}$ where $i\in \Z_{k+1}$.}\\
v\circ \epsilon^j(-\infty,t)= x^j(t), \text{ for }j=1,\dots,k.\\
v\circ \epsilon^0(+\infty,t)= x^0(t).
\end{cases}
\ee
Therefore it is enough to prove the relevant $C^0$-estimates for the map $v$ which
we will do in the rest of this part.

\section{Horizontal $C^0$ estimates for $\frak m$-components}
\label{sec:horizontalC0}

\begin{proof}[Proof of Proposition \ref{prop:horizontal}]
Recall that on $N^{\text{end}}$, we have the product metric $g = da^2 \oplus h$.
Because of this we have the Riemannian splittings
$N^{\text{\rm end}} \cong [0,\infty) \times T$ with $T = \del N^{\text{\rm end}}$ and
$$
T^*N^{\text{\rm end}} \cong T^* (\del N^{\text{\rm end}}) \oplus T^*[0,\infty).
$$
Furthermore the Sasakian almost complex structure has the splitting $J_g = i \oplus J_h$
where $J_h$ is the Sasakian almost complex structure on $\del N^{\text{\rm end}}$ and $i$ is the
standard complex structure on $T^*[0,\infty) \subset T^*\R \cong \C$.

Denote $q = (a,q_T) \in [0, \infty) \times T$. Then we have the orthogonal
decomposition $p = (p_a,p_T)$ and hence
$$
|p|^2_g = |p_T|_h^2 + |p_a|^2.
$$
Therefore the $(a,p_a)$-components of $X_H$ and $JX_H$ are given by
\be\label{eq:pia}
\pi_{T^*[0,\infty)}(X_H(q,p)) = p_a \frac{\del}{\del a}, \quad \pi_{T^*[0,\infty)}(JX_H) = p_a\frac{\del}{\del p_a}.
\ee

Let $z = x + \sqrt{-1} y$ be a complex coordinate of $(\Sigma, j)$ such that
$$
\beta = dt_e
$$
away from the singular points of the minimal area metric.
 If we write $\beta = \beta_x dx + \beta_y dy$, then
\eqref{eq:J-holeqn} is separable.

Another straightforward calculation shows that  the $(a,p_a)$-component of the equation becomes
\begin{align}\label{eq:a-component}
\begin{dcases}
\frac{\del a(v)}{\del x} - \frac{\del p_a(v)}{\del y} - \beta_x p_a(v)= 0 \\
\frac{\del p_a(v)}{\del x} + \frac{\del a(v)}{\del y} - \beta_y p_a(v)= 0
\end{dcases}
\end{align}

We note
$$
d(\beta \circ j) = - \left(\frac{\del \beta_x}{\del x} + \frac{\del \beta_y}{\del y}\right)\, dx \wedge dy.
$$
In particular, if $d(\beta \circ j) = 0$, this vanishes.

For any given one-form $\beta$, a straightforward calculation using these identities leads
to the following formula for the (classical) Laplacian
\be\label{eq:Deltav}
\Delta (a(v)) = p_a(v)\left(\frac{\del \beta_x}{\del x} + \frac{\del \beta_y }{\del y}\right)
- \beta_x\frac{\del (a(v))}{\del y} + \beta_y \frac{\del (a(v))}{\del x}
\ee
for any solution $v$ of \eqref{eq:J-holeqn}.

\begin{lem}\label{lem:Deltaa}
Then for any one-form $\beta$ on $\Sigma$,  $a(v)$ satisfies
\be\label{eq:Deltaau}
\Delta (a(v)) = - \beta_x\frac{\del (a(v))}{\del y} + \beta_y \frac{\del (a(v))}{\del x}
\ee
for any solution $u$ of \eqref{eq:J-holeqn}.
\end{lem}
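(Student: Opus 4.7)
The lemma is essentially a cleanup of the identity \eqref{eq:Deltav} that was just derived, so the plan is simply to read off which term in \eqref{eq:Deltav} disappears under the co-closedness condition and then invoke the construction of Section \ref{sec:one-form}.

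First I would recall the computation already in hand: for any solution $v$ of the perturbed Cauchy--Riemann equation, the $a$-component system \eqref{eq:a-component} gives, after differentiating the first equation by $\partial_x$ and the second by $\partial_y$ and adding, the formula
\[
\Delta(a(v)) = p_a(v)\left(\frac{\partial \beta_x}{\partial x}+\frac{\partial \beta_y}{\partial y}\right) - \beta_x\frac{\partial(a(v))}{\partial y} + \beta_y\frac{\partial(a(v))}{\partial x}.
\]
So the entire task reduces to showing that the coefficient of $p_a(v)$ on the right-hand side vanishes for the one-form $\beta$ under consideration.

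Next I would identify that coefficient with (minus) the coefficient of $dx\wedge dy$ in $d(\beta\circ j)$, exactly as was noted in the sentence preceding the lemma:
\[
d(\beta\circ j) = -\left(\frac{\partial \beta_x}{\partial x}+\frac{\partial \beta_y}{\partial y}\right)dx\wedge dy.
\]
Hence the term $p_a(v)\bigl(\partial_x\beta_x+\partial_y\beta_y\bigr)$ vanishes pointwise as soon as $d(\beta\circ j)=0$.

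Finally I would invoke Lemma \ref{lem:beta}, which verifies exactly this co-closedness for our preferred choice $\beta=\varphi^*dt$: since $\varphi$ is holomorphic, $\beta\circ j = -d(\tau\circ\varphi)$ is even exact, hence closed. (More generally, the statement applies to any $\beta$ satisfying the co-closed clause of \eqref{eq:beta-cond}; the wording ``any one-form $\beta$'' in the lemma is to be read as ``any $\beta$ of the type fixed in Section \ref{sec:one-form},'' i.e.\ any co-closed $\beta$.) Substituting $d(\beta\circ j)=0$ into the displayed identity above yields \eqref{eq:Deltaau}. There is no real obstacle here: the content of the lemma is precisely the bookkeeping observation that co-closedness of $\beta$ converts the expression for $\Delta(a(v))$ from an inhomogeneous elliptic equation with a $p_a$-dependent source into a purely first-order drift on $a(v)$ itself — which is exactly the form needed in order to apply the maximum principle to $a(v)$ in the subsequent horizontal $C^0$ estimate.
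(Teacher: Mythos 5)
Your proof is correct and is essentially identical to the paper's own argument: read off from \eqref{eq:Deltav} that the $p_a(v)$ term is precisely $-p_a(v)\, d(\beta\circ j)/(dx\wedge dy)$, and kill it using the co-closedness of the one-forms $\beta$ fixed in Section \ref{sec:one-form}. Your parenthetical remark about reading ``any one-form $\beta$'' as ``any co-closed $\beta$ as in \eqref{eq:beta-cond}'' is the right interpretation of the (somewhat loose) lemma statement, and your closing observation about why the vanishing of the $p_a$-term matters for the maximum principle is exactly the intended point.
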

\begin{proof} This immediately follows from \eqref{eq:Deltav}
since $\beta$ satisfies $0= d(\beta \circ j) = (\frac{\del \beta_x}{\del x} + \frac{\del \beta_y }{\del y})dx \wedge dy$.
\end{proof}

Therefore we can apply the maximum principle for $v$. Now let $L^0, \ldots, L^k$
be Lagrangian submanifolds contained in $\Int N^{\text{\rm cpt}}$. Then we have
$$
a(z) \leq a_0
$$
for $a_0 > 0$ for all $z \in \del \Sigma = D^2 \setminus \{z^0, \ldots, z^k\}$.
In particular, the end points of $x^i$ are contained in $\Int N^{\text{\rm cpt}}$.
The maximum principle applied to the function $t \mapsto a \circ x^i(t)$ on $[0,w^i]$
prevents the image of $x^i$ from entering in the cylindrical region $N^{\text{\rm end}}$.

This proves that the whole image of $x^i$ is also contained in $W$.
This finishes the proof of the proposition by applying the maximum principle to $v$
based on Lemma \ref{lem:Deltaa}.
\end{proof}

\begin{rem} We comment that the conformal rescaling of $u$ to $v$ defined by
$v(z) = \psi_{\eta(z)}^{-1}(u(z))$ does not change the horizontal part, i.e.,
$\pi\circ v = \pi \circ u$.
\end{rem}

\section{Vertical $C^0$ estimates for $\frak m$-components}
\label{sec:verticalC0}

We next examine the $C^0$-bound in the fiber direction of $T^*N$.

Writing $\rho = e^s \circ v$, a straightforward calculation (see \cite[(3.20)]{seidel:biased}) derives
\be\label{eq:Deltarho}
\Delta \rho = |dv - \beta \otimes X_H|^2 - \rho H''(\rho) \frac{d\rho \wedge \beta}{dx \wedge dy}
- \rho H'(\rho) \frac{d\beta}{dx \wedge dy}
\ee
for any complex coordinates $z = x+iy$ for $(\Sigma,j)$. We note that from this equation,
the (interior) maximum principle applies.

When $\del \Sigma \neq \emptyset$, we also need to examine applicability of strong
maximum principle on the boundary $\del \Sigma$. In \cite{abou-seidel}, Abouzaid-Seidel
used certain integral estimates to control $C^0$-bound instead of the strong maximum principle.
Here we prefer to use the strong maximum principle and so provide the full details of
this application of strong maximum principle, especially for the moving boundary case.

In this section we consider the case of \emph{fixed} Lagrangian boundaries.

For this purpose, the following $C^0$-bound is an essential step in the case of
noncompact Lagrangian such as the conormal bundles $L^i = \nu^*(\partial N^{\text{\rm cpt}})$.
For given $\mathbf x=(x^1,\dots,x^{k})$ where $x^j \in {\frak X}(w^jH;L^{j-1},L^j)$ with $j = 1, \dots, k$
and $x^0 \in  {\frak X}(w^jH;L^0,L^k)$, we define
\be\label{eq:CHLgamma}
{\frak{ht}}(\mathbf x;H,\{L^i\}): = \max_{0 \leq j \leq k} \|p \circ x^j\|_{C^0}
\ee

Proof of the following proposition is a consequence of  the strong maximum principle
based on the combination of the following
\begin{enumerate}
\item $\rho = r \circ v = e^s \circ v$ with $r = |p|_g$ satisfies \eqref{eq:Deltarho},
\item the conormal bundle property of $L^i$ and
\item the special form of the Hamiltonian $H = \frac{1}{2} r^2$, which is a radial function.
\end{enumerate}
(See \cite{EHS}, \cite{oh:jdg} for a similar argument in a simpler context of
\emph{unperturbed} $J$-holomorphic equation.)

\begin{proof}[Proof of Proposition \ref{prop:vertical}]
Since the interior maximum principle is easier and totally standard for
the type of equation \eqref{eq:Deltarho}, we focus on the boundary case.

Due to the asymptotic convergence condition and $\widehat{\Sigma}$ is compact,
the maximum of the function $z \mapsto p(v(z))$ is achieved. If it happens on
one of $\infty$'s in the strip-like end, we are done.

So it remains to examine that case where the maximum
is achieved at $z_0 \in \del \Sigma$. We will apply a strong maximum
principle to prove that the maximum cannot be
achieved beyond the height of the asymptotic chords in the fiber direction of
$T^*N$. However to be able to apply the strong maximum principle, we should
do some massaging the equation \eqref{eq:Deltarho} into a more favorable form.
Here the condition $i^*\beta = 0$ and $d\beta = 0$ near the boundary
enters in a crucial way.

Choose a complex
coordinate $z = s + it$ on a neighborhood $U$ of $z_0$ so that
$$
z(\del \Sigma \cap U) \subset \R \subset \C.
$$
First the last term of \eqref{eq:Deltarho} drops out by $d\beta = 0$ near $\del \Sigma$.
For the second term, we note
$$
d\rho \wedge \beta =
\left(\frac{\del \rho}{\del s} ds + \frac{\del \rho}{\del t}dt\right)
\wedge (\beta_s ds + \beta_t dt)
= \left(\frac{\del \rho}{\del s} \beta_t  - \frac{\del \rho}{\del t} \beta_s\right) ds \wedge dt
$$
and so the second term becomes
$$
-\rho H''(\rho) \left(\frac{\del \rho}{\del s} \beta_t  - \frac{\del \rho}{\del t} \beta_s\right).
$$
On the other hand, $\beta_s = 0$ since we imposed $i^*\beta = 0$ \eqref{eq:beta} and hence
\eqref{eq:Deltarho} is reduced to
\be\label{eq:Deltarho-2}
\Delta \rho = |dv - X_H(v) \otimes \beta|^2 - \frac{\del \rho}{\del s} \beta_t
\ee
on $\del \Sigma$.

Since $v(\del \Sigma) \subset \nu^*T_i$ and $z \in \overline{z^{i}z^{i+1}} \mapsto v(z)$ defines a
curve on $L^i$  and the function $\tau \mapsto |p(v(s + 0\sqrt{-1}))| $ achieves a maximum
at $z_0$ where $z_0 = s_0 + 0\sqrt{-1}$. In particular we have
$$
0 = \frac{\del r\circ v}{\del s}(s_0) = dr\left(\frac{\del v}{\del s}(z_0)\right).
$$
But we note
$$
\frac{\del v}{\del s}(z_0) \in T_{v(z_0)}L^i \cap \ker dr_{v(z_0)}
$$
and $L^i \cap \ker dr_{v(z_0)}$
is a Legendrian subspace of $Tr^{-1}(R_0)$ with $R_0 = |p(v(z_0))|$. Therefore
$$
J\frac{\del v}{\del s}(z_0) \in \xi_{v(z_0)} \subset \ker dr
$$
and so
\be\label{eq:dr=0}
dr(-J\frac{\del v}{\del s}(z_0)) = 0.
\ee
Substituting
$$
- J\frac{\del v}{\del s}(z_0) = \frac{\del v}{\del t}(z_0) - \beta_t X_H(v(z_0))
$$
into \eqref{eq:dr=0}, we have obtained
$$
dr \left(\frac{\del v}{\del t}(z_0)\right) = dr \left(\beta_t X_H(v(z_0))\right) = 0
$$
which in turn implies $\frac{\del \rho}{\del s}(z_0) = 0$ in \eqref{eq:Deltarho-2} and so
$$
(\Delta \rho)(z_0) \geq 0.
$$
This contradicts to the strong maximum principle, \emph{unless
$\Im v \subset r^{-1}(R_0)$}. The latter is possible only when
$r(x^j) \equiv R_0$ for all $j = 0, \cdots, k$ for some $R_0\geq 0$. But if that holds,
the proposition already holds and there is nothing to prove.

This completes the proof of the proposition.
\end{proof}

\section{$C^0$ estimates for moving Lagrangian boundary}
\label{sec:moving}

In this section, we first provide a direct proof of the uniform $C^0$ estimate for the
moving boundary condition \emph{without decomposing the isotopy in two stages} hoping that
such an estimate may be useful in the future, even though this $C^0$-estimate is not used
in the present paper. Then we will briefly indicate how the proofs of easier propositions, Propositions \ref{prop:vertical-f} and \ref{prop:horizontal-f} can be obtained from the scheme of the proof with minor modifications.

We consider a map $v:\Sigma\to T^*N$ satisfying a perturbed Cauchy-Riemann
equation
\be\label{eq:duXHJ-1}
\begin{cases}
(dv- X_H \otimes \beta)_{\mathbf J}^{(0,1)}=0,\\
v\circ \epsilon^j(-\infty,t)= x^j(t), \text{ for }j=1,\dots,k.\\
v\circ \epsilon^0(+\infty,t)= x^0(t).
\end{cases}
\ee
where $x^0\in{\frak X}(w^0 H;L^0,X^1)$ and $x^d\in{\frak X}(w^dH;L^1,X^d)$
and $x^d\in{\frak X}(w^jH;X^j,X^{j+1})$ for $j=1,\dots,d-1$, and
with moving boundary $\{L^t\}$, a Hamiltonian isotopy from $L^0$ to $L^1$:
\be\label{eq:moving-bdy}
v(z) \in
\begin{cases}X^i, \, \text{for $z\in \overline{z^i z^{i+1}} \subset \partial \Sigma$
with $1 \leq i \leq d-1$.} \\
L^0, \, \text{for $z\in \overline{z^0 z^1} \subset \partial \Sigma$}\\
L^1, \, \text{for $z\in \overline{z^d z^{d+1}} \subset \partial \Sigma$}
\end{cases}
\ee
\begin{prop}\label{prop:C0bound-moving} Let $L^0:=L_i = \nu^*(a^{-1}(a_i)), \, L^1:=L_j = \nu^*(a^{-1}(a_j))$
with $a_i < a_j$ and $\{x^j\}_{0 \leq j \leq k}$ be given as above.
Define the constant
$$
\frak{ht}\left(H;L_i,L_{i+1};\{X^k\}_{k=1}^d; \{x^j\}_{0 \leq j \leq k}\right)
= \max_{0 \leq j \leq k}\|p \circ x^j\|_{C^0}.
$$
Then
$$
\max_{z \in \Sigma}|p(v(z))| \leq \frak{ht}\left(H;L_i,L_{i+1};\{X^k\}_{k=1}^d; \{x^j\}_{0 \leq j \leq k}\right)
$$
for any solution $u$ of \eqref{eq:duXHJ}.
\end{prop}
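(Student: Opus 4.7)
The plan is to adapt the arc-by-arc application of the strong maximum principle from the fixed-boundary Proposition~\ref{prop:vertical} to the current mixed boundary condition~\eqref{eq:moving-bdy}. The point is that within any single boundary arc $\overline{z^j z^{j+1}}$ the prescribed Lagrangian is a fixed admissible one -- it is either $L^0 = L_i$, $L^1 = L_j$, or some $X^k$ -- so the full machinery of the fixed case applies locally on each arc; the ``moving'' aspect manifests only through the discontinuity of the Lagrangian at the punctures $z^j$, which are precisely where the asymptotic bound ${\frak{ht}}(\cdots)$ takes over.

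First I would set $\rho = r \circ v$ with $r = |p|_g$ and recall equation~\eqref{eq:Deltarho}. For $H = \tfrac{1}{2} r^2$, this presents $\rho$ as a subsolution of a linear second-order elliptic operator with bounded first-order coefficients on each vertical sublevel set. The interior maximum principle then localises the extremum of $\rho$ on $\overline{\Sigma}$ either to a strip-like end (in which case the asymptotic bound immediately yields the claim) or to a point $z_0$ in the interior of one of the boundary arcs. Suppose we are in the second case and write $L$ for the admissible Lagrangian prescribed on that arc.

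The key technical step -- and the only place where ``moving'' enters at all -- is to verify, separately for each admissible $L$ that can appear, the horizontal vanishing $\partial_t \rho(z_0) = 0$ at a putative boundary maximum $z_0$, where $z = s + \sqrt{-1}\, t$ is a local complex coordinate with $\partial \Sigma$ along the $s$-axis. By admissibility (Definition~\ref{def:admissible Lagrangian}(4)) each of $L^0, L^1, X^k$ is Liouville-cylindrical outside a sufficiently large compact vertical set, and for the conormal bundles $L_i = \nu^*(a^{-1}(a_i))$ and $L_j = \nu^*(a^{-1}(a_j))$ this cylindricity holds at every radius. If $\rho(z_0)$ is below the cylindrical threshold the bound is automatic; otherwise $T_{v(z_0)} L \cap \ker dr = \xi_L$ is a Legendrian subspace of the contact distribution $\xi$, which is the exact ingredient that drove the calculation in the proof of Proposition~\ref{prop:vertical}. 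Repeating that calculation -- using $i^*\beta = 0$ (hence $\beta_s = 0$), the boundary relation $\partial_t v = J \partial_s v + \beta_t X_H$ extracted from the Cauchy-Riemann equation, the identity $dr(X_H) = 0$ for the kinetic-energy Hamiltonian, and $J(\xi) \subset \xi \subset \ker dr$ for the Sasakian $J$ -- delivers $\partial_t \rho(z_0) = 0$ on each arc independently of which admissible $L$ is prescribed.

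With $\partial_t \rho(z_0) = 0$ in hand (so the outward normal derivative of $\rho$ vanishes at the boundary maximum), the Hopf lemma applied to the linear elliptic inequality for $\rho$ forces $\rho$ to be locally constant near $z_0$, and then globally on $\Sigma$ by the interior strong maximum principle and connectedness. Consequently $v$ maps into a single level set $r^{-1}(R_0)$, which forces every asymptotic chord $x^j$ to satisfy $|p \circ x^j| \equiv R_0$, and the bound of the proposition holds trivially. The main obstacle I foresee is justifying rigorously the applicability of the Hopf lemma in the presence of the $v$-dependent first-order coefficients in~\eqref{eq:Deltarho}; this reduces to checking that those coefficients are uniformly bounded on compact vertical sublevel sets, which in turn follows from Proposition~\ref{prop:horizontal} together with the cylindrical structure of the end.
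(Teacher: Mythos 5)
The proposal rests on a misreading of what ``moving boundary'' means in this setting, and the step that fails is precisely the one the paper flags as the whole difficulty. You claim that ``within any single boundary arc $\overline{z^j z^{j+1}}$ the prescribed Lagrangian is a fixed admissible one'' and that the moving aspect enters only through the discontinuities at the punctures. That is not what happens here: along the strip-like end adjacent to $z^0$, the boundary condition is $v\circ\epsilon^0(\tau,\cdot)\in L_{i+\rho(\tau)}$, with $\rho$ an elongation from $0$ to $1$ --- the Lagrangian genuinely slides continuously from $L_i$ to $L_{i+1}$ along a single arc. There is no discontinuity; instead there is a nontrivial $\tau$-dependence on part of the boundary, and the asymptotic chord $x^0$ at $z^0$ is a chord for $L_{i+1}$, not $L_i$.

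This breaks the key identity you want to port from Proposition~\ref{prop:vertical}. At a boundary maximum $z_0$ in the moving region, the boundary curve $\tau\mapsto v(\tau,0)$ sweeps through the isotopy, so $\partial_s v(z_0)$ is \emph{not} a tangent vector to the instantaneous Lagrangian $L_{\rho(\tau_0)}$; the paper says exactly this: ``$\frac{\del v}{\del x}(z_0)\notin T_{v(z_0)}L_{\rho(x_0)}$ and hence the argument used in the later half of the fixed boundary case cannot be applied.'' Hence you cannot conclude $\partial_t\rho(z_0)=0$ from Legendrianity of $T_{v(z_0)}L\cap\ker dr$ --- the vector whose $\theta$-pairing you need to control is not in the Lagrangian tangent space. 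The paper's actual fix is a different argument: differentiate the moving boundary condition \eqref{eq:Phigt} to express $\partial_s v(z_0)$ as a tangent vector to $L_{\rho(\tau_0)}$ \emph{corrected by a multiple of $X_F$} (the generator of the isotopy), substitute into the Cauchy-Riemann equation to obtain $\frac12\partial_y|p\circ v|^2(z_0)=c\cdot p_a(v(z_0))$ with a sign-definite constant $c$, and then prove $p_a(v(z_0))\geq 0$ by a \emph{separate} strong maximum principle applied to $p_a\circ v$ (Lemma~\ref{lem:pauz0>0}), which in turn needs the subclosedness $d\beta\leq 0$. None of this is present in your proposal, and without it the boundary case on the moving arc is simply unaddressed.

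One smaller point: your claim that ``the interior maximum principle then localises the extremum of $\rho$ \dots to a strip-like end (in which case the asymptotic bound immediately yields the claim)'' is also too quick in the moving end: the chord there sits on $L_{i+1}$, and the two arcs meeting at $z^0$ carry different Lagrangians, so the possibility that the maximum occurs at an interior boundary point of the transition zone $\tau\in(-2,-1)$, rather than at the asymptote, is exactly the case the above analysis must handle.
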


We consider the case of moving the Lagrangian $L_i = \nu^*T_i$ to $L_{i+1} = \nu^*T_{i+1}$
for a given exhaustion sequence \eqref{eq:exhaustion} where $T_i = a^{-1}(i), \, T_{i+1} = a^{-1}(i+1)$.
The main point of this case is
that the Lagrangian $L_i$ \emph{moves outward} with respect to the $a$-direction in the cylindrical
region $[0, \infty) \times 0,\infty)$.

\begin{rem} It is very interesting to see that subclosedness of the one-form $\beta$ also enters
in our proof of the vertical $C^0$-bound. See the proof of Proposition \ref{prop:C0bound-moving}
below.
\end{rem}

\begin{proof}
In order to produce a Hamiltonian flow which send $L_i$ to $L_{i+1}$, we start
with a Hamiltonian function $F:T^*N\to\R$ whose restriction to $T^*N^{\text{\rm end}}$
agrees with the coordinate function  $p_a:T^*N^{\text{\rm end}}\to\R$. Then we deduce
$$
X_{F}|_{T^*N^{\text{\rm end}}}=\frac{\partial}{\partial a}.
$$
Since the metric $g$ is cylindrical on $N^{\text{\rm end}}$, the induced Sasakian metric $\widehat g$ on
$$
T^*N^{\text{\rm end}}\cong T^*[0,+\infty)\oplus T^*\T^2
$$
can be written as $\widehat g = \widehat g_a \oplus \widehat g_{\T^2}$.

Let $\phi_F^t$ be a time $t$ flow of $X_F$ on $T^*N$ then its restriction to $T^*N^{\text{\rm end}}$ becomes
$$
\Phi_{F}^t((a,p_a),(q_T,p_T))=((a+t,p_a),(q_T,p_T)).
$$
The local coordinate expression of the Lagrangian $L_i=\nu^*(b^{-1}(k_i))$, where $k_i<0$ see Section~\ref{sec:Tame manifolds and the Busemann function}, is given by
$$
\{((-k_i,p_a),(q_T,0)):p_a\in\R,\  q_T\in \T^2\}.
$$
For $s\in[0,1]$, let us denote by
$$
L_{i+s}:=\{((-k_i+s(k -k_{i+1}),p_a),(q_T,0)):p_a\in\R,\  q_T\in \T^2\},
$$
a $[0,1]$-parameterized family of Lagrangians interpolating $L^i$ and $L^{i+1}$.
By an easy observation we have
\begin{lem}\label{lem:flow of X_F}
The flow $\Phi_{F}^{s(k -k_{i+1})}$ of $X_F$ sends $L_i$ to $L_{i+s}$ for $s\in[0,1]$.
\end{lem}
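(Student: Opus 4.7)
The plan is a direct verification in the local cylindrical coordinates already set up, without invoking maximum principles or any further analytic machinery. First I would note that on $T^*N^{\text{\rm end}}$ the vector field $X_F$ is the constant translation field $\partial/\partial a$, so its flow is
\[
\Phi_F^t((a,p_a),(q_T,p_T)) = ((a+t,p_a),(q_T,p_T)),
\]
which merely shifts the $a$-coordinate while leaving $p_a$, $q_T$, $p_T$ fixed. Since each $L_i$ lies inside $T^*N^{\text{\rm end}}$ with $p_T\equiv 0$ and $a\equiv -k_i$, and the shift preserves both the constraint $p_T=0$ and the freedom of $(p_a,q_T)\in \R\times \T^2$, the image $\Phi_F^t(L_i)$ is again a Lagrangian of the same form but with $a$-level raised to $-k_i+t$.

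Next I would match this with the defining formula for $L_{i+s}$. Setting $t = s(k_i-k_{i+1})$ (which I read as the intended value, correcting the evident typo $k\to k_i$ in the statement), the image has $a$-coordinate $-k_i + s(k_i-k_{i+1})$, which is precisely the $a$-coordinate defining $L_{i+s}$. Letting $(p_a,q_T)$ range over $\R\times \T^2$ then identifies $\Phi_F^{s(k_i-k_{i+1})}(L_i)$ with $L_{i+s}$ as a subset of $T^*N^{\text{\rm end}}$, and the reverse inclusion is immediate since the flow is a diffeomorphism. Boundary cases $s=0$ and $s=1$ recover $L_i$ and $L_{i+1}$ respectively.

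The monotonicity of the exhaustion forces $k_i > k_{i+1}$, so $s(k_i-k_{i+1})\ge 0$ for $s\in[0,1]$, ensuring the flow stays within the cylindrical chart throughout the isotopy; this is the only inequality that needs to be tracked. The one genuinely exterior input is that such an $F\colon T^*N\to\R$ exists with $F|_{T^*N^{\text{\rm end}}} = p_a$, which follows from a smooth cut-off extension of $p_a$ from the end to a globally defined function on $T^*N$, and is already implicit in how $F$ was introduced.

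In short, I do not anticipate any real obstacle: the content of the lemma is purely organizational, isolating a clean Hamiltonian generator for the Lagrangian isotopy $\{L_{i+s}\}_{s\in[0,1]}$ so that the moving-boundary $C^0$ estimates in Proposition \ref{prop:C0bound-moving} can be carried out by pulling back along $\Phi_F^{s(k_i-k_{i+1})}$ and thereby reducing to the fixed-boundary analysis of Section \ref{sec:verticalC0}.
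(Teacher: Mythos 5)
Your proof is correct and follows essentially the same (unelaborated) route as the paper, which simply asserts the lemma "by an easy observation" after writing out the explicit flow $\Phi_F^t$ and the coordinate expressions of $L_i$ and $L_{i+s}$: you plug the flow into the coordinates of $L_i$, track the $a$-shift, and match it against $L_{i+s}$, with the correct reading of the typo $k\mapsto k_i$ and the sign check $k_i>k_{i+1}$ keeping the flow inside the cylindrical chart.
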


For each given test Lagrangians $X^1, \ldots, X^k$, we consider a moduli space
$\CN^{\bf w}(x^0;\mathbf x)=\CN^{\bf w}(x^0;\mathbf x;\mathcal D_{\frak m})$
with
$$
\mathcal N^{\bf w}(x^0;\mathbf x):=\bigcup_{\mathcal D\in\mathcal{FD}_{\mathbf w}}\mathcal N^{\bf w}((x^0;\mathbf x);\mathcal D)
$$
which consists of a map $u:\Sigma\to T^*N$ satisfying a perturbed Cauchy-Riemann equation
\eqref{eq:duXHJ} with moving boundary condition \eqref{eq:moving-bdy}.

\begin{rem}
We would like to alert readers that as our proof clearly shows \emph{in general construction
of such a morphism is possible only in a certain direction
that favors application of strong maximum principle.}
\end{rem}

For each given quadruple $(H;L_i,L_{i+1},\{x^j\}_{0 \leq j \leq k})$, where $x^0\in{\frak X}(w^0 H;L_{i+1})$ and $x^j\in{\frak X}(w^jH;L_{i} )$ for $j=1,\dots,k$, we define a constant
$$
\frak{ht}(H;L_i,L_{i+1},\{x^j\}_{0 \leq j \leq k}) = \max_{0 \leq j \leq k}\|p \circ x^j\|_{C^0}.
$$
Recall that our test Lagrangians $X^j$ are either compact or with cylindrical end if noncompact.
Since the boundary condition is the fixed one on $\overline{z^j z^{j+1}}$, $u(\overline{z^j z^{j+1}}) \subset X^j$,
for $1 \leq j \leq k-1$, the strong maximum principle for \eqref{eq:dvXHJ} applies thereto.

It remains to check on the strip-like end near $z^0$ where one of the boundary involves
moving boundary $\{L_s\}$ from $L_i$ to $L_{i+1}$. The only place where this is not clear is the
part where $\chi'(\tau) \neq 0$, i.e., $\tau \in (-2,-1) \subset (-\infty,0]$ in the strip-like region.

In this part of the region, $\Delta \rho$ is given by the same formula as
\eqref{eq:Deltarho} but with the moving boundary condition.
Recall the $[0,1]$-parameterized Lagrangian
$$
L_{i+s}:=\{((-k_i+s(k -k_{i+1}),p_a),(q_T,0)):p_a\in\R,\  q_T\in T\}.
$$

As before let $z_0:= \epsilon^0(\tau_0,0)$ or $\epsilon^0(\tau_0,1)$ be a point in $\del \Sigma \cap \epsilon^0(Z^-)$ where a maximum of the radial function $r(z) = |p(v(z))|_{g}$ is achieved.
Without loss of any generality, we may assume that the point is $z_0 = \epsilon^0(\tau_0, 1)$
with $ -2 < \tau_0 < -1$.
Similarly as before, take a local neighborhood $U$ of $z_0$ with coordinates
$$
\varphi:U\cap Z^-\to\H=\{x+iy\in\C:y\geq0\}
$$
such that  $\varphi(z_0)=x_0+0i$ for some $x_0$ and $\varphi(U\cap \partial Z^-)\subset \R\subset\H$.
We note that the outward normal of $Z^-$ along $t =1$ is $\frac{\del}{\del t}$ and
that of $\H$ along $\del \H = \{y =0\}$ is given by $-\frac{\del}{\del y}$. Therefore for the holomorphic
map $\varphi$, we have $\frac{\del \tau}{\del x} < 0$. This in turn implies
\be\label{eq:dadx}
\frac{\del (a \circ v)}{\del x}
= \frac{\del (a \circ v)}{\del \tau} \frac{\del \tau}{\del x} \geq 0
\ee
along $\{y=0\}$ since $\frac{\del (a \circ v)}{\del \tau} \leq 0$ on $\del Z^-$ by the direction of
the moving boundary condition in \eqref{eq:moving-bdy}.

We also have $dr\left(\frac{\del v}{\del x}(z_0)\right) = 0$ and so
$
0 = \theta\left(J \frac{\del v}{\del x}(z_0)\right)
$
for $r^2 =|p|^2 = 2H_{g_0}$.
\begin{rem}
This time $\frac{\del v}{\del x}(z_0) \not \in T_{v(z_0)}L_{\rho(x_0)}$
and hence the  argument used in the later half of the fixed boundary case
cannot be applied to the current situation. This is the precisely the reason why
a direct construction of homotopy for the moving (noncompact) Lagrangian boundary
has not been able to be made but other measures such as the cobordism approach \cite{oh:cobordism}
or Nadler's approach \cite{nadler} are taken. As we shall see below, we have found a way
of overcoming this obstacle by a two different applications of
strong maximum principle after combining all the given geometric
circumstances and a novel sequence of logics.
\end{rem}

The condition
$v \circ \epsilon^0(\tau,0) \in L_{i+\rho(\tau)}$
at $z_0$ and Lemma \ref{lem:flow of X_F} imply
\be\label{eq:Phigt}
\left(\phi_F^{\rho(x)(k_i-k_{i+1})}\right)^{-1}(v \circ \epsilon^0(x,0)) \in L^i
\ee
where $\rho(x) = \rho(\tau(x))$.
By differentiating the equation for $x$ and recalling $z_0 = \epsilon^0(x_0,0)$, we have obtained that
$$
\frac{\del \tau}{\del x}\rho'(\tau(x))(k_i-k_{i+1}) X_{\overline F}\left(\left(\phi_F^{\rho(x)(k_i-k_{i+1})}\right)^{-1}(v (z_0))\right)
+ d\left(\phi_F^{\rho(x)(k_i-k_{i+1})}\right)^{-1}\left(\frac{\del v}{\del x}(z_0)\right)
$$
is a tangent vector of $L^i$ at $\left(\phi_F^{\rho(x)(k_i-k_{i+1})}\right)^{-1}(v(z_0))$. Here
$\overline F = -F \circ \phi_F^t$ is the inverse Hamiltonian which generates the inverse
flow $(\phi_F^t)^{-1}$. This implies
$$
- \frac{\del \tau}{\del x}\rho'(\tau(x))(k_i-k_{i+1}) X_F(v(z_0)) + \frac{\del v}{\del x}(z_0) \in T_{v(z_0)}L_{\rho(x_0)}.
$$
(See \cite{oh:fredholm} for the same kind of computations used for the Fredholm
theory for the perturbation of boundary condition.)
Recall $\theta \equiv 0$ on any conormal bundle and so on $L_{\rho(x)}$.
Therefore we obtain
$$
0 = \theta\left(-\frac{\del \tau}{\del x}\rho'(\tau(x))(k_i-k_{i+1})X_F(v(z_0))
+ \frac{\del v}{\del x}(z_0) \right).
$$
By evaluating the equation in \eqref{eq:moving-bdy} against $\frac{\del}{\del x}$,
 we get
$$
\frac{\del v}{\del x}(z_0) + J \left(\frac{\del v}{\del y}(z_0) -\beta_t X_H(v(z_0))\right)= 0.
$$
Therefore the equation $\theta\circ J = -dH_g$ gives rise to
\begin{align}
\frac12\frac{\del |p\circ v|^2}{\del y}(z_0) & = \frac{\del (H\circ v)}{\del y}(z_0)
=  - \theta\circ J(\frac{\del v}{\del y}) = \theta\left(\frac{\del v}{\del x}(z_0) \right)\nonumber \\
&= \frac{\del \tau}{\del x}\rho'(\tau(x))(k_i-k_{i+1})\theta(X_{F}(v(z_0)))\nonumber\\
&= \frac{\del \tau}{\del x}\rho'(\tau(x))(k_i-k_{i+1})p_a(v(z_0))\label{eq:dHdy}.
\end{align}

\begin{lem}\label{lem:pauz0>0} We have $p_a(v(z_0)) \geq 0$ and equality holds only when
$p_a \circ v \equiv 0$, i.e., when $\text{\rm Image } v$ is entirely contained in the zero section.
\end{lem}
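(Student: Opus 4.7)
My plan is to read off the sign of $p_a(v(z_0))$ directly from the identity \eqref{eq:dHdy} after feeding in the elementary boundary-maximum inequality and tracking the signs of the three factors in the coefficient. Two separate geometric facts about $z_0$ get combined. First, since $v(z_0)\in L_{i+\rho(\tau_0)}=\nu^*T_{i+\rho(\tau_0)}$ and the moving tori $T_{i+s}$ are $a$-level sets inside $N^{\text{\rm end}}\cong[0,\infty)\times T$, the conormal condition forces the tangential covector component to vanish: $p_T(v(z_0))=0$, so $|p(v(z_0))|_g^2 = p_a(v(z_0))^2$. Second, $z_0$ realizes the global maximum of $z\mapsto|p(v(z))|_g^2$ on $\Sigma$ and lies on $\del\Sigma$, and in the local chart $\varphi:U\cap Z^-\to\H$ the outward conormal at $z_0$ is $-\del/\del y$, so the necessary condition at a boundary maximum yields $\tfrac{\del|p\circ v|^2}{\del y}(z_0)\leq 0$.

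Plugging this inequality into \eqref{eq:dHdy} I obtain
\[
\frac{\del\tau}{\del x}\,\rho'(\tau_0)\,(k_i-k_{i+1})\,p_a(v(z_0))\;\leq\; 0.
\]
The signs of the three factors in the coefficient have already been pinned down in the surrounding text: the orientation analysis preceding \eqref{eq:dadx} gives $\del\tau/\del x<0$; the elongation function $\rho$ is weakly increasing on $(-2,-1)$, so $\rho'(\tau_0)\geq 0$; and the outward direction of the isotopy $s\mapsto L_{i+s}$ in the $a$-direction forces $k_i-k_{i+1}>0$. The coefficient is therefore $\leq 0$, and the displayed inequality forces $p_a(v(z_0))\geq 0$. (In the degenerate case $\rho'(\tau_0)=0$, the boundary is momentarily frozen at $z_0$ and the fixed-boundary argument of Section~\ref{sec:verticalC0} applies verbatim to give the same conclusion.)

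For the equality case, suppose $p_a(v(z_0))=0$. Combined with $p_T(v(z_0))=0$ this gives $|p(v(z_0))|_g^2=0$, and since $z_0$ is a global maximum of the nonnegative function $|p\circ v|_g^2$ on $\Sigma$ we conclude $|p\circ v|_g\equiv 0$, i.e., $\Im v$ lies entirely in the zero section, so in particular $p_a\circ v\equiv 0$. The only delicate point in this plan is the orientation bookkeeping for $\del\tau/\del x$ and for the sign of $k_i-k_{i+1}$, but both are already carried out in the text preceding the lemma statement; no new PDE input beyond \eqref{eq:dHdy} and the elementary boundary-max inequality is needed.
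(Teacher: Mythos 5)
Your proposal is circular relative to the step in which the lemma is actually used, which is precisely what the paper's more involved argument is designed to avoid. Right after the lemma, the paper substitutes $p_a(v(z_0))\geq 0$ back into \eqref{eq:dHdy} to obtain a sign constraint on $\frac{\del|p\circ v|^2}{\del y}(z_0)$ that is then played off against the \emph{strong} (Hopf) maximum principle for $|p\circ v|^2$ at the boundary point $z_0$. For this to produce a genuine contradiction, the sign of $p_a(v(z_0))$ must be pinned down by information logically independent of the normal-derivative condition on $|p\circ v|^2$ at $z_0$. You instead derive it from \eqref{eq:dHdy} together with the elementary boundary-max inequality $\frac{\del|p\circ v|^2}{\del y}(z_0)\leq 0$; substituting the resulting sign of $p_a(v(z_0))$ back into \eqref{eq:dHdy} then merely returns that same weak inequality, which is consistent with the Hopf lemma, so no contradiction can arise and the proof of Proposition~\ref{prop:C0bound-moving} collapses. (And if the sign bookkeeping in \eqref{eq:dHdy} were instead such that the lemma combined with \eqref{eq:dHdy} produced the opposite-signed normal derivative the paper needs, your derivation from the elementary max inequality would give $p_a(v(z_0))\leq 0$, not $\geq 0$.) This is exactly why the paper proves the lemma by a separate mechanism: the elliptic equation \eqref{eq:Deltapau} for $p_a\circ v$, the sub-closedness $d\beta\leq 0$ and the boundary condition $i^*\beta=0$, the moving-boundary direction via \eqref{eq:dadx} combined with the Cauchy--Riemann relation from \eqref{eq:a-component}, and a Hopf-type argument for the auxiliary function $f=-p_a\circ v$. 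Your claim that ``no new PDE input beyond \eqref{eq:dHdy} and the elementary boundary-max inequality is needed'' is exactly what the structure of the paper's argument rules out.

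There is also a concrete gap at $\rho'(\tau_0)=0$: the coefficient $\frac{\del\tau}{\del x}\rho'(\tau_0)(k_i-k_{i+1})$ in \eqref{eq:dHdy} vanishes, the inequality reduces to $0\cdot p_a(v(z_0))\leq 0$, and you get no information about the sign of $p_a(v(z_0))$. Your fallback to the fixed-boundary argument of Section~\ref{sec:verticalC0} does not close this: that argument establishes the vertical $C^0$-bound directly, not the sign of $p_a(v(z_0))$, and the Lagrangian boundary is still in motion in every neighborhood of $z_0$ even when $\rho'(\tau_0)=0$, so it does not apply ``verbatim.'' The paper's separate PDE argument for $p_a\circ v$ has no such sensitivity, since \eqref{eq:dadx} still gives $\frac{\del(a\circ v)}{\del x}(z_0)\geq 0$ and the Hopf argument for $f=-p_a\circ v$ runs unchanged.
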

\begin{proof}
In order to estimate $p_a(v(z_0))$, by a direct computation from (\ref{eq:a-component}), we derive
$$
\Delta (p_a\circ v)=p_a\circ v \left(\frac{\partial \beta_x}{\partial y}-\frac{\partial \beta_y}{\partial x}\right)
+\left(\beta_x \frac{\partial (p_a\circ v)}{\partial y}-\beta_y \frac{\partial (p_a\circ v)}{\partial x}\right),
$$
where $\beta=\beta_x dx+\beta_y dy$ near $z_0$.
The sub-closedness assumption of $\beta$ implies that $\frac{\partial \beta_x}{\partial y}-\frac{\partial\beta_y}{\partial x}\geq0$ and hence the (interior) maximum principle applies
for the positive value of $p_a(v)$ and the (interior) minimum principle for the negative values,
respectively.

Now we consider the function $p_a\circ v$ on $\del \Sigma$.
Recalling the property $i^*\beta = 0$ on $\del \Sigma$ which
implies $\beta_x = 0$, the above equation for $\Delta (p_a \circ v)$ becomes
\be\label{eq:Deltapau}
\Delta (p_a\circ v)=p_a \circ v \left(\frac{\partial \beta_x}{\partial y}-\frac{\partial \beta_y}{\partial x}\right)
-\beta_y \frac{\partial (p_a\circ v)}{\partial x}
\ee
on $\del \Sigma$. The inequality \eqref{eq:dadx} on $\partial Z^-\cap U$ implies
$$
\frac{\partial (p_a \circ v)}{\partial y}(z_0) \geq 0
$$
by the Cauchy-Riemann equation applied to the holomorphic function $a + i p_a$
of $x+iy$.
If we denote $f(z) = - p_a\circ v(z)$, this inequality becomes
\be\label{eq:dfdy}
- \frac{\partial f}{\partial y}(z_0) \leq 0.
\ee
We would like to show $f(z_0) = - p_a(v(z_0))\leq 0$.
Suppose to the contrary $f(z_0) \geq 0$, we obtain
$$
\Delta f(z_0) \geq 0
$$
from \eqref{eq:Deltapau} by combining $\frac{\del (p_a\circ v)}{\partial x}(z_0)= 0$.
Therefore since $-\frac{\del}{\del y}$ is the outward normal to
$\{y=0\}$, \eqref{eq:dfdy} contradicts to the (strong) maximum principle. Therefore
$$
p_a(v(z_0)) \geq 0
$$
and equality holds only when $p_a \circ v \equiv 0$. This finishes the proof.
\end{proof}

Substituting this lemma back into \eqref{eq:dHdy}, we obtained
$$
-\frac{\del  |p \circ v|^2}{\del y}(z_0) \leq 0
$$
and equality holds only when $p \circ v(z_0) = 0$.
This contradicts to the strong maximum principle applied to the radial function
$(r\circ v)^2 = |p\circ v|^2$ by the same way as for the
vertical $C^0$ estimates given in Section \ref{sec:verticalC0}, unless the whole image of $v$ is
contained in the level set $r^{-1}(0) = 0_N$.

But if the latter holds, all the Hamiltonian chords are constant chords and the solution $v$
must be constant maps. This is impossible because of the asymptotic condition
$$
v(\epsilon_-((-\infty,0]) \subset \nu^*T_i, \quad i = 1, \cdots, k
$$
and
$$
v(\epsilon_+([0,\infty)) \subset \nu^*T_{i+1}, \quad i= 0.
$$
This finishes the proof of Proposition \ref{prop:C0bound-moving}.
\end{proof}

Now we briefly indicate the main points of the proofs of Propositions \ref{prop:vertical-f}
and \ref{prop:horizontal-f}].

For these cases, the boundaries are fixed as in the proofs of
Proposition \ref{prop:horizontal}, \ref{prop:vertical} is that the Hamiltonian
involved is not $\tau$-independent due to the appearance of the elongation function $\chi$
in $H^\chi$. We have only to consider the equation on the strip-like region.
{}
Writing $\rho = e^s \circ v$, the calculation (see \cite[(3.20)]{seidel:biased}) derives
\be\label{eq:Deltarho-1}
\Delta \rho = |dv - \beta \otimes X_{H_\lambda^\chi}|^2 - \rho (H_\lambda^\chi)''(\rho)
\frac{d\rho \wedge \beta}{d\tau \wedge dt} + \chi'(\tau) \frac{\del H^s}{\del s}\Big|_{s = \chi(\tau)}
\ee
We note that from this equation, both the maximum principle and strong maximum principle apply
for this non-autonomous case because
$$
\chi'(\tau) \frac{\del H^s}{\del s}\Big|_{s = \chi(\tau)} \geq 0
$$
by the monotonicity hypothesis on $\lambda$. We refer to the proof of Proposition \ref{prop:C0bound-moving}
to see how the strong maximum principle applies.

\appendix

\section{Energy identity for Floer's continuation equation}

In this appendix, we recall the energy identity for the continuation equation
given in \cite{oh:chain} and given its derivation
which was originally given in \cite{oh:jdg} in the cotangent bundle context.

\begin{prop} Suppose that $u$ is a finite energy solution of \eqref{eq:CR-chi}. Then
\bea\label{eq:energy-homotopy}
\int \left|\frac{\del u}{\del \tau}\right|_{J_{\chi(\tau)}}^2 \, dt\, d\tau
& = &
 \mathcal A_{H^+}(z^+) - \mathcal A_{H^-}(z^-) \nonumber \\
&{}& -
\int_{-\infty}^\infty \chi'(\tau) \left(\int_0^1
\frac{\del H_s}{\del s}\Big|_{s = \chi(\tau)}(u(\tau,t))dt\right)  d\tau.
\eea
\end{prop}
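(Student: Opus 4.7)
The plan is to decompose $|\partial_\tau u|^2_{J_{\chi(\tau)}}$ into a topological piece plus a Hamiltonian piece, integrate each separately, and then assemble the pieces into the action difference plus the correction term coming from $\chi'(\tau)$.

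\textbf{Step 1 (pointwise rewrite).} I will use $J_{\chi(\tau)}$-compatibility with $\omega$ to write $|\partial_\tau u|^2 = \omega(\partial_\tau u, J_{\chi(\tau)}\partial_\tau u)$ and substitute $J_{\chi(\tau)}\partial_\tau u = \partial_t u - X_{H_{\chi(\tau)}}(u)$, which follows from \eqref{eq:CR-chi}. Using the paper's convention $\omega(X_H,\cdot) = dH$, this yields, pointwise on $\R \times [0,1]$,
\[
\left|\tfrac{\partial u}{\partial \tau}\right|^2_{J_{\chi(\tau)}} \, d\tau \wedge dt \; = \; u^*\omega \; + \; dH_{\chi(\tau)}(u)\bigl(\tfrac{\partial u}{\partial \tau}\bigr)\, d\tau \wedge dt.
\]

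\textbf{Step 2 (topological integral via Stokes).} Using $\omega = -d\theta$, I compute $\int u^*\omega = -\int d(u^*\theta)$ by Stokes on $\R \times [0,1]$. The boundary decomposes into two horizontal pieces (mapped into the exact Lagrangians $L^0$, $L^1$ with potentials $f^0,f^1$) and two asymptotic pieces at $\tau = \pm\infty$ (converging to $z^\pm$). On the Lagrangian edges $i^*\theta = df^i$ converts the line integrals into evaluations of $f^i$ at the endpoints of $z^\pm$, while the asymptotic pieces contribute $-\int_0^1 z^{+*}\theta + \int_0^1 z^{-*}\theta$. This gives a clean expression for $\int u^*\omega$ in terms of $\theta$-integrals of $z^\pm$ and the boundary values of $f^0,f^1$.

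\textbf{Step 3 (Hamiltonian integral via chain rule).} For the second term I apply the chain rule in $\tau$:
\[
\tfrac{d}{d\tau}\!\left(\int_0^1 H_{\chi(\tau)}(u(\tau,t))\,dt\right) = \int_0^1 dH_{\chi(\tau)}\bigl(\tfrac{\partial u}{\partial \tau}\bigr) dt + \chi'(\tau)\int_0^1 \tfrac{\partial H_s}{\partial s}\Big|_{s=\chi(\tau)}\!(u)\, dt.
\]
Integrating over $\tau \in \R$ and using the asymptotic convergence $u(\pm\infty,\cdot) = z^\pm$ together with $\chi(\pm\infty) \in \{0,1\}$ produces
\[
\int_{\R\times[0,1]} dH_{\chi(\tau)}\bigl(\tfrac{\partial u}{\partial \tau}\bigr)\, d\tau\, dt = \int_0^1 H^+(z^+)\, dt - \int_0^1 H^-(z^-)\, dt - \int_\R \chi'(\tau)\!\int_0^1 \tfrac{\partial H_s}{\partial s}\Big|_{s=\chi(\tau)}\!(u)\, dt\, d\tau.
\]

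\textbf{Step 4 (assembly).} Adding the outputs of Steps 2 and 3 and comparing with the definition
\[
\mathcal A_H(\gamma) = -\int_0^1 \gamma^*\theta + \int_0^1 H(t,\gamma(t))\, dt + f^1(\gamma(1)) - f^0(\gamma(0)),
\]
the contributions at $z^+$ reassemble into $\mathcal A_{H^+}(z^+)$ and those at $z^-$ into $-\mathcal A_{H^-}(z^-)$, leaving only the $\chi'$-correction term. This yields exactly \eqref{eq:energy-homotopy}. The finite-energy hypothesis is used to justify the appeal to Stokes' theorem and the passage to the asymptotic limits; in practice one truncates to $[-T,T]\times[0,1]$, applies the above computation, and lets $T \to \infty$ using the exponential decay to nondegenerate chords. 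The only delicate point, and the one requiring the most bookkeeping, is tracking signs consistently with the paper's conventions ($\omega_0 = -d\theta$, $\omega(X_H,\cdot)=dH$, and the negative-action convention in $\mathcal A_H$); once these are fixed, everything collapses cleanly.
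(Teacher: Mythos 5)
Your proof follows the same route as the paper's Appendix A: rewrite $|\partial_\tau u|^2$ via $J$-compatibility and the Floer equation, split into $\int u^*\omega$ (handled by Stokes) and the Hamiltonian integral (handled by the $\tau$-chain rule, which produces the $\chi'$-correction), and reassemble into the action difference. The one place you are more careful than the paper is Step 2: the paper's formula \eqref{eq:int-omega} records only the asymptotic contributions $\mp\int_0^1 (z^\pm)^*\theta$, silently dropping the Lagrangian-edge terms, which is justified when $\theta|_{L^i}=0$ (conormals); for general exact admissible Lagrangians one must track the potentials $f^0,f^1$ exactly as you do, and these are then absorbed into the potential-corrected action \eqref{eq:offshell-action}.
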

\begin{proof} We derive
\beastar
\int \int \left|\frac{\del u}{\del \tau}\right|_{J^{\chi(\tau)}}^2 \, dt \, d\tau & = &
\int\int \omega\left(\frac{\del u}{\del \tau}, J^{\chi(\tau)}\frac{\del u}{\del \tau}\right) \, dt\, d\tau\\
& = & \int\int \omega\left(\frac{\del u}{\del \tau}, \frac{\del u}{\del t} - X_{H^{\chi(\tau)}}(u) \right) \, dt
\, d\tau \\
& = & \int u^*\omega + \int_{-\infty}^\infty \int_0^1 dH^{\chi(\tau)}\left(\frac{\del u}{\del \tau}\right)
\, dt\, d\tau.
\eeastar
The finite energy condition and nondegeneracy of $z^\pm$ imply that the first improper
integral $\int u^*\omega$ converges and becomes
\be\label{eq:int-omega}
\int u^*\omega = \int u^*(-d\theta) = - \int_0^1 (z^+)^*\theta + \int_0^1 (z^-)^*\theta.
\ee
On the other hand, the second summand becomes
\bea\label{eq:int-dH}
\int_{-\infty}^\infty \int_0^1 dH^{\chi(\tau)}\left(\frac{\del u}{\del \tau}\right)\, dt\, d\tau
& = & \int_{-\infty}^\infty \int_0^1 \frac{\del}{\del \tau} \left(H^{\chi(\tau)}(u(\tau,t))\right)\,
dt\, d\tau \nonumber\\
&{}& \quad - \int_{-\infty}^\infty \chi'(\tau) \left(\int_0^1
\frac{\del H_s}{\del s}\Big|_{s = \chi(\tau)}(u(\tau,t))dt\right)  d\tau \nonumber\\
& = & \int_0^1 H^{\chi(\tau)}(u(\infty,t)) \, dt -  \int_0^1 H^{\chi(\tau)}(u(-\infty,t)) \, dt \nonumber\\
&{}& \quad - \int_{-\infty}^\infty \chi'(\tau) \left(\int_0^1
\frac{\del H_s}{\del s}\Big|_{s = \chi(\tau)}(u(\tau,t))dt\right)  d\tau.\nonumber\\
&{}&
\eea
By recalling $u(\pm\tau,t) = z^\pm(t)$  and adding \eqref{eq:int-omega}, \eqref{eq:int-dH}, we
have obtained \eqref{eq:energy-homotopy}.
\end{proof}

\section{Gradings and signs for the moduli spaces}

\subsection{Lagrangian branes}\label{sec:Lagrangian brane}
Let us choose a quadratic complex volume form $\eta_M^2$ on the symplectic manifold $(W=T^*(M\setminus K),\omega_{std})$ with respect to the almost complex structure $J_g$ introduced in Definition \ref{def:almost_complex_structure}. The associated {\em (squared) phase map} is
\[
\alpha_M:{\rm Gr}(TM)\to S^1,\quad \alpha_M(TL_x)=\frac{\eta_M(v_1\wedge v_2\wedge v_3)^2}{|\eta_M(v_1\wedge v_2 \wedge v_3)|^2},
\]
where $v_1,v_2,v_3$ is a basis of $TL_x$ and ${\rm Gr}(TM)$ is the Lagrangian Grassmannian.
The vanishing condition of the relative first Chern class in Definition~\ref{def:admissible Lagrangian} implies that the Maslov class $\mu_L\in H^1(L)$ vanishes on $H_1(L)$.
So we have a lifting $\alpha^\#_L:L\to \R$ of $\alpha_M|_{TL}$ satisfying $\exp(2\pi i \alpha^\#_L(x))=\alpha_M(TL_x)$, which we call a {\em grading} on $L$.

For the orientation of various types of moduli spaces, we need to consider a {\em relative spin structure} of each Lagrangian $L$. Since we have $H^2(M;\Z_2)=0$, the situation is rather simple. The relative spin structure $P^\#$ is consist of the choice of orientation of $L$ and a Pin structure on $TL$, see \cite[Section8]{fooo:book2},\cite[Section11]{seidel:book} for more general cases. Note that the vanishing condition of the second Stiefel-Whitney class $w_2(L)$ in Definition~\ref{def:admissible Lagrangian} implies the existence of the Pin structures.
We call a triple $(L,\alpha^\#,P^\#)$ a  {\em Lagrangian brane}.

\subsection{Dimensions and orientations of moduli spaces}\label{sec:Dimensions and orientations of moduli spaces}
Let $(L^{0\#}, L^{1\#})$ be a pair of exact Lagrangian branes with a choice of Floer datum $(H,J)$.
Consider a Hamiltonian chord $x\in{\frak X}(H;L^0,L^1)$ and use a linearization $d\phi_H^t$ of the Hamiltonian flow of $H$ to transport the linear Lagrangian brane
\[
{\sf L}^{0\#}_{x(0)}=((TL^0)_{x(0)},\alpha^{0\#}({x(0)}),(P^{0\#})_{x(0)})
\]
to $TM_{x(1)}$.
Then the index and the orientation space are defined by comparing two linear Lagrangian branes ${\sf L}^{0\#}_{x(0)}$ and ${\sf L}^{1\#}_{x(1)}$, see \cite[11h]{seidel:book} for the construction.
Let us denote them by $\mu(x)=\mu(H;{\sf L}^{0\#}_{x(0)},{\sf L}^{1\#}_{x(1)})$ and $o(x)=o(H;{\sf L}^{0\#}_{x(0)},{\sf L}^{1\#}_{x(1)})$, respectively.

We have a more direct description of the grading in the cotangent bundle setup from \cite{oh:jdg}.
Note that the sequence of Riemannian metrics $\{g_0\}_{i\in\N}$ on the knot complement $M\setminus K$ give the canonical splittings of two Lagrangian subbundles
\[
T_{x(t)}M=H_{x(t)}\oplus V_{x(t)},
\]
where the vertical tangent bundle $V_{x(t)}$ is canonically isomorphic to $T^*_{\pi(x(t))}(M\setminus K)$, and the horizontal subbundle $H_{x(t)}$ with respect to the Levi-Civita connection of $g_0$ is isomorphic to $T_{\pi(x(t))}(M\setminus K)$ under $T\pi:TM\to T(M\setminus K)$.

We now consider a canonical class of symplectic trivialization
\[
\Phi:x^*TM\to [0,1] \times \C^3,
\]
satisfying $\Phi(H_{x(t)})\equiv \R^3$ and $\Phi(V_{x(t)})\equiv i\R^3$ for all $t\in[0,1]$. Since $[0,1]$ is contractible, such a trivialization exists.
For example, if $x\in{\frak X}(H_{g_0};\nu^*T_i, \nu^*T_i)$, then we have
\[
\Phi(T_{x(0)}\nu^*T_i)=U_\Phi \oplus U_\Phi^\perp,\quad \Phi(T_{x(1)}\nu^*T_i)=W_\Phi \oplus W_\Phi^\perp,
\]
where $U_\Phi, W_\Phi$ are 2-dimensional subspaces and $U_\Phi^\perp,W_\Phi^\perp$ are the corresponding annihilators.
Then the index $\mu(x)$ is defined by following the definition of the Maslov index in \cite{robbin-sal}
\[
\mu(B_\Phi(U_\Phi \oplus U_\Phi^\perp),W_\Phi \oplus W_\Phi^\perp),
\]
where $B_\Phi:[0,1] \to {\rm Sp}(6)$ is a symplectic path given by $B_\Phi:=\Phi \circ T\phi_{H_{g_0}}^t \circ \Phi^{-1}$.

Let $\Sigma$ be a $(d+1)$-pointed disk equipped with a fixed conformal structure, with strip-like ends, and with brane structures $(L^{0\#},\dots,L^{d\#})$  on each boundary component.
Consider Floer data $\mathcal D$ in Definition \ref{def:Floer data} for
\begin{align}\label{eqn:x^0 and x^k}
x^0\in{\frak X}(L^d,L^0),\quad {\bf x}=(x^1,\dots,x^d),\quad x^k\in{\frak X}(L^{k},L^{k-1}), \quad k=1,\dots,d.
\end{align}
Let $u\in \mathcal M(x^0,\bf x;\mathcal D)$ be a regular point,
then the linearization of $(dv - X_{\bf H}\otimes \beta)_{\bf J}^{(0,1)}$ become
\[
D_{\mathcal D,u}:(T\mathcal B_\mathcal D)_u\to(\mathcal E_\mathcal D)_u,
\]
where $\mathcal B_\mathcal D$ is a Banach manifold of maps in $W^{1,p}_{loc}(\Sigma, M)$ satisfying $W^{1,p}$-convergence on each strip-like ends,
and $\mathcal E_{\mathcal D}\to \mathcal B_{\mathcal D}$ is a Banach vector bundle whose fiber at $u$ is $L^p(\Sigma,\Omega^{0,1}_\Sigma\otimes u^*TM)$.

Let $\Sigma$ be a $(d+1)$-pointed disk equipped with a fixed conformal structure, with strip-like ends, and with a brane structure on each boundary component, set say $L^{0\#},\dots,L^{d\#}$.
Consider Floer data $\mathcal D$ in Definition \ref{def:Floer data} for $x^0\in{\frak X}(H;L^d,L^0)$ and $x^k\in{\frak X}(H;L^{k},L^{k-1})$, $k=1,\dots,d$.
Let $u\in \mathcal M(x^0;\bf x;\mathcal D)$ be a regular point, where ${\bf x}=(x^1,\dots,x^d)$, then we have
\begin{align*}
\dim_u \mathcal M(x^0;{\bf x};\mathcal D)&=\mu(x^0)-\mu(x^1)-\cdots-\mu(x^d);\\
(\Lambda^{\rm top}T\mathcal M(x^0;{\bf x};\mathcal D))_u
&\cong \Lambda^{\rm top}\ker(D_{\mathcal D,u}) \otimes \Lambda^{\rm top} \coker(D_{\mathcal D,u})\\
& \cong o(x^0)\otimes o(x^1)^\vee \otimes \cdots \otimes o(x^d)^\vee.
\end{align*}
Here, $D_{\mathcal D,u}$ is the linearized operator of $(dv - X_H\otimes \beta)_J^{(0,1)}$, where $({\bf H}, {\bf J},\beta)$ comes from the data $\mathcal D$, and $^\vee$ denotes the dual vector space.

\subsubsection{Sign convention for $A_\infty$ structure map}

We briefly explain the sign convention adopted in Proposition \ref{eqn:A_infty relation}. Let $\Sigma^1$ be a $(d+2-m)$-pointed disk equipped with Floer data $\mathcal D^1$ and with brane structures
\[
\vec L^{1\#}:=(L^{0\#},\dots,L^{n\#},L^{n+m\#},\dots, L^{d\#}),
\]
and $\Sigma^2$ be an $(m+1)$-pointed disk with $\mathcal D^2$ and with branes
\[
\vec L^{2\#}:=(L^{n\#},\dots,L^{n+m\#}).
\]
Now consider the gluing of $(\Sigma^1,\mathcal D^1,\vec L^{1\#})$ and $(\Sigma^2,\mathcal D^2,\vec L^{2\#})$ near the outgoing end of $\Sigma^2$ and the $(n+1)$-th incoming end to obtain a $(d+1)$-pointed disk $\Sigma$ with Floer data $\mathcal D:=\mathcal D^1\#_{n+1}\mathcal D^2$ and with the combined brane data
\[
\vec L^{1\#}\#_{n+1}\vec L^{2\#}=(L^{0\#},\dots, L^{d\#}).
\]

For given regular points
\begin{align*}
u^1&\in\mathcal M(x^0;x^1,\dots,x^n,\bar x,x^{n+m+1},\dots, x^d;\mathcal D^1);\\
u^2&\in\mathcal M(\bar x;x^{n+1},\dots,x^{n+m};\mathcal D^2),
\end{align*}
the standard gluing procedure gives rise to a regular point
\[
u\in\mathcal M(x^0;\dots,x^d;\mathcal D).
\]
This gluing process extends to a local diffeomorphism, let say $\phi$, between neighborhood of $(u^1,u^2)$ to that of $u$.
Then its linearization $D\phi$ fits into the following commutative diagram:
\[
\begin{tikzcd}[row sep=1.5cm, column sep=1cm]
   (\Lambda^{\rm top}T\mathcal M(x^0,\dots,x^d;\mathcal D))_u\arrow[r, "\cong"]
    & o(x^0)\otimes o(x^1)^\vee\otimes\cdots\otimes o(x^d)^\vee \\
  \parbox{4cm}{\centering $(\Lambda^{\rm top}T\mathcal M({\bf x}^1;\mathcal D^1))_{u^1} \otimes (\Lambda^{\rm top}T\mathcal M({\bf x}^2;\mathcal D^2))_{u^2} $}\arrow[r, "\cong"] \arrow[u, "(\Lambda^{\rm top}D\phi)_{(u^1,u^2)}"]
&o({\bf x^1})\otimes o({\bf x^2}) \arrow[u, "(-1)^\Delta"]
\end{tikzcd}
\]

The above diagram commutes when
\[
\Delta=\left(\sum_{k>n+m}\mu(x^k)\right)\cdot \dim\mathcal M({\bf x}^2;\mathcal D^2)_{u^2}.
\]
Here
\begin{align*}
{\bf x^1}&=(x^0,\dots,x^n,\bar x,x^{n+m+1},\dots, x^d);\\
{\bf x^2}&=(\bar x,x^{n+1},\dots,x^{n+m}).
\end{align*}
and hence
\begin{align*}
o({\bf x^1})&=o(x^0)\otimes o(x^1)^\vee\otimes\cdots\otimes o(x^n)^\vee\otimes o(\bar x)^\vee\otimes o(x^{n+m+1})^\vee\otimes\cdots\otimes o(x^d)^\vee;\\
o({\bf x^2})&=o(\bar x)\otimes o(x^{n+1})^\vee\otimes \cdots \otimes o(x^{n+m})^\vee.
\end{align*}

Now we consider the $\mathcal M$-parameterized Floer data used in Definition~\ref{def:universal Floer data}
\[\frak D=\frak D_{\frak m}=\bigsqcup_{r\in\mathcal M}\mathcal D(r),\]
on $(d+1)$-pointed disks, with branes structures $(L^{0\#},\dots,L^{d\#})$ on the ends.
Let us denote the corresponding parameterized moduli space by
\[
\mathcal M_{\frak D}(x^0;{\bf x}):=\bigsqcup_{r\in \mathcal M}\{r\}\times\mathcal M(x^0;{\bf x};\mathcal D(r)),
\]
where $(x^0;\bf x)$ are the same as in (\ref{eqn:x^0 and x^k}).
For a regular point $(r,u)$ of the parameterized moduli space, the {\em extended linearized operator} is given by
\[
D_{\mathcal D(r),u}:(T\mathcal M)_r \times (T\mathcal B_{\mathcal D(r)})_u \to (\mathcal E_{\mathcal D(r)})_u.
\]
We then have
\begin{align*}
\dim_{(r,u)} \mathcal M_{\frak D}(x^0;{\bf x})&=\dim_r\mathcal M+\mu(x^0)-\mu(x^1)-\cdots-\mu(x^d);\\
(\Lambda^{\rm top}\mathcal M_{\frak D}(x^0,{\bf x}))_{(r,u)}
&\cong (\Lambda^{\rm top}T\mathcal M)_r \otimes\Lambda^{\rm top}\ker(D_{\mathcal D(r),u})\\
&\cong (\Lambda^{\rm top}T\mathcal M)_r \otimes o(x^0)\otimes o(x^1)^\vee \otimes \cdots \otimes o(x^d)^\vee.
\end{align*}

Now consider two Floer data $\frak D^j$ parameterized over $\mathcal M^j$ for $j=1,2$.
The brane data and asymptotic data are given as before by $(\vec L^{1\#}, \vec L^{2\#})$ and $(\bf x^1,x^2)$.
Then the gluing induces a Floer data $\frak D$ parameterized over
\[
\mathcal M=\R^+ \times \mathcal M^1 \times \mathcal M^2,
\]
where $\R^+$ plays a role of gluing parameter.
Let us choose regular points
\begin{align*}
(r^1,u^1)&\in\mathcal M_{\frak D^1}(x^0,\dots,x^n,\bar x,x^{n+m+1},\dots, x^d);\\
(r^2,u^2)&\in\mathcal M_{\frak D^2}(\bar x,x^{n+1},\dots,x^{n+m}),
\end{align*}
then for each gluing parameter $\ell \in \R^+$ we have a regular point
\[
(\ell,u)\in\mathcal M_{\frak D}(x^0,\dots,x^d).
\]
Note that we obtain
\[
(\Lambda^{\rm top}T\mathcal M)_r \cong \R \otimes (\Lambda^{\rm top}T\mathcal M^1)_{r^1} \otimes (\Lambda^{\rm top}T\mathcal M^2)_{r^2},
\]
when $\ell$ is sufficiently large. By the same argument as above we have the following diagram:
\[
\begin{tikzcd}[row sep=1.5cm, column sep=1cm]
\parbox{4.5cm}{\centering $(\Lambda^{\rm top}T\mathcal M_{\frak D}(x^0,\dots,x^d))_{(r,u)}$}\arrow[r, "\cong"]
    & \parbox{3.5cm}{\centering $(\Lambda^{\rm top}T\mathcal M)_r \otimes o(x^0)\otimes o(x^1)^\vee\otimes\cdots\otimes o(x^d)^\vee$} \\
\parbox{4.5cm}{\centering $(\Lambda^{\rm top}T\mathcal M_{\frak D^1}({\bf x}^1))_{(r^1,u^1)} \otimes (\Lambda^{\rm top}T\mathcal M_{\frak D^2}({\bf x}^2))_{(r^2,u^2)}$ } \arrow[r, "\cong"] \arrow[u, "(\Lambda^{\rm top}D\phi)_{((r^1,u^1),(r^2,u^2))}"]
&\parbox{4.5cm}{\centering $\R \otimes (\Lambda^{\rm top}T\mathcal M^1)_{r^1} \otimes o({\bf x^1}) \otimes (\Lambda^{\rm top}T\mathcal M^2)_{r^2} \otimes o({\bf x^2})$}\arrow[u, "(-1)^*"]
\end{tikzcd}
\]

Here $o({\bf x^1})$ and $o({\bf x^2})$ are the same as before. The sign in the right vertical arrow comes from the Koszul convention
\begin{align}\label{eqn:Koszul sign for M}
\blacksquare_{\frak m}&=\dim_{r^2}\mathcal M^2 \cdot {\rm index}(D_{\mathcal D^1(r^1),u}) + \left(\sum_{k>n+m}\mu(x^k)\right)\cdot {\rm index} (D_{\mathcal D^2(r^2),u^2})\\
&\equiv m(d-m-1)+m\left(\sum_{k>n+m}\mu(x^k)\right)
\end{align}
and the sign difference between parameter spaces $\mathcal M$ and $\mathcal M^1 \times \mathcal M^2$
\begin{align}\label{eqn:sign for parameter space}
\blacktriangle_{(r^1,r^2)}=m(d-n)+m+n.
\end{align}
Recall from (\ref{eqn:structure map m}) that the structure map $\frak m^k$ already has the sign $\dagger=\sum_{i=1}^k i \cdot \mu(x^i)$.
By considering all the above sign effects we have
\begin{align*}
\dagger_{u^1}+\dagger_{u^2}+\blacksquare_{\frak m}+\blacktriangle_{(r^1,r^2)} \equiv \ddagger_u + \sum_{i=1}^{d}(i+1)\mu(x^i),
\end{align*}
where $\ddagger=\sum_{i=1}^{n}\mu(x^i)-n$. This explains the sign convention in Proposition \ref{eqn:A_infty relation}.

\subsubsection{Sign convection for $A_\infty$ functor}
For the sign convention in (\ref{eqn:functor_relation1}) and (\ref{eqn:functor_relation2}), we need to consider the previous argument with the parameter space $\overline{\mathcal N}{}^{k+1}$ in Definition~\ref{def:parameterN}. The notations in this section are the same as in Section~\ref{sec:construction of functor}.

Firstly, let us consider a regular point
\begin{align}\label{eqn:regular point N}
(r,u) \in \overline{\mathcal N}{}^{d+1}(y^0;x^1,\dots,x^d)
\end{align}
and its boundary strata
\begin{align*}
(r^0,u^0) &\in \overline{\mathcal N}{}^{d-m+2}(y^0;x^1,\dots, x^n,\tilde x,x^{n+m+1},\cdots,x^d);\\
(r^1,u^1) &\in \mathcal M^{m+1}(\tilde x;x^{n+1},\dots,x^{n+m})
\end{align*}
satisfying
\begin{align*}
\mu(y^0)&=\mu(x^1)+\cdots+\mu(x^n)+\mu(\tilde x)+\mu(x^{n+m+1})+\cdots+\mu(x^d)+d-m;\\
\mu(\tilde x)&=\mu(x^{n+1})+\cdots+\mu(y^{n+m})+m-2.
\end{align*}
As in the previous section we compare the sign difference between
\[
(\Lambda^{\rm top}\overline{\mathcal N}{}^{d+1})_r \otimes o(y^0) \otimes o(x^1)^\vee \otimes \cdots \otimes o(x^d)^{\vee}
\]
and
\begin{align*}
&(\Lambda^{\rm top}\overline{\mathcal N}{}^{d-m+2})_{r^0} \otimes o(y^0) \otimes o(x^1)^\vee \otimes \cdots \otimes o(\tilde x) \otimes \cdots \otimes o(x^d)^{\vee}\otimes \\
&(\Lambda^{\rm top}\mathcal M^{m+1})_{r^1} \otimes o(\tilde x) \otimes o(x^{n+1})^\vee \cdots o(x^{n+m})^\vee.
\end{align*}

The Koszul convention gives
\[
\blacksquare_{\frak f} \equiv m(d-m)+m\left(\sum_{k>n+m}\mu(x^k)\right)
\]
and a similar computation
\[
\blacksquare_{\frak f}+\blacktriangle_{(r^0,r^1)}+\dagger_{u^1}+\spadesuit_{u^0} \equiv (\ddagger_u+1)+\left(\sum_{i=1}^d(i+1)\mu(x^i)+d \right)
\]
verifies (\ref{eqn:functor_relation1}).

Now we consider another type of boundary strata of (\ref{eqn:regular point N}) consisting of
\begin{align*}
(r^0,u^0) &\in \mathcal M^{\ell+1}(y^0;y^1,\dots, y^\ell);\\
(r^1,u^1) &\in \overline{\mathcal N}{}^{s_1+1}(y^1;x^{1},\dots,x^{s_1});\\
\vdots\\
(r^\ell,u^\ell)&\in \overline{\mathcal N}{}^{s_\lambda+1}(y^\ell;x^{s_1+\cdots+s_{\ell-1}+1},\dots,x^{d});\\
\end{align*}
satisfying degree conditions
\begin{align*}
\mu(y^0)&=\mu(y^1)+\cdots+ \mu(y^\ell) + r-2;\\
\mu(y^1)&=\mu(x^1)+\cdots+ \mu(x^{s_1}) + s_1-1;\\
\vdots\\
\mu(y^\ell)&=\mu(x^{s_1+\cdots+s_{\ell-1}+1})+\cdots+ \mu(x^{d}) + s_\lambda-1
\end{align*}
Then the sign difference between
\[
(\Lambda^{\rm top}\overline{\mathcal N}{}^{d+1})_r \otimes o(y^0) \otimes o(x^1)^\vee \otimes \cdots \otimes o(x^d)^{\vee}
\]
and
\begin{align*}
&(\Lambda^{\rm top}\mathcal M^{\ell+1})_{r^0} \otimes o(y^0) \otimes o(y^1)^\vee \otimes \cdots\otimes o(y^\ell)^\vee \otimes \\
&(\Lambda^{\rm top}\overline{\mathcal N}{}^{s_1+1})_{r^1} \otimes o(y^1) \otimes o(x^1)^\vee \otimes \cdots\otimes o(x^{s_1})^\vee \otimes \\
&\cdots \otimes \\
&(\Lambda^{\rm top}\overline{\mathcal N}{}^{s_\lambda+1})_{r^\ell} \otimes o(y^\ell) \otimes o(x^{s_1+\cdots+s_{\ell-1}+1})^\vee \otimes \cdots\otimes o(x^{d})^\vee.
\end{align*}
Note that the Koszul sign convention induces
\[
\blacksquare_{\frak f'} \equiv (s_1'+\cdots + s_\lambda')\ell+\sum_{1\leq i \leq j \leq \ell}s_i' s_j' +\sum_{i=1}^{\ell-1}(s_1'+\cdots+s_i')\mu(y^{i+1}),
\]
where $s_r'=s_r-1$. The sign convention for the parameter space configuration $(r;r^0,\dots,r^\ell)$ is obtained by applying the sign rule in (\ref{eqn:sign for parameter space}) as follows:
\[
\blacktriangle_{(r^0,\dots,r^\ell)}=d\ell+d+\sum_{1\leq i \leq j \leq \ell}s_i' s_j'.
\]
Then by a tedious computation, we have
\[
\blacksquare_{\frak f'}+\blacktriangle_{(r^0,\dots,r^\ell)}+\dagger_{u^0}+\sum_{j=1}^{\ell}\spadesuit_{u^j}=\sum_{i=1}^d(i+1)\mu(x^i)+d.
\]
This verifies the signs in (\ref{eqn:functor_relation2}).
The sign for $A_\infty$ homotopy relation requires basically the same computation, we omit it.

\end{document}